\let\oldcirc=\circ
\renewcommand{\circ}{\mathchoice
    {\mathbin{\scriptstyle\oldcirc}}{\mathbin{\scriptstyle\oldcirc}}
    {\mathbin{\scriptscriptstyle\oldcirc}}
    {\mathbin{\scriptscriptstyle\oldcirc}}}
\newlength{\upto}\newlength{\dnto}
\newcommand{\I}[2]{\addtolength{\upto}{#1pt}\addtolength{\dnto}{#2pt}%
{\vrule height\upto depth\dnto width 0pt}}
\numberwithin{equation}{section}
\mathchardef\cdot="0201
\newcommand{\newsect}[1]{\bigskip\section{#1}\setcounter{table}{0}}
\newcommand{\newsub}[1]{\mbfx{\smallskip\subsection{#1}\leavevmode\noindent}}
\newcommand{\newsubb}[2]{\mbfx{\smallskip\subsection{#1}\label{#2}%
\leavevmode\noindent}}
\renewenvironment{enumerate}[1][]
{\begin{enumerat}[#1]\setlength{\itemsep}{6pt}}{\end{enumerat}}
\newenvironment{enuma}{\begin{enumerate}[{\rm(a) }]}{\end{enumerate}}
\renewenvironment{itemize}
{\begin{itemiz}\setlength{\itemsep}{6pt}\setlength{\itemindent}{-20pt}}
{\end{itemiz}}
\def\beq#1\eeq{\begin{equation*}#1\end{equation*}}
\def\beqq#1\eeqq{\begin{equation}#1\end{equation}}
\newcommand{\mbfx}[1]{{\boldmath #1\unboldmath}}
\let\emptyset=\varnothing
\newcommand{\longline}{\bigskip\centerline{\hbox to 5cm{\hrulefill}}\bigskip}
\newcommand{\widebar}[1]{\overset{\mskip3mu\hrulefill\mskip3mu}{#1}
		\vphantom{#1}}
\let\mynote=\rmk
\renewcommand{\:}{\colon}
\newcommand{\e}[2]{e_{#1#2}}
\newcommand{\mxtwo}[4]{\left(\begin{smallmatrix}#1&#2\\#3&#4
\end{smallmatrix}\right)}
\newcommand{\mxthree}[9]{\left(\begin{smallmatrix}#1&#2&#3\\#4&#5&#6\\
#7&#8&#9\end{smallmatrix}\right)}
\newcommand{\orb}{\mathcal{O}}
\DeclareMathAlphabet\EuR{U}{eur}{m}{n}
\SetMathAlphabet\EuR{bold}{U}{eur}{b}{n}
\newcommand{\curs}{\EuR}
\newcommand{\Ab}{\curs{Ab}}
\renewcommand{\mod}{\textup{-}\curs{mod}}
\newcommand{\higherlim}[2]{\displaystyle\setbox1=\hbox{\rm lim}
	\setbox2=\hbox to \wd1{\leftarrowfill} \ht2=0pt \dp2=-1pt
	\setbox3=\hbox{$\scriptstyle{#1}$}
	\def\test{#1}\ifx\test\empty
	\mathop{\mathop{\vtop{\baselineskip=5pt\box1\box2}}}\nolimits^{#2}
	\else
	\ifdim\wd1<\wd3
	\mathop{\hphantom{^{#2}}\vtop{\baselineskip=5pt\box1\box2}^{#2}}_{#1}
	\else
	\mathop{\mathop{\vtop{\baselineskip=5pt\box1\box2}}_{#1}}%
	\nolimits^{#2}
	\fi\fi}
\newcommand{\higherlimm}[2]{\setbox1=\hbox{\rm lim}
	\setbox2=\hbox to \wd1{\leftarrowfill} \ht2=0pt \dp2=-1pt
	\mathop{\mathop{\vtop{\baselineskip=5pt\box1\box2}}}\limits_{#1}
	\nolimits^{#2}}
\newcounter{let} \setcounter{let}{0}
\loop\stepcounter{let}
\edef\csname cal\alph{let}\endcsname%
\newcommand{\tdef}[2][]{\expandafter\newcommand\csname#2\endcsname%
{#1\textup{#2}}}
\newcommand{\fdef}[1]{\expandafter\newcommand\csname#1\endcsname%
{\mathfrak{#1}}}
\newcommand{\bbdef}[1]{\expandafter\newcommand%
\csname#1\endcsname{\mathbb{#1}}}
\newcommand{\3}[1]{\widebar{#1}}
\newcommand{\SFL}[1][]{(S#1,\calf#1,\call#1)}
\newcommand{\gen}[1]{\langle{#1}\rangle}
\newcommand{\Gen}[1]{\bigl\langle{#1}\bigr\rangle}
\let\nsg=\normal
\newcommand{\pr}{\operatorname{pr}\nolimits}
\newcommand{\E}{\widehat{\cale}}
\newcommand{\syl}[2]{\textup{Syl}_{#1}(#2)}
\newcommand{\sylp}[1]{\syl{p}{#1}}
\newcommand{\autf}{\Aut_{\calf}}
\newcommand{\outf}{\Out_{\calf}}
\newcommand{\homf}{\Hom_{\calf}}
\newcommand{\isof}{\Iso_{\calf}}
\newcommand{\sminus}{\smallsetminus}
\newcommand{\defeq}{\overset{\textup{def}}{=}}
\newcommand{\zploc}{\Z_{(p)}}
\renewcommand{\Im}{\textup{Im}}
\renewcommand{\2}[2]{\underset{#1}{#2}}
\newcommand{\pcom}{{}^\wedge_p}
\newcommand{\sd}[1]{\overset{{#1}}{\rtimes}}
\let\til=\widetilde
\newtheorem{Thm}{Theorem}[section]
\newtheorem{Prop}[Thm]{Proposition}
\newtheorem{Lem}[Thm]{Lemma}
\newtheorem{Defi}[Thm]{Definition}   
\newtheorem{Th}{Theorem}
\newcommand{\longleft}[1]{\;{\leftarrow%
\count255=0 \loop \mathrel{\mkern-6mu}%
    \relbar\advance\count255 by1\ifnum\count255<#1\repeat}\;}
\newcommand{\longright}[1]{\;{\count255=0 \loop \relbar\mathrel{\mkern-6mu}%
    \advance\count255 by1\ifnum\count255<#1\repeat\rightarrow}\;}
\newcommand{\Right}[2]{\overset{#2}{\longright#1}}
\newcommand{\RIGHT}[3]{\mathrel{\mathop{\kern0pt\longright#1}
	\limits^{#2}_{#3}}}
\newcommand{\Left}[2]{{\buildrel #2 \over {\longleft#1}}}
\newcommand{\LEFT}[3]{\mathrel{\mathop{\kern0pt\longleft#1}\limits^{#2}_{#3}}
}
\newcommand{\longleftright}[1]{\;{\leftarrow\mathrel{\mkern-6mu}%
    \count255=0\loop\relbar\mathrel{\mkern-6mu}%
    \advance\count255 by1\ifnum\count255<#1\repeat\rightarrow}\;} 
\newcommand{\onto}[1]{\;{\count255=0 \loop \relbar\joinrel
    \advance\count255 by1
    \ifnum\count255<#1 \repeat \twoheadrightarrow}\;}
\newcommand{\Onto}[2]{\overset{#2}{\onto#1}}
\newcommand{\RLEFT}[3]{\mathrel{%
   \mathop{\vcenter{\baselineskip=0pt\hbox{$\kern0pt\longright#1$}%
   \hbox{$\kern0pt\longleft#1$}}}\limits^{#2}_{#3}}}
\theoremstyle{definition}
\theoremstyle{remark}
\newcommand{\bfn}{{\mathbf n}}
\newcommand{\mm}{\mathbf{m}}
\title{Reduced, tame, and exotic fusion systems}
\author{Kasper Andersen}
\address{Matematisk Institut, Ny Munkegade, 8000 Aarhus C, Denmark}
\email{kksa@imf.au.dk}
\author{Bob Oliver}
\address{LAGA, Institut Galil\'ee, Av. J-B Cl\'ement, 93430
Villetaneuse, France}
\email{bobol@math.univ-paris13.fr}
\thanks{B. Oliver is partially supported by UMR 7539 of the CNRS, and by 
project ANR BLAN08-2\_338236, HGRT}
\author{Joana Ventura}
\address{Departamento de Matem\'atica, Instituto Superior T\'ecnico, Av.
Rovisco Pais, 1049--001 Lisboa, Portugal}
\email{jventura@math.ist.utl.pt}
\thanks{J. Ventura is partially supported by FCT/POCTI/FEDER and grant
PDCT/MAT/58497/2004.}
\subjclass[2000]{Primary 55R35. Secondary 55R40, 20D20}
\keywords{Classifying space, $p$-completion, finite groups, fusion.}
\begin{document}

\begin{abstract} 
We define here two new classes of saturated fusion systems, reduced fusion 
systems and tame fusion systems.  These are motivated by our attempts to 
better understand and search for exotic fusion systems:  fusion systems 
which are not the fusion systems of any finite group.  Our main theorems 
say that every saturated fusion system reduces to a reduced fusion system 
which is tame only if the original one is realizable, and that every 
reduced fusion system which is not tame is the reduction of some exotic 
(nonrealizable) fusion system.
\end{abstract}

\maketitle

When $G$ is a finite group and $S\in\sylp{G}$, the fusion category of $G$ 
is the category $\calf_S(G)$ whose objects consist of all subgroups of 
$S$, and where 
	\[ \Mor_{\calf_S(G)}(P,Q)=\Hom_G(P,Q)\defeq
	\{c_g\in\Hom(P,Q)\,|\, g\in{}G,\ gPg^{-1}\le{}Q \}~. \]
This provides a means of encoding the \emph{$p$-local structure} of $G$:  
the conjugacy relations among the subgroups of the Sylow $p$-subgroup $S$.  
An abstract 
``saturated fusion system'' over a finite $p$-group $S$ is a category 
whose objects are the subgroups of $S$, whose morphisms are certain 
monomorphisms of groups between the subgroups, and which satisfies certain 
conditions formulated by Puig \cite{Puig} and stated here in Definition 
\ref{sat.Frob.}.  In particular, for any finite $G$ as above, $\calf_S(G)$ 
is a saturated fusion system.  A saturated fusion system is called 
\emph{realizable} if it is isomorphic to the fusion system of some finite 
group $G$, and is called \emph{exotic} otherwise.

It turns out to be very difficult in general to construct exotic fusion 
systems, especially over $2$-groups.  This says something about how close 
Puig's definition is to the properties of fusion systems of finite groups.  

This paper is centered around the problem of identifying exotic fusion 
systems.  A first step towards doing this was taken in \cite{OV2}, where 
two of the authors developed methods for listing saturated fusion systems 
over any given $2$-group.  However, it quickly became clear that in order 
to have any chance of making a systematic search through all $2$-groups 
(or $p$-groups) of a given type, one must first find a way to limit the 
types of fusion systems under consideration, and do so without missing any 
possible exotic ones.

This leads to the concept of a \emph{reduced fusion system}.  A saturated 
fusion system is reduced if it contains no nontrivial normal 
$p$-subgroups, and also contains no proper normal subsystems of $p$-power 
index or of index prime to $p$.  These last concepts will be defined 
precisely in Definitions \ref{D:subgroups} and \ref{d:p-p'-index}; for 
now, we just remark that they are analogous to requiring a finite group to 
have no nontrivial normal $p$-subgroups and no proper normal subgroups of 
$p$-power index or of index prime to $p$.  Thus it is very far from 
requiring that the fusion system be simple in any sense, but it is 
adequate for our purposes.

The second concept which plays a central role in our results is that of a 
\emph{tame fusion system}.  Roughly, a fusion system $\calf$ is tame if it 
is realized by a finite group $G$ for which all automorphisms of $\calf$ 
are induced by automorphisms of $G$.  The precise (algebraic) definition is 
given in Definition \ref{d:tame}.  In terms of classifying spaces, $\calf$ 
is tame if it is realized by a finite group $G$ such that the natural map 
from $\Out(G)$ to $\Out(BG\pcom)$ is split surjective, where 
$\Out(BG\pcom)$ is the group of homotopy classes of self homotopy 
equivalences of the space $BG\pcom$.  

For any saturated fusion system $\calf$ over a finite $p$-group $S$, there 
is a canonical reduction $\red(\calf)$ of $\calf$ (Definition 
\ref{d:reduced}).  The analogy for a finite group $G$ with maximal normal 
$p$-subgroup $Q$ would be to set $G_0=C_G(Q)/Q$, and then let 
$\red(G)\nsg{}G_0$ be the smallest normal subgroup such that $G_0/\red(G)$ 
is $p$-solvable.  Our first main theorem is the following.

\begin{Th} \label{ThA}
For any saturated fusion system $\calf$ over a finite $p$-group $S$, if 
$\red(\calf)$ is tame, then $\calf$ is also tame, and in particular 
$\calf$ is realizable.
\end{Th}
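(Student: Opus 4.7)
The plan is to induct on the number of elementary steps in the construction of $\red(\calf)$. From the definition of the reduction (Definition \ref{d:reduced}), the passage $\calf \to \red(\calf)$ factors as a finite sequence of operations of three basic types: (i) replacing a fusion system by its quotient by $O_p$, i.e., eliminating a nontrivial normal $p$-subgroup; (ii) passing to a normal subsystem of $p$-power index; and (iii) passing to a normal subsystem of index prime to $p$. It therefore suffices to prove a single lifting lemma in each case: if $\calf'$ arises from $\calf$ by one of these three operations and $\calf'$ is tame, then $\calf$ is also tame.

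For each operation, fix a finite group $G'$ that tamely realizes $\calf'$, together with a splitting $\sigma'$ of the canonical map $\kappa_{G'} \: \Out(G') \to \Out(\calf',\call')$. The idea is to construct a finite group $G$ realizing $\calf$ as a group-theoretic extension involving $G'$: in case (iii), $G$ should be an extension of $G'$ by a $p'$-group chosen to supply the missing $p'$-part of $\Out_\calf(S)$; in case (ii), an extension by a $p$-group capturing the hyperfocal data; and in case (i), an extension of $G'$ by $Q = O_p(\calf)$, exploiting that $\calf'$ is the corresponding quotient fusion system. Once $G$ is built, one checks $\calf_S(G) = \calf$ and constructs a splitting $\sigma$ of $\kappa_G$ as follows: given $\alpha \in \Out(\calf,\call)$, restrict $\alpha$ to an element $\alpha' \in \Out(\calf',\call')$ (using that $\calf'$ is defined canonically from $\calf$, so automorphisms descend), lift via $\sigma'(\alpha') \in \Out(G')$, and finally extend $\sigma'(\alpha')$ to a class in $\Out(G)$ realizing $\alpha$.

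The main obstacle is precisely this last step: showing that the outer automorphisms of $G'$ in the image of $\sigma'$ admit coherent extensions to outer automorphisms of $G$, and that these extensions assemble into a genuine group homomorphism $\sigma$ rather than merely a set-theoretic section. This is an obstruction-theoretic problem whose form depends on the type of extension in play -- central or non-central, by a $p$-group or a $p'$-group -- and which typically reduces to vanishing statements in low-dimensional cohomology of $\Out(\calf')$ with coefficients in certain automorphism groups of the kernel. The obstruction calculations are simplest in case (iii), somewhat harder in case (ii) where the kernel is a $p$-group on which $\Out(\calf')$ may act nontrivially, and most delicate in case (i), where one must also track how $\Out(\calf,\call)$ acts on $Q = O_p(\calf)$ itself. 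Once all three lifting lemmas are established, iteration along the reduction chain yields that $\calf$ is tame, which by definition entails realizability.
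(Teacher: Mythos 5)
Your coarse outline (induct along the chain of reductions and prove a lifting lemma for each elementary step) is indeed the skeleton of the paper's argument, but there are two genuine gaps. First, your case (i) does not match the definition of the reduction: by Definition \ref{d:reduced}, the first step is not $\calf\mapsto\calf/O_p(\calf)$ but $\calf\mapsto C_\calf(Q)/Z(Q)$ with $Q=O_p(\calf)$. Reversing this step is correspondingly more involved than "an extension of $G'$ by $Q$": in the paper one passes through $\calf^*=N_\calf^{\Inn(Q)}(Q)$, recovers $\calf^*$ from $\calf^*/Q\cong C_\calf(Q)/Z(Q)$ by iterated \emph{central} extensions along the upper central series of $Q$ (Lemma \ref{cent-pb}, Proposition \ref{F/Z-tame}), and only then climbs from the normal subsystem $\calf^*\nsg\calf$ up to $\calf$ (Proposition \ref{c:tame->tame}). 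So the three lifting lemmas you would need are not quite the three you state.

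Second, and more seriously, the entire weight of the proof rests on the step you defer to "obstruction-theoretic" vanishing in low-dimensional cohomology of $\Out(\calf')$, and that is not how the construction can be carried out, nor is it what happens in the paper. The group $G$ realizing $\calf$ is not produced by group-cohomological obstruction theory with coefficients in the kernel; it is produced from \emph{linking systems}. One first needs the existence of a centric linking system $\call$ associated to $\calf$ (Proposition \ref{F0/A=F(G)}), whose obstruction lives in higher derived limits of the center functor over the orbit category and whose vanishing is exactly where the hypothesis of \emph{strong} tameness -- hence the classification of finite simple groups, via Theorem \ref{t:L(p)} -- enters; your proposal nowhere accounts for this input. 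One also needs the \emph{uniqueness} of the linking system of $G'$ (Lemma \ref{lim2=0}(c)) to identify the abstract normal subsystem $\call'\nsg\call$ with $\call_{S'}^c(G')$, the surjectivity of $\mu_{G'}$ (Lemma \ref{lim2=0}(d)) so that fusion-preserving automorphisms of $S'$ lift to $\Aut(G',S')$, the invariance and centricity of $\call'$ in $\call$ (Proposition \ref{link-ex}), and normalizations such as $O_{p'}(G')=1$ and $Z(G')=Z(\calf')$ (Lemma \ref{Op'-Z(G)}). With these in hand, $G$ is built explicitly as a quotient of $G'\rtimes H$, where $H$ is a pullback of $\Aut(G',S')$ and $\Aut_\call(S')$ over $\Aut\typ^I(\call')$ (Lemma \ref{G0<|G}), and the identity $\calf_S(G)=\calf$ -- which your sketch asserts but which is the other hard point -- is verified via the uniqueness statement for extensions of linking systems, not by a cohomology computation. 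Finally, the splitting of $\kappa_G$ is transferred from the splitting of $\kappa_{G'}$ by exhibiting pullback squares of outer automorphism groups (Lemmas \ref{norm-pb} and \ref{cent-pb}, combined with Lemma \ref{p.b.split}); no $H^1$/$H^2$ vanishing of the kind you hope for is available or needed. In short, without bringing linking systems (and the existence/uniqueness results for them) into the construction, there is no apparent way to produce the extension group, to verify that its fusion system is $\calf$, or to make the section of $\kappa_G$ a homomorphism, so the proposal as it stands does not close.
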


Thus Theorem \ref{ThA} says that reduced fusion systems detect all possible 
exotic fusion systems.  If one wants to find all exotic fusion systems over 
$p$-groups of order $\le{}p^k$ for some $p$ and $k$, then one first 
searches for all reduced fusion systems over $p$-groups of order $\le{}p^k$ 
which are not tame, and then for all other fusion systems which reduce to 
them.  

The proof of Theorem \ref{ThA} uses the uniqueness of linking systems 
associated to the fusion system of a finite group, and through that 
depends on the classification of finite simple groups.  In order to make 
it clear exactly which part of the result depends on the classification 
theorem and which part is independent, we introduce another (more 
technical) concept, that of ``strongly tame'' fusion systems (Definition 
\ref{d:L(p)}).  Using the classification, together with results in 
\cite{limz-odd} and \cite{limz}, we prove that all tame fusion systems are 
strongly tame (Theorem \ref{t:L(p)}).  (In fact, the definition of 
``strongly tame'' is such that any tame fusion system which we're ever 
likely to be working with can be shown to be strongly tame without using 
the classification.)  Independently of that, and 
without using the classification theorem, we prove in Theorem 
\ref{T:reduce} that $\calf$ is tame whenever $\red(\calf)$ is strongly 
tame; and this together with Theorem \ref{t:L(p)} imply Theorem \ref{ThA}.

Alternatively, one can also avoid using the classification theorem by 
restating Theorem \ref{ThA} in terms of fusion systems together with 
associated linking systems.

Albert Ruiz has constructed examples \cite{Ruiz} which show that the 
reduction of a tame fusion system need not be tame, and in fact, can be 
exotic.  So there is no equivalence between the tameness of $\calf$ and 
tameness of $\red(\calf)$.  The next theorem does, however, provide a 
weaker converse to Theorem \ref{ThA}, by saying that for every non-tame 
reduced fusion system, there is some associated exotic fusion system in 
the background.

\begin{Th} \label{ThB}
Let $\calf$ be a reduced fusion system which is not tame.  Then there is 
an exotic fusion system whose reduction is isomorphic to $\calf$.
\end{Th}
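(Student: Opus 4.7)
The plan is to construct $\E$ as a non-realizable extension of $\calf$ by a $p$-subgroup of $\Out(\calf)$ that witnesses the failure of tameness. First dispose of the trivial case: if $\calf$ itself is exotic, take $\E = \calf$, which works because $\red(\calf) = \calf$ by the reducedness hypothesis. So assume $\calf$ is realizable by some finite group $G$.

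Since $\calf$ is not tame, for every realization $G$ the map $\kappa_G \: \Out(G) \to \Out(\calf)$ fails to be split surjective. Choose $H \le \Out(\calf)$ as a witness --- for instance a Sylow $p$-subgroup of $\Out(\calf)$, or a $p$-subgroup not lying in the image of $\kappa_G$ for the ``most tame'' realization of $\calf$ available. Apply the extension theory of saturated fusion systems by $p$-groups to build a saturated fusion system $\E$ over some $T$ with $S \nsg T$ and $T/S \iso H$, realizing the outer $H$-action on $\calf$. The identity $\red(\E) = \calf$ then follows by inspecting the three-step reduction process: reducedness of $\calf$ (no normal $p$-subgroups, no proper normal subsystems of $p$-power index or of index prime to $p$) together with the fact that the extension is by a $p$-group forces $O_p(\E)$ to be trivial, $O^p(\E) = \calf$, and no further reduction by prime-to-$p$ index subsystems.

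Exoticness of $\E$ is then established by contradiction. If $\E = \calf_T(G_1)$ for some finite $G_1$ with $T \in \sylp{G_1}$, extract a finite group $G$ realizing $\calf$ from $G_1$ --- for instance by taking $G = O^p(G_1)$, which is normal of $p$-power index, has Sylow $p$-subgroup $S$, and satisfies $\calf_S(G) = O^p(\calf_T(G_1)) = O^p(\E) = \calf$. Conjugation by elements of $T$ then lifts the outer $H$-action on $\calf$ to outer automorphisms of $G$, producing (after dealing with $C_{G_1}(G)$) a lift $H \hookrightarrow \Out(G)$ which splits $\kappa_G$ over $H$. By the choice of $H$, this is incompatible with non-tameness of $\calf$, so $\E$ must be exotic. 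The main obstacle will be this last extraction step: one must guarantee that a hypothetical realization of $\E$ genuinely forces a realization of $\calf$ whose $\kappa_G$ is split in a manner strong enough to contradict non-tameness. This likely requires a careful choice of $H$ (for instance a Sylow $p$-subgroup of $\Out(\calf)$, together with a Schur--Zassenhaus or iterative argument covering the $p'$-part of $\Out(\calf)$), and some delicate bookkeeping of how the linking-system-level data for $\E$ restricts to that for $\calf$ via the subquotient $G_1 \surj G_1/G \iso H$.
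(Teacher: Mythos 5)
There is a genuine gap, and it lies exactly where you flag your "main obstacle": the exoticness step cannot be made to work with an extension by a $p$-subgroup $H$ of automorphisms of $S$. First, tameness is the split surjectivity of $\kappa_G\:\Out(G)\Right2{}\Out\typ(\call_S^c(G))$, not of its composite with $\mu_G$ into $\Out(S,\calf)$; the failure of tameness may be caused by the kernel of $\mu_{\call}$ (a $p$-group of automorphisms of the linking system acting trivially on $S$) or by the prime-to-$p$ part of $\Out\typ(\call)$, neither of which is visible to any subgroup $H\le\Out(S,\calf)$. Second, in any extension $\E$ over a $p$-group $T$ with $S\nsg T$ the quotient $T/S\cong H$ is forced to be a $p$-group, so even in the best case a hypothetical realization $G_1$ of $\E$ only yields a realization $G$ of $\calf$ together with a lift of the $p$-group $H$ to $\Out(G)$. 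That gives at most a partial section of $\mu_G\circ\kappa_G$ over a $p$-subgroup of $\Out(S,\calf)$; it does not produce a section of $\kappa_G$ on all of $\Out\typ(\call_S^c(G))$ (there is no Schur--Zassenhaus or transfer-type argument that promotes a splitting over a Sylow $p$-subgroup to a splitting over the whole group), and hence no contradiction with the hypothesis that $\calf$ is not tame. There are also smaller unverified points in your outline ($O_p(\E)=1$, $\hyp(\E)=S$, injectivity of $T/S\Right2{}\Out(G)$, and the fact that the extracted $G$ may have a different linking system than the one you started with), but they are secondary to this structural problem.

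The paper's proof avoids all of this by a different device, which is the idea your proposal is missing. Choose a centric linking system $\call$ for $\calf$ with $|\Out\typ(\call)|$ maximal, pick a finite abelian $p$-group $A$ on which the \emph{whole} group $\Out\typ(\call)$ acts faithfully, and extend the product $(A\times S, A\times\calf, A\times\call)$ by the group $\Gamma=A\rtimes\Aut\typ^I(\call)$, so that the quotient is all of $\Out\typ(\call)$ (not a $p$-group); the extension exists by \cite[Theorem 9]{O3}, and one checks $O_p(\3\calf)=A$ and $C_{\3\calf}(A)=A\times\calf$, whence $\red(\3\calf)\cong\calf$. The point of $A$ is that it encodes the full $\Out\typ(\call)$ --- including its $p'$-part and $\Ker(\mu_\call)$ --- inside the $p$-local structure of $\3\calf$: if $\3\calf$ were realized by $\3G$, then $\Aut_{\3G}(A)\cong\Out\typ(\call)$, and conjugation gives a homomorphism $s$ from $\Out\typ(\call)$ to $\Out(G)$, where $G=C_{\3G}(A)/A$ realizes $\calf$, with $\kappa_G\circ s$ injective; the maximality of $|\Out\typ(\call)|$ then forces $\kappa_G\circ s$ to be an isomorphism, so $\kappa_G$ is split surjective, contradicting non-tameness. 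Without an auxiliary normal abelian $p$-group carrying a faithful action of $\Out\typ(\call)$, your construction cannot reach this contradiction, so the argument as proposed does not close.
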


As remarked above, reduced fusion systems can be very far from being 
simple in any sense.  For example, a product of reduced fusion systems is 
always reduced (Proposition \ref{F1xF2-red}).  The next theorem handles 
reduced fusion systems which factor as products.

\begin{Th} \label{ThC}
Each reduced fusion system $\calf$ over a finite $p$-group $S$ has a 
unique maximal factorization $\calf=\calf_1\times\cdots\times\calf_m$ as a 
product of indecomposable fusion systems $\calf_i$ over subgroups 
$S_i\nsg{}S$.  If $\calf_i$ is tame for each $i$, then $\calf$ is tame.
\end{Th}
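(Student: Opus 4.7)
The plan is to split the theorem into two parts: a Krull--Schmidt-type uniqueness for the decomposition, and the preservation of tameness under products.

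For existence of the decomposition, I would induct on $|S|$: if $\calf$ is indecomposable there is nothing to do; otherwise $\calf = \calg_1 \times \calg_2$ with both factors proper, and a direct check from Definition \ref{d:reduced} shows that both $\calg_i$ are themselves reduced (any normal $p$-subgroup, or proper normal subsystem of $p$-power or prime-to-$p$ index, in a single factor lifts to one in $\calf$), so the inductive hypothesis applies. The technical crux is uniqueness, since even elementary abelian $p$-groups admit many inequivalent direct-product decompositions. The key lemma I would prove is: if $\calf$ is reduced and $\calf = \calh_1 \times \calh_2$, then any direct-factor subsystem $\calh$ of $\calf$ splits as $\calh = (\calh \cap \calh_1) \times (\calh \cap \calh_2)$. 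This is where reducedness---especially the triviality of $O_p(\calf)$---is essential: any ``diagonal'' subfactor whose projections to the $\calh_i$ are both surjective would produce either a nontrivial strongly closed $p$-subgroup of $\calf$ or a proper normal subsystem of $p$-power or prime-to-$p$ index, violating one of the reducedness hypotheses. Applying this lemma iteratively to any two decompositions into indecomposables forces a common refinement which, by indecomposability, must be trivial, and this yields uniqueness up to permutation.

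For the second part, group the indecomposable factors by isomorphism type as $\calf \cong \calf_1^{k_1} \times \cdots \times \calf_r^{k_r}$ with the $\calf_i$ pairwise non-isomorphic. The uniqueness just established implies that every self-equivalence of $\calf$ permutes the $k_i$ copies of $\calf_i$, so
\beq
\Out(\calf) \;\cong\; \prod_{i=1}^{r} \bigl(\Out(\calf_i) \wr \sym{k_i}\bigr).
\eeq
If each $\calf_i$ is tame, realized by a finite group $G_i$ with a splitting $s_i \: \Out(\calf_i) \to \Out(G_i)$, then $G \defeq G_1^{k_1} \times \cdots \times G_r^{k_r}$ realizes $\calf$ and $\Out(G)$ admits the analogous wreath-product decomposition $\prod_i (\Out(G_i) \wr \sym{k_i})$. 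Taking in each factor the wreath product of $s_i$ with $\Id_{\sym{k_i}}$ yields a splitting of $\Out(G) \twoheadrightarrow \Out(\calf)$, which proves that $\calf$ is tame.

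The main obstacle will be the uniqueness lemma above. Ruling out skew or diagonal direct factors probably requires a case split based on whether the projections of a hypothetical skew factor to the $\calh_i$ are surjective or injective, and each case must be pushed to a contradiction with one of the three reducedness conditions. Once this lemma is in place, both the uniqueness of the decomposition and the wreath-product formula for $\Out(\calf_1 \times \cdots \times \calf_m)$ used in the tameness argument follow in parallel, so essentially the entire theorem reduces to this one combinatorial/structural step.
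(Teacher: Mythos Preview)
Your overall architecture matches the paper's (uniqueness via a common-refinement lemma, then tameness via a wreath-type splitting), but there are two gaps worth flagging.

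For uniqueness, your ``key lemma'' is indeed the heart of the matter (it is the paper's Lemma~\ref{dbl-decomp}), but your proposed mechanism is off. You single out $O_p(\calf)=1$ as the essential hypothesis and plan a case split on surjectivity of projections; the paper instead uses a short focal-subgroup computation: for $x,y\in S_1$ which are $\calf$-conjugate one shows $x^{-1}y\in(S_1\cap S_3)\times(S_1\cap S_4)$, hence $\foc(\calf)\le\prod_{i,j}(S_i\cap S_j)$. Since $\calf$ reduced gives $\foc(\calf)=S$ (via $O^p(\calf)=\calf$), this forces $S=\prod S_{ij}$, and then $O^{p'}(\calf)=\calf$ plus Proposition~\ref{F=F1xF2} yields the fusion-system factorization. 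The condition $O_p(\calf)=1$ is not what drives the argument.

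For tameness there is a more serious issue: tameness is defined via $\kappa_G\colon\Out(G)\to\Out\typ(\call_S^c(G))$, so the group you must split is $\Out\typ(\call)$, not an undefined ``$\Out(\calf)$''. The paper works at the linking-system level, building an \emph{injection} $\widebar\Psi\colon\Out\typ^0(\call)\hookrightarrow\prod_i\Out\typ(\call_i)$ (where $\Out\typ^0$ is the subgroup fixing each $S_i$) and showing $\Out\typ(\call)=\Out\typ^0(\call)\rtimes\widebar\Sigma$; it never establishes the wreath-product \emph{isomorphism} you assert, and likewise only uses that $(\prod_i\Out(G_i))\rtimes\widebar\Sigma$ embeds in $\Out(G)$, not that they are equal. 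Your scheme survives if you replace the claimed isomorphisms by these injections and then verify $\kappa_G\circ s=\Id$ directly, but constructing $\widebar\Psi$ requires a genuine argument at the level of $\Aut\typ^I(\call)$ (the paper's formula~\eqref{e:3.6b}), not just of fusion automorphisms.
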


Here, ``unique'' means that the indecomposable subsystems are unique as 
subcategories, not only up to isomorphism.  By Theorem \ref{ThC}, in 
order to find minimal reduced fusion systems which are not tame, it 
suffices to look at those which are indecomposable.  In practice, it seems 
that any reduced indecomposable fusion system which is not simple (which 
has no proper normal fusion subsystems in the sense of Definition 
\ref{d:F0<|F} or of \cite[\S\,6]{Asch}) has to be over a $p$-group of very 
large order.  The smallest example of this type we know of is the fusion 
system of $A_6\wr{}A_5$, over a group of order $2^{17}$.  

Using these results and those in \cite{OV2} as starting point, we have 
started to undertake a systematic computer search for reduced fusion 
systems over small $2$-groups.  So far, while details have yet to be 
rechecked carefully, we seem to have shown that each reduced fusion system 
over a $2$-group of order $\le512$ is the fusion system of a finite simple 
group, and is tame.  We hope to be able to extend this soon to $2$-groups 
of larger order.

What we really would like to find is an example of a realizable fusion 
system which is not tame.  It seems very likely that such a fusion system 
exists, but so far, our attempts to find one have been unsuccessful.

The theorems stated above will all be proven in Sections \ref{s:red-sfs} 
and \ref{s:factor}:  Theorems \ref{ThA} and \ref{ThB} as Theorems 
\ref{T:reduce} and \ref{untame->exotic}, and Theorem \ref{ThC} as 
Proposition \ref{uniq-decomp} and Theorem \ref{prod-tame}.  They are 
preceded by a first section containing mostly background definitions and 
results, and are followed by a fourth section with examples of how to 
prove certain fusion systems are tame.

All three authors would like to thank Copenhagen University for its 
hospitality, when letting us meet there for 2-week periods on two separate 
occasions.  We would also like to thank Richard Lyons for his help with 
automorphisms of certain sporadic groups.

\tableofcontents

\newsect{Fusion and linking systems}

We first collect the basic results about fusion and linking systems and 
their automorphisms which will be needed in the rest of the paper.  Most 
of this is taken directly from earlier papers, such as \cite{BLO2}, 
\cite{BCGLO1}, \cite{BCGLO2} and \cite{O3}.


\newsubb{Background on fusion systems}{s:fus}

We first recall very briefly the definition of a saturated fusion system, 
in the form given in \cite{BLO2}.  In general, for any group $G$ and any 
pair of subgroups $H,K\le{}G$, $\Hom_G(H,K)$ denotes the set of all 
homomorphisms from $H$ to $K$ induced by conjugation by some element of 
$G$.  When $G$ is finite and $S\in\sylp{G}$, $\calf_S(G)$ (the 
\emph{fusion category} of $G$) is the category whose objects are the 
subgroups of $S$, and where for each pair of objects 
$\Mor_{\calf_S(G)}(P,Q)=\Hom_G(P,Q)$.  

A \emph{fusion system} over a finite $p$-group $S$ is a category $\calf$, 
where $\Ob(\calf)$ is the set of all subgroups of $S$, such that for all 
$P,Q\le{}S$, 
	\[ \Hom_S(P,Q) \subseteq \homf(P,Q) \subseteq \Inj(P,Q); \]
and each $\varphi\in\homf(P,Q)$ is the composite of an isomorphism in 
$\calf$ followed by an inclusion.  Here, $\Inj(P,Q)$ denotes the set of 
injective homomorphisms from $P$ to $Q$.  If $\calf$ is a fusion system 
over a finite $p$-subgroup $S$, then two subgroups $P,Q\le S$ are 
\emph{$\calf$-conjugate} if they are isomorphic as objects of the category 
$\calf$. 

\begin{Defi}[{\cite{Puig}, see \cite[Definition~1.2]{BLO2}}] \label{sat.Frob.}
\renewcommand{\labelenumi}{\hskip-4pt$\bullet$}
Let $\calf$ be a fusion system over a finite $p$-group $S$.
\begin{itemize}
\item A subgroup $P\le{}S$ is \emph{fully centralized in $\calf$} if
$|C_S(P)|\ge|C_S(P^*)|$ for each $P^*\le{}S$ which is $\calf$-conjugate to
$P$.
\item A subgroup $P\le{}S$ is \emph{fully normalized in $\calf$} if
$|N_S(P)|\ge|N_S(P^*)|$ for each $P^*\le{}S$ which is $\calf$-conjugate to
$P$.
\item $\calf$ is a \emph{saturated fusion system} if the following
two conditions hold:
\begin{enumerate}[\rm(I) ]
\item For each $P\le{}S$ which is fully normalized in $\calf$, $P$ is fully
centralized in $\calf$ and $\Aut_S(P)\in\sylp{\autf(P)}$.
\item If $P\le{}S$ and $\varphi\in\homf(P,S)$ are such that $\varphi(P)$ is
fully centralized in $\calf$, and if we set
	\[ N_\varphi = \{ g\in{}N_S(P) \,|\, \varphi c_g\varphi^{-1} \in
	\Aut_S(\varphi(P)) \}, \]
then there is $\widebar{\varphi}\in\homf(N_\varphi,S)$ such that
$\widebar{\varphi}|_P=\varphi$.
\end{enumerate}
\end{itemize}
\end{Defi}

If $G$ is a finite group and $S\in\sylp{G}$, then the category 
$\calf_S(G)$ is a saturated fusion system (cf. \cite[Proposition 
1.3]{BLO2}).

We now list some classes of subgroups of $S$ which play an important role 
when working with fusion systems over $S$.  Here and elsewhere, for any 
fusion system $\calf$ over a finite $p$-group $S$, we write for each 
$P\le{}S$, 
	\[ \outf(P) = \autf(P)/\Inn(P) \le \Out(P)~. \]

\begin{Defi} \label{D:subgroups}
Fix a prime $p$, a finite $p$-group $S$, and a fusion system $\calf$ over 
$S$.  Let $P\le{}S$ be any subgroup.
\begin{itemize} 
\item $P$ is \emph{$\calf$-centric} if $C_S(P^*)=Z(P^*)$ for each $P^*$ 
which is $\calf$-conjugate to $P$.

\item $P$ is \emph{$\calf$-radical} if $O_p(\outf(P))=1$; i.e., if
$\outf(P)$ contains no nontrivial normal $p$-subgroups.

\item $P$ is \emph{central in $\calf$} if $P\nsg{}S$, and every morphism
$\varphi\in\homf(Q,R)$ in $\calf$ extends to a morphism
$\widebar{\varphi}\in\homf(PQ,PR)$ such that $\widebar{\varphi}|_P=\Id_P$.

\item $P$ is \emph{normal in $\calf$} ($P\nsg\calf$) if $P\nsg{}S$, and 
every morphism $\varphi\in\homf(Q,R)$ in $\calf$ extends to a morphism
$\widebar{\varphi}\in\homf(PQ,PR)$ such that $\widebar{\varphi}(P)=P$. 

\item $P$ is \emph{strongly closed in $\calf$} if no element of $P$ is 
$\calf$-conjugate to an element of $S{\sminus}P$.

\item $Z(\calf)\le{}Z(S)$ and $O_p(\calf)\le{}S$ denote the largest 
subgroups of $S$ which are central in $\calf$ and normal in $\calf$, 
respectively.
\end{itemize}
\end{Defi}

It follows directly from the definitions that if $P_1$ and $P_2$ are both 
central (normal) in $\calf$, then so is $P_1P_2$.  This is why there 
always is a largest central subgroup $Z(\calf)$, and a largest normal 
subgroup $O_p(\calf)$.

Several forms of Alperin's fusion theorem have been shown for saturated 
fusion systems, starting with Puig in \cite[\S\,5]{Puig}.  The following 
version suffices for what we need in most of this paper.  A stronger 
version will be given in Theorem \ref{AFT-E}.

\begin{Thm}[{\cite[Theorem A.10]{BLO2}}] \label{AFT}
For any saturated fusion system $\calf$ over a finite $p$-group $S$, each 
morphism in $\calf$ is a composite of restrictions of automorphisms in 
$\autf(P)$, for subgroups $P$ which are fully normalized in $\calf$, 
$\calf$-centric, and $\calf$-radical.
\end{Thm}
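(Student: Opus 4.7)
The plan is to prove the theorem by downward induction on $|P|$: assume that every morphism between subgroups strictly larger than some threshold is already a composite of the desired form, and deduce the statement for morphisms defined on smaller subgroups. The base case is trivial since when $P = S$, every $\calf$-morphism out of $S$ is an element of $\autf(S)$, and $S$ itself is fully normalized, $\calf$-centric, and $\calf$-radical (the radicality of $S$ follows from $\outs(S)=1$, and $\calf$-centricity from $C_S(S)=Z(S)$).

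Next I would reduce the statement to isomorphisms: any $\varphi\in\homf(P,Q)$ factors as an $\calf$-isomorphism $P\to\varphi(P)$ followed by the inclusion $\varphi(P)\hookrightarrow Q$, and inclusions are restrictions of identity automorphisms, so it suffices to factor arbitrary $\calf$-isomorphisms. Given $\varphi\colon P\xrightarrow{\cong} Q$, pick a fully normalized $\calf$-conjugate $R$ of $P$ and choose $\calf$-isomorphisms $\psi_P\colon P\to R$ and $\psi_Q\colon Q\to R$ with $\varphi=\psi_Q^{-1}\psi_P$. Using the extension axiom (II) applied to $\psi_P$ and $\psi_Q$ (valid because $R$, being fully normalized, is fully centralized by axiom (I)), together with the fact that $\Aut_S(R)\in\sylp{\autf(R)}$, I can arrange that $\psi_P$ extends to a morphism defined on $N_S(P)$ and similarly for $\psi_Q$: the point is that after modifying $\psi_P$ by a suitable element of $\autf(R)$ (conjugating a Sylow $p$-subgroup of $\psi_P\Aut_S(P)\psi_P^{-1}$ into $\Aut_S(R)$), the set $N_{\psi_P}$ becomes all of $N_S(P)$. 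Thus $\varphi$ is a composite of restrictions of morphisms defined on strict overgroups together with an automorphism of the fully normalized representative $R$; the overgroup factors are handled by the inductive hypothesis, so it remains to analyze $\autf(R)$ when $R$ is fully normalized.

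This reduces the problem to showing that when $R$ is fully normalized, every $\alpha\in\autf(R)$ is a composite of restrictions of automorphisms of fully normalized, $\calf$-centric, $\calf$-radical subgroups. If $R$ is not $\calf$-centric, then there exists $x\in C_S(R)\setminus Z(R)$, so $R\cdot\langle x\rangle$ properly contains $R$ while centralizing $R$; axiom (II) extends $\alpha$ to an automorphism of $R\cdot\langle x\rangle$ (since $C_S(R)\subseteq N_\alpha$), and induction finishes this case. If $R$ is $\calf$-centric but not $\calf$-radical, let $1\ne K/\Inn(R)=O_p(\outf(R))$; then $K$ is a normal $p$-subgroup of $\autf(R)$ containing $\Aut_S(R)=\Inn(N_S(R))\cdot\Inn(R)/\Inn(R)\cdot\ldots$ wait — more carefully, lifting $O_p(\outf(R))$ to $\autf(R)$ and using that $\Aut_S(R)$ is Sylow in $\autf(R)$, one finds a proper overgroup $R'>R$ in $N_S(R)$ such that every element of $\autf(R)$ extends to an element of $\autf(R')$ (this is the standard trick: the subgroup of $N_S(R)$ corresponding to the lift of $O_p(\outf(R))$ contains $R$ properly and is stabilized by $\autf(R)$ via the extension axiom). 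Again induction applies.

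The main obstacle is the non-radical step: one has to show carefully that the lift of $O_p(\outf(R))$ in $N_S(R)$ actually produces a proper overgroup to which \emph{every} automorphism of $R$ extends, and that the extension can be chosen compatibly so that restriction recovers the original automorphism. This relies delicately on axiom (II) together with the fact that $R$ is fully centralized (guaranteed by axiom (I) since $R$ is fully normalized), which forces $N_\alpha$ for $\alpha\in\autf(R)$ to contain the preimage of $O_p(\outf(R))$ in $N_S(R)$.
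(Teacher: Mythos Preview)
The paper does not prove this theorem at all; it is stated with a citation to \cite[Theorem A.10]{BLO2} and used as a black box throughout. So there is no ``paper's own proof'' to compare against. Your outline is essentially the standard downward induction that appears in \cite{BLO2} (and in Puig's earlier work): reduce to isomorphisms, pass through a fully normalized representative $R$ using the extension axiom so the outer factors live on strict overgroups, and then for $\alpha\in\autf(R)$ with $R$ fully normalized but not both centric and radical, produce a strict overgroup $R'\le N_S(R)$ to which $\alpha$ extends.

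Two small points worth tightening. First, you cannot choose $\psi_P$ and $\psi_Q$ independently and still have $\varphi=\psi_Q^{-1}\psi_P$; one standard way is to first obtain $\psi\in\homf(N_S(P),N_S(R))$ and $\chi\in\homf(N_S(Q),N_S(R))$ sending $P,Q$ to $R$ (this is where the ``after modifying by an element of $\autf(R)$'' trick enters), and then write $\varphi$ as a restriction of $\chi^{-1}$ composed with an element of $\autf(R)$ composed with a restriction of $\psi$. Second, in the non-radical step you do not actually need the extension $\bar\alpha$ to land back in $R'$; it suffices that $\bar\alpha$ is a morphism defined on the strict overgroup $R'$, since the induction hypothesis applies to any $\calf$-morphism with larger source. (That said, your observation that $\bar\alpha(R')=R'$ does follow, since $C_S(R)=Z(R)\le R'$ forces any $g\in R'$ with $c_{\bar\alpha(g)}\in K=\Aut_{R'}(R)$ to satisfy $\bar\alpha(g)\in R'$.)
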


The following elementary result is useful for identifying subgroups which 
are centric and radical in a fusion system.

\begin{Lem} \label{centrad}
Let $\calf$ be a saturated fusion system over a finite $p$-group $S$.  If 
$P\le{}S$ is $\calf$-centric and $\calf$-radical, then there is no 
$g\in{}N_S(P){\sminus}P$ such that $c_g\in{}O_p(\autf(P))$.  Conversely, 
if $P\le{}S$ is fully normalized in $\calf$, and there is no 
$g\in{}N_S(P){\sminus}P$ such that $c_g\in{}O_p(\autf(P))$, then $P$ is 
$\calf$-centric and $\calf$-radical.
\end{Lem}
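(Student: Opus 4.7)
The key preliminary observation is that $\Inn(P)$ is a normal $p$-subgroup of $\autf(P)$, so $\Inn(P)\le O_p(\autf(P))$, and the standard correspondence under a surjection with $p$-group kernel yields $O_p(\outf(P))=O_p(\autf(P))/\Inn(P)$. In particular, $P$ is $\calf$-radical if and only if $O_p(\autf(P))=\Inn(P)$. I would prove the lemma by moving back and forth between $N_S(P)$ and $\Aut_S(P)$ via the conjugation map $c\colon N_S(P)\surj\Aut_S(P)$ with kernel $C_S(P)$.

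For the forward direction, assume $P$ is $\calf$-centric and $\calf$-radical. Then $O_p(\autf(P))=\Inn(P)$, so any $g\in N_S(P)$ with $c_g\in O_p(\autf(P))$ satisfies $c_g=c_h$ for some $h\in P$; then $gh^{-1}\in C_S(P)=Z(P)\le P$, forcing $g\in P$.

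For the converse, assume $P$ is fully normalized and the stated condition on $g$ holds. By axiom (I) of Definition~\ref{sat.Frob.}, $P$ is fully centralized and $\Aut_S(P)\in\sylp{\autf(P)}$, so the normal $p$-subgroup $O_p(\autf(P))$ is contained in $\Aut_S(P)$. Each element of $O_p(\autf(P))$ therefore has the form $c_g$ for some $g\in N_S(P)$; the hypothesis forces $g\in P$ and hence $c_g\in\Inn(P)$. Combined with $\Inn(P)\le O_p(\autf(P))$, this yields $O_p(\autf(P))=\Inn(P)$, so $O_p(\outf(P))=1$ by the preliminary observation, proving $\calf$-radicality. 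For $\calf$-centricity, note that any $g\in C_S(P)$ has $c_g=\Id_P\in\Inn(P)\le O_p(\autf(P))$, so the hypothesis gives $g\in P$ and hence $C_S(P)=Z(P)$. To extend this to arbitrary $\calf$-conjugates $P^*$ of $P$, use that $P$ is fully centralized and that $|Z(P^*)|=|Z(P)|=|C_S(P)|\ge|C_S(P^*)|\ge|Z(P^*)|$, forcing equality throughout.

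I expect no serious obstacle; the only point requiring care is the translation between the hypothesis, phrased in terms of $g\in N_S(P)$, and the algebraic conditions on the subgroup $O_p(\autf(P))\le\Aut_S(P)$, which hinges on the Sylow conclusion of axiom (I) to know that $O_p(\autf(P))$ is realized by $S$-conjugations at all.
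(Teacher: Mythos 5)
Your proof is correct and follows essentially the same route as the paper: both directions hinge on the observation that $O_p(\autf(P))$ is the preimage of $O_p(\outf(P))$ (so radicality means $O_p(\autf(P))=\Inn(P)$), and the converse uses axiom (I) to place $O_p(\autf(P))$ inside $\Aut_S(P)$ and full centralization to handle $C_S(P)$. Your explicit reduction of centricity for arbitrary $\calf$-conjugates via $|Z(P^*)|=|Z(P)|=|C_S(P)|\ge|C_S(P^*)|$ is just a spelled-out version of what the paper leaves implicit, so there is nothing further to add.
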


\begin{proof}  Assume $P$ is $\calf$-centric and $\calf$-radical.  Fix 
$g\in{}N_S(P)$ such that $c_g\in{}O_p(\autf(P))$.  Then 
$O_p(\outf(P))=1$ since $P$ is $\calf$-radical, so $c_g\in\Inn(P)$, and 
$g\in{}P{\cdot}C_S(P)=P$ since $P$ is $\calf$-centric.  This proves the 
first statement.

Now assume $P$ is fully normalized in $\calf$.  
If $P$ is not $\calf$-centric, then $C_S(P)\nleq{}P$ (since $P$ 
is fully centralized), and hence there is $g\in{}N_S(P){\sminus}P$ with 
$c_g=1$.  If $P$ is not $\calf$-radical, then $O_p(\outf(P))\ne1$.  This 
subgroup is contained in each Sylow $p$-subgroup of $\outf(P)$, and in 
particular is contained in $\Out_S(P)$.  Thus each nontrivial element of 
$O_p(\outf(P))$ is induced by conjugation by some element of 
$N_S(P){\sminus}P$.
\end{proof}

\begin{Prop} \label{norm<=>}
Let $\calf$ be a saturated fusion system over a finite $p$-group $S$.  For 
any normal subgroup $Q\nsg{}S$, $Q$ is normal in $\calf$ if and only if $Q$ 
is strongly closed and contained in all subgroups which are centric and 
radical in $\calf$.
\end{Prop}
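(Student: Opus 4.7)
My plan is to prove each implication directly: the forward direction by unpacking the definition of $Q\nsg\calf$, and the backward direction via Alperin's fusion theorem (Theorem \ref{AFT}). Strong closure is immediate from $Q\nsg\calf$: given $x\in Q\cap P$ and $\varphi\in\homf(P,S)$, I extend $\varphi$ to $\bar\varphi\in\homf(QP,QS)$ with $\bar\varphi(Q)=Q$; then $\varphi(x)=\bar\varphi(x)\in Q$.

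For the containment assertion in $(\Rightarrow)$, fix any $\calf$-centric and $\calf$-radical $P$, and consider the $p$-subgroup $K=\{c_g\in\autf(P)\mid g\in Q\cap N_S(P)\}$. Each $\alpha\in\autf(P)$ extends, by $Q\nsg\calf$, to $\bar\alpha\in\autf(QP)$ with $\bar\alpha(P)=P$ and $\bar\alpha(Q)=Q$; since $\bar\alpha(g)$ normalizes $P$ whenever $g$ does, $\bar\alpha$ carries $Q\cap N_S(P)$ to itself, and hence $\alpha c_g\alpha^{-1}=c_{\bar\alpha(g)}\in K$, so $K\nsg\autf(P)$. The image of $K$ in $\outf(P)$ is then a normal $p$-subgroup, which is trivial by radicality, so $K\le\Inn(P)$; combined with $\calf$-centricity ($C_S(P)\le P$) this forces $Q\cap N_S(P)\le P$. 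Now $QP\le S$ is a subgroup (using $Q\nsg S$), and a direct computation shows $N_{QP}(P)=(Q\cap N_S(P))P$; the previous inclusion therefore yields $N_{QP}(P)=P$, and the standard $p$-group inequality $H<N_G(H)$ for $H<G$ then forces $QP=P$, i.e.\ $Q\le P$.

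For $(\Leftarrow)$, assume $Q$ is strongly closed and contained in every $\calf$-centric and $\calf$-radical subgroup. By Theorem \ref{AFT}, any $\varphi\in\homf(A,B)$ factors as $\alpha_n|_{A_{n-1}}\circ\cdots\circ\alpha_1|_{A_0}$ with $A_0=A$, $A_n=B$, and each $\alpha_i\in\autf(P_i)$ for some fully normalized, $\calf$-centric, $\calf$-radical $P_i$. The hypothesis gives $Q\le P_i$, and strong closure gives $\alpha_i(Q)\le Q$, hence $\alpha_i(Q)=Q$. Restricting $\alpha_i$ to $QA_{i-1}\le P_i$ yields $\bar\beta_i\in\homf(QA_{i-1},QA_i)$, whose composite extends $\varphi$ to $\bar\varphi\in\homf(QA,QB)$ with $\bar\varphi(Q)=Q$, proving $Q\nsg\calf$. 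The main hurdle lies in the forward direction: establishing $K\nsg\autf(P)$ (which relies on the stability of $Q\cap N_S(P)$ under $\bar\alpha$) and then bridging $Q\cap N_S(P)\le P$ to $Q\le P$ via the $p$-group normalizer inequality inside $QP$.
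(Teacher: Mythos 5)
Your proof is correct, but it takes a different route from the paper, which gives no direct argument at all: it simply cites \cite[Proposition 1.6]{BCGLO1}, with the caveat that ``$\calf$-radical'' there must everywhere be read as ``$\calf$-centric and $\calf$-radical''. Your self-contained argument is the expected one, and it has the virtue of showing exactly where centricity is needed: in the forward direction, $c_g\in\Inn(P)$ for $g\in Q\cap N_S(P)$ only gives $g\in P{\cdot}C_S(P)$, so the hypothesis $C_S(P)=Z(P)\le P$ is what forces $g\in P$ --- precisely the point behind the correction the paper flags. You could, however, shorten the forward direction considerably using results the paper proves immediately before this proposition: your subgroup $K=\Aut_{Q\cap N_S(P)}(P)$ is a normal $p$-subgroup of $\autf(P)$, hence lies in $O_p(\autf(P))$, so Lemma \ref{centrad} gives $Q\cap N_S(P)\le P$ at once, and the passage from there to $Q\le P$ is exactly the contrapositive of Lemma \ref{QnleqP} (with $P\le N_S(Q)$ automatic, since $Q\nsg S$). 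The backward direction via Theorem \ref{AFT} is the standard argument; the one small inaccuracy is writing $A_n=B$, since Alperin's theorem factors $\varphi$ through $\varphi(A)\le B$, but composing your extension with the inclusion of $Q\varphi(A)$ into $QB$ (a morphism in $\Hom_S(Q\varphi(A),QB)$ that sends $Q$ to $Q$) repairs this trivially.
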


\begin{proof} This is shown in \cite[Proposition 1.6]{BCGLO1}.  Note, 
however, that wherever ``$\calf$-radical'' appears in the statement and 
proof of that proposition, it should be replaced by ``$\calf$-centric and 
$\calf$-radical''.  
\end{proof}

Lemma \ref{centrad} shows the importance of being able to identify 
elements of the subgroup $O_p(\autf(P))$.  The following, very well known 
property of automorphisms of $p$-groups is useful in many cases when doing 
this.

\begin{Lem} \label{mod-Fr}
Fix a prime $p$, a finite $p$-group $P$, and a group $\cala\le\Aut(P)$ of 
automorphisms of $P$.  Assume $1=P_0\nsg{}P_1\nsg\cdots\nsg{}P_m=P$ is a 
sequence of normal subgroups such that $\alpha(P_i)=P_i$ for all 
$\alpha\in\cala$ and all $i$.  For $1\le{}i\le{}m$, let 
$\Psi_i\:\cala\Right2{}\Aut(P_i/P_{i-1})$ be the homomorphism which sends 
$\alpha\in\cala$ to the induced automorphism of $P_i/P_{i-1}$.  
Then for all $\alpha\in\cala$, $\alpha\in{}O_p(\cala)$ if and only if
$\Psi_i(\alpha)\in{}O_p(\Psi_i(\cala))$ for all $i=1,\dots,m$.
\end{Lem}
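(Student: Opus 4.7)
The plan is to reduce everything to the classical stability result: if a group of automorphisms of a $p$-group preserves an invariant series of subgroups and acts trivially on each subquotient, then it is itself a $p$-group. Writing $\Psi=(\Psi_1,\ldots,\Psi_m)\:\cala\to\prod_{i=1}^{m}\Aut(P_i/P_{i-1})$, this stability result applied to the given filtration shows that $K:=\ker(\Psi)$ is a $p$-group. Since $K$ is the kernel of a homomorphism out of $\cala$ it is normal in $\cala$, so $K\le O_p(\cala)$.

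For the forward direction, if $\alpha\in O_p(\cala)$ then $\Psi_i(\alpha)\in\Psi_i(O_p(\cala))$, and the latter is the image of a normal $p$-subgroup under the surjection $\cala\to\Psi_i(\cala)$, hence a normal $p$-subgroup of $\Psi_i(\cala)$, and so is contained in $O_p(\Psi_i(\cala))$.

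For the reverse direction, define
\[ N = \bigl\{\beta\in\cala \,\bigm|\, \Psi_i(\beta)\in O_p(\Psi_i(\cala)) \text{ for all } i=1,\ldots,m\bigr\}. \]
Because each $O_p(\Psi_i(\cala))$ is normal in $\Psi_i(\cala)$, $N$ is a normal subgroup of $\cala$. The map $\Psi$ sends $N$ into the $p$-group $\prod_{i}O_p(\Psi_i(\cala))$, so $N/K$ is a $p$-group, and combined with the fact that $K$ is itself a $p$-group, $N$ must be a $p$-group. Being a normal $p$-subgroup of $\cala$, this forces $N\le O_p(\cala)$, and any $\alpha$ satisfying the hypothesis of the ``if'' direction belongs to $N$ by construction. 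The only nontrivial ingredient is the stability result cited at the outset; once it is in hand, both implications come down to routine normality and $p$-group bookkeeping, so no serious obstacle is expected.
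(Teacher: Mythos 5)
Your proof is correct and follows essentially the same route as the paper: both hinge on the stability theorem giving that $\Ker(\Psi)$ is a $p$-group, and then deduce the two implications by routine arguments with normal $p$-subgroups (the paper passes through $O_p(\Psi(\cala))$ where you construct the normal subgroup $N$ directly in $\cala$, a negligible difference).
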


\begin{proof}  Set $\Psi=(\Psi_1,\ldots,\Psi_m)$, as a homomorphism from 
$\cala$ to $\prod_{i=1}^m\Aut(P_i/P_{i-1})$.  Then $\Ker(\Psi)$ is a 
$p$-group (cf. \cite[Corollary 5.3.3]{Gorenstein}).  If 
$\Psi_i(\alpha)\in{}O_p(\Psi_i(\cala))$ for all $i=1,\dots,m$, then 
$\Psi(\alpha)\in{}O_p(\Psi(\cala))$, and so $\alpha\in{}O_p(\cala)$.  
Conversely, if $\alpha\in{}O_p(\cala)$, then clearly 
$\Psi_i(\alpha)\in{}O_p(\Psi_i(\cala))$ for all $i$.
\end{proof}

Another elementary lemma which is frequently useful when working with 
centric and radical subgroups is the following:

\begin{Lem} \label{QnleqP}
Let $P$ and $Q$ be $p$-subgroups of a finite group $G$ such that 
$P\le{}N_G(Q)$ and $Q\nleq{}P$.  Then $N_{QP}(P)\gneqq{}P$, and 
$(Q\cap{}N_G(P))\nleq{}P$.
\end{Lem}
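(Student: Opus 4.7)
The plan is to use the standard fact that in a finite $p$-group, every proper subgroup is properly contained in its normalizer. The hypothesis $P\le N_G(Q)$ makes $QP=PQ$ a subgroup of $G$, and since both $P$ and $Q$ are $p$-subgroups, $QP$ is itself a $p$-group (its order is $|P|\cdot|Q|/|P\cap Q|$). The assumption $Q\nleq P$ gives $P\lneqq QP$, so by the $p$-group normalizer property applied inside $QP$, we obtain $N_{QP}(P)\gneqq P$. This settles the first assertion.

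For the second assertion, I would pick any $g\in N_{QP}(P)\sminus P$. Since $g\in QP=PQ$, write $g=qp$ with $q\in Q$ and $p\in P$. Because $p\in P\le N_G(P)$ and $g\in N_{QP}(P)\le N_G(P)$, the element $q=gp^{-1}$ also lies in $N_G(P)$. Moreover, $q\notin P$: otherwise $g=qp\in P$, contradicting the choice of $g$. Hence $q\in (Q\cap N_G(P))\sminus P$, proving $(Q\cap N_G(P))\nleq P$.

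There is no real obstacle here; the only step requiring any care is noting that $QP$ is a $p$-group (which uses $P\le N_G(Q)$ so that $Q\nsg QP$) and then applying the standard fact that proper subgroups of finite $p$-groups are strictly smaller than their normalizers. Everything else is a direct manipulation of the decomposition $g=qp$.
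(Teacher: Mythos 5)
Your proof is correct and follows essentially the same route as the paper: form the $p$-group $QP$, apply the normalizer-growth property of $p$-groups to get $N_{QP}(P)\gneqq P$, and then extract an element of $Q\cap N_G(P)$ outside $P$. Your element-wise step $g=qp$, $q=gp^{-1}\in N_G(P)$ is just the explicit form of the factorization $N_{QP}(P)=P\cdot(Q\cap N_{QP}(P))$ that the paper invokes.
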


\begin{proof}  Since $P$ normalizes $Q$, $QP$ is also a $p$-group, and 
$QP\gneqq{}P$ by assumption.  Hence $N_{QP}(P)\gneqq{}P$ (cf. 
\cite[Theorem 2.1.6]{Sz1}).  Since 
$N_{QP}(P)=P{\cdot}(Q\cap{}N_{QP}(P))$, we have 
$(Q\cap{}N_{QP}(P))\nleq{}P$.
\end{proof}

We also need to work with certain quotient fusion systems.  When $\calf$ 
is a saturated fusion system over $S$ and $Q\nsg{}S$ is strongly closed in 
$\calf$, we define the quotient fusion system $\calf/Q$ over $S/Q$ by 
setting 
	\[ \Hom_{\calf/Q}(P/Q,R/Q) = \Im\bigl[ \homf(P,R) \Right2{} 
	\Hom(P/Q,R/Q) \bigr] \]
for all $P,R\le{}S$ containing $Q$.  

\begin{Prop} \label{F/Q}
Let $\calf$ be a saturated fusion system over a finite $p$-group $S$, and 
let $Q\nsg{}S$ be a strongly closed subgroup.  Then $\calf/Q$ is a 
saturated fusion system.  For each $P\le{}S$ containing $Q$, $P$ is fully 
normalized in $\calf$ if and only if $P/Q$ is fully normalized in 
$\calf/Q$.  If $Q$ is central in $\calf$, then $P\nsg\calf$ if and only if 
$P/Q\nsg\calf/Q$.  
\end{Prop}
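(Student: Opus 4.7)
The proposition has three claims: (a) $\calf/Q$ is saturated; (b) the fully-normalized correspondence; (c) the normality correspondence when $Q$ is central. I would treat them in that order.

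The crucial preliminary, used throughout, is that strong closedness forces $\varphi(Q)=Q$ for every $\varphi\in\homf(P,R)$ with $Q\le P$ (since $\varphi(Q)\le Q$ and $|\varphi(Q)|=|Q|$). Hence morphisms in $\calf$ descend to $\calf/Q$, every $\calf$-conjugate of a subgroup $P\ge Q$ contains $Q$, these $\calf$-conjugates correspond bijectively to the $\calf/Q$-conjugates of $\bar P:=P/Q$, and one has the identities
\[
N_{S/Q}(\bar P)=N_S(P)/Q, \qquad \Aut_{\calf/Q}(\bar P)=\Im\bigl[\autf(P)\to\Aut(\bar P)\bigr],
\]
so the surjection $\autf(P)\surj\Aut_{\calf/Q}(\bar P)$ sends Sylow $p$-subgroups onto Sylow $p$-subgroups. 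Claim (b) is then immediate from the first identity, since $|N_{S/Q}(\bar P)|=|N_S(P)|/|Q|$.

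For (a), the inclusion $\Hom_{S/Q}(\bar P,\bar R)\sset\Hom_{\calf/Q}(\bar P,\bar R)$ and the factorization axiom are inherited from $\calf$. For axiom (I) at a fully normalized $\bar P$: by (b), $P$ is also fully normalized in $\calf$, so $\Aut_S(P)\in\sylp{\autf(P)}$ and hence $\Aut_{S/Q}(\bar P)\in\sylp{\Aut_{\calf/Q}(\bar P)}$ via the surjection above. Full centralization of $\bar P$ follows by lifting a $\calf/Q$-isomorphism $\bar P'\to\bar P$ to a $\calf$-isomorphism $\varphi\:P'\to P$ with $N_\varphi=N_S(P')$ (possible since $P$ is fully normalized), extending via axiom (II) in $\calf$, and observing that the extension carries $\{s\in N_S(P')\mid[s,P']\le Q\}$ into $\{s\in N_S(P)\mid[s,P]\le Q\}$. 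Axiom (II) for $\calf/Q$ is the main obstacle. Given $\bar\varphi$ with $\bar\varphi(\bar P)$ fully centralized, lift to $\varphi\in\homf(P,S)$; after moving $\varphi(P)$ to a fully normalized conjugate and adjusting $\varphi$ within its $\calf/Q$-class, apply axiom (II) in $\calf$ to extend $\varphi$ over $N_\varphi$. The delicate point is that the preimage of $N_{\bar\varphi}$ in $N_S(P)$ can strictly contain $N_\varphi$: an element $g$ of that preimage only satisfies $\varphi c_g\varphi^{-1}\equiv c_s\pmod{Q}$ for some $s\in N_S(\varphi(P))$. One absorbs the discrepancy by modifying $\varphi$ by an automorphism of $\varphi(P)$ trivial modulo $Q$, arranging the preimage to lie inside $N_\varphi$, whereupon the $\calf$-extension descends to the required extension of $\bar\varphi$ in $\calf/Q$.

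For (c) with $Q$ central, the forward direction is formal: any extension in $\calf$ preserving $P$ descends to one in $\calf/Q$ preserving $\bar P$. For the converse, given $\varphi\in\homf(R,R')$, first use centrality of $Q$ to extend $\varphi$ to $\varphi'\in\homf(QR,QR')$ with $\varphi'|_Q=\Id_Q$; extend the induced $\overline{\varphi'}$, by normality of $\bar P$ in $\calf/Q$, to $\bar\psi\:\overline{PR}\to\overline{PR'}$ with $\bar\psi(\bar P)=\bar P$; then lift $\bar\psi$ to some $\psi\in\homf(PR,PR')$ and adjust it using the freedom afforded by centrality of $Q$ (any two lifts of a given $\calf/Q$-morphism differ by $Q$-valued twists which can be absorbed into identity-on-$Q$ extensions) so that $\psi|_R=\varphi$. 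The relations $\psi(P)Q=P$ (from $\bar\psi(\bar P)=\bar P$) and $Q\le\psi(P)$ (strong closedness) then force $\psi(P)=P$, giving the desired extension of $\varphi$.
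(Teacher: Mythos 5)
Your preliminaries, the fully normalized correspondence, axiom (I), and the forward direction of the normality statement are fine (the paper itself simply quotes \cite[Lemma 2.6]{limz-odd} for saturation and the fully normalized claim). The first genuine gap is in your verification of axiom (II) for $\calf/Q$. You are given $\bar\varphi$ whose image $\bar P'=\bar\varphi(P/Q)$ is fully centralized in $\calf/Q$, and you propose to ``move $\varphi(P)$ to a fully normalized conjugate'' $P''$ and apply axiom (II) in $\calf$ there. But axiom (II) asks for an extension of the given $\bar\varphi$, with its given target: after moving, what you can extend over the preimage of $N_{\bar\varphi}$ (once the discrepancy is absorbed) is a morphism whose reduction is $\bar\chi\circ\bar\varphi$, where $\bar\chi$ is the comparison morphism from $N_{S/Q}(\bar P')$ into $N_{S/Q}(P''/Q)$; its descent extends $\bar\chi\bar\varphi$, not $\bar\varphi$. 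To transport it back you must show the extension lands inside the image of $\bar\chi$, and that is exactly where full centralization of $\bar P'$ in $\calf/Q$ has to be used (it forces $\bar\chi(C_{S/Q}(\bar P'))=C_{S/Q}(P''/Q)$, so $\bar\chi^{-1}$ is defined where needed). Your sketch never uses that hypothesis, so it cannot be complete: the conclusion of (II) genuinely fails for non--fully-centralized targets. In addition, the ``absorption'' step is only asserted: the image of the preimage of $N_{\bar\varphi}$ is a $p$-subgroup of $K\cdot\Aut_S(P'')$ with $K=\Ker[\autf(P'')\to\Aut(P''/Q)]$, and you need $\Aut_S(P'')$ to be a Sylow $p$-subgroup of that product (true because $P''$ is fully normalized in $\calf$, but this Sylow argument is the heart of the step and is missing).

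The second gap is in your converse for the last statement. You lift the $\calf/Q$-extension $\bar\psi$ to $\psi\in\homf(PR,PR')$ and then want to ``adjust'' it so that $\psi|_R=\varphi$, citing ``the freedom afforded by centrality of $Q$.'' The freedom you actually have is that any two lifts of $\bar\psi$ differ by an $\calf$-automorphism of $PR$ trivial mod $Q$; what you need is the reverse: that the specific discrepancy $(\psi|_{QR})^{-1}\circ\varphi'$ --- an $\calf$-automorphism of $QR$ trivial mod $Q$ and trivial on $Q$ --- extends to an $\calf$-automorphism of $PR$ preserving $P$. That is an instance of the very extension property you are proving, and nothing in the sketch produces it; repeating the construction only generates a new $Q$-valued error each time. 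This is precisely why the paper takes a different route for this part: it invokes Proposition \ref{norm<=>} (normal $=$ strongly closed and contained in every $\calf$-centric $\calf$-radical subgroup) and transports the centric-radical condition through the quotient via Lemma \ref{centrad}, centrality of $Q$ entering only through the observation that every $\alpha\in\autf(P)$ restricts to the identity on $Q$, so that by Lemma \ref{mod-Fr} one has $\alpha\in O_p(\autf(P))$ if and only if its image lies in $O_p(\Aut_{\calf/Q}(P/Q))$. Either adopt that route, or supply genuine arguments for the untwisting here and for the two missing points in axiom (II).
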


\begin{proof}  By \cite[Lemma 2.6]{limz-odd}, $\calf/Q$ is a saturated 
fusion system, and $P$ is fully normalized if and only if $P/Q$ is.  So it 
remains only to prove the last statement.

By Proposition \ref{norm<=>}, $P$ is normal in $\calf$ if and only if it 
is strongly closed in $\calf$, and contained in each subgroup which is 
$\calf$-centric and $\calf$-radical.  For $P\le{}S$ containing $Q$, 
clearly $P$ is strongly closed in $\calf$ if and only if $P/Q$ is strongly 
closed in $\calf/Q$.  

We apply the criterion in Lemma \ref{centrad} for detecting subgroups 
which are centric and radical.  Let 
$\rho\:\autf(P)\Right2{}\Aut_{\calf/Q}(P/Q)$ be the homomorphism induced 
by projection.  Then $\rho$ is surjective by definition of $\calf/Q$.  
For $\alpha\in\autf(P)$, we have $\alpha|_Q=\Id_Q$ since $Q$ is central in 
$\calf$, so by Lemma \ref{mod-Fr}, $\alpha\in O_p(\autf(P))$ if and only if 
$\rho(\alpha)\in O_p(\Aut_{\calf/Q}(P/Q))$.

If $Q\le{}R\le{}S$, and $R^*$ is $\calf$-conjugate to $R$ and fully 
normalized in $\calf$, then $R^*/Q$ is $\calf/Q$-conjugate to $R/Q$ and 
fully normalized in $\calf/Q$.  So by Lemma \ref{centrad}, $R$ and $R^*$ 
are centric and radical in $\calf$ if and only if $R/Q$ and $R^*/Q$ are 
centric and radical in $\calf/Q$.  Upon combining this with the above 
criterion for normality, we see that $P\nsg\calf$ if and only if 
$P/Q\nsg\calf/Q$.
\end{proof}

\newsubb{Background on linking systems}{s:link}

We next define abstract linking systems associated to a fusion system 
$\calf$.  We use the definition given in \cite{O3}, which is more flexible 
in the choice of objects than the earlier definitions in \cite{BLO2} and 
\cite{BCGLO1}.  This definition also differs slightly from the one given 
in \cite[Definition 3.3]{BCGLO1}, in that we include a choice of inclusion 
morphisms as part of the data in the linking system.  All of these 
definitions are, however, equivalent, aside from having greater freedom in 
the choice of objects.

For any finite group $G$ and any $S\in\sylp{G}$, let $\calt_S(G)$ denote 
the \emph{transporter category} of $G$:  the category whose objects are the 
subgroups of $S$, and where for all $P,Q\le{}S$,
	\[ \Mor_{\calt_S(G)}(P,Q) = N_G(P,Q) \defeq \bigl\{ g\in{}G 
	\,\big|\, gPg^{-1}\le Q \bigr\} ~. \]
If $\calh$ is a set of subgroups of $S$, then 
$\calt_\calh(G)\subseteq\calt_S(G)$ denotes the full subcategory with 
object set $\calh$.

\begin{Defi}[{\cite[Definition 3]{O3}}] \label{d:L}
Let $\calf$ be a fusion system over a finite p-group $S$.  A \emph{linking 
system} associated to $\calf$ is a finite category $\call$, together with 
a pair of functors 
	\[ \calt_{\Ob(\call)}(S) \Right5{\delta} \call
	\Right5{\pi} \calf\,, \]
satisfying the following conditions:
\begin{enumerate}[\rm(A) ]
\item $\Ob(\call)$ is a set of subgroups of $S$ closed under 
$\calf$-conjugacy and overgroups, and includes all subgroups which are 
$\calf$-centric and $\calf$-radical.  Each object in $\call$ is 
isomorphic (in $\call$) to one which is fully centralized in $\calf$.  Also, 
$\delta$ is the identity on objects, and $\pi$ is the inclusion on 
objects.  For each $P,Q\in\Ob(\call)$ such that $P$ is fully centralized 
in $\calf$, $C_S(P)$ acts freely on $\Mor_{\call}(P,Q)$ via $\delta_{P}$ 
and right composition, and $\pi_{P,Q}$ induces a bijection 
	\[ \Mor_\call(P,Q)/C_S(P) \Right5{\cong} \homf(P,Q) ~. \]

\item For each $P,Q\in\Ob(\call)$ and each $g\in{}N_S(P,Q)$, $\pi_{P,Q}$ 
sends $\delta_{P,Q}(g)\in\Mor_\call(P,Q)$ to $c_g\in\homf(P,Q)$. 

\item For all $\psi\in\Mor_{\call}(P,Q)$ and all $g\in P$, the diagram
	\[ \xymatrix@C+2ex{
	\I02P \ar[r]^{\psi} \ar[d]_{\delta_{P}(g)} & \I02Q 
	\ar[d]^{\delta_{Q}(\pi(\psi)(g))} \\
	\I02P \ar[r]^{\psi} & \I02Q } \]
commutes in $\call$.
\end{enumerate}
If $\call^*$ is another linking system associated to $\calf$ with the same 
set of objects as $\call$, then an \emph{isomorphism} of linking systems is 
an isomorphism of categories $\call\Right2{\cong}\call^*$ which commutes 
with the structural functors:  those coming from $\calt_{\Ob(\call)}(S)$ and 
those going to $\calf$.
\end{Defi}

A \emph{$p$-local finite group} is defined to be a triple $\SFL$, where 
$\calf$ is a saturated fusion system over a finite $p$-group $S$, and where 
$\call$ is a \emph{centric linking system} associated to $\calf$ (i.e., one 
whose objects are the $\calf$-centric subgroups of $S$).  

For $P\le{}Q$ in $\Ob(\call)$, we usually write 
$\iota_P^Q=\delta_{P,Q}(1)$, and regard this as the ``inclusion'' of $P$ 
into $Q$.  The definition in \cite[Definition 3.3]{BCGLO1} of a 
(quasicentric) linking system does not include these inclusions, but it is 
explained there how to choose inclusions in a way so that a functor 
$\delta\:\calt_{\Ob(\call)}(S)\Right2{}\call$ can be defined in a unique 
way with the above properties (\cite[Lemma 3.7]{BCGLO1}).  Note also that 
because the above definition includes a choice of inclusions, condition 
(D)$_q$ in \cite[Definition 3.3]{BCGLO1} is not needed here.  

We have defined linking systems to be as flexible as possible in the choice 
of objects, but one cannot avoid completely discussing quasicentric 
subgroups in this context.

\begin{Defi} \label{d:qcentric}
\begin{enuma}  
\item For any finite group $G$, a $p$-subgroup $P\le{}G$ is 
\emph{$G$-quasicentric} if $O^p(C_G(P))$ has order prime to $p$.
\item For any fusion system $\calf$ over a finite $p$-group $S$, a subgroup 
$P\le{}S$ is \emph{$\calf$-quasicentric} if for each $P^*$ which is 
fully centralized in $\calf$ and $\calf$-conjugate to $P$, $C_\calf(P^*)$ 
is the fusion system of the $p$-group $C_S(P^*)$.  Equivalently, for each 
$Q\le{}P^*{\cdot}C_S(P^*)$ containing $P^*$, 
$\{\alpha\in\autf(Q)\,|\,\alpha|_{P^*}=\Id\}$ is a $p$-group.
\end{enuma}
\end{Defi}

For any saturated fusion system $\calf$, the set of $\calf$-quasicentric 
subgroups is closed under $\calf$-conjugacy and overgroups.  So a
quasicentric linking system as defined in \cite[\S\,3]{BCGLO1} is a linking 
system in the sense defined here.

Fix a finite group $G$ and $S\in\sylp{G}$, and set $\calf=\calf_S(G)$.  
Then a subgroup $P\le{}S$ is $G$-quasicentric if and only if it is 
$\calf$-quasicentric.  For any set $\calh$ of $G$-quasicentric subgroups 
of $S$, define $\call_S^\calh(G)$ to be the category with object set 
$\calh$, and where for each $P,Q\in\calh$,
	\[ \Mor_{\call_S^\calh(G)}(P,Q) = N_G(P,Q)/O^p(C_G(P)). \]
Composition is well defined, since for each $g\in{}N_G(P,Q)$, 
$g^{-1}Qg\ge{}P$, so $g^{-1}C_G(Q)g\le{}C_G(P)$, and thus 
$g^{-1}O^p(C_G(Q))g\le{}O^p(C_G(P))$.  When $\calh$ is closed under 
$\calf$-conjugacy and overgroups and contains all subgroups of $S$ which 
are $\calf$-centric and $\calf$-radical, then $\call_S^\calh(G)$ is a 
linking system associated to $\calf$.  When $\calh$ is the set of 
$\calf$-centric subgroups of $S$, we write $\call_S^c(G)=\call_S^\calh(G)$.

\begin{Prop} \label{L-prop}
The following hold for any linking system $\call$ associated to a saturated 
fusion system $\calf$ over a finite $p$-group $S$.
\begin{enuma}  
\item For each $P,Q\in\Ob(\call)$, the subgroup 
$E(P)\defeq\Ker[\Aut_\call(P)\Right2{}\autf(P)]$ acts freely on 
$\Mor_{\call}(P,Q)$ via right composition, and $\pi_{P,Q}$ induces a 
bijection 
	\[ \Mor_\call(P,Q)/E(P) \Right5{\cong} \homf(P,Q) ~. \]

\item For every morphism $\psi\in\Mor_\call(P,Q)$, and every 
$P_0,Q_0\in\Ob(\call)$ such that $P_0\le{}P$, $Q_0\le{}Q$, and 
$\pi(\psi)(P_0)\le{}Q_0$, there is a unique morphism 
$\psi|_{P_0,Q_0}\in\Mor_\call(P_0,Q_0)$ (the ``restriction'' of $\psi$) 
such that $\psi\circ\iota_{P_0}^{P}=\iota_{Q_0}^Q\circ\psi|_{P_0,Q_0}$.  

\item[\rm(b$'$) ] For each $P,Q\in\Ob(\call)$ and each 
$\psi\in\Mor_\call(P,Q)$, if we 
set $Q_0=\pi(\psi)(P)$, then there is a unique $\psi_0\in\Iso_\call(P,Q_0)$ 
such that $\psi=\iota_{Q_0}^Q\circ\psi_0$.

\item The functor $\delta$ is injective on all morphism sets.

\item If $P\in\Ob(\call)$ is fully normalized in $\calf$, then 
$\delta_P(N_S(P))\in\sylp{\Aut_\call(P)}$.

\item Let $P,Q,\widebar{P},\widebar{Q}\in\Ob(\call)$ and 
$\psi\in\Mor_\call(P,Q)$ be such that $P\nsg\widebar{P}$, 
$Q\le\widebar{Q}$, and for each $g\in\widebar{P}$ there is 
$h\in\widebar{Q}$ such that 
$\iota_Q^{\widebar{Q}}\circ\psi\circ\delta_P(g)=
\delta_{Q,\widebar{Q}}(h)\circ\psi$.
Then there is a unique morphism 
$\widebar{\psi}\in\Mor_\call(\widebar{P},\widebar{Q})$ such that 
$\widebar{\psi}|_{P,Q}=\psi$. 

\item All morphisms in $\call$ are monomorphisms and epimorphisms in the 
categorical sense.

\item All objects in $\call$ are $\calf$-quasicentric.  
\end{enuma}
\end{Prop}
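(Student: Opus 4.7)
The plan is to deduce each item directly from the axioms of Definition \ref{d:L}, largely following the template of the corresponding arguments in \cite[Section 3]{BCGLO1} and \cite[Section 2]{O3} but exploiting the fact that inclusions are here part of the data rather than having to be constructed.

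I would settle (a) first for $P$ fully centralized: axiom (A) gives the free $C_S(P)$-action and the bijection $\Mor_\call(P,Q)/C_S(P)\cong\homf(P,Q)$, and the inclusion $\delta_P(C_S(P))\le E(P)$ (from axiom (B), since $c_g=\Id_P$ for $g\in C_S(P)$) combined with a cardinality comparison on $\Aut_\call(P)$ forces $E(P)=\delta_P(C_S(P))$; this yields (a) in the fully centralized case. For general $P\in\Ob(\call)$, axiom (A) provides an isomorphism in $\call$ to a fully centralized $\calf$-conjugate $P^*$, and the statement transports along that isomorphism. Injectivity of $\delta$ (part (c)) then follows as in \cite[Lemma 3.3]{BCGLO1}: if $\delta_{P,Q}(g)=\delta_{P,Q}(h)$, axiom (B) gives $c_g=c_h\in\homf(P,Q)$, so $h^{-1}g\in C_S(P)$, and the free action collapses the equation to $g=h$.

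Next I would establish (b$'$): given $\psi\in\Mor_\call(P,Q)$, the subgroup $Q_0=\pi(\psi)(P)$ lies in $\Ob(\call)$ because $\Ob(\call)$ is closed under $\calf$-conjugacy. The $\calf$-isomorphism $\pi(\psi)\colon P\to Q_0$ lifts via (a) to some $\psi_0\in\Iso_\call(P,Q_0)$, and adjusting $\psi_0$ on the right by a suitable element of $\delta_P(C_S(P))$ makes $\iota_{Q_0}^Q\circ\psi_0=\psi$; uniqueness is forced by (c). Part (b) then follows by applying (b$'$) to $\psi\circ\iota_{P_0}^P$ and restricting its image to $Q_0$. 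Part (d) is a counting argument: for $P$ fully normalized, condition (I) of Definition \ref{sat.Frob.} gives that $P$ is fully centralized and $\auts(P)\in\sylp{\autf(P)}$, so (a) yields $|\Aut_\call(P)|=|C_S(P)|\cdot|\autf(P)|$, while $|\delta_P(N_S(P))|=|N_S(P)|$ by (c); comparing $p$-parts identifies $\delta_P(N_S(P))$ as a Sylow $p$-subgroup of $\Aut_\call(P)$.

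Part (e) is the extension/lifting property of \cite[Lemma 3.8]{BCGLO1}, whose proof transfers once one checks that the inclusions used implicitly agree with the $\iota$'s given here, which is immediate from axiom (B). For (f), monicity follows from (a) and axiom (C): if $\psi\circ\phi_1=\psi\circ\phi_2$, applying $\pi$ and using injectivity of $\pi(\psi)$ in $\calf$ yields $\pi(\phi_1)=\pi(\phi_2)$, so $\phi_1$ and $\phi_2$ differ by some $\delta(c)$ with $c\in C_S(P)$, and axiom (C) converts the equation into a free $C_S(Q)$-action on $\psi\circ\phi_i$ which annihilates the difference. Epimorphism-ness is dual, reducing via (b$'$) to the case of inclusions. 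Finally, (g) follows from (a) applied at $P$: for $P$ fully centralized, $E(P)=\delta_P(C_S(P))$ is a $p$-group, and for any $Q$ with $P\le Q\le P\cdot C_S(P)$ the group $\{\alpha\in\autf(Q):\alpha|_P=\Id\}$ lifts into $E(Q)$ and so is a $p$-group, which is exactly the condition of Definition \ref{d:qcentric}(b). The main bookkeeping obstacle is making (b$'$) precise, since the explicit inclusions force one to verify that the ``isomorphism part'' of $\psi$ is canonically compatible with $\delta$; once this is done, the rest is essentially a reformulation of existing proofs.
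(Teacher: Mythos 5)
The first thing to note is that the paper does not reprove this proposition at all: its ``proof'' is a citation to \cite[Proposition 4]{O3}, plus the one-line observation that (b$'$) is the special case of (b) in which $\psi_0=\psi|_{P,Q_0}$ is an isomorphism by (a). So you are attempting what the paper deliberately outsources, and the parts of your sketch that work ((a) by transport from the fully centralized case, (b$'$) and (b) by lifting plus adjustment by $E(P)$, (d) by counting, and the monomorphism half of (f), which does follow formally from (a)) are exactly the formal parts. The substance of \cite[Proposition 4]{O3} lies in (c), the epimorphism half of (f), the uniqueness clause of (e), and (g), and there your sketch has a genuine gap.

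Concretely: your argument for (c) says that from $\delta_{P,Q}(g)=\delta_{P,Q}(h)$ one gets $h^{-1}g\in C_S(P)$ and ``the free action collapses the equation to $g=h$.'' Axiom (A) gives freeness of the $C_S(P)$-action only when $P$ is fully centralized; for a general object the transported statement from (a) is freeness of the action of the subgroup $E(P)\le\Aut_\call(P)$, which yields $\delta_P(h^{-1}g)=\Id_P$ but not $h^{-1}g=1$. Injectivity of $\delta_P$ on $C_S(P)$ is precisely what is at stake (and your recurring identification $E(P)=\delta_P(C_S(P))$ is false for non-fully-centralized $P$: in $\call_S^{\calh}(G)$ the kernel is $C_G(P)/O^p(C_G(P))$, which can strictly contain the image of $C_S(P)$). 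Fixing this requires a real argument --- e.g.\ moving to a fully centralized $\calf$-conjugate using the extension axiom (II) of saturation and then cancelling an inclusion --- and that cancellation is a ``left-stabilizer'' statement of exactly the same kind as the epimorphism property. But your treatment of (f) is ``epimorphism-ness is dual, reducing via (b$'$) to the case of inclusions,'' which is not true: mono-ness is formal from (a), while epi-ness of inclusions is the hard half and is intimately tied to (g) (that all objects are $\calf$-quasicentric, itself a nontrivial conclusion here, since the definition only demands that $\Ob(\call)$ contain the centric radicals). Your sketch of (g) likewise needs the restriction homomorphism $\{\til\alpha\in\Aut_\call(Q)\mid\pi(\til\alpha)|_P=\Id\}\to\Aut_\call(P)$ to have trivial (or $p$-) kernel, which is again epi-ness of $\iota_P^Q$; so as written (c), (e)-uniqueness, (f)-epi and (g) lean on one another circularly, with no independent input from saturation or quasicentricity to break the circle. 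The citations you offer as templates do not close this: \cite[Definition 3.3]{BCGLO1} works with quasicentric objects and an extra axiom (D)$_q$, differences the present paper explicitly flags, so their lemmas do not transfer verbatim. Either supply the missing cancellation arguments (this is where \cite[Proposition 4]{O3} does its work) or, as the paper does, cite that result.
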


\begin{proof}  See \cite[Proposition 4]{O3}.  Point (b$'$) is a special 
case of (b), where $\psi_0\defeq\psi|_{P,Q_0}$ is an isomorphism by (a).
\end{proof}

We will also have use for 
the following ``linking system version'' of Alperin's fusion theorem. 

\begin{Thm} \label{AFT-L}
For any saturated fusion system $\calf$ over a finite $p$-group $S$ and 
any linking system $\call$ associated to $\calf$, each morphism in $\call$ 
is a composite of restrictions of automorphisms in $\Aut_\call(P)$, where 
$P$ is fully normalized in $\calf$, $\calf$-centric, and $\calf$-radical.
\end{Thm}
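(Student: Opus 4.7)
The plan is to lift the fusion-system Alperin decomposition (Theorem \ref{AFT}) to the linking system $\call$, using the restriction and surjectivity properties collected in Proposition \ref{L-prop}. Given $\psi\in\Mor_\call(P,Q)$, set $\varphi=\pi(\psi)\in\homf(P,Q)$ and apply Theorem \ref{AFT} to write
\[ \varphi \;=\; \mu_n\circ\cdots\circ\mu_1, \qquad \mu_i \;=\; \alpha_i|_{A_i,B_i}, \]
where each $\alpha_i\in\autf(R_i)$ with $R_i$ fully normalized, $\calf$-centric, and $\calf$-radical, $A_i,B_i\le R_i$, $A_1=P$, $B_n=Q$, and $B_i=A_{i+1}$.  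Each $R_i$ lies in $\Ob(\call)$ by axiom (A) of Definition \ref{d:L}.

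Use the surjectivity in Proposition \ref{L-prop}(a) applied to $R_i$ to lift each $\alpha_i$ to some $\widetilde\alpha_i\in\Aut_\call(R_i)$, and Proposition \ref{L-prop}(b) to form the restriction $\widetilde\mu_i:=\widetilde\alpha_i|_{A_i,B_i}$.  The composite $\widetilde\psi:=\widetilde\mu_n\circ\cdots\circ\widetilde\mu_1\in\Mor_\call(P,Q)$ satisfies $\pi(\widetilde\psi)=\varphi=\pi(\psi)$, so by the bijection in Proposition \ref{L-prop}(a) there is a unique $e\in E(P):=\Ker[\Aut_\call(P)\to\autf(P)]$ with $\psi=\widetilde\psi\circ e^{-1}$.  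When $P$ is $\calf$-centric it is automatically fully centralized (since $|C_S(P^*)|=|Z(P^*)|=|Z(P)|$ is constant across the $\calf$-conjugacy class), and then the freeness of the $C_S(P)=Z(P)$-action in Definition \ref{d:L}(A), together with the bijection in Proposition \ref{L-prop}(a), forces $E(P)=\delta_P(Z(P))$. Writing $e^{-1}=\delta_P(z)$ with $z\in Z(P)\le P\le R_1$, the functoriality of $\delta$ and the uniqueness of restrictions in Proposition \ref{L-prop}(b) yield
\[ \widetilde\mu_1\circ\delta_P(z) \;=\; \bigl(\widetilde\alpha_1\circ\delta_{R_1}(z)\bigr)\big|_{A_1,B_1}, \]
a restriction of the element $\widetilde\alpha_1\circ\delta_{R_1}(z)\in\Aut_\call(R_1)$.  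Absorbing $e^{-1}$ into the first factor in this manner completes the decomposition of $\psi$ in the $\calf$-centric case.

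It remains to handle $P\in\Ob(\call)$ that is $\calf$-quasicentric but not $\calf$-centric.  In this case, choosing a fully centralized $\calf$-conjugate $P^*$ gives $E(P^*)=\delta_{P^*}(C_S(P^*))$ with $C_S(P^*)\not\le P^*$, so $E(P)$ contains elements transported from $\delta_{P^*}(g)$ with $g\in C_S(P^*)\sminus P^*$; such $g$ need not lie in any $R_i$ of the AFT decomposition, and the direct absorption above breaks down.  The plan here is to induct on $|S|-|P|$: use Proposition \ref{L-prop}(e) to extend $\psi$ to a morphism defined on the strictly larger $\calf$-quasicentric subgroup $P\cdot C_S(P)$, apply the inductive hypothesis to the extension, and restrict back via Proposition \ref{L-prop}(b).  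The main obstacle is verifying the compatibility hypothesis of Proposition \ref{L-prop}(e), namely producing the witnesses $h\in N_S(Q,\widetilde Q)$ required to extend $\psi$ across $P\cdot C_S(P)$; this is done by enlarging $Q$ to some $\widetilde Q$ containing the relevant portion of $C_S(\pi(\psi)(P))$ and carefully matching the two centralizer actions, after first transporting $\psi$ by an $\call$-isomorphism (decomposable by the base case) to the fully centralized representative $P^*$.
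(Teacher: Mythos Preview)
Your reduction via Theorem~\ref{AFT} and Proposition~\ref{L-prop}(a) to decomposing an element $e\in E(P)$ is exactly right, and your handling of the $\calf$-centric case is correct.  The absorption of $\delta_P(z)$ into $\widetilde\alpha_1\circ\delta_{R_1}(z)\in\Aut_\call(R_1)$ is a nice variant; the paper instead observes that $\delta_P(z)$ is already the restriction of $\delta_S(z)\in\Aut_\call(S)$, and $S$ is always fully normalized, $\calf$-centric, and $\calf$-radical (the last because $\Inn(S)\in\sylp{\autf(S)}$ forces $|\outf(S)|$ prime to $p$).  Either device works.

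The non-centric case, however, is where you lose the thread.  Your plan to extend $\psi$ across $P\cdot C_S(P)$ via Proposition~\ref{L-prop}(e) and induct is both overcomplicated and not justified as written.  First, before transporting you may have $C_S(P)\le P$, so nothing larger exists; you acknowledge this and transport to a fully centralized $P^*$, but then you are no longer extending $\psi$ but some transported $\psi'\colon P^*\to Q$.  Second, and more seriously, for $g\in C_S(P^*)\sminus P^*$ the witness $h$ required by Proposition~\ref{L-prop}(e) would have to satisfy $\psi'_0\,\delta_{P^*}(g)\,{\psi'_0}^{-1}=\delta_{Q_0}(h)$ in $\Aut_\call(Q_0)$ (where $\psi'_0$ is the isomorphism part of $\psi'$ and $Q_0=\pi(\psi')(P^*)$); this element lies in $E(Q_0)$, but equals some $\delta_{Q_0}(h)$ with $h\in S$ only if $Q_0$ is itself fully centralized, which you never arrange.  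Your phrase ``carefully matching the two centralizer actions'' does not address this.

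The paper avoids all of this.  Once you have transported along a decomposable $\call$-isomorphism to $P^*$ fully centralized (constructed, as you note, by lifting an AFT decomposition term-by-term), you need only decompose elements of $E(P^*)=\delta_{P^*}(C_S(P^*))$, and each $\delta_{P^*}(g)$ is literally $\delta_S(g)|_{P^*,P^*}$.  No induction, no Proposition~\ref{L-prop}(e), no enlargement of $Q$; the same one-line observation that handles the centric case handles the fully centralized case in general.
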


\begin{proof}  Using Theorem \ref{AFT} together with Proposition 
\ref{L-prop}(a), we are reduced to proving the theorem for automorphisms 
in $E(P)=\Ker[\Aut_\call(P)\Right2{}\autf(P)]$ for $P\in\Ob(\call)$.  If 
$P$ is fully centralized, then $E(P)=\{\delta_P(g)\,|\,g\in{}C_S(P)\}$ by 
axiom (A), and each element $\delta_P(g)$ is the restriction of 
$\delta_S(g)\in\Aut_\call(S)$.  If $P$ is arbitrary, and $Q$ is fully 
centralized in $\calf$ and $\calf$-conjugate to $P$, then there is some 
$\psi\in\Iso_\call(P,Q)$ which satisfies the conclusion of the theorem 
(choose any $\varphi\in\isof(P,Q)$, write it as a composite of 
restrictions of automorphisms, and lift each of those automorphisms to 
$\call$).  Then each element of $E(P)$ has the form 
$\psi^{-1}\delta_Q(g)\psi$ for some $g\in{}C_S(Q)$, and hence satisfies 
the conclusion of the theorem.
\end{proof}

\newsubb{Automorphisms of fusion and linking systems}{s:Aut(L)}

\newcommand{\Equ}{\textup{Equ}}

Recall that for any linking system $\call$ associated to a fusion system 
$\calf$ over $S$, and any pair $P\le{}Q$ of objects in $\call$, the 
\emph{inclusion} of $P$ into $Q$ is the morphism 
$\iota_P^Q=\delta_{P,Q}(1)\in\Mor_\call(P,Q)$.  By Proposition 
\ref{L-prop}(b$'$), each morphism in $\call$ splits uniquely as the 
composite of an isomorphism followed by an inclusion. 

As usual, an \emph{equivalence} of small categories is a functor 
$\Phi\:\calc\Right2{}\cald$ which induces a bijection between the sets of 
isomorphism classes of objects and bijections between each pair of 
morphism sets.  It is not hard to see that for each such equivalence, 
there is an ``inverse'' $\Psi\:\cald\Right2{}\calc$ such that both 
composites $\Phi\circ\Psi$ and $\Psi\circ\Phi$ are naturally isomorphic to 
the identities.  In particular, the quotient monoid $\Out(\calc)$ of all 
self equivalences of $\calc$ modulo natural isomorphisms of functors is a 
group.

\begin{Defi}[{\cite[\S\,8]{BLO2}}] \label{Out(F,L)}
Let $\calf$ be a saturated fusion system over a finite $p$-group $S$, and 
let $\call$ be a linking system associated to $\calf$.
\begin{enuma}  
\item An automorphism $\beta\in\Aut(S)$ is \emph{fusion preserving} if 
for each $P,Q\le{}S$ and each $\varphi\in\homf(P,Q)$, 
$(\beta|_{Q,\beta(Q)})\varphi(\beta|_{P,\beta(P)})^{-1}$ lies in 
$\homf(\beta(P),\beta(Q))$.  In particular, each such $\beta$ 
normalizes $\autf(S)$.  Let $\Aut(S,\calf)$ be the group of all fusion 
preserving automorphisms of $S$, and set 
$\Out(S,\calf)=\Aut(S,\calf)/\Aut_{\calf}(S)$.  Note that $\Out(S,\calf)$ 
is a \emph{subquotient} of $\Out(S)$.

\item An equivalence of categories $\alpha\:\call\Right2{}\call$ is 
\emph{isotypical} if $\alpha(\delta_P(P))=\delta_{\alpha(P)}(\alpha(P))$ 
for each $P\in\Ob(\call)$.

\item Let $\Out\typ(\call)$ be the group of classes of isotypical self 
equivalences of $\call$ modulo natural isomorphisms of functors.  

\item Let $\Aut\typ^I(\call)$ be the group of isotypical equivalences of 
$\call$ which send inclusions to inclusions.  
\end{enuma}
\end{Defi}

Since $\Out(\call)$ is a group by the above remarks, and is finite since 
$\Mor(\call)$ is finite, $\Out\typ(\call)$ is a submonoid of a finite 
group and hence itself a group.  Another proof of this, as well as a 
proof that $\Aut\typ^I(\call)$ is a group, will be given in Lemma 
\ref{OutI1}. 

One of the main results in \cite{BLO2} (Theorem 8.1) says that for any 
$p$-local finite group $\SFL$, $\Out\typ(\call)\cong\Out(|\call|\pcom)$:  
the group of homotopy classes of self homotopy equivalences of 
$|\call|\pcom$.  This helps to explain the importance of $\Out\typ(\call)$, 
among other groups of automorphisms of $\SFL$ which we might have chosen.

The next lemma gives an alternative description of $\Out\typ(\call)$, and 
also of $\Out(G)$ --- descriptions which will be useful later.  For each 
$\call$ associated to $\calf$ over $S$, and each $\gamma\in\Aut_\call(S)$, 
let $c_\gamma\in\Aut\typ^I(\call)$ be the automorphism which sends 
$P\in\Ob(\call)$ to $\gamma(P)=\pi(\gamma)(P)$, and sends 
$\psi\in\Mor_\call(P,Q)$ to 
$(\gamma|_{Q,\gamma(Q)})\circ\psi\circ(\gamma|_{P,\gamma(P)})^{-1}$.  This 
is clearly isotypical, since for $g\in{}P\in\Ob(\call)$, 
$c_\gamma(\delta_P(g))=\delta_{\gamma(P)}(\pi(\gamma)(g))$ by axiom (C).  
For $P\le{}Q$ in $\Ob(\call)$, $c_\gamma$ sends $\iota_P^Q$ to 
$\iota_{\gamma(P)}^{\gamma(Q)}$ by definition of restriction, and thus 
$c_\gamma\in\Aut\typ^I(\call)$.

\begin{Lem} \label{OutI1} 
\begin{enuma}  
\item For any saturated fusion system $\calf$ over a finite $p$-group $S$, 
and any linking system $\call$ associated to $\calf$, the sequence 
	\[ 1 \Right2{} Z(\calf) \Right4{\delta_S} \Aut_{\call}(S) 
	\Right5{\gamma\mapsto c_\gamma} 
	\Aut\typ^I(\call) \Right4{} \Out\typ(\call) \Right2{} 1 \]
is exact.  All elements of $\Aut\typ^I(\call)$ are automorphisms of 
$\call$, and hence $\Aut\typ^I(\call)$ and $\Out\typ(\call)$ are both 
groups.

\item For any finite group $G$ and any $S\in\sylp{G}$, the sequence
	\[ 1 \Right2{} Z(G) \Right4{\incl} N_G(S) \Right5{g\mapsto c_g} 
	\Aut(G,S) \Right4{} \Out(G) \Right2{} 1 \]
is exact, where $\Aut(G,S)=\{\alpha\in\Aut(G)\,|\,\alpha(S)=S\}$.
\end{enuma}
\end{Lem}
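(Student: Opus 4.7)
Part (b) is essentially classical: $Z(G)\le N_G(S)$ is the kernel of $g\mapsto c_g$ on $N_G(S)$; inner automorphisms of $G$ fixing $S$ are precisely those by $g\in N_G(S)$; and by Sylow's theorem every $\alpha\in\Aut(G)$ can be adjusted by an inner automorphism to lie in $\Aut(G,S)$. I would verify these steps quickly. For part (a) the work splits into checking four exactness conditions and the claim that every element of $\Aut\typ^I(\call)$ is a genuine automorphism of $\call$.

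\emph{Injectivity at $Z(\calf)$} is immediate from Proposition \ref{L-prop}(c). \emph{Exactness at $\Aut_\call(S)$:} if $c_\gamma=\Id_\call$, applying $c_\gamma$ to $\delta_P(g)$ for $g\in P\in\Ob(\call)$ and using axiom (C) together with injectivity of $\delta_P$ forces $\pi(\gamma)=\Id_S$; then axiom (A) identifies $\gamma=\delta_S(z)$ for some $z\in Z(S)$. Unpacking $c_\gamma(\psi)=\psi$ for $\psi\in\Iso_\call$ via axiom (C) yields $\pi(\psi)(z)=z$ for every $\calf$-isomorphism, which by Theorem \ref{AFT} and $z\in Z(S)$ is equivalent to $\gen z\le Z(\calf)$; the converse is a direct unwinding. \emph{Exactness at $\Aut\typ^I(\call)$:} given $\alpha\in\Aut\typ^I(\call)$ naturally isomorphic to $\Id_\call$ via $\eta$, we have $\alpha(S)=S$ (since $S$ is the only element of $\Ob(\call)$ of its order), so $\eta_S\in\Aut_\call(S)$; naturality of $\eta$ at $\iota_P^S$, together with $\alpha(\iota_P^S)=\iota_{\alpha(P)}^S$ and the uniqueness of restrictions in Proposition \ref{L-prop}(b), gives $\eta_P=(\eta_S)|_{P,\alpha(P)}$, whence $\alpha=c_{\eta_S}$.

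\emph{Surjectivity onto $\Out\typ(\call)$ and the automorphism claim.} Given an isotypical self-equivalence $\alpha_0$, first $\alpha_0(S)=S$. For each $P\in\Ob(\call)$, factor $\alpha_0(\iota_P^S)$ uniquely via Proposition \ref{L-prop}(b$'$) as $\iota_{\alpha(P)}^S\circ\rho_P$ with $\rho_P\in\Iso_\call(\alpha_0(P),\alpha(P))$, and define a functor $\alpha$ on objects by $P\mapsto\alpha(P)$ and on morphisms by $\psi\mapsto\rho_R\circ\alpha_0(\psi)\circ\rho_P^{-1}$, so that $(\rho_P)_P$ is a natural isomorphism $\alpha_0\Rightarrow\alpha$. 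Applying $\alpha_0$ to $\iota_R^S\circ\iota_P^R=\iota_P^S$ and invoking the uniqueness in Proposition \ref{L-prop}(b) yields $\alpha(\iota_P^R)=\iota_{\alpha(P)}^{\alpha(R)}$ for $P\le R$, and isotypicality transfers via axiom (C) applied to each $\rho_P$. To upgrade $\alpha$ to a genuine automorphism of $\call$, apply the same recipe to an isotypical quasi-inverse of $\alpha_0$ to produce $\alpha'$; then $\alpha\circ\alpha'$ is an inclusion-preserving isotypical equivalence naturally isomorphic to $\Id_\call$, so by the exactness at $\Aut\typ^I(\call)$ already proven, $\alpha\circ\alpha'=c_\gamma$ for some $\gamma\in\Aut_\call(S)$, which is a true automorphism. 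Symmetrically $\alpha'\circ\alpha$ is inner, so both $\alpha$ and $\alpha'$ are automorphisms of $\call$.

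\textbf{Main obstacle.} The delicate point is the surjectivity construction: verifying that the modified $\alpha$ preserves inclusions on the nose (rather than merely up to the natural isomorphism $\rho$) rests on applying Proposition \ref{L-prop}(b$'$) consistently to each $\alpha_0(\iota_P^S)$ and invoking the uniqueness of restrictions in Proposition \ref{L-prop}(b) for nested pairs $P\le R$. Bootstrapping $\alpha$ from an equivalence to a true automorphism requires the previously proven exactness at $\Aut\typ^I(\call)$, so the ordering of the four verifications is essential.
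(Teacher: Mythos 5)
The one genuine gap is in your identification of the kernel of $\gamma\mapsto c_\gamma$ with $\delta_S(Z(\calf))$. The lemma is stated for an arbitrary linking system $\call$ associated to $\calf$: its objects are only required to be $\calf$-quasicentric (Proposition \ref{L-prop}(g)), and a quasicentric object need not contain $Z(S)$. So after you have reduced to $\gamma=\delta_S(z)$ with $z\in Z(S)$, the assertion that $c_\gamma(\psi)=\psi$ ``yields $\pi(\psi)(z)=z$ for every $\calf$-isomorphism'' does not parse for a morphism $\psi\in\Mor_\call(P,Q)$ with $z\notin P$: axiom (C) applies only to elements of $P$. For such $\psi$ the condition $\delta_S(z)|_{Q,Q}\circ\psi=\psi\circ\delta_S(z)|_{P,P}$ is not a pointwise fixing condition but an extension condition: by Proposition \ref{L-prop}(e) together with axiom (C), $\delta_S(z)$ commutes with $\psi$ if and only if $\psi$ extends in $\call$ to some $\widebar{\psi}\in\Mor_\call(\gen{P,z},\gen{Q,z})$ with $\pi(\widebar{\psi})(z)=z$, and this is exactly how the paper argues. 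The same point is why your ``converse is a direct unwinding'' is not: if $z\in Z(\calf)$, then $\pi(\psi)$ extends in $\calf$ to a morphism fixing $z$, but to obtain commutation in $\call$ you must lift that extension to $\call$ and reconcile the lift with $\psi$ (lifts are determined only up to precomposition by elements of $E(P)=\Ker[\Aut_\call(P)\Right2{}\autf(P)]$), which requires an argument. Your computation is correct whenever every object of $\call$ contains $Z(S)$ (for instance for centric linking systems), but as written it does not prove the lemma in the stated generality.

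Everything else is either the paper's argument or a sound variant of it. Your surjectivity step (factor each $\alpha_0(\iota_P^S)$ via Proposition \ref{L-prop}(b$'$) and conjugate by the resulting isomorphisms) is precisely the paper's construction, and your verification that the modified functor preserves all inclusions, using the monomorphism/uniqueness properties of Proposition \ref{L-prop}, is correct; likewise your identification of the kernel of $\Aut\typ^I(\call)\Right2{}\Out\typ(\call)$ via naturality at the inclusions $\iota_P^S$ is the paper's argument (you should also record the easy fact that each $c_\gamma$ is naturally isomorphic to $\Id_\call$, so that the map to $\Out\typ(\call)$ really kills the image). For the claim that elements of $\Aut\typ^I(\call)$ are automorphisms you take a different route: the paper observes directly that $\alpha_S$ carries $\delta_S(P)$ to $\delta_S(\alpha(P))$, so the object map is determined by the group automorphism $\alpha_S$ of $\Aut_\call(S)$ and is therefore a bijection; your bootstrap through a quasi-inverse also works, but it silently uses that an isotypical equivalence admits an isotypical quasi-inverse, which you should justify (it follows from axiom (C) and faithfulness) or avoid by using the paper's direct argument. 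Part (b) is the standard Frattini argument, as in the paper.
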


\begin{proof} \noindent\textbf{(a) }  Each equivalence of $\call$ 
(isotypical or not) sends $S$ to itself, since $S$ is the only object 
which is the target of morphisms from all other objects.  

If $\alpha\in\Aut\typ^I(\call)$, then for each $P\in\Ob(\call)$, 
$\alpha_{P,S}$ sends $\iota_P^S$ to $\iota_{\alpha(P)}^S$, and $\alpha_P$ 
sends $\delta_P(P)$ to $\delta_{\alpha(P)}(\alpha(P))$.  Hence $\alpha_S$ 
sends $\delta_S(P)$ to $\delta_S(\alpha(P))$, and thus determines the 
action of $\alpha$ on $\Ob(\call)$.  In particular, $\alpha$ permutes the 
objects of $\call$ bijectively, and hence is an automorphism of $\call$. 
This proves that $\Aut\typ^I(\call)$ is a group; and that 
$\Out\typ(\call)$ is also a group if the above sequence is exact.

We next show that each isotypical equivalence 
$\alpha\:\call\Right2{}\call$ is naturally isomorphic to an isotypical 
equivalence which sends inclusions to inclusions.  For each 
$P\in\Ob(\call)$, let $\alpha(\iota_P^S)=\iota_{\beta(P)}^S\circ\omega(P)$ 
be the unique decomposition of $\alpha(\iota_P^S)$ as a composite of an 
isomorphism $\omega(P)\in\Iso_{\call}(\alpha(P),\beta(P))$ followed by an 
inclusion (Proposition \ref{L-prop}(b$'$)).  In particular, 
$\omega(S)=\Id$. Let $\beta$ be the automorphism of $\call$ which on 
objects sends $P$ to $\beta(P)$, and which on morphisms sends 
$\varphi\in\Mor_{\call}(P,Q)$ to 
$\omega(Q)\circ\alpha(\varphi)\circ\omega(P)^{-1}$ in 
$\Mor_{\call}(\beta(P),\beta(Q))$.  Then $\beta$ is isotypical by axiom 
(C) (and since $\alpha$ is isotypical); it sends inclusions to inclusions 
by construction (and since $\omega(S)=\Id$); and $\omega(-)$ defines a 
natural isomorphism from $\alpha$ to $\beta$.

This proves that the natural homomorphism from $\Aut\typ^I(\call)$ to 
$\Out\typ(\call)$ is onto.  If $\alpha\in\Aut\typ^I(\call)$ is in the 
kernel, then it is naturally isomorphic to the identity, via some 
$\omega(-)$ which consists of isomorphisms 
$\omega(P)\in\Iso_\call(P,\alpha(P))$ such that for each 
$\psi\in\Mor_\call(P,Q)$, $\alpha(\psi)\circ\omega(P)=\omega(Q)\circ\psi$. 
Since $\alpha$ sends $\iota_P^S$ to $\iota_{\alpha(P)}^S$, 
$\omega(P)=\omega(S)|_{P,\alpha(P)}$, and thus $\alpha$ is conjugation by 
$\omega(S)\in\Aut_\call(S)$.

Conversely, if $\gamma\in\Aut_\call(S)$, then $c_\gamma$ is naturally 
isomorphic to $\Id_\call$, by the natural isomorphism which sends 
$P\in\Ob(\call)$ to $\gamma|_{P,\pi(\gamma)(P)}$.  This finishes the proof 
that the above sequence is exact at $\Aut\typ^I(\call)$.

It remains to show, for $\gamma\in\Aut_\call(S)$, that $c_\gamma=\Id_\call$ 
if and only if $\gamma\in\delta_S(Z(\calf))$.  If $c_\gamma=\Id$, then since 
$\gamma\delta_S(g)\gamma^{-1}=\delta_S(g)$ for all $g\in{}S$, 
$\pi(\gamma)=\Id_S$ by axiom (C), and $\gamma\in\delta_S(Z(S))$ by 
(A).  So assume $\gamma=\delta_S(a)$ for some $a\in{}Z(S)$.  By Proposition 
\ref{L-prop}(e) (and axiom (C) again), $\gamma$ commutes with a morphism 
$\psi\in\Mor_\call(P,Q)$ if and only if $\psi$ extends to some 
$\widebar{\psi}\in\Mor_\call(\gen{P,a},\gen{Q,a})$ such that 
$\pi(\widebar{\psi})(a)=a$.  Thus $c_\gamma=\Id_\call$ if and only if 
$a\in{}Z(\calf)$.  This finishes the proof that the sequence in (a) is 
exact.

\smallskip

\noindent\textbf{(b) } The natural homomorphism from $\Aut(G,S)$ to 
$\Out(G)$ is onto by the Frattini argument (the Sylow $p$-subgroups of $G$ 
are permuted transitively by inner automorphisms).  The kernel of that map 
clearly consists of conjugation by elements of $N_G(S)$.
\end{proof}

In particular, the group $\Aut\typ^I(\call)$ defined here is the same as 
that defined in \cite{O3}, where it was defined explicitly as a group of 
automorphisms of $\call$ rather than of equivalences.

The next lemma describes how elements of $\Aut\typ^I(\call)$ induce 
automorphisms of the associated fusion system.  For 
$\beta\in\Aut(S,\calf)$, let $c_\beta\in\Aut(\calf)$ be the 
automorphism of the category $\calf$ which sends $P\le{}S$ to $\beta(P)$, 
and sends $\varphi\in\Mor(\calf)$ to $\beta\varphi\beta^{-1}$.  

\begin{Lem}[{\cite[Proposition 6]{O3}}] \label{AutI}
Let $\call$ be a linking system associated to a saturated fusion system 
$\calf$ over a finite $p$-group $S$, with structure functors 
$\calt_{\Ob(\call)}(S)\Right2{\delta}\call\Right2{\pi}\calf$.  Fix 
$\alpha\in\Aut\typ^I(\call)$.  Let $\beta\in\Aut(S)$ be such that 
$\alpha(\delta_S(g))=\delta_S(\beta(g))$ for all $g\in{}S$.  Then 
$\beta\in\Aut(S,\calf)$, $\alpha(P)=\beta(P)$ for $P\in\Ob(\call)$, and 
$\pi\circ\alpha=c_\beta\circ\pi$. 
\end{Lem}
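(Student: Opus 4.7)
My plan is to (i) extract $\beta$ from $\alpha|_{\Aut_\call(S)}$, (ii) use isotypicality and inclusion-preservation together to identify $\alpha(P)$ with $\beta(P)$ for each $P\in\Ob(\call)$, (iii) translate axiom (C) through $\alpha$ to obtain $\pi\circ\alpha=c_\beta\circ\pi$, and (iv) deduce that $\beta$ preserves fusion via Alperin's fusion theorem.

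For (i), any self-equivalence of $\call$ fixes $S$ since $S$ is the unique object receiving morphisms from every object (cf.\ the proof of Lemma \ref{OutI1}(a)). Isotypicality then makes $\alpha$ permute $\delta_S(S)\le\Aut_\call(S)$, and injectivity of $\delta_S$ (Proposition \ref{L-prop}(c)) yields a unique $\beta\in\Aut(S)$ with $\alpha\circ\delta_S=\delta_S\circ\beta$.

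The crux is (ii). Fix $P\in\Ob(\call)$ and $g\in P$. In the transporter category, the element $g\in N_S(P,S)$ admits two decompositions $g=1\cdot g=g\cdot 1$, which translate in $\call$ into
\[
\iota_P^S\circ\delta_P(g)\;=\;\delta_{P,S}(g)\;=\;\delta_S(g)\circ\iota_P^S.
\]
Applying $\alpha$ and using $\alpha(\iota_P^S)=\iota_{\alpha(P)}^S$ together with $\alpha(\delta_S(g))=\delta_S(\beta(g))$ yields
\[
\iota_{\alpha(P)}^S\circ\alpha(\delta_P(g))\;=\;\delta_S(\beta(g))\circ\iota_{\alpha(P)}^S\;=\;\delta_{\alpha(P),S}(\beta(g)).
\]
The last expression decomposes canonically as the inclusion of $\beta(g)\alpha(P)\beta(g)^{-1}$ into $S$ preceded by the conjugation isomorphism $\alpha(P)\to\beta(g)\alpha(P)\beta(g)^{-1}$; the uniqueness of the iso-followed-by-inclusion decomposition (Proposition \ref{L-prop}(b$'$)) then forces $\beta(g)\in N_S(\alpha(P))$ and $\alpha(\delta_P(g))=\delta_{\alpha(P)}(\beta(g))$. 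Since isotypicality places $\alpha(\delta_P(g))$ in $\delta_{\alpha(P)}(\alpha(P))$, injectivity of $\delta_{\alpha(P)}$ upgrades this to $\beta(g)\in\alpha(P)$. Hence $\beta(P)\le\alpha(P)$, and the chain $|\beta(P)|=|P|=|\delta_P(P)|=|\alpha(\delta_P(P))|=|\delta_{\alpha(P)}(\alpha(P))|=|\alpha(P)|$ (from isotypicality and injectivity of the $\delta$'s) promotes this inclusion to an equality.

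For (iii), given $\psi\in\Mor_\call(P,Q)$ and $g\in P$, axiom (C) gives $\psi\circ\delta_P(g)=\delta_Q(\pi(\psi)(g))\circ\psi$; applying $\alpha$, substituting $\alpha(\delta_P(g))=\delta_{\alpha(P)}(\beta(g))$ and similarly on the $Q$ side, and comparing with axiom (C) applied to $\alpha(\psi)$ at $\beta(g)\in\alpha(P)$, one cancels the epimorphism $\alpha(\psi)$ (Proposition \ref{L-prop}(f)) and invokes injectivity of $\delta_{\alpha(Q)}$ to deduce $\pi(\alpha(\psi))(\beta(g))=\beta(\pi(\psi)(g))$, i.e., $\pi\circ\alpha=c_\beta\circ\pi$. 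For (iv), Alperin's fusion theorem (Theorem \ref{AFT}) writes any $\varphi\in\homf(P,Q)$ as a composite of restrictions of automorphisms $\varphi_R\in\autf(R)$ with $R$ fully normalized, $\calf$-centric, and $\calf$-radical, hence $R\in\Ob(\call)$ by axiom (A); lift each $\varphi_R$ through the surjection $\pi_R$ to some $\tilde\varphi_R\in\Aut_\call(R)$, and (iii) gives $\beta\varphi_R\beta^{-1}=\pi(\alpha(\tilde\varphi_R))\in\autf(\beta(R))$, so $\beta\varphi\beta^{-1}\in\homf(\beta(P),\beta(Q))$ by closure of $\calf$ under composition and restriction. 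The main obstacle lies in step (ii): bridging the functorial action of $\alpha$ on $\Ob(\call)$ with the group-theoretic action of $\beta$ on $S$ requires simultaneously invoking both defining hypotheses of $\Aut\typ^I(\call)$ together with the iso-followed-by-inclusion uniqueness.
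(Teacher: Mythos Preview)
Your proof is correct. The paper itself does not give a proof here but cites \cite[Proposition~6]{O3}, noting only that the object-level identity $\alpha(P)=\beta(P)$ is established in the course of that proof; your argument supplies exactly such a proof, and its crucial step (ii) is essentially the same computation the paper carries out in the proof of Lemma~\ref{OutI1}(a) (there phrased as ``$\alpha_S$ sends $\delta_S(P)$ to $\delta_S(\alpha(P))$'').

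One small remark on step (ii): a slightly shorter route avoids the unique-factorization argument. By isotypicality write $\alpha(\delta_P(g))=\delta_{\alpha(P)}(h)$ for some $h\in\alpha(P)$; then
\[
\delta_S(\beta(g))\circ\iota_{\alpha(P)}^S=\iota_{\alpha(P)}^S\circ\delta_{\alpha(P)}(h)=\delta_S(h)\circ\iota_{\alpha(P)}^S,
\]
and cancelling the epimorphism $\iota_{\alpha(P)}^S$ (Proposition~\ref{L-prop}(f)) gives $\beta(g)=h\in\alpha(P)$ directly. Your route via Proposition~\ref{L-prop}(b$'$) is equally valid (the image $\beta(g)\alpha(P)\beta(g)^{-1}$ is in $\Ob(\call)$ since $\Ob(\call)$ is closed under $\calf$-conjugacy by axiom~(A)), just marginally longer.
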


\begin{proof}  See \cite[Proposition 6]{O3}.  The relation 
$\alpha(P)=\beta(P)$ is not in the statement of the proposition, but it is 
shown in its proof.  It is really part of the statement 
$\pi\circ\alpha=c_\beta\circ\pi$ (since $\pi$ is the inclusion 
on objects). 
\end{proof}

Lemma \ref{AutI} motivates the following definition.  
For any saturated fusion system $\calf$ over a finite $p$-group $S$, and 
any linking system $\call$ associated to $\calf$, define 
	\[ \til\mu_\call\: \Aut\typ^I(\call) \Right5{} \Aut(S,\calf) \]
by setting $\til\mu_\call(\alpha) = 
\delta_S^{-1}\circ\alpha_S\circ\delta_S\in\Aut(S)$ for 
$\alpha\in\Aut\typ^I(\call)$.  By Lemma \ref{AutI}, 
$\Im(\til\mu_\call)\le\Aut(S,\calf)$.  For $\gamma\in\Aut_\call(S)$, 
$\til\mu_\call(c_\gamma)=\pi(\gamma)\in\autf(S)$ by axiom (C) in 
Definition \ref{d:L}.  So by Lemma  \ref{OutI1}(a), $\til\mu_\call$ 
induces a homomorphism 
	\[ \mu_\call\: \Out\typ(\call) \Right5{} \Out(S,\calf) \]
by sending the class of $\alpha$ to that of $\til\mu_\call(\alpha)$.  When 
$\call=\call_S^\calh(G)$ for some finite group $G$ and some set of objects 
$\calh$, we write $\til\mu_G^\calh=\til\mu_\call$ and 
$\mu_G^\calh=\mu_\call$ for short.  When $\call=\call_S^c(G)$ is the 
centric linking system, we write $\til\mu_G=\til\mu_\call$ and 
$\mu_G=\mu_\call$.  

\begin{Lem} \label{Ker(mu)-p-gp}
For any linking system $\call$ associated to a saturated fusion system 
$\calf$, $\Ker(\mu_\call)$ is a finite $p$-group.
\end{Lem}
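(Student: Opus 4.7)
The plan is to reduce to showing $\Ker(\til\mu_\call)$ is a finite $p$-group, and then embed this kernel into a product of finite $p$-groups built from the $\Aut_\call(P)$. By Lemma~\ref{OutI1}(a), the subgroup $I\defeq\{c_\gamma\mid\gamma\in\Aut_\call(S)\}$ is normal in $\Aut\typ^I(\call)$ with quotient $\Out\typ(\call)$, and the formula $\til\mu_\call(c_\gamma)=\pi(\gamma)$ recalled just above shows that $\til\mu_\call$ maps $I$ onto $\autf(S)$. Hence $\til\mu_\call^{-1}(\autf(S))=\Ker(\til\mu_\call)\cdot I$, so $\Ker(\mu_\call)$ is a quotient of $\Ker(\til\mu_\call)$, and it suffices to show the latter is a finite $p$-group.

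Fix $\alpha\in\Ker(\til\mu_\call)$, so that $\alpha_S\circ\delta_S=\delta_S$ on $S$. By Lemma~\ref{AutI} applied with $\beta=\Id_S$, $\alpha$ fixes every object of $\call$ and $\pi\circ\alpha=\pi$. For each $P\in\Ob(\call)$ and $g\in P$, functoriality of $\delta$ gives $\iota_P^S\circ\delta_P(g)=\delta_S(g)\circ\iota_P^S$; applying $\alpha$ (which fixes both $\iota_P^S$ and $\delta_S(g)$) and invoking the monomorphism property in Proposition~\ref{L-prop}(e) yields $\alpha(\delta_P(g))=\delta_P(g)$. Thus $\alpha$ restricts to an automorphism $\alpha_P\in\Aut(\Aut_\call(P))$ that fixes $\delta_P(P)$ pointwise and induces the identity on $\autf(P)=\Aut_\call(P)/E(P)$, where $E(P)\defeq\Ker[\Aut_\call(P)\twoheadrightarrow\autf(P)]$.

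Let $A_P$ denote the group of all such automorphisms of $\Aut_\call(P)$. Any $\phi\in A_P$ is determined by the function $\varepsilon\:\Aut_\call(P)\to E(P)$ defined by $\phi(\gamma)=\varepsilon(\gamma)\gamma$, so $|A_P|\le|E(P)|^{|\Aut_\call(P)|}$. Comparing the bijections in axiom~(A) of Definition~\ref{d:L} and in Proposition~\ref{L-prop}(a) (with $Q=S$, say) shows $|E(P^*)|=|C_S(P^*)|$ for any fully centralized $P^*$, and transport by an isomorphism in $\call$ extends this equality to an arbitrary $P$. Hence $E(P)$ is always a $p$-group, $|A_P|$ is a power of $p$, and $A_P$ is a finite $p$-group.

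Finally, by Theorem~\ref{AFT-L}, every morphism in $\call$ is a composite of restrictions of automorphisms in $\Aut_\call(P)$ as $P$ runs over the finite set $\calp$ of $\calf$-centric, $\calf$-radical, fully normalized subgroups. Since $\alpha$ is a functor sending inclusions to inclusions, it preserves restrictions, so $\alpha$ is determined by the tuple $(\alpha_P)_{P\in\calp}$. The restriction map thus embeds $\Ker(\til\mu_\call)$ into the finite $p$-group $\prod_{P\in\calp}A_P$, completing the proof. The key technical step is the verification that $\alpha$ fixes each $\delta_P(P)$ pointwise, since this is what ultimately makes the cocycle count on $A_P$ yield a $p$-power bound.
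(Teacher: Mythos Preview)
Your overall strategy is sound and close in spirit to the paper's, but there is a genuine gap at the step where you conclude that $A_P$ is a $p$-group. The inequality $|A_P|\le|E(P)|^{|\Aut_\call(P)|}$ is only a cardinality bound coming from an injection of \emph{sets} $\phi\mapsto\varepsilon$; knowing that the right-hand side is a $p$-power does not by itself force $|A_P|$ to be a $p$-power. For this you would need $\phi\mapsto\varepsilon$ to be a group homomorphism into the abelian $p$-group of maps $\Aut_\call(P)\to E(P)$, and that fails in general unless each $\phi\in A_P$ also fixes $E(P)$ pointwise.

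The fix is short and already implicit in your setup. For $P\in\calp$ the subgroup $P$ is $\calf$-centric and fully normalized, so $C_S(P)=Z(P)$ and hence $E(P)=\delta_P(Z(P))\le\delta_P(P)$. Thus your hypothesis ``$\phi$ fixes $\delta_P(P)$ pointwise'' already forces $\phi|_{E(P)}=\Id$. Now for $\gamma\in\Aut_\call(P)$ write $\phi(\gamma)=\varepsilon(\gamma)\gamma$ with $\varepsilon(\gamma)\in E(P)$; since $\phi(\varepsilon(\gamma))=\varepsilon(\gamma)$ one gets $\phi^{k}(\gamma)=\varepsilon(\gamma)^{k}\gamma$, so $\phi$ has order dividing the exponent of the $p$-group $E(P)$. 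This shows every element of $A_P$ has $p$-power order, hence $A_P$ is a $p$-group, and your embedding $\Ker(\til\mu_\call)\hookrightarrow\prod_{P\in\calp}A_P$ finishes the proof. (Equivalently, with $E(P)$ abelian and $\phi|_{E(P)}=\Id$, the map $\phi\mapsto\varepsilon$ really is a group monomorphism.) This is exactly the computation the paper carries out directly for $\alpha$ itself, without introducing the auxiliary groups $A_P$; so once the gap is filled your argument and the paper's coincide. One minor point: the monomorphism property you invoke is Proposition~\ref{L-prop}(f), not (e).
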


\begin{proof}  Assume $\calf$ is a fusion system over the finite $p$-group 
$S$.  Since $\call$ is a finite category, $\Aut\typ^I(\call)$ and 
$\Out\typ(\call)$ are finite groups.  So it suffices to prove that each 
element of $\Ker(\mu_\call)$ has $p$-power order.

Fix $\alpha\in\Aut\typ^I(\call)$ such that $[\alpha]\in\Ker(\mu_\call)$.  
Thus $\til\mu_\call(\alpha)\in\autf(S)$, and 
$\til\mu_\call(\alpha)=\pi(\gamma)$ for some $\gamma\in\Aut_\call(S)$.  So
upon replacing $\alpha$ by $c_\gamma^{-1}\circ\alpha$, we can assume 
$\alpha\in\Ker(\til\mu_\call)$.  

Thus $\alpha_S|_{\delta_S(S)}=\Id$.  Since $\alpha$ sends inclusions to 
inclusions, $\alpha_P|_{\delta_P(P)}=\Id$ for all $P\in\Ob(\call)$.  For 
each $P$ and each $\psi\in\Aut_\call(P)$, $\psi$ and $\alpha(\psi)$ have 
the same conjugation action on $\delta_P(P)$, so 
$\psi^{-1}\alpha(\psi)\in 
C_{\Aut_\call(P)}(\delta_P(P))\le\delta_P(Z(P))$.  Hence 
$\alpha(\psi)=\psi\delta_P(g)$ for some $g\in{}Z(P)$, and 
$\alpha^k(\psi)=\psi\delta_P(g^k)$ for all $k$ since $\alpha$ is the 
identity on $\delta_P(P)$.  

Choose $m\ge0$ such that $g^{p^m}=1$ for all $g\in{}S$.  Then $\alpha^{p^m}$ 
is the identity on $\Aut_\call(P)$ for each $P\in\Ob(\call)$, and hence (by 
Theorem \ref{AFT-L}) is the identity on $\call$.
\end{proof}

The kernel of $\mu_\call$ will be studied much more closely in 
Proposition \ref{Ker(mu)}.  

Since we will need to work with linking systems with different sets of 
objects associated to the same fusion system, it will be important to know 
they have the same automorphisms.

\begin{Lem} \label{OutI2}
Fix a saturated fusion system $\calf$ over a finite $p$-group $S$.  Let 
$\call_0\subseteq\call$ be a pair of linking systems associated to 
$\calf$.  Set $\calh_0=\Ob(\call_0)$ and $\calh=\Ob(\call)$, and assume 
$\calh_0\subseteq\calh$ are both $\Aut(S,\calf)$-invariant.  Then 
restriction defines an isomorphism
	\[ \Out\typ(\call) \RIGHT4{R}{\cong} \Out\typ(\call_0). \]
\end{Lem}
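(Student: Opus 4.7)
The plan is to prove well-definedness, injectivity, and surjectivity of $R$ separately, leaning on Alperin's fusion theorem for linking systems (Theorem~\ref{AFT-L}), which ensures every morphism of $\call$ factors as a composite of restrictions of morphisms between objects in $\calh_0$; this applies because $\calh_0$ contains all $\calf$-centric $\calf$-radical subgroups by axiom (A) of Definition~\ref{d:L}. For well-definedness of $R$, any $\alpha \in \Aut\typ^I(\call)$ acts on objects via some $\beta \in \Aut(S,\calf)$ by Lemma~\ref{AutI}, and since $\calh_0$ is $\Aut(S,\calf)$-invariant, $\alpha$ sends $\Ob(\call_0)$ to itself and restricts to an isotypical, inclusion-preserving self-equivalence of $\call_0$. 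Natural isomorphisms restrict similarly, so $R$ descends to a homomorphism on the $\Out\typ$ level.

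\emph{Injectivity.} Take a representative $\alpha \in \Aut\typ^I(\call)$ of a class in $\Ker(R)$. Then $[\alpha|_{\call_0}] = 1$ in $\Out\typ(\call_0)$, so by exactness in Lemma~\ref{OutI1}(a) applied to $\call_0$ we may write $\alpha|_{\call_0} = c_\gamma$ for some $\gamma \in \Aut_{\call_0}(S) = \Aut_\call(S)$. Replacing $\alpha$ by $c_\gamma^{-1}\circ\alpha$, I may assume $\alpha|_{\call_0} = \Id$. In particular $\alpha_S = \Id$, so Lemma~\ref{AutI} forces $\alpha$ to fix every object of $\call$. Any morphism $\psi$ of $\call$ decomposes by Alperin as a composite of restrictions of automorphisms $\widetilde\psi_i$ of objects $P_i \in \calh_0$; since $\alpha(\widetilde\psi_i) = \widetilde\psi_i$ and $\alpha$ preserves inclusions, the uniqueness of restriction in Proposition~\ref{L-prop}(b) gives $\alpha(\psi) = \psi$, hence $\alpha = \Id_\call$ and $[\alpha] = 1$.

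\emph{Surjectivity.} Given $\alpha_0 \in \Aut\typ^I(\call_0)$ with associated $\beta \in \Aut(S,\calf)$ (Lemma~\ref{AutI}), I extend $\alpha_0$ to $\call$ by setting $\alpha(P) = \beta(P)$ on objects (legitimate because $\calh$ is $\Aut(S,\calf)$-invariant), and on morphisms via the Alperin recipe: write $\psi$ as a composite of restrictions of automorphisms $\widetilde\psi_i$ in $\call_0$, and set $\alpha(\psi)$ to be the corresponding composite of restrictions of $\alpha_0(\widetilde\psi_i)$. The main obstacle is well-definedness of $\alpha(\psi)$: two Alperin decompositions of the same $\psi$ must produce the same image. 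I would handle this by checking that any two candidates $\alpha(\psi)$, $\alpha'(\psi)$ have equal projections $c_\beta \circ \pi(\psi)$ in $\calf$ (by Lemma~\ref{AutI}) and equal source and target in $\Ob(\call)$; the remaining ambiguity is governed by the free $C_S(\beta(P))$-action on $\Mor_\call(\beta(P), \beta(Q))$ from axiom (A) of Definition~\ref{d:L}, and this can be pinned down by restricting further into $\call_0$ and using that $\alpha_0$ is already well-defined there. Once well-definedness is established, isotypicality and inclusion-preservation transfer directly from the corresponding properties of $\alpha_0$, and $R([\alpha]) = [\alpha_0]$ holds by construction.
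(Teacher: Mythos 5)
Your reduction of $R$ to a well-defined homomorphism and your injectivity argument are essentially the paper's: $\Aut(S,\calf)$-invariance of $\calh_0$ plus Lemma \ref{AutI} give the restriction map, and Theorem \ref{AFT-L} together with the fact that inclusion-preserving equivalences commute with restrictions shows that $\alpha|_{\call_0}=\Id$ forces $\alpha=\Id_\call$. The problem is surjectivity, where your proposal has a genuine gap. You define $\alpha(\psi)$ by choosing an Alperin decomposition of $\psi$ and applying $\alpha_0$ to the pieces, and you correctly identify that the whole issue is independence of the chosen decomposition; but the mechanism you propose to settle it, namely ``restricting further into $\call_0$,'' cannot work. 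If $P\in\calh\sminus\calh_0$, then \emph{no} subgroup of $P$ lies in $\calh_0$, because $\calh_0$ is closed under overgroups (axiom (A) for $\call_0$): a subgroup of $P$ in $\calh_0$ would force $P\in\calh_0$. So from an object outside $\call_0$ you can never pass into $\call_0$ by restriction; the only way to compare two candidate values of $\alpha(\psi)$, which a priori differ by an element of $E(\beta(P))=\Ker[\Aut_\call(\beta(P))\to\autf(\beta(P))]$ (note also that the free action in axiom (A) is by $C_S(-)$ only for fully centralized objects; in general one must use Proposition \ref{L-prop}(a)), is to \emph{extend upward} to strictly larger subgroups and use injectivity of restriction there.

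That upward extension is exactly the nontrivial content of the paper's proof, and none of it is present in your sketch. The paper first reduces, by induction on $|\calh|-|\calh_0|$, to the case where all subgroups in $\calp=\calh\sminus\calh_0$ have a fixed order, so that every morphism of $\call$ outside $\call_0$ is an isomorphism between $\calp$-objects followed by an inclusion (Proposition \ref{L-prop}(b$'$)) and every subgroup strictly containing a $\calp$-object lies in $\calh_0$. For a fully normalized $P\in\calp$ it then produces $\widehat P\gneqq P$ with $\delta_P(\widehat P)=O_p(\Aut_\call(P))$ (strict containment uses Lemma \ref{centrad}, since $P$ is not both centric and radical), so that every $\psi\in\Aut_\call(P)$ extends uniquely to $\Aut_\call(\widehat P)\subseteq\Mor(\call_0)$ by Proposition \ref{L-prop}(e); together with chosen isomorphisms $\lambda_P$ to fully normalized representatives, extended over $N_S(P)$, this gives a \emph{canonical} factorization $\psi=\lambda_{P_2}^{-1}\circ\psi_*\circ\lambda_{P_1}$ on which to define $\alpha(\psi)$, sidestepping the dependence on arbitrary Alperin decompositions. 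Even then, one still has to verify that the resulting $\alpha$ is a functor, i.e.\ compatible with composites of inclusions and morphisms of $\call_0$; this occupies the bulk of the paper's argument and does not ``transfer directly'' from $\alpha_0$. As written, your surjectivity step is a statement of intent rather than a proof: the well-definedness obstruction is named but the tools that actually resolve it (the inductive reduction, Lemma \ref{centrad}, and the extension/uniqueness properties of Proposition \ref{L-prop}(e),(f)) are missing, and the one concrete idea offered points in the wrong direction.
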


\begin{proof}  Using Proposition \ref{L-prop}(a), one sees that $\call_0$ 
must be a full subcategory of $\call$.  Set $\calp=\calh{\sminus}\calh_0$. 
We can assume, by induction on $|\calh|-|\calh_0|$, that all subgroups in 
$\calp$ have the same order.  Thus all morphisms in $\call$ between 
subgroups in $\calp$ are isomorphisms.  

Since $\calh_0$ is $\Aut(S,\calf)$-invariant and $\call_0$ is a full 
subcategory, there is a well defined restriction homomorphism
	\[ \Aut\typ^I(\call) \Right4{\Res} \Aut\typ^I(\call_0). \]
By assumption, $\calh_0$ contains all subgroups which are $\calf$-centric 
and $\calf$-radical.  Hence Theorem \ref{AFT-L} implies that all morphisms 
in $\call$ are composites of restrictions of morphisms in $\call_0$.  
Since each $\alpha\in\Aut\typ^I(\call)$ sends inclusions to inclusions, it 
also sends restrictions to restrictions, and hence 
$\alpha|_{\call_0}=\Id_{\call_0}$ only if $\alpha=\Id_\call$.  Thus $\Res$ 
is injective.  We next show it is surjective, and hence an isomorphism.

Let $\calp_*\subseteq\calp$ be a subset consisting of one 
fully normalized subgroup from each $\calf$-conjugacy class in $\calp$.  
For each $P\in\calp_*$, $\delta_P(N_S(P))\in\sylp{\Aut_\call(P)}$ by 
Proposition \ref{L-prop}(d), so there is a unique $\widehat{P}\le{}N_S(P)$ 
such that $\delta_P(\widehat{P})=O_p(\Aut_\call(P))$.  Since 
$P\notin\calh_0$, $P$ is either not $\calf$-centric or not 
$\calf$-radical.  In either case, $\widehat{P}\gneqq{}P$ by Lemma 
\ref{centrad}.  By Proposition \ref{L-prop}(e), each $\psi\in\Aut_\call(P)$ 
extends to a unique automorphism $\widehat{\psi}\in\Aut_\call(\widehat{P})$. 

Let $\nu\:\calp\Right2{}\calp_*$ be the map which sends $P$ to the unique 
subgroup $\nu(P)\in\calp_*$ which is $\calf$-conjugate to $P$.  For each 
$P\in\calp$, $\delta_{\nu(P)}(N_S(\nu(P)))\in\sylp{\Aut_\call(\nu(P))}$ by 
Proposition \ref{L-prop}(d) (and since $\nu(P)$ is fully normalized), and 
hence there is $\lambda_P\in\Iso_\call(P,\nu(P))$ such that 
	\[ \lambda_P\delta_P(N_S(P))\lambda_P^{-1}
	\le\delta_{\nu(P)}(N_S(\nu(P)))~. \]
By Proposition \ref{L-prop}(e) again, $\lambda_P$ extends to a unique 
$\widehat{\lambda}_P\in\Mor_\call(N_S(P),N_S(\nu(P)))$.  When 
$P\in\calp_*$ (so $\nu(P)=P$), we set $\lambda_P=\Id_P$, and hence 
$\widehat{\lambda}_P=\Id_{N_S(P)}$.

Fix any $\alpha_0\in\Aut\typ^I(\call_0)$; we want to extend $\alpha_0$ to 
$\call$.  By Lemma \ref{AutI}, $\alpha_0$ induces some 
$\beta\in\Aut(S,\calf)$, and $\alpha_0(P)=\beta(P)$ for all $P\in\calh_0$. 
So define $\alpha(P)=\beta(P)$ for $P\in\calh$; this is possible since 
$\calh$ is $\Aut(S,\calf)$-invariant by assumption.  By Lemma \ref{AutI} 
again, for each $P,Q\in\calh_0$ and each $\psi\in\Mor_{\call_0}(P,Q)$, 
$\pi(\alpha_0(\psi))=c_\beta(\pi(\psi))=\beta(\pi(\psi))\beta^{-1}$.  In 
other words, 
	\beqq \textup{$\psi\in\Mor_{\call_0}(P,Q)$, $g\in{}P$, 
	$\pi(\psi)(g)=h\in{}Q$ $\Longrightarrow$ 
	$\pi(\alpha_0(\psi))(\beta(g))=\beta(h)$~.} \label{e:2.8} \eeqq

We next define $\alpha$ on isomorphisms between subgroups in $\calp$.  Fix 
$P_1,P_2\in\calp$ and $\psi\in\Iso_\call(P_1,P_2)$, and set 
$P_*=\nu(P_1)=\nu(P_2)$.  There is a unique $\psi_*\in\Aut_\call(P_*)$ such 
that $\psi=\lambda_{P_2}^{-1}\circ\psi_*\circ\lambda_{P_1}$, and we set
	\[ \alpha(\psi) = 
	\bigl(\alpha_0(\widehat{\lambda}_{P_2})
	|_{\alpha(P_2),\alpha(P_*)} \bigr)^{-1} 
	\circ \bigl(\alpha_0(\widehat{\psi}_*)|_{\alpha(P_*),\alpha(P_*)} 
	\bigr) \circ 
	\bigl(\alpha_0(\widehat{\lambda}_{P_1})|_{\alpha(P_1),\alpha(P_*)}
	\bigr) ~. \]
Note that $\widehat{\lambda}_{P_1}$, $\widehat{\lambda}_{P_2}$, and 
$\widehat{\psi}_*$ are all in $\Mor(\call_0)$, since all subgroups strictly 
containing subgroups in $\calp$ are in $\calh_0=\Ob(\call_0)$ by 
assumption.  Also, the restrictions are well defined (for example, 
$\pi(\alpha_0(\widehat{\lambda}_{P_i}))(\alpha(P_i))=\alpha(P^*)$) by 
\eqref{e:2.8}. 

Each morphism in $\call$ not in $\call_0$ factors uniquely as an 
isomorphism between subgroups in $\calp$ followed by an inclusion 
(Proposition \ref{L-prop}(b$'$)), and thus these definitions extend to 
define $\alpha$ as a map from $\Mor(\call)$ to itself.  This clearly 
preserves composition of isomorphisms between subgroups in $\calp$.  To 
prove that $\alpha$ is a functor, it remains to show it preserves 
composites of inclusions followed by isomorphisms in $\call_0$.  This 
means showing, for each $P_1,P_2\in\calp$, each $P_i\lneqq{}Q_i$, and each 
$\psi\in\Iso_\call(P_1,P_2)$ which extends to 
$\varphi\in\Mor_{\call_0}(Q_1,Q_2)$, that 
$\alpha(\psi)=\alpha_0(\varphi)|_{\alpha(P_1),\alpha(P_2)}$.  Since 
$N_{Q_i}(P_i)\gneqq{}P_i$, we can assume $P_i\nsg{}Q_i$ for $i=1,2$.  Set 
$P_*=\nu(P_1)=\nu(P_2)$ again, and set 
$R_i=\pi(\widehat{\lambda}_{P_i})(Q_i) \gneqq{}P_*$.  Then $P_*\nsg{}R_i$ 
since $P_i\nsg{}Q_i$.  We saw that $\psi$ factors in a unique way 
$\psi=\lambda_{P_2}^{-1}\circ\psi_*\circ\lambda_{P_1}$ for 
$\psi_*\in\Aut_\call(P_*)$.  We also have 
$\varphi=\widebar{\lambda}_{P_2}^{-1}\circ\varphi_*\circ 
\widebar{\lambda}_{P_1}$, where $\widebar{\lambda}_{P_i}= 
\widehat{\lambda}_{P_i}|_{Q_i,R_ i}$ and 
$\varphi_*\in\Mor_{\call_0}(R_1,R_2)$.  Thus 
$\alpha_0(\widebar{\lambda}_{P_i})$ is a restriction of 
$\alpha_0(\widehat{\lambda}_{P_i})$ ($i=1,2$), and hence an extension of 
$\alpha(\lambda_{P_i})$.  

It remains to show $\alpha(\psi_*)$ is the restriction of 
$\alpha_0(\varphi_*)$.  By definition, $\alpha(\psi_*)$ is the 
restriction to $\alpha(P_*)$ of $\alpha_0(\widehat{\psi}_*)$, where 
$\widehat{\psi}_*\in\Aut_\call(\widehat{P_*})$.  Set 
$T_i=\gen{\widehat{P_*},R_i}$.  By Proposition \ref{L-prop}(e), since 
$\psi_*\in\Aut_\call(P_*)$ extends to 
$\widehat{\psi}_*\in\Aut_\call(\widehat{P_*})$ and to 
$\varphi_*\in\Mor_\call(R_1,R_2)$, there is 
$\widebar{\varphi}_*\in\Mor_\call(T_1,T_2)$ which extends both 
$\widehat{\psi}_*$ and $\varphi_*$.  Hence $\alpha_0(\widebar{\varphi}_*)$ 
extends both $\alpha_0(\widehat{\psi}_*)$ and $\alpha_0(\varphi_*)$ (all 
of these are in $\call_0$), and thus $\alpha(\psi_*)$ is a restriction of 
each of the latter.  This finishes the proof that $\alpha$ is a functor.  
By construction, $\alpha$ is isotypical, sends inclusions to inclusions, 
and extends $\alpha_0$; and thus $\Res$ is surjective.  

We have now shown that restriction defines an isomorphism from 
$\Aut\typ^I(\call)$ to $\Aut\typ^I(\call_0)$.  By Lemma \ref{OutI1}(a), 
the outer automorphism groups of $\call$ and $\call_0$ are defined by 
dividing out by conjugation by elements of $\Aut_\call(S)$.  Hence the 
induced homomorphism 
	\[ \Out\typ(\call) \Right4{R} \Out\typ(\call_0) \]
is also an isomorphism.
\end{proof}

\newsubb{Normal fusion subsystems}{s:normfus}

Let $\calf$ be a saturated fusion system over a finite $p$-group $S$.  By 
a \emph{(saturated) fusion subsystem} of $\calf$ 
over a subgroup $S_0\le{}S$, we mean a subcategory $\calf_0\subseteq\calf$ 
whose objects are the subgroups of $S_0$, and which is itself a 
(saturated) fusion system over $S_0$.  

The following definition of a normal fusion subsystem is equivalent to that 
of Puig \cite[\S\,6.4]{Puig}, and also to Aschbacher's definition of an 
$\calf$-invariant subsystem \cite[\S\,3]{Asch} (except that they do 
not require the subsystem to be saturated).  See \cite[Proposition 
6.6]{Puig} and \cite[Theorem 3.3]{Asch} for proofs of those equivalences.

\begin{Defi} \label{d:F0<|F}
Let $\calf$ be a saturated fusion system over a finite $p$-group $S$, and 
let $\calf_0\subseteq\calf$ be a saturated fusion subsystem over 
$S_0\le{}S$.  Then $\calf_0$ is \emph{normal} in $\calf$ 
($\calf_0\nsg\calf$) if 
\begin{enumerate}[\rm(i) ]  
\item $S_0$ is strongly closed in $\calf$;

\item for each $P,Q\le{}S_0$ and each $\varphi\in\homf(P,Q)$, there are
$\alpha\in\autf(S_0)$ and $\varphi_0\in\Hom_{\calf_0}(\alpha(P),Q)$ such 
that $\varphi=\varphi_0\circ\alpha|_{P,\alpha(P)}$; and

\item for each $P,Q\le{}S_0$, each $\varphi\in\Hom_{\calf_0}(P,Q)$, and 
each $\beta\in\autf(S_0)$, $\beta\varphi\beta^{-1} 
\in\Hom_{\calf_0}(\beta(P),\beta(Q))$.
\end{enumerate}
\end{Defi}

We next list some of the basic properties of normal fusion subsystems,   
starting with the following technical result.  

\begin{Lem} \label{F0<|F(d)}
Fix a saturated fusion system $\calf$ over a finite $p$-group $S$.  Let 
$\calf_0\subseteq\calf$ be a fusion subsystem (not necessarily saturated) 
over the subgroup $S_0\nsg{}S$, which satisfies conditions (i--iii) in 
Definition \ref{d:F0<|F}.  Assume $P_0\le{}S_0$ is $\calf_0$-centric and 
fully normalized in $\calf$, and $\Out_{S_0}(P_0)\cap 
O_p(\Out_{\calf_0}(P_0))=1$.  Then there is $P\le{}S$ which is 
$\calf$-centric and $\calf$-radical and such that $P\cap S_0=P_0$. 
\end{Lem}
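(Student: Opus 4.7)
The plan is to take $P$ to be the full preimage in $N_S(P_0)$ of $O_p(\autf(P_0))$ and verify the three required properties in turn. Write $K = \autf(P_0)$ and $K_0 = \Aut_{\calf_0}(P_0)$; conditions (ii) and (iii) of Definition \ref{d:F0<|F} combine to give $K_0 \nsg K$. Since $\Inn(P_0)$ is a normal $p$-subgroup of $K_0$, $R \defeq O_p(K_0)$ is exactly the preimage in $K_0$ of $O_p(\Out_{\calf_0}(P_0))$, so the hypothesis on $P_0$ rephrases as $\Aut_{S_0}(P_0) \cap R = \Inn(P_0)$. Since $P_0$ is fully normalized in $\calf$, axiom (I) gives $\Aut_S(P_0) \in \sylp{K}$, hence $O_p(K) \le \Aut_S(P_0) = \pi(N_S(P_0))$, where $\pi \colon N_S(P_0) \to K$ sends $g$ to $c_g$. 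Define
\[ P = \pi^{-1}(O_p(K)) = \{g \in N_S(P_0) \mid c_g \in O_p(K)\}. \]

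For $P \cap S_0 = P_0$: if $g \in P \cap S_0$ then $c_g \in \Aut_{S_0}(P_0) \cap O_p(K) \le K_0 \cap O_p(K)$, which is a normal $p$-subgroup of $K_0$ and hence lies in $R$; so $c_g \in \Aut_{S_0}(P_0) \cap R = \Inn(P_0)$, and writing $c_g = c_h$ with $h \in P_0$ gives $gh^{-1} \in C_S(P_0) \cap S_0 = C_{S_0}(P_0) = Z(P_0)$ (using that $P_0$ is $\calf_0$-centric), so $g \in P_0$. For full normalization of $P$ in $\calf$: normality of $O_p(K)$ in $K$ yields $N_S(P) = N_S(P_0)$, the reverse inclusion using $S_0 \nsg S$ to see any $g \in N_S(P)$ normalizes $P \cap S_0 = P_0$; then for any $\calf$-conjugate $P^*$ of $P$, strong closure of $S_0$ makes $P^* \cap S_0$ be $\calf$-conjugate to $P_0$, so $|N_S(P^*)| \le |N_S(P^* \cap S_0)| \le |N_S(P_0)| = |N_S(P)|$.

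The main step is checking that $P$ is $\calf$-centric and $\calf$-radical. By Lemma \ref{centrad} it suffices to rule out $g \in N_S(P) \sminus P$ with $c_g|_P \in O_p(\autf(P))$. The key input is surjectivity of the restriction map $\rho \colon \autf(P) \to K$, which is a well-defined homomorphism by strong closure of $S_0$. Given $\varphi \in K$, I apply axiom (II) of Definition \ref{sat.Frob.} to the isomorphism $\varphi \colon P_0 \to P_0$, whose target is fully centralized by (I); the extension subgroup $N_\varphi$ contains $P$ because $\varphi \cdot O_p(K) \cdot \varphi^{-1} = O_p(K) \le \Aut_S(P_0)$, so $\varphi$ extends to some $\widebar{\varphi} \in \homf(N_\varphi, S)$; and for $g \in P$ the identity $c_{\widebar{\varphi}(g)}|_{P_0} = \varphi c_g \varphi^{-1} \in O_p(K)$ shows $\widebar{\varphi}(g) \in P$, so $\widebar{\varphi}(P) = P$ and $\widebar{\varphi}|_P \in \autf(P)$ restricts to $\varphi$. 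With $\rho$ surjective, $\rho(O_p(\autf(P)))$ is a normal $p$-subgroup of $K$, hence contained in $O_p(K)$. Now if $g \in N_S(P) \sminus P$, then $g \in N_S(P_0)$ and $c_g|_{P_0} \notin O_p(K)$ by the very definition of $P$, so $c_g|_P \notin O_p(\autf(P))$.

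The main obstacle is the surjectivity of $\rho$: one has to verify that the extension furnished by axiom (II) actually carries $P$ into itself, which is precisely where the normality of $O_p(K)$ in $K$ (and thus the choice of $O_p(K)$ rather than $R$ or another subgroup) plays a decisive role.
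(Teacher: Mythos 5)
Your proposal is correct and follows essentially the same route as the paper: the same subgroup $P=\{g\in N_S(P_0)\mid c_g\in O_p(\autf(P_0))\}$, the same verification that $P\cap S_0=P_0$ via $\Aut_{\calf_0}(P_0)\nsg\autf(P_0)$ and $\calf_0$-centricity, the same full-normalization argument through $N_S(P)=N_S(P_0)$, and the same use of the extension axiom to get surjectivity of $\autf(P)\to\autf(P_0)$ before applying Lemma \ref{centrad}. The only difference is that you spell out details (e.g.\ that $\widebar{\varphi}$ carries $P$ into $P$) which the paper leaves implicit.
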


\begin{proof}  Set 
	\[ P = \{x\in{}N_S(P_0) \,|\, c_x\in O_p(\autf(P_0)) \} ~. \]
If $x\in{}P\cap{}S_0$, then 
$c_x\in{}O_p(\autf(P_0))\cap\Aut_{\calf_0}(P_0)=O_p(\Aut_{\calf_0}(P_0))$ 
($\Aut_{\calf_0}(P_0)$ is normal in $\autf(P_0)$ by 
\ref{d:F0<|F}(ii--iii)), so $c_x\in\Aut_{S_0}(P_0)\cap 
O_p(\Aut_{\calf_0}(P_0))=\Inn(P_0)$, and $x\in{}P_0$ since $P_0$ is 
$\calf_0$-centric.  Thus $P\cap{}S_0=P_0$.

By construction, $N_S(P)=N_S(P_0)$.  So if $Q$ if $\calf$-conjugate 
to $P$ and $Q_0=Q\cap{}S_0$, then 
$|N_S(Q)|\le|N_S(Q_0)|\le|N_S(P_0)|=|N_S(P)|$ since $P_0$ is 
fully normalized in $\calf$ and $\calf$-conjugate to $Q_0$.  This proves 
that $P$ is fully normalized in $\calf$.  

Now, $\Aut_S(P_0)\ge{}O_p(\autf(P_0))$ since $P_0$ is fully normalized in 
$\calf$.  So $\Aut_P(P_0)=O_p(\autf(P_0))$, and hence this is normal in 
$\autf(P_0)$.  By the extension axiom, the restriction homomorphism 
$\autf(P)\Right2{}\autf(P_0)$ is surjective, and thus sends 
$O_p(\autf(P))$ into $O_p(\autf(P_0))$.  So for all $x\in{}N_S(P)$ such 
that $c_x\in{}O_p(\autf(P))$, $c_x\in{}O_p(\autf(P_0))$, and hence 
$x\in{}P$.  Since $P$ is fully normalized, it is $\calf$-centric and 
$\calf$-radical by Lemma \ref{centrad}.  
\end{proof}

The following is our main lemma listing properties of normal pairs of 
fusion systems.  Recall that $O_p(\calf)$ is the largest normal 
$p$-subgroup of the fusion system $\calf$.  

\begin{Lem} \label{F0<|F}
Fix a saturated fusion system $\calf$ over a finite $p$-group $S$, and let 
$\calf_0\nsg\calf$ be a normal fusion subsystem over the normal subgroup 
$S_0\nsg{}S$.  Then the following hold.
\begin{enuma}  
\item Each $\calf$-conjugacy class contains a subgroup $P\le{}S$ such that 
$P$ and $P\cap{}S_0$ are both fully normalized in $\calf$, and $P\cap{}S_0$ 
is fully normalized in $\calf_0$.

\item For each $P,Q\le{}S_0$ and each $\varphi\in\isof(P,Q)$, 
$\varphi\Aut_{\calf_0}(P)\varphi^{-1}=\Aut_{\calf_0}(Q)$.

\item The set of $\calf_0$-centric subgroups of $S_0$, and the set of 
$\calf_0$-radical subgroups of $S_0$, are both invariant under 
$\calf$-conjugacy. 

\item If $P\le{}S$ is $\calf$-centric and $\calf$-radical, then 
$P\cap{}S_0$ is $\calf_0$-centric and $\calf_0$-radical.  
Conversely, if $P_0\le{}S_0$ is $\calf_0$-centric, $\calf_0$-radical, and 
fully normalized in $\calf$, then there is $P\le{}S$ which is 
$\calf$-centric and $\calf$-radical, and such that $P\cap{}S_0=P_0$.  

\item $O_p(\calf_0)$ is normal in $\calf$, and 
$O_p(\calf_0)=O_p(\calf)\cap{}S_0$.

\end{enuma}
\end{Lem}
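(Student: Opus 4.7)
The plan is to prove the five parts in order, with (d) the technical core and (e) following quickly from it.

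For (a), I would start with $P$ in its $\calf$-class and proceed in two stages: first replace $P$ so that $P\cap S_0$ becomes fully normalized in $\calf_0$ (using saturation of $\calf_0$ together with the extension axiom of $\calf$ to propagate an $\calf_0$-isomorphism along $N_S(P\cap S_0)$), then use condition (ii) to lift an element of $\autf(S_0)$ to $\autf(S)$ and move $P$ into a fully normalized position in $\calf$ without disturbing the intersection's full normalization; here one uses that $\autf(S_0)$ preserves $|N_{S_0}(-)|$ along its orbits. Part (b) comes from factoring $\varphi=\varphi_0\circ(\alpha|_{P,\alpha(P)})$ via condition (ii), with $\alpha\in\autf(S_0)$ and $\varphi_0\in\Iso_{\calf_0}(\alpha(P),Q)$: condition (iii) conjugates $\Aut_{\calf_0}(P)$ to $\Aut_{\calf_0}(\alpha(P))$, and $\varphi_0$-conjugation stays within $\calf_0$. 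Part (c) is then immediate: both properties are preserved under the $\autf(S_0)$-action (by (iii) and since $\alpha\in\Aut(S_0)$) and under $\calf_0$-isomorphisms (the latter using (b) for radicality).

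The heart of the proof is (d). For the forward direction, assume $P$ is $\calf$-centric and $\calf$-radical, and apply (a) to take $P$ and $P_0:=P\cap S_0$ both fully normalized in $\calf$. To prove $\calf_0$-centricity of $P_0$: if $C_{S_0}(P_0)\nleq P_0$, then $P$ normalizes $Q:=C_{S_0}(P_0)$ (since it normalizes both $S_0$ and $P_0$), so Lemma \ref{QnleqP} yields $y\in Q\cap N_S(P)\sminus P$; since $y\in S_0\cap C_{S_0}(P_0)$, Lemma \ref{mod-Fr} applied with the filtration $1\le P_0\le P$ gives $c_y\in O_p(\autf(P))$, contradicting Lemma \ref{centrad}. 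To prove $\calf_0$-radicality, the real work is to realize a hypothetical nontrivial $O_p(\Out_{\calf_0}(P_0))$ via conjugation by a concrete subgroup: by (b), $\Aut_{\calf_0}(P_0)\nsg\autf(P_0)$, so $O_p(\Aut_{\calf_0}(P_0))$ is normal in $\autf(P_0)$, and full normalization of $P_0$ places it inside $\Aut_{S_0}(P_0)$, whence it equals $\Aut_U(P_0)$ for some $U\le N_{S_0}(P_0)$ strictly containing $P_0$. Its normality under $\Aut_P(P_0)$ combined with the $\calf_0$-centricity of $P_0$ just established forces $P$ to normalize $U$, and Lemma \ref{QnleqP} applied to $UP$ then yields $y\in U\cap N_S(P)\sminus P$ with $c_y\in O_p(\autf(P))$ (by the same application of Lemma \ref{mod-Fr}), once again contradicting Lemma \ref{centrad}. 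The converse direction of (d) is immediate from Lemma \ref{F0<|F(d)}, whose intersection hypothesis is supplied by $O_p(\Out_{\calf_0}(P_0))=1$.

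Part (e) follows quickly. The subgroup $O_p(\calf_0)$ is invariant under $\calf_0$-morphisms by definition and under $\autf(S_0)$ by (iii), hence by (ii) under all $\calf$-morphisms between subgroups of $S_0$; together with $S_0$ being $\calf$-strongly closed, this makes $O_p(\calf_0)$ strongly closed in $\calf$. The forward direction of (d) puts $O_p(\calf_0)$ inside every $\calf$-centric and $\calf$-radical $P$, so Proposition \ref{norm<=>} gives $O_p(\calf_0)\nsg\calf$, hence $O_p(\calf_0)\le O_p(\calf)\cap S_0$. For the reverse inclusion, $O_p(\calf)\cap S_0$ is normal in $S_0$ and $\calf_0$-strongly closed; the converse direction of (d) places it inside every fully normalized $\calf_0$-centric $\calf_0$-radical subgroup, and strong closure propagates this to all such subgroups, so Proposition \ref{norm<=>} applied inside $\calf_0$ yields $O_p(\calf)\cap S_0\nsg\calf_0$. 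The main obstacle will be the radicality half of (d): identifying the concrete subgroup $U\le S_0$ whose conjugations realize $O_p(\Aut_{\calf_0}(P_0))$, verifying that $P$ normalizes $U$ so that $PU$ properly contains $P$ as a $p$-subgroup of $S$, and then channeling the contradiction through Lemma \ref{QnleqP} and Lemma \ref{mod-Fr}.
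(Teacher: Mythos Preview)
Your treatment of (b)--(e) is essentially the paper's argument, and your handling of (d) and (e) is correct. One economy: in (d) the paper does not split centricity and radicality. It defines a single subgroup $T=\{x\in N_{S_0}(P_0):c_x\in O_p(\Aut_{\calf_0}(P_0))\}$ (your $U$, but defined unconditionally rather than under a contradiction hypothesis), shows $T\cap N_S(P)\le P_0$ by exactly your mod-Fr/centrad argument, and then applies Lemma~\ref{QnleqP} once to get $T\le P_0$; Lemma~\ref{centrad} then yields both centricity and radicality of $P_0$ simultaneously. Your two-pass version is fine but slightly redundant (and the $\calf_0$-centricity is not actually needed for the claim that $P$ normalizes $U$).

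Your sketch for (a), however, has a genuine gap. You propose first to arrange that $P_0=P\cap S_0$ is fully normalized in $\calf_0$, and then to move $P$ into a fully normalized position in $\calf$ ``without disturbing'' $P_0$. But the morphism that moves $P$ is defined only on $N_S(P)$, which need not contain $N_S(P_0)$, so there is no mechanism guaranteeing the image of $P_0$ stays fully normalized. Also, condition (ii) factors $\calf$-morphisms through $\autf(S_0)$; it does not lift elements of $\autf(S_0)$ to $\autf(S)$, so that step does not do what you claim. The paper reverses the order, which is the point: first pass to $R$ fully normalized in $\calf$, and then apply $\psi\in\homf(N_S(R_0),N_S(P_0))$ with $P_0=\psi(R_0)$ fully normalized in $\calf$. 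Since $N_S(R)\le N_S(R_0)$, the morphism $\psi$ is defined on all of $N_S(R)$, so $P:=\psi(R)$ satisfies $|N_S(P)|\ge|N_S(R)|$ and remains fully normalized. Finally, $P_0$ being fully normalized in $\calf$ forces it to be fully normalized in $\calf_0$, again via the existence of morphisms $N_S(P_0^*)\to N_S(P_0)$ for $\calf$-conjugates $P_0^*$.
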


\begin{proof}  Throughout the proof, whenever $P\le{}S$, we write 
$P_0=P\cap{}S_0$ for short.

\smallskip

\noindent\textbf{(a) }  Fix $Q\le{}S$.  By \cite[Proposition 
A.2(b)]{BLO2}, there are subgroups $R\le{}S$ and $P_0\le{}S_0$ which are 
fully normalized in $\calf$, and morphisms 
$\varphi\in\homf(N_S(Q),N_S(R))$ and $\psi\in\homf(N_S(R_0),N_S(P_0))$ 
such that $\varphi(Q)=R$ and $\psi(R_0)=P_0$.  Set $P=\psi(R)$ (note that 
$P\cap{}S_0=P_0$ since $S_0$ is strongly closed).  Since 
$N_S(R)\le{}N_S(R_0)$, $P$ is also fully normalized in $\calf$.  Also, 
$P=\psi\circ\varphi(Q)$ is $\calf$-conjugate to $Q$.  

By \cite[Proposition A.2(b)]{BLO2} again, if $P^*_0$ is $\calf$-conjugate 
to $P_0$, then there is a morphism in $\calf$ from $N_S(P^*_0)$ to 
$N_S(P_0)$ which sends $P^*_0$ to $P_0$.  In particular, 
$|N_{S_0}(P^*_0)|\le|N_{S_0}(P_0)|$, and hence $P_0$ is also fully 
normalized in $\calf_0$.

\smallskip

\noindent\textbf{(b) } Fix $P,Q\le{}S_0$ and $\varphi\in\isof(P,Q)$.  By 
condition (ii) in Definition \ref{d:F0<|F}, there are $\alpha\in\autf(S_0)$ 
and $\varphi_0\in\Iso_{\calf_0}(\alpha(P),Q)$ such that 
$\varphi=\varphi_0\circ\alpha|_{P,\alpha(P)}$.  Hence 
	\[ \varphi\Aut_{\calf_0}(P)\varphi^{-1} = 
	\varphi_0\Aut_{\calf_0}(\alpha(P))\varphi_0^{-1} = 
	\Aut_{\calf_0}(Q)~, \]
where the first equality holds by condition (iii) in Definition 
\ref{d:F0<|F}. 

\smallskip

\noindent\textbf{(c) }  Fix $P\le{}S_0$, let $\calp$ be the 
$\calf$-conjugacy class of $P$, and let $\calp_0$ be its 
$\calf_0$-conjugacy class.  

If $P$ is $\calf_0$-centric, then $C_{S_0}(P^*)=Z(P^*)$ for all 
$P^*\in\calp_0$.  For all $R\in\calp$, there is $\alpha\in\autf(S_0)$ such 
that $\alpha(R)\in\calp_0$ (condition (ii) in Definition \ref{d:F0<|F}), 
and hence $C_{S_0}(R)=Z(R)$.  Since this holds for all subgroups in 
$\calp$, all of these subgroups are $\calf_0$-centric.

Now assume $P$ is $\calf_0$-radical; then $O_p(\Out_{\calf_0}(P^*))=1$ for 
all $P^*\in\calp_0$.  If $R\in\calp$, and $\alpha\in\autf(S_0)$ is such 
that $\alpha(R)\in\calp_0$, then by condition (iii) in Definition 
\ref{d:F0<|F}, conjugation by $\alpha$ sends 
$\Out_{\calf_0}(R)\le\outf(R)$ isomorphically to 
$\Out_{\calf_0}(\alpha(R))$.  Since $O_p(\Out_{\calf_0}(\alpha(R)))=1$, 
$O_p(\Out_{\calf_0}(R))=1$.  So all subgroups in $\calp$ are 
$\calf_0$-radical.

\smallskip

\noindent\textbf{(d) } The second statement was shown in Lemma 
\ref{F0<|F(d)}.  It remains to prove the first.

Assume $P$ is $\calf$-centric and $\calf$-radical. 
We must show that $P_0$ is $\calf_0$-centric and $\calf_0$-radical.  By 
(c), this is independent of the choice of $P$ in its $\calf$-conjugacy 
class, and hence by (a), it suffices to prove it when $P_0$ is fully 
normalized in $\calf_0$.  By (b), $\Aut_{\calf_0}(P_0)$ is normal in 
$\autf(P_0)$, and hence 
	\[ O_p(\Aut_{\calf_0}(P_0)) \le O_p(\autf(P_0))~. \]

Let $T$ be the subgroup of all $x\in{}N_{S_0}(P_0)$ such that 
$c_x\in{}O_p(\Aut_{\calf_0}(P_0))$.  If $x\in{}T\cap{}N_{S}(P)$, then 
$c_x\in{}O_p(\autf(P_0))$, and $c_x$ induces the identity on $P/P_0$ since 
$[x,P]\le{}P\cap[S_0,P]\le{}P_0$.  Thus 
$c_x\in{}O_p(\autf(P))$ by Lemma \ref{mod-Fr}, and $x\in{}P$ by Lemma 
\ref{centrad} since $P$ is centric and radical in $\calf$.  

Thus $T\cap{}N_{S}(P)\le{}P_0$.  Also, $P$ normalizes $T$ by construction, 
so $T\le{}P_0$ by Lemma \ref{QnleqP}.  Hence $P_0$ is centric and radical 
in $\calf_0$ by Lemma \ref{centrad} again.  

\smallskip

\noindent\textbf{(e) }  Set $Q=O_p(\calf_0)$ and $R=O_p(\calf)$ for short. 
To prove that $Q\nsg\calf$ and $R_0=Q$, it suffices to show that 
$Q\nsg\calf$ and $R_0\nsg\calf_0$.  We apply Proposition \ref{norm<=>}, 
which says that a subgroup is normal in a saturated fusion system if and 
only if it is strongly closed and contained in all subgroups which are 
centric and radical.  Since an intersection of strongly closed subgroups 
is strongly closed, $R_0$ is strongly closed in $\calf$ and hence in 
$\calf_0$.  

If $P\le{}S$ is $\calf$-centric and $\calf$-radical, then $P_0$ is 
$\calf_0$-centric and $\calf_0$-radical by (d), so $P\ge{}P_0\ge{}Q$.  If 
$P_0$ is $\calf_0$-centric and $\calf_0$-radical, then the same holds for 
each subgroup in its $\calf$-conjugacy class by (c).  So by (d), there is 
$P^*\le{}S$ which is $\calf$-centric and $\calf$-radical with $P^*_0$ 
$\calf$-conjugate to $P_0$; $P^*\ge{}R$, and hence $P^*_0$ and $P_0$ both 
contain $R_0$.  

It remains to prove that $Q$ is strongly closed in $\calf$.  Fix 
$\calf$-conjugate elements $g,h\in{}S$ such that $g\in{}Q$; we must show 
$h\in{}Q$.  Since $S_0$ is strongly closed in $\calf$ (since 
$\calf_0\nsg\calf$), $h\in{}S_0$.  Fix 
$\varphi\in\isof(\gen{g},\gen{h})$ with $\varphi(g)=h$.  Since 
$\calf_0\nsg\calf$, there are morphisms $\chi\in\autf(S_0)$ and 
$\varphi_0\in\Iso_{\calf_0}(\gen{g},\gen{\chi^{-1}(h)})$ such that 
$\varphi=\chi\circ\varphi_0$.  Then $g'\defeq\varphi_0(g)\in{}Q$, and 
$h=\chi(g')$.  The invariance condition (iii) in Definition \ref{d:F0<|F} 
implies that $\chi$ sends a normal subgroup of $\calf_0$ to another normal 
subgroup.  Thus $\chi(Q){\cdot}Q$ is also normal in $\calf_0$, so 
$\chi(Q)=Q$ since $Q$ is the largest subgroup of $S_0$ normal in 
$\calf_0$, and thus $h=\chi(g')\in{}Q$. 
\end{proof}

We now turn to the specific examples of normal fusion subsystems which we 
work with in this paper.  We first look at those of $p$-power 
index and of index prime to $p$.  Two other definitions are first needed.  
For any saturated fusion system $\calf$, the \emph{focal subgroup} 
$\foc(\calf)$ and the \emph{hyperfocal subgroup} $\hyp(\calf)$ are 
defined by 
	\begin{align*}
	\foc(\calf) &=
	\gen{s^{-1}t\,|\,s,t\in{}S \textup{ and $\calf$-conjugate} }
	= \gen{s^{-1}\alpha(s) \,|\, s\in{}P\le{}S,\ \alpha\in\autf(P) } \\
	\hyp(\calf) &= \gen{s^{-1}\alpha(s) \,|\, 
	s\in{}P\le{}S,\ \alpha\in O^p(\autf(P))}~.
	\end{align*}
Note that in \cite{BCGLO2}, we wrote $O^p_\calf(S)=\hyp(\calf)$.  

The following definition also includes many fusion subsystems which are 
not normal.

\begin{Defi}[{\cite[Definition 3.1]{BCGLO2}}] \label{d:p-p'-index}
Let $\calf$ be a saturated fusion system over a finite $p$-group $S$, and 
let $\calf_0\subseteq\calf$ be a saturated fusion subsystem over a subgroup 
$S_0\le{}S$.
\begin{enuma}  
\item $\calf_0$ \emph{has $p$-power index} in $\calf$ if 
$\hyp(\calf)\le{}S_0\le{}S$, and $\Aut_{\calf_0}(P)\ge O^p(\autf(P))$ 
for all $P\le{}S_0$.  

\item $\calf_0$ \emph{has index prime to $p$} in $\calf$ if $S_0=S$, and 
$\Aut_{\calf_0}(P)\ge O^{p'}(\autf(P))$ for all $P\le{}S$.  
\end{enuma}
\end{Defi}

Recall that despite the terminology, these are not analogous to subgroups 
of a finite group of $p$-power index or index prime to $p$.  Instead, they 
are analogous to subgroups which contain a normal subgroup having 
appropriate index.

The following theorem gives a complete description of all such fusion 
subsystems.

\begin{Thm}[{\cite[Theorems 4.3 \& 5.4]{BCGLO2}}] \label{t:O^p(F)}
The following hold for any saturated fusion system $\calf$ over a finite 
$p$-group $S$.  
\begin{enuma}
\item For each subgroup $S_0\le{}S$ containing the hyperfocal subgroup 
$\hyp(\calf)$, there is a unique fusion subsystem $\calf_0$ over $S_0$ 
of $p$-power index in $\calf$.  Thus $\calf$ contains a proper fusion 
subsystem of $p$-power index if and only if $\hyp(\calf)\lneqq{}S$, or 
equivalently $\foc(\calf)\lneqq{}S$.

\item There is a subgroup $\Gamma\nsg\outf(S)$ with the following 
properties.  For each subsystem $\calf_0\subseteq\calf$ of index prime to 
$p$, $\Out_{\calf_0}(S)\ge\Gamma$.  Conversely, for each $H\le\outf(S)$ 
containing $\Gamma$, there is a unique subsystem $\calf_0\subseteq\calf$ 
of index prime to $p$ with $\Out_{\calf_0}(S)=H$.
\end{enuma}
\end{Thm}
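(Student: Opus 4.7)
My plan is to construct the desired subsystems explicitly and verify saturation via Alperin's fusion theorem (Theorem~\ref{AFT}); in both cases the construction is forced by the hypotheses, so uniqueness reduces to a fusion-theoretic check.

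For part~(a), necessity of $\hyp(\calf) \le S_0$ is immediate: if $\calf_0$ is a subsystem of $p$-power index over $S_0$, then $\Aut_{\calf_0}(P) \supseteq O^p(\autf(P))$ for each $P \le S_0$, and each generator $s^{-1}\alpha(s)$ of $\hyp(\calf)$ with $\alpha \in O^p(\autf(P))$ and $s \in P \le S_0$ automatically lies in $S_0$. For existence, given $\hyp(\calf) \le S_0 \le S$, I would define $\calf_0$ over $S_0$ by setting
\[
\Aut_{\calf_0}(P) = \Aut_{S_0}(P) \cdot O^p(\autf(P)) \qquad (P \le S_0)
\]
and extending to morphism sets by taking composites of restrictions with inclusions inside $S_0$. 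Uniqueness would then follow because $\autf(P)/O^p(\autf(P))$ is a $p$-group, so axiom~(I) applied to any saturated $\calf_0$ forces $\Aut_{\calf_0}(P)$ to equal the displayed formula when $P$ is fully $\calf_0$-normalized; Alperin's theorem for $\calf_0$ then determines all other morphism sets.

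For part~(b), I would take $\Gamma \nsg \outf(S)$ to be the image of the subgroup of $\autf(S)$ generated by all $\alpha \in \autf(S)$ whose restriction (via axiom~(II)) to some fully normalized $P \le S$ lies in $\Aut_S(P) \cdot O^{p'}(\autf(P))$. Normality of $\Gamma$ in $\outf(S)$ uses that each $O^{p'}(\autf(P))$ is characteristic in $\autf(P)$. Given $\Gamma \le H \le \outf(S)$, I would set $\Aut_{\calf_0}(S)$ equal to the preimage of $H$ in $\autf(S)$, and for fully normalized $P \le S$ set
\[
\Aut_{\calf_0}(P) = O^{p'}(\autf(P)) \cdot \{\alpha \in \autf(P) : \alpha \text{ extends to some element of } \Aut_{\calf_0}(S)\}.
\]

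The main obstacle in both parts is verifying saturation axiom~(II) for $\calf_0$: given $\varphi \in \Hom_{\calf_0}(P, S_0)$ with $\varphi(P)$ fully $\calf_0$-centralized, I must extend $\varphi$ to $N_\varphi$ within $\calf_0$. My approach is to first apply axiom~(II) in $\calf$ to obtain some extension $\bar\varphi$, and then to adjust $\bar\varphi$ by an automorphism of its target so that the adjusted extension lies in $\calf_0$. For part~(a) the adjustment uses that each element of $\autf(Q)$ decomposes modulo $O^p(\autf(Q))$ as one induced by $\Aut_S(Q)$, and the hyperfocal hypothesis $\hyp(\calf) \le S_0$ lets one replace that $\Aut_S$-part by an $\Aut_{S_0}$-part up to an element of $O^p$. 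For part~(b) the analogous splitting is provided by the Schur--Zassenhaus theorem applied to $O^{p'}(\autf(Q)) \nsg \autf(Q)$. Axiom~(I) is comparatively easy, since by construction $\Aut_{S_0}(P)$ (respectively $\Aut_S(P)$) is a Sylow $p$-subgroup of $\Aut_{\calf_0}(P)$ whenever $P$ is fully $\calf_0$-normalized.
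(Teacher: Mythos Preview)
You are proving the wrong thing. The theorem is quoted from \cite[Theorems~4.3 \& 5.4]{BCGLO2}; the paper explicitly takes existence and uniqueness of the subsystems as given there. Its own proof consists of a single paragraph establishing the one claim \emph{not} in \cite{BCGLO2}: that $\hyp(\calf)\lneqq S$ is equivalent to $\foc(\calf)\lneqq S$. The argument shows $\foc(\calf)=\hyp(\calf)\cdot[S,S]$ by using Theorem~\ref{AFT} to reduce to fully normalized $P$, splitting $\autf(P)=O^p(\autf(P))\cdot\Aut_S(P)$, and noting $s^{-1}\alpha(s)\in[S,S]$ when $\alpha\in\Aut_S(P)$; the equivalence then follows because any index-$p$ normal subgroup of $S$ contains $[S,S]$. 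Your proposal does not touch this point at all.

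Regarding your attempt to reprove the cited material: the broad shape for part~(a) is reasonable (and not far from the approach in \cite{BCGLO2}), but the sketch for axiom~(II) hides real work; \cite{BCGLO2} develops a substantial ``fusion mapping triple'' framework precisely to handle this. For part~(b), your definition of $\Gamma$ is not well-posed: you speak of restricting $\alpha\in\autf(S)$ to a fully normalized $P$, but $\alpha$ need not send $P$ to itself, so ``restriction'' is undefined. The actual $\Gamma$ in \cite{BCGLO2} is constructed via a surjection from $\outf(S)$ onto a certain quotient (roughly, the abelianization of a category built from $\calf$), and identifying it directly in terms of automorphism groups as you attempt requires more care.
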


\begin{proof}  The only part not shown in \cite{BCGLO2} is that 
$\hyp(\calf)\lneqq{}S$ implies $\foc(\calf)\lneqq{}S$.  By Theorem 
\ref{AFT}, 
	\[ \foc(\calf) = \gen{ s^{-1}\alpha(s)\,|\, s\in{}P\le{}S,\ P 
	\textup{ fully normalized in $\calf$},\ \alpha\in\autf(P)}~. \]
Since $\autf(P)=O^p(\autf(P)){\cdot}\Aut_S(P)$ when $P$ is fully 
normalized, and since $s^{-1}\alpha(s)\in[S,S]$ when $s\in{}P$ and 
$\alpha\in\Aut_S(P)$, we have $\foc(\calf)=\hyp(\calf){\cdot}[S,S]$.  
Also, $\hyp(\calf)\lneqq{}S$ implies there is $Q\nsg{}S$ such that 
$[S:Q]=p$ and $\hyp(\calf)\le{}Q$.  Then $[S,S]\le{}Q$ since $S/Q$ is 
abelian, and hence $\foc(\calf)\le{}Q\lneqq{}S$.
\end{proof}

One can show, in the situation of Theorem \ref{t:O^p(F)}, that a fusion 
subsystem of $p$-power index is normal in $\calf$ exactly when its 
underlying $p$-group is normal in $S$, and that a fusion subsystem 
$\calf_0\subseteq\calf$ of index prime to $p$ is normal in $\calf$ exactly 
when $\Aut_{\calf_0}(S)$ is normal in $\autf(S)$.  But in fact, we will 
only be concerned here (in Proposition \ref{fus-ex}(a,b)) with the minimal 
such fusion subsystems, defined as follows.

\begin{Defi} \label{d:Op-p'}
For any saturated fusion system $\calf$ over a finite $p$-group $S$, 
$O^p(\calf)$ and $O^{p'}(\calf)$ denote the unique minimal saturated 
fusion subsystems of $p$-power index over $\hyp(\calf)$, or of index 
prime to $p$ over $S$, respectively.
\end{Defi}

We next recall the definitions of the normalizer fusion systems 
$N_\calf^K(Q)$ (cf. \cite[\S\,2.8]{Puig} or \cite[Definitions A.1, 
A.3]{BLO2}).  For any group $G$, any subgroup $Q\le{}G$, and any 
$K\le\Aut(Q)$, define 
	\[ N_G^K(Q) = \big\{g\in N_G(Q) \,\big|\, c_g|_Q\in K\big\}\,. \]
For example, $N_G^{\Aut(Q)}(Q)=N_G(Q)$ is the usual normalizer, and 
$N_G^{\{\Id\}}(Q)=C_G(Q)$ is the centralizer.  

Let $\calf$ be a saturated fusion system over a finite $p$-group $S$, and 
fix $Q\le{}S$ and $K\le\Aut(Q)$.  We say $Q$ is \emph{fully 
$K$-normalized} if for each $Q^*$ which is $\calf$-conjugate to $Q$ and each 
$\varphi\in\isof(Q,Q^*)$, $|N_S^K(Q)|\ge|N_S^{\varphi{}K\varphi^{-1}}(Q^*)|$.  
Let $N_\calf^K(Q)$ be the fusion system over $N_S^K(Q)$ defined by 
setting, for all $P,R\le{}N_S^K(Q)$,
	\begin{multline*}  
	\Hom_{N_\calf^K(Q)}(P,R) = \bigl\{ \varphi\in\homf(P,R) 
	\,\big|\, \exists\, \widebar{\varphi}\in\homf(PQ,RQ),\\ 
	\widebar{\varphi}|_P=\varphi,\ \widebar{\varphi}(Q)=Q,\ 
	\widebar{\varphi}|_Q\in{}K \bigr\}~. 
	\end{multline*}
As special cases, $C_\calf(Q)=N_\calf^{\{\Id\}}(Q)$ and 
$N_\calf(Q)=N_\calf^{\Aut(Q)}(Q)$.  By \cite[Proposition 2.15]{Puig} or 
\cite[Proposition A.6]{BLO2}, if $Q$ is fully $K$-normalized in $\calf$, 
then $N_\calf^K(Q)$ is a saturated fusion system.  If $K\ge\Inn(Q)$, then 
$Q$ is normal in $N_\calf^K(Q)$ by definition.

This construction is motivated by the following proposition.

\begin{Prop} \label{N_F=F(N_G)}
Fix a finite group $G$ and $S\in\sylp{G}$, and set $\calf=\calf_S(G)$.  For 
$Q\le{}S$ and $K\le\Aut(Q)$, $Q$ is fully $K$-normalized in $\calf$ if and 
only if $N_S^K(Q)\in\sylp{N_G^K(Q)}$.  When this is the case, then 
$N_\calf^K(Q)=\calf_{N_S^K(Q)}(N_G^K(Q))$.  
\end{Prop}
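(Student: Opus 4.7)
The plan is to prove the two assertions in sequence: first the Sylow characterization of full $K$-normalization, then the identification of the two fusion systems.

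For the first assertion, I would begin with the conjugation calculation: for any $g\in G$ with $Q^*=gQg^{-1}$, writing $\varphi = c_g|_Q \in\isof(Q,Q^*)$, a direct verification shows
$$g\, N_G^K(Q)\, g^{-1} = N_G^{\varphi K\varphi^{-1}}(Q^*),$$
because $h\in N_G^{\varphi K\varphi^{-1}}(Q^*)$ is equivalent to $g^{-1}hg\in N_G^K(Q)$. Since every $\varphi\in\isof(Q,Q^*)$ is realized by some such $g\in G$, all the subgroups $N_G^{\psi K\psi^{-1}}(Q^*)$ obtained as $Q^*$ ranges over $\calf$-conjugates of $Q$ and $\psi$ over $\isof(Q,Q^*)$ are $G$-conjugate and hence share the common order $|N_G^K(Q)|$. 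Consequently
$$|N_S^{\psi K\psi^{-1}}(Q^*)| \le |N_G^K(Q)|_p$$
for every such choice. To attain this upper bound, I would take a Sylow $p$-subgroup $P$ of $N_G^K(Q)$, extend it to a Sylow $p$-subgroup $S'$ of $G$, and pick $g\in G$ with $gS'g^{-1}=S$. Setting $Q^* = gQg^{-1}$ and $\varphi = c_g|_Q$, the group $gPg^{-1}\le S$ is also contained in $gN_G^K(Q)g^{-1} = N_G^{\varphi K\varphi^{-1}}(Q^*)$, hence in $N_S^{\varphi K\varphi^{-1}}(Q^*)$; by cardinality, equality holds and the latter is a Sylow $p$-subgroup of $N_G^{\varphi K\varphi^{-1}}(Q^*)$. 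The definition of ``fully $K$-normalized'' then reduces to $|N_S^K(Q)|=|N_G^K(Q)|_p$, i.e., $N_S^K(Q)\in\sylp{N_G^K(Q)}$.

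For the second assertion, assume this Sylow condition. Both fusion systems sit over $N_S^K(Q)$, so I only need to match their morphism sets. If $\varphi\in\Hom_{N_\calf^K(Q)}(P,R)$ with extension $\widebar\varphi\in\homf(PQ,RQ)$ satisfying $\widebar\varphi(Q)=Q$ and $\widebar\varphi|_Q\in K$, then writing $\widebar\varphi=c_h|_{PQ}$ for some $h\in G$ with $h(PQ)h^{-1}\le RQ$ forces $h\in N_G(Q)$ and $c_h|_Q\in K$, i.e., $h\in N_G^K(Q)$, so $\varphi=c_h|_P$ lies in $\Hom_{N_G^K(Q)}(P,R)$. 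Conversely, any $c_h|_P$ with $h\in N_G^K(Q)$ and $hPh^{-1}\le R$ admits $c_h|_{PQ}$ as an extension of the required form, since $h$ also normalizes $Q$ and acts on it through $K$. This gives the equality of fusion systems.

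The main obstacle is really the bookkeeping in the Sylow argument: keeping straight which conjugate $\varphi K\varphi^{-1}$ of $K$ corresponds to which element $g\in G$, and ensuring the Sylow $p$-subgroup of $N_G^K(Q)$ is conjugated back inside $S$ in a way that still witnesses the $K$-normalization of the image of $Q$. Once that conjugation calculation is firmly in hand, both assertions follow without further input, and in particular no appeal to saturation of $N_\calf^K(Q)$ is needed for the comparison of fusion systems.
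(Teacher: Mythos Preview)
Your approach is essentially the paper's, and the second half (identifying the two fusion systems) matches it almost word for word. There is, however, one small gap in your Sylow argument: when you take $P\in\sylp{N_G^K(Q)}$ and ``extend it to a Sylow $p$-subgroup $S'$ of $G$'', nothing guarantees $Q\le S'$, and hence nothing guarantees that $Q^*=gQg^{-1}$ lies in $S$. Without $Q^*\le S$, the pair $(Q^*,\varphi)$ is not among those appearing in the definition of ``fully $K$-normalized in $\calf$'', so your construction does not yet witness that the upper bound $|N_G^K(Q)|_p$ is attained.

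The fix is immediate and is exactly what the paper does: since $P\le N_G(Q)$ and $Q$ is a $p$-group, $PQ$ is a $p$-group, so extend $PQ$ (rather than just $P$) to $S'\in\sylp{G}$, or equivalently choose $a\in G$ with $a(PQ)a^{-1}\le S$. Then $Q^*=aQa^{-1}\le S$ and $aPa^{-1}\le N_S^{\varphi K\varphi^{-1}}(Q^*)$ with $|aPa^{-1}|=|N_G^K(Q)|_p$, forcing equality. With that adjustment your proof is complete and coincides with the paper's.
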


\begin{proof}  Fix any $T\in\sylp{N_G^K(Q)}$.  Since $T$ normalizes $Q$, 
$TQ$ is a $p$-group, and hence there is $a\in{}G$ such that 
$a(TQ)a^{-1}\le{}S$.  Set $Q^*=aQa^{-1}$, $\varphi=c_a\in\isof(Q,Q^*)$, and 
$K^*=\varphi{}K\varphi^{-1}\le\Aut(Q^*)$.  Then 
$aTa^{-1}\in\sylp{N_G^{K^*}(Q^*)}$ and $aTa^{-1}\le{}S$, so 
$aTa^{-1}=N_S^{K^*}(Q^*)$.  It follows that $Q^*$ is fully $K^*$-normalized 
in $\calf$; and also that $Q$ is fully $K$-normalized if and only if 
$|N_S^K(Q)|=|T|$, if and only if $N_S^K(Q)\in\sylp{N_G^K(Q)}$.  

Now assume $Q$ is fully $K$-normalized.  
Set $N_\calf=N_\calf^K(Q)$, 
$N_G=N_G^K(Q)$, and $N_S=N_S^K(Q)$ for 
short, and fix $P,R\le{}N_S$.  If $\varphi\in\Hom_{N_\calf}(P,R)$, then it 
extends to some $\widebar{\varphi}=c_g\in\homf(PQ,RQ)$ such that 
$\widebar{\varphi}(Q)=Q$ and $\widebar{\varphi}|_Q\in{}K$, so 
$g\in{}N_G$, and $\varphi=c_g\in\Hom_{N_G}(P,R)$.  This proves that 
$\Hom_{N_\calf}(P,R)\subseteq\Hom_{N_G}(P,R)$.  The opposite inclusion 
is clear, and thus $N_\calf=\calf_{N_S}(N_G)$. 
\end{proof}

We now give some examples of normal fusion subsystems:  
examples which will be important later in the paper.  The most obvious 
example is the inclusion $\calf_{S_0}(G_0)\subseteq\calf_S(G)$ when 
$G_0\nsg{}G$ are finite groups and $S_0=S\cap{}G_0$, but this case will be 
handled later (Proposition \ref{L(G0)<|L(G)}).  

\begin{Prop} \label{fus-ex}
The following hold for any saturated fusion system $\calf$ over a finite 
$p$-group $S$.
\begin{enuma}  
\item $O^p(\calf)\nsg\calf$.
\item $O^{p'}(\calf)\nsg\calf$.
\item For each $Q\nsg\calf$ and each $K\nsg\Aut(Q)$, 
$N_\calf^K(Q)\nsg\calf$.
\end{enuma}
\end{Prop}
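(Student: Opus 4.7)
My plan is to verify the three conditions of Definition \ref{d:F0<|F} --- strong closedness of the underlying $p$-group $S_0$, the factorization condition (ii), and the $\autf(S_0)$-invariance condition (iii) --- separately in each of the three cases.

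For (a) and (b), the starting point is Theorem \ref{t:O^p(F)} together with Definition \ref{d:Op-p'}: $O^p(\calf)$ is the unique saturated fusion subsystem over $\hyp(\calf)$ with $\Aut_{O^p(\calf)}(P) \supseteq O^p(\autf(P))$ for every $P \le \hyp(\calf)$, and $O^{p'}(\calf)$ is analogous over $S$ with $\Aut_{O^{p'}(\calf)}(P) \supseteq O^{p'}(\autf(P))$ (and minimal $\Aut_{O^{p'}(\calf)}(S)$). In (b), $S_0 = S$ makes (i) trivial; in (a), strong closedness of $\hyp(\calf)$ follows from its generating description, since for any $\calf$-isomorphism $\psi\colon P \to P'$ and $\alpha \in O^p(\autf(P))$, the image of $s^{-1}\alpha(s)$ is $\psi(s)^{-1}(\psi\alpha\psi^{-1})\psi(s)$ with $\psi\alpha\psi^{-1} \in O^p(\autf(P'))$. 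Condition (ii) follows from the explicit description of these subsystems underlying Theorem \ref{t:O^p(F)}: applying Alperin's theorem (Theorem \ref{AFT}) reduces to factoring $\alpha \in \autf(P)$ with $P$ fully normalized, centric, radical, and then the fact that $\autf(P)/\Aut_{\calf_0}(P)$ is a $p$-group (respectively, of order prime to $p$) permits a Frattini-style factorization $\alpha = \alpha_1 \cdot \alpha_0$ with $\alpha_0 \in \Aut_{\calf_0}(P)$ and $\alpha_1$ extendable to an automorphism of $S_0$. Condition (iii) is immediate since $O^p(\autf(P))$ and $O^{p'}(\autf(P))$ are characteristic in $\autf(P)$, hence preserved by conjugation by elements of $\autf(S_0)$.

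For (c), set $\calf_0 = N_\calf^K(Q)$ over $S_0 = N_S^K(Q)$. Both (i) and (iii) hinge on the same trick: since $Q \nsg \calf$, every morphism in $\calf$ whose domain lies in $S_0$ extends to one fixing $Q$, and $K \nsg \Aut(Q)$ ensures the induced $Q$-restriction conjugates $K$ into $K$. Explicitly, for (i), if $s \in S_0$ and $\varphi \in \isof(\gen{s},\gen{t})$, I extend to $\widebar\varphi\colon \gen{s}Q \to \gen{t}Q$ with $\widebar\varphi(Q) = Q$, set $\beta := \widebar\varphi|_Q$, and find $c_t|_Q = \beta\,c_s|_Q\,\beta^{-1} \in \beta K \beta^{-1} = K$, so $t \in S_0$. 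For (iii), extending both $\beta \in \autf(S_0)$ and $\varphi \in \Hom_{N_\calf^K(Q)}(P,R)$ to $Q$-preserving morphisms $\til\beta$ on $S_0 Q$ and $\widebar\varphi$ on $PQ \to RQ$ (with $\widebar\varphi|_Q \in K$), the conjugate $\til\beta\,\widebar\varphi\,\til\beta^{-1}$ extends $\beta\varphi\beta^{-1}$ and has $Q$-restriction in $K$ by normality.

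The main obstacle is condition (ii). Given $\varphi \in \homf(P,R)$ with $P,R \le S_0$, extend $\varphi$ to $\widebar\varphi\colon PQ \to RQ$ with $\widebar\varphi|_Q = \beta_0 \in \autf(Q)$. I seek $\alpha \in \autf(S_0)$ whose extension $\til\alpha$ to $S_0 Q$ (preserving $Q$) satisfies $\til\alpha|_Q \in \beta_0 K$; then $\varphi_0 := \varphi \circ (\alpha|_P)^{-1}$ lies in $\Hom_{N_\calf^K(Q)}(\alpha(P), R)$ via the extension $\widebar\varphi \circ \til\alpha^{-1}$ (whose $Q$-restriction is $\beta_0(\til\alpha|_Q)^{-1} \in K$), yielding the factorization $\varphi = \varphi_0 \circ \alpha|_P$. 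To produce such $\alpha$, I plan to argue by iterated application of axiom (II) to $\widebar\varphi$, after first replacing $Q$ and $\varphi$ by a fully $K$-normalized $\calf$-conjugate: the obstruction set $N_{\widebar\varphi}$ contains $N_S^K(Q) \cap N_S(PQ)$ because $K \nsg \Aut(Q)$ forces $\beta_0\, c_g|_Q\, \beta_0^{-1} \in K$ for $g \in N_S^K(Q)$, and Alperin's theorem glues the resulting local extensions to the desired automorphism of $S_0$. Verifying this gluing --- in particular, showing that the conjugated automorphism lands in $\Aut_S$ of the relevant target rather than merely in $K$ --- is the heart of the proof and the step I expect to require the most care.
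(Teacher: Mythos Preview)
Your handling of (i) and (iii) in part (c) matches the paper's.  The genuine gap is exactly where you flag it: condition (ii) for $N_\calf^K(Q)$.  Your proposed argument does not close, and the paper's does so by a trick you are missing.

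The claim that $N_{\widebar\varphi}\supseteq N_S^K(Q)\cap N_S(PQ)$ is not justified by $\beta_0\,c_g|_Q\,\beta_0^{-1}\in K$.  Membership in $N_{\widebar\varphi}$ requires $\widebar\varphi\,c_g\,\widebar\varphi^{-1}\in\Aut_S(\widebar\varphi(PQ))$, and landing in $K$ on restriction to $Q$ says nothing about being induced by an element of $S$.  Iterating and invoking Alperin does not repair this: Alperin decomposes morphisms, it does not extend them.  The paper avoids iteration entirely.  Since $K\nsg\Aut(Q)$ and $\Aut_S(Q)\in\sylp{\autf(Q)}$, one has $\Aut_S^K(Q):=K\cap\Aut_S(Q)\in\sylp{\autf^K(Q)}$.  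Normality of $K$ gives $\varphi_0\Aut_S^K(Q)\varphi_0^{-1}\le\autf^K(Q)$, so by Sylow there is $\chi\in\autf^K(Q)$ with $(\chi\varphi_0)\Aut_S^K(Q)(\chi\varphi_0)^{-1}=\Aut_S^K(Q)$.  Now apply axiom (II) once to the automorphism $\chi\varphi_0$ of $Q$: for $g\in N_S^K(Q)$ one has $c_g|_Q\in\Aut_S^K(Q)$, hence $(\chi\varphi_0)c_g(\chi\varphi_0)^{-1}\in\Aut_S^K(Q)\le\Aut_S(Q)$, so $N_S^K(Q)\cdot Q\le N_{\chi\varphi_0}$.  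The resulting extension $\widebar\varphi$ is defined on all of $S_0\cdot Q$ at once and sends $S_0$ to itself; set $\alpha=\widebar\varphi|_{S_0}$ and $\varphi_0=\widehat\varphi\circ(\widebar\varphi|_{PQ})^{-1}$ restricted to $\alpha(P)$, whose $Q$-restriction is $\chi^{-1}\in K$.

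For (a) and (b) your sketch is looser than you may realise.  Your strong-closure argument for $\hyp(\calf)$ only shows that $\calf$-isomorphisms carry \emph{generators} $s^{-1}\alpha(s)$ to elements of the same form; a general element of $\hyp(\calf)$ is a product of such, and an $\calf$-isomorphism out of $\gen{x}$ need not extend far enough to decompose $x$.  The paper instead uses Alperin to reduce to showing $P\cap\hyp(\calf)$ is $\autf(P)$-invariant for $P$ fully normalized, then writes $\autf(P)=\Aut_S(P)\cdot O^p(\autf(P))$.  For condition (ii) in (a) and (b), your ``Frattini-style'' factorization $\alpha=\alpha_1\alpha_0$ in $\autf(P)$ leaves unexplained why $\alpha_1$ extends to $\autf(S_0)$; this is the substantive content, and the paper simply cites \cite[Lemma 3.4, Proposition 3.8]{BCGLO2}.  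For condition (iii), invariance of $O^p(\autf(P))$ is not enough since $\Aut_{\calf_0}(P)$ can be strictly larger; the paper instead uses the \emph{uniqueness} of $O^p(\calf)$ and $O^{p'}(\calf)$ from Theorem~\ref{t:O^p(F)}, which forces $\autf(S_0)$-invariance of the whole subsystem.
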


\begin{proof}  \textbf{(a,b) } The subsystem $O^{p'}(\calf)$ is a fusion 
system over $S$, which is clearly strongly closed.  The subsystem 
$O^p(\calf)$ is a fusion system over the hyperfocal subgroup
	\[ \hyp(\calf) = \Gen{s^{-1}\alpha(s) 
	\,\big|\, s\in{}P\le{}S,\alpha\in O^p(\autf(P))}. \]
We claim $\hyp(\calf)$ is also strongly closed in $\calf$.  By Theorem 
\ref{AFT}, it suffices to show, for each $P\le{}S$ fully normalized 
in $\calf$, that $P\cap\hyp(\calf)$ is $\autf(P)$-invariant.  Since 
$\Aut_S(P)\in\sylp{\autf(P)}$, $\autf(P)$ is generated by $\Aut_S(P)$ and 
$O^p(\autf(P))$.  Finally, $P\cap\hyp(\calf)$ is $\Aut_S(P)$-invariant 
since $\hyp(\calf)\nsg{}S$ and is $O^p(\autf(P))$-invariant by 
definition of $\hyp(\calf)$, and this proves the claim.

Condition (ii) in Definition \ref{d:F0<|F} holds for these two subsystems 
by Lemma 3.4 and Proposition 3.8(b,c) in \cite{BCGLO2}.  Condition 
(iii) (invariance under conjugation by elements of $\autf(S_0)$) follows 
from \cite[Theorems 4.3 \& 5.4]{BCGLO2}:  $O^p(\calf)$ is the unique 
saturated fusion subsystem over $\hyp(\calf)$ of $p$-power index in 
$\calf$, and $O^{p'}(\calf)$ is the unique saturated fusion subsystem over 
$S$ with its given $\Aut_{O^{p'}(\calf)}(S)$, of index prime to $p$ in 
$\calf$.  

\smallskip

\noindent\textbf{(c) } Assume $g,h\in{}S$ are $\calf$-conjugate.  Since 
$Q\nsg\calf$, there is $\varphi\in\homf(\gen{Q,g},\gen{Q,h})$ such that 
$\varphi(g)=h$ and $\varphi(Q)=Q$.  Set $\varphi_0=\varphi|_Q\in\autf(Q)$. 
Then $c_h=\varphi_0{}c_g\varphi_0^{-1}$ in $\autf(Q)$.  Since 
$K\nsg\Aut(Q)$, $g\in{}N_S^K(Q)$ (i.e., $c_g\in{}K$) if and only if 
$h\in{}N_S^K(Q)$.  This proves that $N_S^K(Q)$ is strongly closed in 
$\calf$.

Set $\Aut_S^K(Q)=K\cap\Aut_S(Q)$ and $\autf^K(Q)=K\cap\autf(Q)$.  
Fix $P,R\le{}N_S^K(Q)$ and $\varphi\in\homf(P,R)$.  Since $Q\nsg\calf$, 
there is $\widehat{\varphi}\in\homf(QP,QR)$ such that 
$\widehat{\varphi}|_P=\varphi$ and $\widehat{\varphi}(Q)=Q$.  Set 
$\varphi_0=\widehat{\varphi}|_Q\in\autf(Q)$.  Since $K\nsg\Aut(Q)$ and 
$\Aut_S(Q)\in\sylp{\autf(Q)}$, $\Aut_S^K(Q)\in\sylp{\autf^K(Q)}$.  Since 
$\varphi_0\Aut_S^K(Q)\varphi_0^{-1}$ is contained in $\autf^K(Q)$ 
(again since $K$ is normal), there is $\chi\in\autf^K(Q)$ such that 
	\[ (\chi\varphi_0)\Aut_S^K(Q)(\chi\varphi_0)^{-1} =
	\Aut_S^K(Q)~. \]
By the extension axiom, there is 
$\widebar{\varphi}\in\homf(N_S^K(Q){\cdot}Q,S)$ such that 
$\widebar{\varphi}|_Q=\chi\varphi_0$.  Furthermore, 
$\widebar{\varphi}(N_S^K(Q))=N_S^K(Q)$ since $\chi\varphi_0$ normalizes 
$\Aut_S^K(Q)$.  

Set $P_1=\widebar{\varphi}(P)$, $\widehat{\psi}=
\widehat{\varphi}\circ(\widebar{\varphi}|_{PQ,P_1Q})^{-1} 
\in\homf(P_1Q,RQ)$, and $\psi=\widehat{\psi}|_{P_1,R}$.  Then 
$\widehat{\psi}|_Q=\chi^{-1}$, so $\psi\in\Hom_{N_\calf^K(Q)}(P_1,R)$, and 
$\varphi=\psi\circ\widebar{\varphi}|_{P,P_1}$.  This proves condition (ii) 
in Definition \ref{d:F0<|F}.  The last condition --- the subsystem 
$N_\calf^K(Q)$ is invariant under conjugation by elements of 
$\autf(N_S^K(Q))$ --- is clear.
\end{proof}

We just showed that $O^{p'}(\calf)$ is normal in $\calf$ for any $\calf$.  
The following lemma can be thought of as a ``converse'' to this.

\begin{Lem} \label{F0<|F,S0=S}
Assume $\calf_0\nsg\calf$ is a normal pair of fusion systems over the same 
finite $p$-group $S$.  Then $\calf_0$ has index prime to $p$ in $\calf$, 
and thus $\calf_0\supseteq{}O^{p'}(\calf)$.
\end{Lem}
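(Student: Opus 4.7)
Since $\calf_0$ is a fusion system over $S_0 = S$, by Definition \ref{d:p-p'-index}(b) we only need to verify that $O^{p'}(\autf(P)) \le \Aut_{\calf_0}(P)$ for every $P \le S$. First I would handle the case where $P$ is fully normalized in $\calf$. Saturation of $\calf_0$ gives $\Aut_S(P) \le \Aut_{\calf_0}(P)$, while saturation of $\calf$ and the fully-normalized hypothesis give $\Aut_S(P) \in \sylp{\autf(P)}$. Thus $\Aut_{\calf_0}(P)$ contains a Sylow $p$-subgroup of $\autf(P)$. On the other hand, taking $\varphi \in \autf(P)$ in Lemma \ref{F0<|F}(b) shows that $\Aut_{\calf_0}(P) \nsg \autf(P)$. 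A standard observation: a normal subgroup that contains a Sylow $p$-subgroup has index prime to $p$ (any $p$-element of the ambient group is conjugate into the Sylow, hence lies in the normal subgroup). Therefore $\autf(P)/\Aut_{\calf_0}(P)$ is a $p'$-group, which forces $O^{p'}(\autf(P)) \le \Aut_{\calf_0}(P)$.

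For an arbitrary $P \le S$, I would pick $Q$ in the $\calf$-conjugacy class of $P$ that is fully normalized in $\calf$, and any $\varphi \in \isof(P,Q)$. Then $\varphi \autf(P) \varphi^{-1} = \autf(Q)$, and Lemma \ref{F0<|F}(b) gives $\varphi \Aut_{\calf_0}(P) \varphi^{-1} = \Aut_{\calf_0}(Q)$. Conjugation by $\varphi$ is therefore an isomorphism of pairs, so $O^{p'}(\autf(P)) = \varphi^{-1}\, O^{p'}(\autf(Q))\, \varphi \le \varphi^{-1}\, \Aut_{\calf_0}(Q)\, \varphi = \Aut_{\calf_0}(P)$, completing the verification of Definition \ref{d:p-p'-index}(b).

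Finally, having shown that $\calf_0$ has index prime to $p$ in $\calf$, the inclusion $\calf_0 \supseteq O^{p'}(\calf)$ is immediate from Definition \ref{d:Op-p'}, which characterises $O^{p'}(\calf)$ as the unique minimal saturated fusion subsystem of index prime to $p$ (combine with Theorem \ref{t:O^p(F)}(b), which parametrises such subsystems by overgroups of a fixed $\Gamma \nsg \outf(S)$).

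The only step requiring any real input is the normality of $\Aut_{\calf_0}(P)$ in $\autf(P)$; everything else is Sylow theory plus a transport-of-structure argument. I do not expect any serious obstacles, since Lemma \ref{F0<|F}(b) packages the conjugation invariance in condition (iii) of Definition \ref{d:F0<|F} in exactly the form needed.
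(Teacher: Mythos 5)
Your proof is correct and follows essentially the same route as the paper's: Sylow containment $\Aut_S(P)\le\Aut_{\calf_0}(P)$ at a fully normalized subgroup, normality of $\Aut_{\calf_0}(P)$ in $\autf(P)$ via Lemma \ref{F0<|F}(b), and transport to arbitrary $P$ by conjugation. The only (harmless) difference is that you conjugate along $\calf$-isomorphisms directly, which lets you skip the paper's preliminary observation that a subgroup is fully normalized in $\calf_0$ if and only if it is fully normalized in $\calf$.
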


\begin{proof}  If $P,Q\le{}S$ are $\calf$-conjugate, then by condition 
(ii) in Definition \ref{d:F0<|F}, $P$ is $\calf_0$-conjugate to 
$\alpha(Q)$ for some $\alpha\in\autf(S)$.  Since 
$|N_S(Q)|=|N_S(\alpha(Q))|$, this shows that $P$ is fully normalized in 
$\calf_0$ if and only if it is fully normalized in $\calf$.

If $P\le{}S$ is fully normalized in $\calf_0$ (and hence in $\calf$), then 
$\Aut_{\calf_0}(P)$ contains $\Aut_S(P)\in\sylp{\autf(P)}$.  Also, since 
$\Aut_{\calf_0}(P)$ is normal in $\autf(P)$ by Lemma \ref{F0<|F}(b), 
$\Aut_{\calf_0}(P)$ contains $O^{p'}(\autf(P))$.  Since this 
property depends only on the isomorphism class of $P$ in $\calf_0$, it holds 
for all $P\le{}S$.  So $\calf_0$ has index prime to $p$ in $\calf$ by 
Definition \ref{d:p-p'-index}(b).  
\end{proof}

\newsubb{Normal linking subsystems}{s:normlink}

The following definition of a normal linking subsystem seems to be the most 
appropriate one for our needs here; it is also the one used in \cite{O3}.  
In the following definition (and elsewhere), whenever we say that 
$\call_0\subseteq\call$ is a \emph{pair of linking systems} associated to 
$\calf_0\subseteq\calf$ (or $\call_0$ is a \emph{linking subsystem}), it is 
understood not only that $\call_0$ is a subcategory of $\call$, but also 
that the structural functors for $\call_0$ are the restrictions of the 
structural functors 
$\calt_{\Ob(\call)}(S)\Right2{\delta}\call\Right2{\pi}\calf$ for $\call$.

\begin{Defi} \label{d:L0<|L}
Fix a pair of saturated fusion systems $\calf_0\subseteq\calf$ over 
finite $p$-groups $S_0\nsg{}S$ such that $\calf_0\nsg\calf$, and let 
$\call_0\subseteq\call$ be a pair of associated linking systems.  Then 
$\call_0$ is \emph{normal} in $\call$ ($\call_0\nsg\call$) if 
\begin{enumerate}[\rm(i) ]
\item $\Ob(\call)=\{P\le{}S \,|\, P\cap{}S_0\in\Ob(\call_0) \}$; 

\item for all $P,Q\in\Ob(\call_0)$ and $\psi\in\Mor_{\call}(P,Q)$, 
there are morphisms $\gamma\in\Aut_\call(S_0)$ and 
$\psi_0\in\Mor_{\call_0}(\gamma(P),Q)$ such that 
$\psi=\psi_0\circ\gamma|_{P,\gamma(P)}$; and 

\item for all $\gamma\in\Aut_\call(S_0)$ and $\psi\in\Mor(\call_0)$, 
$\gamma\psi\gamma^{-1}\in\Mor(\call_0)$.  
\end{enumerate}
Here, for $P,Q\in\Ob(\call_0)$, and $\psi\in\Mor_{\call_0}(P,Q)$, we write
$\gamma(P)=\pi(\gamma)(P)$, $\gamma(Q)=\pi(\gamma)(Q)$, and
	\[ \gamma\psi\gamma^{-1} = 
	\gamma|_{Q,\gamma(Q)}\circ\psi\circ (\gamma|_{P,\gamma(P)})^{-1} 
	\in \Mor_{\call}(\gamma(P),\gamma(Q)) \]
for short.  For any such pair $\call_0\nsg\call$, the quotient group 
$\call/\call_0$ is defined by setting
	\[ \call/\call_0 = \Aut_\call(S_0) / \Aut_{\call_0}(S_0). \]
Also, $\call_0$ is \emph{centric} in $\call$ if for each 
$\gamma\in\Aut_\call(S_0){\sminus}\Aut_{\call_0}(S_0)$, there is 
$\psi\in\Mor(\call_0)$ such that $\gamma\psi\gamma^{-1}\ne\psi$.
\end{Defi}

In the situation of Definition \ref{d:L0<|L}, we will sometimes say that 
$\call_0\nsg\call$ is a \emph{normal pair of linking systems} associated 
to $\calf_0\nsg\calf$, or just that $\SFL[_0]\nsg\SFL$ is a \emph{normal 
pair}.  

One source of normal pairs of linking systems is a normal pair of finite 
groups; at least, under certain conditions.

\begin{Prop} \label{L(G0)<|L(G)}
Fix a pair $G_0\nsg{}G$ of finite groups, choose $S\in\sylp{G}$, and set 
$S_0=S\cap{}G_0\in\sylp{G_0}$.  Then $\calf_{S_0}(G_0)\nsg\calf_S(G)$.  
Assume in addition that  
$\calh_0$ and $\calh$ are sets of subgroups of $S_0$ and $S$, respectively, 
such that $\call_{S_0}^{\calh_0}(G_0)$ and $\call_S^\calh(G)$ are linking 
systems associated to $\calf_{S_0}(G_0)$ and $\calf_S(G)$, and such that 
$\calh=\{P\le{}S\,|\,P\cap{}S_0\in\calh_0\}$.  Then 
$\call_{S_0}^{\calh_0}(G_0)\nsg\call_S^\calh(G)$.
\end{Prop}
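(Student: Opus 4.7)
The plan is to reduce both conclusions to axiom-checking, using as the main tool the Frattini argument: since $G_0 \nsg G$ and $S_0 \in \sylp{G_0}$, every $g \in G$ factors as $g = hn$ with $h \in G_0$ and $n \in N_G(S_0)$. Write $\calf = \calf_S(G)$, $\calf_0 = \calf_{S_0}(G_0)$, $\call = \call_S^\calh(G)$, and $\call_0 = \call_{S_0}^{\calh_0}(G_0)$ throughout.

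For the fusion claim $\calf_0 \nsg \calf$, I would verify the three conditions of Definition \ref{d:F0<|F} in turn. Condition (i) is immediate: if $x \in S_0$ and $g \in G$ satisfy $gxg^{-1} \in S$, then $gxg^{-1} \in G_0$ by normality of $G_0$, so $gxg^{-1} \in S \cap G_0 = S_0$. For condition (ii), given $\varphi = c_g \in \homf(P,Q)$ with $P,Q \le S_0$, the Frattini factorization $g = hn$ produces $\alpha = c_n|_{S_0} \in \autf(S_0)$ and $\varphi_0 = c_h|_{\alpha(P),Q} \in \Hom_{\calf_0}(\alpha(P),Q)$ satisfying $\varphi = \varphi_0 \circ \alpha|_{P,\alpha(P)}$; here $\alpha(P) = nPn^{-1} \le S_0$ because $n$ normalizes $S_0$, and $h \in G_0$. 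Condition (iii) follows from the direct computation $c_n c_h c_n^{-1} = c_{nhn^{-1}}$ together with $nhn^{-1} \in G_0$.

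For the linking claim $\call_0 \nsg \call$, I would first justify that the natural functor $\call_0 \to \call$ is a faithful inclusion. Its action on morphism sets is $N_{G_0}(P,Q)/O^p(C_{G_0}(P)) \to N_G(P,Q)/O^p(C_G(P))$, and injectivity reduces to the identity $O^p(C_{G_0}(P)) = G_0 \cap O^p(C_G(P))$ for every $P \in \calh_0$. The inclusion $\subseteq$ holds since $O^p$ is generated by $p'$-elements; for $\supseteq$, note that $C_{G_0}(P) = C_G(P) \cap G_0$ is normal in $C_G(P)$, so $G_0 \cap O^p(C_G(P))$ is a normal $p'$-subgroup of $C_{G_0}(P)$ (the $p'$-property uses the $G$-quasicentricity of $P \in \calh$), hence lies in $O_{p'}(C_{G_0}(P)) = O^p(C_{G_0}(P))$, where the last equality comes from the $G_0$-quasicentricity of $P \in \calh_0$. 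Then I would check the three conditions of Definition \ref{d:L0<|L}. Condition (i) is precisely the stated hypothesis on $\calh$. For condition (ii), given $\psi = [g] \in \Mor_\call(P,Q)$ with $P,Q \in \calh_0$, I write $g = hn$ as above and set $\gamma = [n] \in \Aut_\call(S_0)$ and $\psi_0 = [h] \in \Mor_{\call_0}(\gamma(P),Q)$; here $\gamma(P) = nPn^{-1} \le S_0$ is in $\calh_0$ because $\calh$ is $\calf$-conjugacy invariant (forcing $nPn^{-1} \in \calh$) and $nPn^{-1} \cap S_0 = nPn^{-1}$, so the hypothesis on $\calh$ yields $nPn^{-1} \in \calh_0$. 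The relation $\psi = \psi_0 \circ \gamma|_{P,\gamma(P)}$ is then the composition rule $[h][n] = [hn] = [g]$. Finally, condition (iii) is $\gamma[h]\gamma^{-1} = [nhn^{-1}]$ together with $nhn^{-1} \in G_0$.

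The only step requiring any real argument beyond mechanical axiom checking is the identity $O^p(C_{G_0}(P)) = G_0 \cap O^p(C_G(P))$, which ensures $\call_0$ really sits inside $\call$ as a subcategory rather than merely as a quotient; every other step is a direct consequence of the Frattini factorization and the normality of $G_0$ in $G$.
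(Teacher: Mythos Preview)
Your proof is correct and follows the same Frattini-argument approach as the paper, which only explicitly treats condition~(ii) and dismisses the remaining conditions as clear. You are in fact more careful than the paper: your verification that $O^p(C_{G_0}(P)) = G_0 \cap O^p(C_G(P))$, ensuring that $\call_0$ genuinely embeds as a subcategory of $\call$, is a point the paper glosses over entirely.
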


\begin{proof}  Fix $P,Q\le{}S_0$ and $g\in{}N_G(P,Q)$.  Then $gS_0g^{-1}$ 
is another Sylow $p$-subgroup of $G_0$, so there is some $h\in{}G_0$ such 
that $(h^{-1}g)S_0(h^{-1}g)^{-1}=S_0$.  Set $a=h^{-1}g$; thus $g=ha$ where 
$a\in{}N_G(S_0)$ and $h\in{}G_0$.  Thus $c_g=c_h\circ{}c_a\in\Hom_G(P,Q)$, 
where $c_a\in\Aut_G(S_0)$ and $c_h\in\Hom_{G_0}(aPa^{-1},Q)$.  This proves 
condition (ii) in the definition of a normal fusion system; and condition 
(ii) in Definition \ref{d:L0<|L} follows in a similar way.  The other 
conditions clearly hold.
\end{proof}


When $\SFL[_0]\nsg\SFL$ is a normal pair, then for 
each $\gamma\in\Aut_\call(S_0)$, we let $c_\gamma\in\Aut(\call_0)$ 
denote the automorphism which sends $P$ to $\gamma(P)=\pi(\gamma)(P)$ and 
sends $\psi\in\Mor_{\call_0}(P,Q)$ to 
$(\gamma|_{Q,\gamma(Q)})\circ\psi\circ(\gamma|_{P,\gamma(P)})^{-1}$.  
The next lemma describes how to tell, in terms only of the 
fusion system $\calf$, whether or not $c_{\delta(g)}=\Id_{\call_0}$ 
for $g\in{}S$ ($\delta=\delta_{S_0}$).

When $\call$ is a linking system associated to $\calf$, and $A\nsg\calf$, we 
say that an automorphism $\alpha$ of $\call$ is the \emph{identity modulo $A$} 
if for each $P,Q\in\Ob(\call)$ which contain $A$ and each 
$\psi\in\Mor_\call(P,Q)$, $\alpha(P)=P$, $\alpha(Q)=Q$, and 
$\alpha(\psi)=\psi\circ\delta_P(a)$ for some $a\in{}A$.  

\begin{Lem} \label{g-on-L0}
Let $\SFL[_0]\nsg\SFL$ be a normal pair such that all objects in $\call$ 
are $\calf$-centric.  Fix $A\nsg\calf_0$.  Then for $g\in{}S$, 
$c_{\delta(g)}\in\Aut(\call_0)$ is the identity modulo $A$ if and only if 
$[g,S_0]\le{}A$, and for each $P,Q\le{}S_0$ and 
$\varphi\in\Mor_{\calf_0}(P,Q)$, $\varphi$ extends to some 
$\widebar{\varphi}\in\Mor_\calf(\gen{PA,g},\gen{QA,g})$ such that 
$\widebar{\varphi}(g)\in{}gA$.
\end{Lem}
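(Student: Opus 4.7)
The plan is to prove the two implications separately. Throughout write $\gamma=\delta_{S_0}(g)\in\Aut_\call(S_0)$, so $c_\gamma=c_{\delta(g)}$.

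\emph{Forward direction.} Assume $c_\gamma$ is the identity modulo $A$. Since $A\nsg\calf_0$ gives $A\le O_p(\calf_0)\le S_0\in\Ob(\call_0)$, apply the hypothesis with $P=Q=S_0$ and $\psi=\delta_{S_0}(s)$ for each $s\in S_0$: axiom (C) of Definition \ref{d:L} gives $c_\gamma(\delta_{S_0}(s))=\delta_{S_0}(gsg^{-1})$, which equals $\delta_{S_0}(sa)$ for some $a\in A$, so injectivity of $\delta_{S_0}$ (Proposition \ref{L-prop}(c)) yields $gsg^{-1}\in sA$, i.e.\ $[g,S_0]\le A$; in particular $gAg^{-1}=A$. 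For the extension property, write $\varphi\in\Mor_{\calf_0}(P,Q)$ via Alperin's fusion theorem (Theorem \ref{AFT}) in $\calf_0$ as a composite of an inclusion and restrictions of automorphisms $\alpha_i\in\Aut_{\calf_0}(R_i)$ with $R_i$ fully $\calf_0$-normalized, $\calf_0$-centric, and $\calf_0$-radical. By Proposition \ref{norm<=>}, $A\le R_i\in\Ob(\call_0)$, so $g$ normalizes $R_i$. Lift $\alpha_i$ to $\widetilde\alpha_i\in\Aut_{\call_0}(R_i)$; the identity-modulo-$A$ hypothesis yields $c_\gamma(\widetilde\alpha_i)=\widetilde\alpha_i\circ\delta_{R_i}(a_i)$ with $a_i\in A$, which rearranges (using axiom (C) together with $\alpha_i(A)=A$) to a commutation $\widetilde\alpha_i\circ\delta_{R_i}(\eta)=\delta_{R_i}(h_\eta)\circ\widetilde\alpha_i$ with $h_\eta\in\gen{R_i,g}$ for each $\eta\in\gen{R_i,g}$; in particular $h_g=\alpha_i(a_i^{-1})g\in gA$. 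Axiom (E) (Proposition \ref{L-prop}(e)) then extends $\widetilde\alpha_i$ to $\widebar\alpha_i\in\Aut_\call(\gen{R_i,g})$, and $\beta_i\defeq\pi(\widebar\alpha_i)$ extends $\alpha_i$, stabilizes $A$, and sends $g$ into $gA$. Restricting each $\beta_i$ to $\gen{Q_{i-1}A,g}\to\gen{Q_iA,g}$ (the intermediate $Q_i$ from the Alperin decomposition become $g$-invariant after adjoining $A$) and composing produces $\widebar\varphi\in\Mor_\calf(\gen{PA,g},\gen{QA,g})$, with $\widebar\varphi(g)\in gA$ by induction on the factors using $\beta_i(A)=A$.

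\emph{Backward direction.} Assume (1) and (2), and fix $P,Q\in\Ob(\call_0)$ containing $A$. Since $[g,S_0]\le A\le P$, for each $p\in P$ we have $gpg^{-1}=p\cdot[p^{-1},g^{-1}]\in pA\le P$, so $gPg^{-1}=P$; similarly $gQg^{-1}=Q$. Fix $\psi\in\Mor_{\call_0}(P,Q)$ and set $\varphi=\pi(\psi)$. By (2), $\varphi$ extends to $\widebar\varphi\in\Mor_\calf(\gen{P,g},\gen{Q,g})$ with $\widebar\varphi(g)=ga$ for some $a\in A$; since $\Ob(\call)$ is closed under overgroups, $\gen{P,g},\gen{Q,g}\in\Ob(\call)$, and we may lift to $\widebar\psi\in\Mor_\call(\gen{P,g},\gen{Q,g})$. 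Axiom (C) applied to $g\in\gen{P,g}$, restricted to a morphism $P\to Q$ (valid since $\varphi(P)\le Q$ and $g,a$ normalize $Q$), gives
\[
(\widebar\psi|_{P,Q})\circ\delta_P(g)=\delta_Q(ga)\circ(\widebar\psi|_{P,Q}),
\]
which rearranges to $c_\gamma(\widebar\psi|_{P,Q})=\delta_Q(ga^{-1}g^{-1})\circ(\widebar\psi|_{P,Q})$ with $ga^{-1}g^{-1}\in A$. Since $A\nsg\calf_0$ and $A\le P$ force $\varphi(A)=A$, axiom (C) then provides $b\in A$ such that $c_\gamma(\widebar\psi|_{P,Q})=(\widebar\psi|_{P,Q})\circ\delta_P(b)$.

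Finally, since every object of $\call$ is $\calf$-centric, Proposition \ref{L-prop}(a) together with axiom (A) gives $E(P)=\delta_P(Z(P))$; as both $\psi$ and $\widebar\psi|_{P,Q}$ lift $\varphi$, $\psi=\widebar\psi|_{P,Q}\circ\delta_P(z)$ for some $z\in Z(P)$. Then
\[
c_\gamma(\psi)=c_\gamma(\widebar\psi|_{P,Q})\circ c_\gamma(\delta_P(z))=(\widebar\psi|_{P,Q})\circ\delta_P(b)\circ\delta_P(gzg^{-1}),
\]
and writing $gzg^{-1}=za_1$ with $a_1\in A$ (by (1)) and using $[z,b]=1$ (since $z\in Z(P)$ and $b\in A\le P$), the right-hand side simplifies to $\psi\circ\delta_P(ba_1)$ with $ba_1\in A$, as required. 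The main delicacy throughout is the bookkeeping needed to keep each ``error term'' inside $A$: one uses $A\nsg\calf_0$ (to secure $\varphi(A)=A$ and $A\le R_i$), $[g,S_0]\le A$ (so $g$ preserves $A$ and each subgroup containing $A$), and the centralization of $A$ by elements of $Z(P)$.
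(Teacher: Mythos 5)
Your proof is correct and takes essentially the same route as the paper's: both directions rest on lifting to the linking system, axiom (C), the extension property of Proposition \ref{L-prop}(e) for the forward implication, and $\calf$-centricity of the objects to compare the two lifts in the converse, with the reduction to generators supplied by Theorem \ref{AFT} and Proposition \ref{norm<=>}. The only differences are cosmetic — you extend the Alperin generators and compose, where the paper extends a lift of an arbitrary $\calf_0$-morphism between objects of $\call_0$ containing $A$, and in the converse you carry the correction $\delta_P(z)$, $z\in Z(P)$, to the end instead of adjusting the lift so that it restricts exactly to $\psi$; also your formula $h_g=\alpha_i(a_i^{-1})g$ should read $g\,\alpha_i(g^{-1}a_i^{-1}g)$, but both lie in $gA$, which is all that is used.
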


\begin{proof}  Fix $g\in{}S$.  Set $\gamma=\delta_{S_0}(g)$ and 
$B=\gen{g,A}$ for short. 

Assume $c_\gamma\in\Aut(\call_0)$ is the identity modulo $A$.  Then 
$[g,S_0]\le{}A$ since $[\gamma,\delta_{S_0}(s)]\in \delta_{S_0}(A)$ for 
$s\in{}S_0$ (and $\delta_{S_0}$ is injective by Proposition 
\ref{L-prop}(c)).  Since $\calf_0$ is generated by morphisms between 
objects of $\call_0$ which contain $A$ (by Theorem \ref{AFT} and 
Proposition \ref{norm<=>}, and since $A\nsg\calf_0$), it suffices to prove 
the extension property for such morphisms.  Fix $P,Q\in\Ob(\call_0)$ such 
that $A\le{}P$ and $A\le{}Q$, fix $\varphi\in\Hom_{\calf_0}(P,Q)$, and 
choose a lifting of $\varphi$ to $\psi\in\Mor_{\call_0}(P,Q)$.  By 
assumption, 
$\delta_Q(g)\circ\psi\circ\delta_P(g)^{-1}=\psi\circ\delta_P(a)$ for some 
$a\in{}A$.  So by Proposition \ref{L-prop}(e), there is a unique morphism 
$\widebar{\psi}\in\Mor_\call(PB,QB)$ such that 
$\widebar{\psi}|_{P,Q}=\psi$, and 
$\delta_{QB}(g)\circ\widebar{\psi}\circ\delta_{PB}(ag)^{-1}= 
\widebar{\psi}$ by the uniqueness of the extension.  Set 
$\widebar{\varphi}=\pi(\widebar{\psi})$; then 
$\widebar{\varphi}\in\homf(PB,QB)$, $\widebar{\varphi}|_P=\varphi$, and 
$\widebar{\varphi}(ag)=g$ (so $\widebar{\varphi}(g)\in{}gA$) by axiom (C). 

Now assume $[g,S_0]\le{}A$, and $g$ has the above extension property:  
each $\varphi\in\Hom_{\calf_0}(P,Q)$ extends to 
$\widebar{\varphi}\in\homf(PB,QB)$ such that 
$\widebar{\varphi}(g)\in{}gA$.  We claim $c_\gamma\in\Aut(\call_0)$ is the 
identity modulo $A$.  Since $[g,S_0]\le{}A$, $gPg^{-1}=P$ for all 
$P\in\Ob(\call_0)$ which contain $A$. Fix $\psi\in\Mor_{\call_0}(P,Q)$, 
where $A\le{}P,Q$.  By assumption, $\pi(\psi)$ extends to some 
$\widebar{\varphi}\in\homf(PB,QB)$ such that 
$\widebar{\varphi}(g)\in{}gA$, and this lifts to 
$\widehat{\psi}\in\Mor_\call(PB,QB)$.  Since $P$ is $\calf$-centric, 
$\widehat{\psi}|_{P,Q}=\psi\circ\delta_P(x)$ for some $x\in{}Z(P)$.  Upon 
replacing $\widehat{\psi}$ by $\widehat{\psi}\circ\delta_{PB}(x)^{-1}$ and 
$\widebar{\varphi}$ by $\widebar{\varphi}\circ{}c_x^{-1}$, we can assume 
$\widehat{\psi}|_{P,Q}=\psi$.  By axiom (C), the conjugation action of 
$\delta_S(g)$ fixes $\widehat{\psi}$ modulo $\delta_{PB}(A)$, and hence 
$c_\gamma\in\Aut(\call_0)$ sends $\psi$ into $\psi\circ\delta_P(A)$.  
\end{proof}


The next lemma describes another way to construct normal pairs of 
linking systems.

\begin{Lem} \label{link-pb}
Fix a normal pair of fusion systems $\calf_0\nsg\calf$ over $p$-groups 
$S_0\nsg{}S$.  Let $\calh_0$ be a set of subgroups of $S_0$ such that 
\begin{itemize}  
\item $\calh_0$ is closed under $\calf$-conjugacy and overgroups, and 
contains all subgroups of $S_0$ which are $\calf_0$-centric and 
$\calf_0$-radical; and 
\item $\calh\defeq\{P\le{}S\,|\,P\cap{}S_0\in\calh_0\}$ is contained in 
the set of $\calf$-centric subgroups.
\end{itemize}
Assume $\calf$ has an associated centric linking system $\call^c$.  Let 
$\call\subseteq\call^c$ be the full subcategory with object set $\calh$.  
Let $\call_0\subseteq\call$ be the subcategory with object set 
$\calh_0$, where for $P,Q\in\calh_0$, 
	\beqq \Mor_{\call_0}(P,Q) = \{\psi\in\Mor_\call(P,Q) \,|\, 
	\pi(\psi)\in\Hom_{\calf_0}(P,Q) \}~. \label{e:link-pb} \eeqq
Then $\call_0\nsg\call$ is a normal pair of linking systems associated to 
$\calf_0\nsg\calf$.  For any such pair $\call_0\nsg\call$ with 
$\Ob(\call_0)=\calh_0$ and $\Ob(\call)=\calh$, $\call_0$ is centric in 
$\call$.
\end{Lem}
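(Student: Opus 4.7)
The plan is to verify, in order, that $\call$ is a linking system associated to $\calf$, that $\call_0$ is a linking system associated to $\calf_0$, the three conditions of Definition \ref{d:L0<|L}, and finally the centric property for any normal pair with these object sets.

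First I would check that $\calh$ is closed under $\calf$-conjugacy and overgroups and contains every $\calf$-centric $\calf$-radical subgroup. Closure under conjugacy uses strong closure of $S_0$ in $\calf$, which implies that $\calf$-conjugate subgroups of $S$ have $\calf$-conjugate intersections with $S_0$; overgroup closure is immediate from that of $\calh_0$; and Lemma \ref{F0<|F}(d) supplies the last condition. Since by hypothesis every object of $\calh$ is $\calf$-centric, it is automatically fully centralized, and $\call$ is a linking subsystem of $\call^c$. For $\call_0$, each $P \in \calh_0$ lies in $\calh$ and is therefore $\calf$-centric, giving $C_{S_0}(P) = C_S(P) = Z(P)$; axiom (A) for $\call_0$ then follows by restricting the bijection $\Mor_\call(P,Q)/Z(P) \cong \homf(P,Q)$ to preimages of $\Hom_{\calf_0}(P,Q)$, while axioms (B) and (C) transfer directly from $\call$.

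Conditions (i) and (iii) of Definition \ref{d:L0<|L} are immediate: (i) is the hypothesis on $\calh$, and (iii) follows by applying $\pi$ and invoking condition (iii) of Definition \ref{d:F0<|F}. The main obstacle is condition (ii). Given $\psi \in \Mor_\call(P,Q)$ with $P, Q \in \calh_0$, I would set $\varphi = \pi(\psi)$ and apply condition (ii) of Definition \ref{d:F0<|F} to obtain a factorization $\varphi = \varphi_0 \circ \alpha|_{P,\alpha(P)}$ with $\alpha \in \autf(S_0)$ and $\varphi_0 \in \Hom_{\calf_0}(\alpha(P), Q)$. Using axiom (A) for $\call$, I lift $\alpha$ to some $\gamma \in \Aut_\call(S_0)$ and $\varphi_0$ to some $\psi_0' \in \Mor_\call(\gamma(P), Q)$. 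Then $\psi$ and $\psi_0' \circ \gamma|_{P,\gamma(P)}$ project to the same $\varphi$, so by axiom (A) they differ by right composition with $\delta_P(z)$ for some $z \in Z(P)$; using axiom (C) to rewrite $\gamma|_{P,\gamma(P)} \circ \delta_P(z) = \delta_{\gamma(P)}(\alpha(z)) \circ \gamma|_{P,\gamma(P)}$, I absorb the correction into $\psi_0 = \psi_0' \circ \delta_{\gamma(P)}(\alpha(z))$, which still satisfies $\pi(\psi_0) = \varphi_0 \in \Hom_{\calf_0}$ and hence lies in $\Mor_{\call_0}$.

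For the centric property, I observe that $S_0 \in \calh$ is $\calf$-centric, so the kernel $E(S_0)$ of the projection $\Aut_\call(S_0) \to \autf(S_0)$ equals $\delta_{S_0}(Z(S_0)) \le \delta_{S_0}(S_0) \subseteq \Aut_{\call_0}(S_0)$ for any linking subsystem containing the image of the structural functor. Hence $\gamma \in \Aut_\call(S_0) \sminus \Aut_{\call_0}(S_0)$ forces $\pi(\gamma) \notin \Aut_{\calf_0}(S_0)$, and in particular $\pi(\gamma) \ne \Id_{S_0}$; choosing $s \in S_0$ with $\pi(\gamma)(s) \ne s$, the element $\psi = \delta_{S_0}(s) \in \Aut_{\call_0}(S_0)$ then satisfies $\gamma \psi \gamma^{-1} = \delta_{S_0}(\pi(\gamma)(s)) \ne \psi$ by injectivity of $\delta_{S_0}$ (Proposition \ref{L-prop}(c)), as required.
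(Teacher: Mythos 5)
Your proof is correct and follows essentially the same route as the paper: you verify that $\call$ and $\call_0$ are linking systems, check conditions (i)--(iii) of Definition \ref{d:L0<|L} (your lifting argument for (ii), with the $Z(P)$-correction absorbed via axiom (C), just spells out what the paper asserts follows from \eqref{e:link-pb} and normality of $\calf_0$), and establish centricity from axioms (A) and (C) at $S_0$. The only difference is that your centricity step runs the paper's argument in contrapositive form, producing an explicit non-commuting $\delta_{S_0}(s)$ rather than showing a fully commuting $\gamma$ lies in $\delta_{S_0}(Z(S_0))\subseteq\Aut_{\call_0}(S_0)$ — the same ingredients either way.
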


\begin{proof}  Since $\Ob(\call)$ is closed under $\calf$-conjugacy and 
under overgroups, and contains all subgroups which are $\calf$-centric and 
$\calf$-radical by Lemma \ref{F0<|F}(d) and the assumptions on $\calh_0$, 
$\call$ is a linking system associated to $\calf$. Since all objects in 
$\call_0$ are $\calf$-centric, they are also $\calf_0$-centric, and hence 
fully centralized in $\calf_0$.  Axiom (A) for $\call_0$ thus follows from 
axiom (A) for $\call$, together with the assumptions on 
$\calh_0=\Ob(\call_0)$.  Axioms (B) and (C) for $\call_0$ follow 
immediately from those for $\call$, and $\call_0$ is thus a linking system 
associated to $\calf_0$.

Condition (i) in Definition \ref{d:L0<|L} holds by assumption, while 
conditions (ii) and (iii) follow from \eqref{e:link-pb} and since $\calf_0$ 
is normal in $\calf$.  Thus $\call_0\nsg\call$.

Fix any such pair $\call_0\nsg\call$ associated to $\calf_0\nsg\calf$.  
Assume $\gamma\in\Aut_\call(S_0)$ is such that 
$\gamma\psi\gamma^{-1}=\psi$ for each $\psi\in\Mor(\call_0)$.  Since 
$\gamma(\delta_{S_0}(g))\gamma^{-1}=\delta_{S_0}(\pi(\gamma)(g))$ for 
$g\in{}S_0$ by axiom (C) for the linking system $\call$, 
$\pi(\gamma)=\Id_{S_0}$.  Since $S_0\in\calh_0$ is $\calf$-centric, this 
means that $\gamma=\delta_{S_0}(z)$ for some $z\in{}Z(S_0)$, and in 
particular, that $\gamma\in\Aut_{\call_0}(S_0)$.  So $\call_0$ is centric 
in $\call$.
\end{proof}

We now list the examples of normal pairs of linking systems which motivated 
Definition \ref{d:L0<|L}, and which we need to refer to later.  

\begin{Prop} \label{link-ex}
Let $\calf$ be a saturated fusion system over the finite $p$-group $S$, 
let $\calf_0\nsg\calf$ be a normal fusion subsystem over $S_0\nsg{}S$, and 
let $\calh_0$ be a set of subgroups of $S_0$. Assume that $\calf$ has an 
associated centric linking system $\call^c$, and that one of the following 
three conditions holds.
\begin{enuma}  
\item $\calf_0=O^p(\calf)$, $S_0=\hyp(\calf)$, and $\calh_0$ is the set of 
$\calf_0$-centric subgroups of $S_0$.

\item $\calf_0=O^{p'}(\calf)$, $S_0=S$, and $\calh_0$ is the set of 
$\calf_0$-centric subgroups of $S_0$.

\item For some normal $p$-subgroup $Q\nsg\calf$ and some normal subgroup 
$K\nsg\Aut(Q)$ containing $\Inn(Q)$, $\calf_0=N_\calf^K(Q)$, 
$S_0=N_S^K(Q)$, and $\calh_0$ is the set of all $\calf_0$-centric subgroups 
of $S_0$ which contain $Q$.
\end{enuma}
Set $\calh=\{P\le{}S\,|\,P\cap{}S_0\in\calh_0\}$.  Then there is a normal 
pair of linking systems $\call_0\nsg\call$ associated to 
$\calf_0\nsg\calf$ with $\Ob(\call_0)=\calh_0$ and $\Ob(\call)=\calh$.  
For any such normal pair $\call_0\nsg\call$, $\call_0$ is centric in 
$\call$ in cases (b) and (c), and in case (a) if $Z(\calf)=1$. 
Furthermore, in cases (a) and (b), and in case (c) if $Q=O_p(\calf)$ and 
$K=\Inn(Q)$, $\calh_0$ is $\Aut(S_0,\calf_0)$-invariant, $\calh$ is 
$\Aut(S,\calf)$-invariant, and $\call_0\nsg\call$ can be chosen such that 
$\call_0$ is $\Aut\typ^I(\call)$-invariant.  
\end{Prop}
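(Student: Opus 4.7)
My plan is to construct the normal pair $\call_0 \nsg \call$ case by case by applying Lemma~\ref{link-pb}, after first extending the given centric linking system $\call^c$ to an associated quasicentric linking system $\call^q$ via~\cite{BCGLO1}. Then $\call$ will be the full subcategory of $\call^q$ with object set $\calh$, and $\call_0$ the subcategory of $\call$ whose morphisms project to $\calf_0$, as in~\eqref{e:link-pb}.

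The first task is to verify the combinatorial hypotheses on $\calh_0$: closure under overgroups in $S_0$ is built into each definition, closure under $\calf$-conjugacy follows from Lemma~\ref{F0<|F}(c) (supplemented in case~(c) by $Q\nsg\calf$, so that $\calf$-conjugation preserves the property of containing $Q$), and containment of all $\calf_0$-centric $\calf_0$-radical subgroups is immediate. The more delicate condition is that $\calh$ consists of $\calf$-quasicentric subgroups. In case~(b), $S_0=S$ so this is trivial. In case~(c), the hypothesis $K\supseteq\Inn(Q)$ gives $C_S(Q)\le S_0$; combined with $\calf_0$-centricity of $P^*\cap S_0$ and the fact that $\calf$-conjugates $P^*$ of $P\in\calh$ still contain $Q$, one deduces $C_S(P^*)\le C_S(Q)\le S_0$, whence $C_S(P^*)\le C_{S_0}(P^*\cap S_0)=Z(P^*\cap S_0)\le P^*$; so $\calh$ consists of $\calf$-centric subgroups and $\call\subseteq\call^c$ suffices. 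Case~(a) is the main obstacle I foresee: one must show that $P\cap\hyp(\calf)$ being $\calf_0$-centric forces $P$ to be $\calf$-quasicentric, for which I would exploit that $\calf_0=O^p(\calf)$ contains every $\calf$-automorphism of $p'$-order, so that any $\calf$-automorphism of a suitable $R\ge P^*$ fixing $P^*\cap\hyp(\calf)$ pointwise factors as an $\Aut_S$-part times a $\calf_0$-morphism which, by $\calf_0$-centricity, must be inner.

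For centricity of $\call_0$ in $\call$, Lemma~\ref{link-pb} gives the conclusion directly in cases~(b) and~(c). In case~(a), $\hyp(\calf)$ need not be $\calf$-centric, so the argument of Lemma~\ref{link-pb} must be adapted; using a quasicentric analog of Lemma~\ref{g-on-L0} applied with $A=1$, one shows that any $\gamma\in\Aut_\call(S_0)$ commuting with every morphism of $\call_0$ satisfies $\pi(\gamma)\in Z(\calf)$, so the hypothesis $Z(\calf)=1$ forces $\gamma\in\Aut_{\call_0}(S_0)$. Finally, for $\Aut\typ^I(\call)$-invariance in the listed subcases, the subsystem $\calf_0$ admits an intrinsic characterization inside $\calf$ --- namely as $O^p(\calf)$, $O^{p'}(\calf)$, or $C_\calf(O_p(\calf))=N_\calf^{\Inn(O_p(\calf))}(O_p(\calf))$ respectively --- so by Lemma~\ref{AutI} any $\alpha\in\Aut\typ^I(\call)$ induces some $\beta\in\Aut(S,\calf)$ preserving $S_0$ and sending $\calh_0$ to itself; the canonical construction of $\call_0$ via~\eqref{e:link-pb} then makes it manifestly $\alpha$-invariant.
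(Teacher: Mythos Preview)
Your approach for cases (b) and (c) matches the paper's: both apply Lemma~\ref{link-pb} directly, after checking (in case~(c)) that $C_S(P)\le C_S(Q)\le S_0$ forces every $P\in\calh$ to be $\calf$-centric.

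In case~(a), however, there is a genuine gap. The pullback formula~\eqref{e:link-pb} does \emph{not} yield a linking system for $\calf_0$ once objects of $\calh_0$ can fail to be $\calf$-centric. Concretely, take $P\in\calh_0$ fully centralized in $\calf$ with $C_S(P)\not\le S_0$ (this can occur, since $S_0=\hyp(\calf)$ may be a proper subgroup of $S$; e.g.\ $P=S_0$ itself when $Z(\calf)\not\le S_0$). For $g\in C_S(P)\setminus S_0$ one has $\pi(\delta_P(g))=\Id_P\in\Mor(\calf_0)$, so $\delta_P(g)$ lands in your $\call_0$; but then the kernel of $\Aut_{\call_0}(P)\to\Aut_{\calf_0}(P)$ equals $\delta_P(C_S(P))$, strictly larger than $\delta_P(C_{S_0}(P))=\delta_P(Z(P))$, and axiom~(A) for $\call_0$ fails. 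The paper circumvents this by using instead the map $\lambda\colon\Mor(\call^q)\to S/S_0$ of \cite[Proposition~2.4]{BCGLO2} and defining $\Mor_{\call_0}(P,Q)=\{\psi:\lambda(\psi)=1\}$. Since $\lambda(\delta_P(g))=gS_0$, this correctly excludes $\delta_P(g)$ for $g\notin S_0$; the resulting $\call_0$ is then shown, via \cite[Theorem~3.9 and Proposition~3.8]{BCGLO2}, to be a centric linking system for $O^p(\calf)$. The $\Aut\typ^I(\call)$-invariance also comes from uniqueness of $\lambda$, not from the description~\eqref{e:link-pb}.

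Your centricity argument in case~(a) is likewise incomplete. From $\gamma$ commuting with every morphism of $\call_0$ you only get $\pi(\gamma)=\Id_{S_0}$, hence $\gamma=\delta_{S_0}(h)$ with $h\in C_S(S_0)$; this does not place $h$ in $Z(\calf)$, because $\call_0$ does not see the morphisms of $\call$ outside $\calf_0$ (and Lemma~\ref{g-on-L0} is unavailable, as it assumes $\calf$-centric objects). The paper instead lets $H\le C_S(S_0)$ be the subgroup of all such $h$, takes the fixed subgroup $H_0$ of $H$ under the conjugation action of the $p$-group $\call/\call_0$, shows that each $h\in H_0$ has $\delta_S(h)$ commuting with \emph{all} of $\Mor(\call)$ (whence $h\in Z(\calf)$ by Lemma~\ref{OutI1}(a)), and deduces $H=1$ from $H_0=1$ by the usual $p$-group fixed-point argument.
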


\begin{proof} In all cases, $\calf_0\nsg\calf$ by Proposition 
\ref{fus-ex}.  Also, $\calh_0$ is $\Aut(S_0,\calf_0)$-invariant and 
$\calh$ is $\Aut(S,\calf)$-invariant:  this is clear in cases (a) and (b), 
and holds in case (c) when $Q=O_p(\calf)$ (since $Q=O_p(\calf_0)$ by Lemma 
\ref{F0<|F}(e)).

\smallskip

\noindent\textbf{(a) }  Set $S_0=\hyp(\calf)$, $\calf_0=O^p(\calf)$, and 
$\calh_0=\Ob(\calf_0^c)$.  By \cite[Theorem 4.3(a)]{BCGLO2}, 
a subgroup of $S_0$ is $\calf_0$-quasicentric if and only if it is 
$\calf$-quasicentric.  In particular, every $\calf_0$-centric subgroup 
of $S_0$ is $\calf$-quasicentric and hence all subgroups in $\calh$ are 
$\calf$-quasicentric.  By Lemma \ref{F0<|F}(d), $\calh$ contains all 
subgroups which are $\calf$-centric and $\calf$-radical.  By Lemma 
\ref{F0<|F}(c), $\calh_0$ is closed under $\calf$-conjugacy, so $\calh$ is 
closed under $\calf$-conjugacy (and it is clearly closed under 
overgroups).  Hence if $\call^q\supseteq\call^c$ is the quasicentric 
linking system which contains $\call^c$ constructed in \cite[Proposition 
3.4]{BCGLO1}, then the full subcategory $\call\subseteq\call^q$ with 
object set $\calh$ is also a linking system associated to $\calf$.

By \cite[Proposition 2.4]{BCGLO2},  there is a unique map 
$\lambda\:\Mor(\call^q)\Right2{}S/S_0$ which sends composites to 
products and inclusions in $\call^q$ to the identity, and such that 
$\lambda(\delta_S(g))=[g]$ for all $g\in{}S$.  By \cite[Theorem 
3.9]{BCGLO2}, there is a $p$-local finite group $(S_0,\calf'_0,\call_0)$ 
where for $P,Q\in\Ob(\call_0)$, 
	\beqq \Mor_{\call_0}(P,Q) = \{\psi\in\Mor_{\call^q}(P,Q) \,|\, 
	\lambda(\psi)=1\}~. \label{e:L0=Ker} \eeqq
Furthermore, $\calf'_0$ is constructed using \cite[Proposition 
3.8]{BCGLO2} (cf. the proof of \cite[Theorem 3.9]{BCGLO2}), and hence (by 
part (b) of that proposition) it has $p$-power index in $\calf$.  
Thus $\calf'_0=\calf_0$ by Theorem \ref{t:O^p(F)}(a).  

Now, $\Ob(\call_0)=\calh_0$ since $\call_0$ is a 
centric linking system.  Condition (i) in Definition \ref{d:L0<|L} holds 
for $\call_0\subseteq\call$ by definition of $\calh$, 
condition (iii) ($\gamma\call_0\gamma^{-1}=\call_0$ for 
$\gamma\in\Aut_\call(S_0)$) holds by construction, and condition (ii) 
holds since $\lambda|_{\delta_{S_0}(S)}$ is surjective.  So 
$\call_0\nsg\call$.  

We next check that $\call_0$ is $\Aut\typ^I(\call)$-invariant.  Fix 
$\alpha\in \Aut\typ^I(\call)$ and set $\beta=\til\mu_\call(\alpha)\in 
\Aut(S,\calf)$.  Then $\beta(S_0)=S_0$ since $S_0=\hyp(\calf)$, and 
$\beta|_{S_0}\in\Aut(S_0,\calf_0)$ by the uniqueness of $\calf_0$ (Theorem 
\ref{t:O^p(F)}(a) again).  Since $\alpha(P)=\beta(P)$ for 
$P\in\Ob(\call_0)$ (Lemma \ref{AutI}), $\alpha$ sends 
$\Ob(\call_0)=\calh_0$ to itself.  By Lemma \ref{OutI2}, 
$\alpha=\widebar{\alpha}|_\call$ for some 
$\widebar{\alpha}\in\Aut\typ^I(\call^q)$, 
$\lambda\circ\widebar{\alpha}=\widebar{\beta}\circ\lambda$ (where 
$\widebar{\beta}\in \Aut(S/S_0)$ is induced by $\beta$) by the uniqueness 
of $\lambda$, and hence $\alpha(\Mor(\call_0))=\Mor(\call_0)$ by 
\eqref{e:L0=Ker}.

Now let $\call_0\nsg\call$ be any normal pair of linking systems 
associated to $\calf_0\nsg\calf$ with these objects.  Assume $Z(\calf)=1$; 
we must show $\call_0$ is centric in $\call$.  Assume 
$\gamma\in\Aut_\call(S_0)$ is such that $\gamma\psi\gamma^{-1}=\psi$ for 
each $\psi\in\Mor(\call_0)$.  Since 
$\gamma(\delta_{S_0}(g))\gamma^{-1}=\delta_{S_0}(\pi(\gamma)(g))$ for 
$g\in{}S_0$ by axiom (C) for the linking system $\call$, 
$\pi(\gamma)=\Id_{S_0}$.  So by axiom (A) (and since $S_0$ is fully 
centralized in $\calf$), $\gamma=\delta_{S_0}(h)$ for some 
$h\in{}C_S(S_0)$.  

Let $H\le{}C_S(S_0)$ be the subgroup of all $h$ such that the conjugation 
action of $\delta_{S_0}(h)$ on $\call_0$ is trivial.  The $p$-group 
$\call/\call_0=\Aut_\call(S_0)/\Aut_{\call_0}(S_0)$ acts on 
$\delta_{S_0}(H) \cong H$ by conjugation. Let $H_0$ be the fixed subgroup 
of this action.  Note that $H_0\leq Z(S)$ since $H_0$ is fixed by 
$\delta_{S_0}(S)\leq \Aut_\call(S_0)$.  Fix $h\in{}H_0$, and set 
$\bar\gamma=\delta_S(h)$.  Let $\bar{\gamma}(P)$ and 
$\bar{\gamma}\psi\bar{\gamma}^{-1}$ be as in Definition 
\ref{d:L0<|L}, but this time for all $P\in\Ob(\call)$ and 
$\psi\in\Mor(\call)$.  For $P\in\Ob(\call)$, 
$\bar\gamma(P)=hPh^{-1}=P$.  Also, 
$\bar{\gamma}\psi\bar{\gamma}^{-1}=\psi$ for all 
$\psi\in\Aut_\call(S_0)$ by definition of $H_0$, 
$\bar\gamma\psi\bar{\gamma}^{-1}=\psi$ for $\psi\in\Mor(\call_0)$ 
by definition of $H$, and hence conjugation by $\bar{\gamma}$ is the 
identity on morphisms in $\call$ between subgroups in $\calh_0$ by 
condition (ii) in Definition \ref{d:L0<|L}.  By Proposition 
\ref{L-prop}(f), for each $P,Q\in\calh$, the restriction map from 
$\Mor_{\call}(P,Q)$ to $\Mor_{\call}(P\cap{}S_0,Q\cap{}S_0)$ is injective, 
and hence $\bar\gamma\psi\bar{\gamma}^{-1}=\psi$ for all 
$\psi\in\Mor_{\call}(P,Q)$.  Thus conjugation by 
$\bar{\gamma}=\delta_S(h)$ is the identity on $\call$, and so 
$h\in{}Z(\calf)=1$ by Lemma \ref{OutI1}(a).  Since $H_0=1$ is the fixed 
subgroup of an action of the $p$-group $\call/\call_0$ on the $p$-group 
$H$, $H=1$, and so $\call_0$ is centric in $\call$.  

\smallskip

\noindent\textbf{(b) }  Set $\calf_0=O^{p'}(\calf)$.  
By \cite[Proposition 3.8(c)]{BCGLO2}, a subgroup 
of $S$ is $\calf_0$-centric if and only if it is $\calf$-centric.  So upon 
letting $\calh_0=\calh$ be the set of all $\calf$-centric subgroups of $S$, 
the hypotheses of Lemma \ref{link-pb} are satisfied.  By the 
lemma, there is a normal pair $\call_0\nsg\call$ of linking systems 
associated to $\calf_0\nsg\calf$ with object set $\calh_0=\calh$; and for 
any such pair, $\call_0$ is centric in $\call$.  By the explicit 
description of $\call_0$ (formula \eqref{e:link-pb} in Lemma 
\ref{link-pb}), $\call_0$ is $\Aut\typ^I(\call)$-invariant.

\smallskip

\noindent\textbf{(c) }  Fix $Q\nsg\calf$ and $\Inn(Q)\le{}K\nsg\Aut(Q)$, 
and set $S_0=N_S^K(Q)$ and $\calf_0=N_\calf^K(Q)$.  Let $\calh_0$ be the 
set of all $\calf_0$-centric subgroups of $S_0$ which contain $Q$.  We 
first check that all subgroups in 
$\calh=\{P\le{}S\,|\,P\cap{}S_0\in\calh_0\}$ are $\calf$-centric; it 
suffices to show this for subgroups in $\calh_0$.  By Lemma \ref{F0<|F}(c) 
(and since $\calf_0\nsg\calf$), the set of $\calf_0$-centric subgroups, 
and hence also the set $\calh_0$, are closed under $\calf$-conjugacy.  
For each $P\in\calh_0$, $C_S(P)\le{}C_S(Q)\le{}S_0$ since $P\ge{}Q$, and 
hence $C_S(P)=C_{S_0}(P)=Z(P)$ since $P$ is $\calf_0$-centric.  Since this 
holds for all subgroups $\calf$-conjugate to $P$, we conclude that $P$ is 
$\calf$-centric.

We just saw that $\calh_0$ is closed under $\calf$-conjugacy, and it is 
clearly closed under overgroups.  Since $Q\nsg\calf_0$, each subgroup of 
$S_0$ which is $\calf_0$-centric and $\calf_0$-radical contains $Q$ by 
Proposition \ref{norm<=>}, and thus lies in $\calh_0$. So by Lemma 
\ref{link-pb}, there is a normal pair $\call_0\nsg\call$ of linking 
systems associated to $\calf_0\nsg\calf$ with object sets $\calh_0$ and 
$\calh$, and for any such pair, $\call_0$ is centric in $\call$.  If 
$Q=O_p(\calf)$ and $K=\Inn(Q)$, then $Q=O_p(\calf_0)$ by Lemma 
\ref{F0<|F}(e), and so $\calf_0$ is $\Aut(S,\calf)$-invariant.  Hence 
$\call_0$ is $\Aut\typ^I(\call)$-invariant by the explicit description of 
$\call_0$ in Lemma \ref{link-pb}. 
\end{proof}


\newsect{Reduced fusion systems and tame fusion systems}
\label{s:red-sfs}

Throughout this section, $p$ denotes a fixed prime, and we work with 
fusion systems over finite $p$-groups.  We first define reduced fusion 
systems and the reduction of a fusion system.  We then define tame fusion 
systems, and prove that a reduced fusion system is tame if every saturated 
fusion system which reduces to it is realizable (Theorem \ref{ThB}).  We 
then make a digression to look at the existence of linking systems in 
certain situations, before proving that all fusion systems whose reduction 
is tame are realizable (Theorem \ref{ThA}).  We thus end up with a way to 
``detect'' exotic fusion systems in general while looking only at reduced 
fusion systems.

\newsubb{Reduced fusion systems and reductions of fusion systems}{s:red}

We begin with the definition of a reduced fusion system, and the reduction 
of an (arbitrary) fusion system.  See Proposition \ref{F/Q} and the 
discussion before that for the definition and properties of quotient fusion 
systems.

\begin{Defi} \label{d:reduced}
A \emph{reduced fusion system} is a saturated fusion system $\calf$ such 
that 
\begin{itemize}  
\item $\calf$ has no nontrivial normal $p$-subgroups, 
\item $\calf$ has no proper normal subsystem of $p$-power index, and 
\item $\calf$ has no proper normal subsystem of index prime to $p$.  
\end{itemize}
Equivalently, $\calf$ is reduced if $O_p(\calf)=1$, $O^p(\calf)=\calf$, 
and $O^{p'}(\calf)=\calf$.  

For any saturated fusion system $\calf$, the \emph{reduction} of $\calf$ 
is the fusion system $\red(\calf)$ defined as follows.  Set 
$\calf_0=C_\calf(O_p(\calf))/Z(O_p(\calf))$, and let 
$\calf_0 \supseteq \calf_1 \supseteq \calf_2 \supseteq \cdots 
	\supseteq \calf_m $
be such that $\calf_i=O^p(\calf_{i-1})$ if $i$ is odd, 
$\calf_i=O^{p'}(\calf_{i-1})$ if $i$ is even, and 
$O^p(\calf_m)=O^{p'}(\calf_m)=\calf_m$.  Then $\red(\calf)=\calf_m$.  
\end{Defi}

Fix any $\calf$, and set $Q=O_p(\calf)$ for short.  By definition of 
centralizer fusion systems, every morphism in $C_\calf(Q)$ extends to a 
morphism in $\calf$ which is the identity on $Q$, and hence to a morphism 
in $C_\calf(Q)$ which is the identity on $Z(Q)$.  This proves that $Z(Q)$ 
is always central in $C_\calf(Q)$, and hence that $\calf_0=C_\calf(Q)/Z(Q)$ 
is well defined as a fusion system.  

What is important in the last part of the definition of $\red(\calf)$ is 
that we give an explicit procedure for successively applying $O^p(-)$ and 
$O^{p'}(-)$, starting with $\calf_0$, until neither makes the fusion 
system any smaller.  It seems likely that the final result $\red(\calf)$ 
is independent of the order in which we apply these reductions, but we 
have not shown this, and do not need to know it when proving the results 
in this section.

Clearly, for these definitions to make sense, we want $\red(\calf)$ to 
always be reduced.

\begin{Prop} \label{p:red->red}
The reduction of any saturated fusion system is reduced.  
\end{Prop}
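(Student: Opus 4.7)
The property that $\red(\calf)=\calf_m$ is reduced requires three conditions: $O_p(\calf_m)=1$, $O^p(\calf_m)=\calf_m$, and $O^{p'}(\calf_m)=\calf_m$. The last two are immediate from the construction, since $\calf_m$ is by definition the first term of the descending sequence on which both $O^p$ and $O^{p'}$ act trivially. The real content is $O_p(\calf_m)=1$, which reduces by induction on $i$ to the single claim $O_p(\calf_0)=1$: indeed, if $O_p(\calg)=1$ and $\calg'\in\{O^p(\calg),O^{p'}(\calg)\}$ is a subsystem over $T'$, then $\calg'\nsg\calg$ by Proposition~\ref{fus-ex}(a,b), and Lemma~\ref{F0<|F}(e) applied to this normal pair gives $O_p(\calg')=O_p(\calg)\cap T'=1$.

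Set $Q=O_p(\calf)$ and $\calc=C_\calf(Q)=N_\calf^{\{\Id\}}(Q)$, so $\calf_0=\calc/Z(Q)$. By Proposition~\ref{fus-ex}(c) with $K=\{\Id\}\nsg\Aut(Q)$, $\calc\nsg\calf$. Every morphism in $\calc$ extends in $\calf$ to one fixing $Q$ pointwise and hence fixes $Z(Q)$, so $Z(Q)$ is central in $\calc$; by Proposition~\ref{F/Q}, $\calf_0$ is saturated and its normal subsystems correspond bijectively to normal subsystems of $\calc$ containing $Z(Q)$. Write $O_p(\calf_0)=R/Z(Q)$ with $R\nsg\calc$ and $Z(Q)\le R\le C_S(Q)$; my goal is $R=Z(Q)$. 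Otherwise $QR$ is a $p$-subgroup of $S$ (as $[Q,R]=1$) strictly larger than $Q$, and I will derive a contradiction by proving $QR\nsg\calf$.

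I verify $QR\nsg\calf$ via Proposition~\ref{norm<=>}. Every $\calf$-centric $\calf$-radical $P$ contains $Q$ (as $Q\nsg\calf$), and Lemma~\ref{F0<|F}(d) for $\calc\nsg\calf$ shows $C_P(Q)=P\cap C_S(Q)$ is $\calc$-centric-radical, whence Proposition~\ref{norm<=>} inside $\calc$ gives $R\le C_P(Q)\le P$, so $QR\le P$. For strong closure of $QR$ in $\calf$, note $QR\cap C_S(Q)=R$ (using $Q\cap C_S(Q)=Z(Q)\le R$); Theorem~\ref{AFT} then reduces the problem to showing $\alpha(R)=R$ for every such $P$ and every $\alpha\in\autf(P)$.

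The main obstacle is this invariance, which I would prove as follows. Let $\beta=\alpha|_Q\in\autf(Q)$; since $Q$ is fully centralized (being normal in $\calf$), the extension axiom extends $\beta$ to $\bar\beta\in\homf(N_\beta,S)$, and $C_S(Q)\le N_\beta$ (as $c_g|_Q=\Id$ for $g\in C_S(Q)$). Because $\bar\beta(Q)=Q$, $\bar\beta$ restricts to $\tilde\beta\in\autf(C_S(Q))$, and $\tilde\beta(Z(Q))=Z(Q)$. By condition~(iii) of Definition~\ref{d:F0<|F} for $\calc\nsg\calf$, conjugation by $\tilde\beta$ is a self-equivalence of $\calc$ which descends to one of $\calf_0$, hence preserves $R$ (the preimage of the characteristic subgroup $O_p(\calf_0)$): $\tilde\beta(R)=R$. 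Finally, the composite $\gamma:=\tilde\beta^{-1}\circ\alpha|_{C_P(Q)}$ extends to $\bar\beta^{-1}\circ\alpha|_{Q\cdot C_P(Q)}$, which restricts to the identity on $Q$ (both factors restrict to $\beta$ there), so $\gamma$ is a morphism in $\calc$; since $R\le C_P(Q)$ and $R\nsg\calc$, $\gamma(R)=R$. Combining, $\alpha(R)=\tilde\beta(\gamma(R))=\tilde\beta(R)=R$, as required.
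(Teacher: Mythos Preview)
Your proof is correct. The inductive step and the overall structure coincide with the paper's argument, but for the base case $O_p(\calf_0)=1$ you take a different and longer route.

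The paper argues the base case in two lines: since $C_\calf(Q)\nsg\calf$ (Proposition~\ref{fus-ex}(c)), Lemma~\ref{F0<|F}(e) gives $O_p(C_\calf(Q))=O_p(\calf)\cap C_S(Q)=Q\cap C_S(Q)=Z(Q)$; then Proposition~\ref{F/Q} (with $Z(Q)$ central in $C_\calf(Q)$) yields $O_p(\calf_0)=O_p(C_\calf(Q))/Z(Q)=1$. You invoke Lemma~\ref{F0<|F}(e) for the inductive step but not here; instead you lift $O_p(\calf_0)$ to $R\nsg\calc$ via Proposition~\ref{F/Q} and then prove $QR\nsg\calf$ by hand through Proposition~\ref{norm<=>}, checking both containment in all centric radicals (via Lemma~\ref{F0<|F}(d)) and strong closure (via the $\tilde\beta$/$\gamma$ factorization). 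This amounts to re-deriving, in this particular situation, the statement ``$O_p(\calc)\nsg\calf$'' that is already packaged inside Lemma~\ref{F0<|F}(e). Your argument is self-contained and the $\tilde\beta$/$\gamma$ decomposition is a nice illustration of how conditions~(ii) and~(iii) of Definition~\ref{d:F0<|F} get used, but the paper's route is considerably more economical.

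One minor terminological slip: when you write that Proposition~\ref{F/Q} gives a bijection of ``normal subsystems,'' you mean normal $p$-subgroups; the proposition is about $P\nsg\calf$ in the sense of Definition~\ref{D:subgroups}, not about fusion subsystems.
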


For later reference, we also state the following, more technical 
result, which will be proven together with Proposition \ref{p:red->red}.

\begin{Lem} \label{l:red->red}
Let $\calf$ be a saturated fusion system.  Set $Q=O_p(\calf)$ and 
$\calf_0=C_\calf(Q)/Z(Q)$.  Let 
$\calf_0\supseteq\calf_1\supseteq\cdots\supseteq\calf_m=\red(\calf)$ 
be such that for each $i$, $\calf_i=O^p(\calf_{i-1})$ or 
$\calf_i=O^{p'}(\calf_{i-1})$.  Then $O_p(\calf_i)=1$ for each 
$0\le{}i\le{}m$.
\end{Lem}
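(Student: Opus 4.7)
The plan is to handle the base case $i=0$ and the inductive step $i\ge1$ separately, using Lemma \ref{F0<|F}(e) as the workhorse in both cases.

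For the base case, I would first observe that since $Q = O_p(\calf)\nsg\calf$ and $\{\Id\}\nsg\Aut(Q)$, Proposition \ref{fus-ex}(c) gives $C_\calf(Q)=N_\calf^{\{\Id\}}(Q)\nsg\calf$, so $C_\calf(Q)\nsg\calf$ is a normal pair of saturated fusion systems over $C_S(Q)\nsg S$. Applying Lemma \ref{F0<|F}(e) to this pair yields
\[
O_p(C_\calf(Q)) \;=\; O_p(\calf)\cap C_S(Q) \;=\; Q\cap C_S(Q) \;=\; Z(Q).
\]
Since $Z(Q)$ is central in $C_\calf(Q)$ (as already noted in the paragraph following Definition \ref{d:reduced}), Proposition \ref{F/Q} tells us that normal subgroups of $\calf_0 = C_\calf(Q)/Z(Q)$ correspond bijectively to normal subgroups of $C_\calf(Q)$ containing $Z(Q)$. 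Any normal $p$-subgroup of $\calf_0$ therefore pulls back to a $p$-subgroup $T\nsg C_\calf(Q)$ with $Z(Q)\le T\le C_S(Q)$; but then $T\le O_p(C_\calf(Q)) = Z(Q)$ forces $T = Z(Q)$ and hence $O_p(\calf_0) = 1$.

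For the inductive step, assume $O_p(\calf_{i-1}) = 1$, and let $\calf_i$ equal either $O^p(\calf_{i-1})$ (a fusion system over $\hyp(\calf_{i-1})$) or $O^{p'}(\calf_{i-1})$ (a fusion system over the same $p$-group as $\calf_{i-1}$). By Proposition \ref{fus-ex}(a,b), $\calf_i\nsg\calf_{i-1}$ in either case. Writing $S_i$ for the underlying $p$-group of $\calf_i$, Lemma \ref{F0<|F}(e) now yields
\[
O_p(\calf_i) \;=\; O_p(\calf_{i-1})\cap S_i \;=\; 1\cap S_i \;=\; 1,
\]
completing the induction.

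The main obstacle is really just the base case, and within that, the identification $O_p(C_\calf(Q)) = Z(Q)$ via Lemma \ref{F0<|F}(e); the rest is formal translation between a fusion system and its central quotient. The inductive step is then essentially automatic, since each pass of $O^p$ or $O^{p'}$ produces a normal subsystem to which the same lemma applies verbatim. Proposition \ref{p:red->red} then follows immediately: the first bullet of Definition \ref{d:reduced} holds by the lemma applied at $i=m$, while the second and third bullets are built into the stopping condition $O^p(\calf_m) = O^{p'}(\calf_m) = \calf_m$.
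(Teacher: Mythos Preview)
Your proof is correct and follows essentially the same approach as the paper's own proof: normality of $C_\calf(Q)$ in $\calf$ via Proposition \ref{fus-ex}(c), then Lemma \ref{F0<|F}(e) together with Proposition \ref{F/Q} for the base case, and Proposition \ref{fus-ex}(a,b) plus Lemma \ref{F0<|F}(e) again for the inductive step. If anything, your base case is slightly cleaner, since you invoke the full equality $O_p(\calf_0)=O_p(\calf)\cap S_0$ from Lemma \ref{F0<|F}(e) directly rather than first deducing an inclusion and then arguing separately that $Z(Q)\le O_p(C_\calf(Q))$.
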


\begin{proof}  Fix $\calf$, and let $Q\nsg\calf$ and the $\calf_i$ be as 
above.  Since $C_\calf(Q)\nsg\calf$ by Proposition \ref{fus-ex}(c), 
$O_p(C_\calf(Q))\le{}O_p(\calf)=Q$ by Lemma \ref{F0<|F}(e).  Hence 
$O_p(C_\calf(Q))=Z(Q)$.  We just saw that $Z(Q)$ is central in 
$C_\calf(Q)$.  So by Proposition \ref{F/Q}, a subgroup 
$P/Z(Q)\le{}C_S(Q)/Z(Q)$ is normal in $C_\calf(Q)/Z(Q)$ only if 
$P\nsg{}C_\calf(Q)$.  Thus $O_p(\calf_0)=Z(Q)/Z(Q)=1$.  

By definition, $O^p(\red(\calf))=O^{p'}(\red(\calf))=\red(\calf)$.  By 
Proposition \ref{fus-ex}(a,b), $\calf_i\nsg\calf_{i-1}$ for each $i\ge1$.  
So by Lemma \ref{F0<|F}(e) again, $O_p(\calf_i)=1$ if $O_p(\calf_{i-1})=1$.  
Since $O_p(\calf_0)=1$, this proves that $O_p(\calf_i)=1$ for each $i$.  In 
particular, $O_p(\red(\calf))=1$, and hence $\red(\calf)$ is reduced.
\end{proof}

A saturated fusion system $\calf$ is \emph{constrained} if there is a normal 
subgroup $Q\nsg\calf$ which is $\calf$-centric (cf. \cite[\S\,4]{BCGLO1}).  

\begin{Prop} \label{red(constr)}
For any saturated fusion system $\calf$, $\red(\calf)=1$ (the fusion 
system over the trivial group) if and only if $\calf$ is constrained.
\end{Prop}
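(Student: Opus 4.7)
The plan is to prove the biconditional by showing each direction directly, using the explicit iterative construction of $\red(\calf)$.

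For the easy direction, assume $\calf$ is constrained, so there is a $\calf$-centric $Q_0 \nsg \calf$. Setting $Q = O_p(\calf) \ge Q_0$, I get $C_S(Q) \le C_S(Q_0) = Z(Q_0) \le Q_0 \le Q$, which forces $C_S(Q) = Z(Q)$, so $Q = O_p(\calf)$ is itself $\calf$-centric. Hence $C_\calf(Q)$ is a fusion system over $C_S(Q) = Z(Q)$, and $\calf_0 = C_\calf(Q)/Z(Q)$ is a fusion system over the trivial group; thus $\red(\calf) = \calf_0 = 1$.

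For the converse, assume $\red(\calf) = \calf_m = 1$. By Lemma \ref{l:red->red}, $O_p(\calf_i) = 1$ for each $0 \le i \le m$. The key claim is: \emph{any saturated fusion system $\calf'$ over a nontrivial $p$-group $S'$ with $O_p(\calf') = 1$ has $\hyp(\calf') \ne 1$}, so that both $O^{p'}(\calf')$ (over $S'$) and $O^p(\calf')$ (over $\hyp(\calf')$) are over nontrivial $p$-groups. To prove the claim I argue by contradiction: if $\hyp(\calf') = 1$, then by definition $O^p(\autf(P))$ fixes $P$ pointwise for every $P \le S'$, so $\autf(P)$ is a $p$-group; for $P$ fully normalized in $\calf'$, saturation gives $\Aut_{S'}(P) \in \sylp{\autf(P)}$, which combined with $\autf(P)$ being a $p$-group forces $\Aut_{S'}(P) = \autf(P)$. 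By Alperin's fusion theorem (Theorem \ref{AFT}), every morphism of $\calf'$ is a composite of restrictions of such automorphisms, so every morphism in $\calf'$ is induced by $S'$-conjugation; thus $\calf' = \calf_{S'}(S')$, whose unique largest normal $p$-subgroup is $S' \ne 1$, contradicting $O_p(\calf') = 1$. Applying this claim inductively starting from $\calf_0$ (and using Lemma \ref{l:red->red} at each step so the hypothesis $O_p(\calf_i)=1$ carries over), if $\calf_0$ were over a nontrivial group then every $\calf_i$ would be, contradicting $\calf_m = 1$. So $\calf_0 = 1$, i.e. $C_S(Q)/Z(Q) = 1$, i.e. $C_S(Q) = Z(Q)$, so $Q = O_p(\calf)$ is $\calf$-centric and $\calf$ is constrained.

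The main obstacle is the nilpotency-type claim that $\hyp(\calf') = 1$ forces $\calf' = \calf_{S'}(S')$ via Alperin's fusion theorem; everything else amounts to routine bookkeeping with the definitions of $C_\calf(Q)$, $O^p$, $O^{p'}$, and $\red(\calf)$.
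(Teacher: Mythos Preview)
Your proof is correct and follows essentially the same approach as the paper's: both hinge on the observation that $\hyp(\calf')=1$ forces $\calf'=\calf_{S'}(S')$ (hence $O_p(\calf')=S'$), which contradicts $O_p(\calf')=1$ when $S'\ne1$. The only organizational difference is that the paper looks directly at the last nontrivial term $\calf_{m-1}$ and derives the contradiction there, while you package the key step as a standalone claim and propagate nontriviality forward through the whole chain; the underlying idea is the same.
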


\begin{proof}  If $\calf$ is constrained, then clearly $\red(\calf)=1$.  
Conversely, assume $\calf$ is a fusion system over a finite $p$-group $S$ 
such that $\red(\calf)=1$.  Set $Q=O_p(\calf)$ and 
$\calf_0=C_\calf(Q)/Z(Q)$.  If $\calf_0=1$, then $C_\calf(Q)$ is a fusion 
system over $Z(Q)$, and hence $C_S(Q)=Z(Q)$.  So $Q$ is $\calf$-centric, 
and hence $\calf$ is constrained in this case.  

If $\calf_0\ne1$, then there is a sequence of fusion subsystems 
$1=\calf_m\subsetneqq\calf_{m-1}\subsetneqq\cdots\subsetneqq\calf_0$ such 
that for each $i$, $\calf_{i+1}=O^p(\calf_i)$ or 
$\calf_{i+1}=O^{p'}(\calf_i)$.  By Lemma \ref{l:red->red}, 
$O_p(\calf_i)=1$ for each $0\le{}i\le{}m$.  Since $\calf_{m-1}\ne1$, it is 
a fusion system over a $p$-group $S_{m-1}\ne1$, so 
$O^{p'}(\calf_{m-1})\ne1$ (it is over the same $p$-group), which implies 
$O^p(\calf_{m-1})=1$.  Thus $\hyp(\calf_{m-1})=1$ by Definition 
\ref{d:p-p'-index}(a), so there are no nontrivial automorphisms of order 
prime to $p$ in $\calf_{m-1}$, and $\calf_{m-1}$ is the fusion system of 
the $p$-group $S_{m-1}$.  This is impossible, since it would imply 
$O_p(\calf_{m-1})=S_{m-1}\ne1$, and we conclude $\calf_0=1$.  
\end{proof}

\newsubb{Tame fusion systems and the proof of Theorem \ref{ThB}}{s:tamedef}

Assume $\calf=\calf_S(G)$ for some finite group $G$ with $S\in\sylp{G}$.   
Let $\calh$ be an $\Aut(G,S)$-invariant set of $G$-quasicentric subgroups  
of $S$ such that $\call\defeq\call^\calh_S(G)$ is a linking system  
associated to $\calf$ (i.e. $\calh$ is closed under overgroups and  
contains all $\calf$-centric $\calf$-radical subgroups).  Define  the 
homomorphism 
	\[ \til\kappa_G^\calh: \Aut(G,S) \Right5{} \Aut\typ^I(\call) \]
as follows.  For $\beta\in\Aut(G,S)$, $\til\kappa_G^\calh(\beta)$ sends 
$P$ to  $\beta(P)$ and sends $[a]\in\Mor_\call(P,Q)$ (for 
$a\in{}N_G(P,Q)$) to  $[\beta(a)]$. 

For any $g\in{}N_G(S)$, $\til\kappa_G^\calh$ sends $c_g\in\Aut(G,S)$ to 
$c_{[g]}\in\Aut\typ^I(\call)$, where $[g]\in\Aut_\call(S)$ is the class of 
$g$.  Thus by Lemma \ref{OutI1}, $\til\kappa_G^\calh$ induces a 
homomorphism 
	\[ \kappa_G^\calh\: \Out(G) \Right5{} \Out\typ(\call) \]
by sending the class of $\beta$ to the class of $\til\kappa_G^\calh(\beta)$.
When $\call=\call_S^c(G)$ is the centric linking system of $G$, we write
$\til\kappa_G=\til\kappa_G^\calh$ and $\kappa_G=\kappa_G^\calh$ for short.  

Note that when $\calf=\calf_S(G)$ and $\call=\call_S^\calh(G)$ as above, 
$\til\mu_G^\calh\circ\til\kappa_G^\calh\:\Aut(G,S)\Right2{}\Aut(S,\calf)$ 
is the restriction homomorphism.  

\begin{Defi} \label{d:tame}
A saturated fusion system $\calf$ over $S$ is \emph{tame} if 
there is a finite group $G$ which satisfies:
\begin{itemize}  
\item $S\in\sylp{G}$ and $\calf\cong\calf_S(G)$; and 
\item $\kappa_G\:\Out(G)\Right3{}\Out\typ(\call_S^c(G))$ is split 
surjective.
\end{itemize}
In this situation, we say $\calf$ is \emph{tamely realized} by $G$.
\end{Defi}

The condition that $\kappa_G$ be split surjective was chosen since, as we 
will see shortly, that is what is needed in the proof of Theorem 
\ref{ThB}.  In contrast, Theorem \ref{ThA} would still be true (with 
essentially the same proof) if we replaced ``split surjective'' by ``an 
isomorphism'' in the above definition.  

By Lemma \ref{OutI2}, 
$\Out\typ(\call_S^c(G))\cong\Out\typ(\call_S^\calh(G))$ for any 
$\Aut(S,\calf)$-invariant set of objects $\calh$ (which satisfies the 
conditions for $\call_S^\calh(G)$ to be a linking system).  Hence 
$\Ker(\kappa_G^\calh)=\Ker(\kappa_G)$, and $\kappa_G^\calh$ is (split) 
surjective if and only if $\kappa_G$ is.

By \cite[Theorem B]{BLO1}, $\kappa_G$ is split 
surjective if and only if the natural map from $\Out(G)$ to 
$\Out(BG\pcom)$ is split surjective, where $\Out(BG\pcom)$ is the group of 
homotopy classes of self equivalences of $BG\pcom$.  So this gives another 
way to formulate the definition of tameness.

It is natural to ask whether a tame fusion system $\calf$ can always be 
realized by a finite group $G$ such that $\kappa_G$ is an isomorphism.  We 
know of no counterexamples to this, but do not know how to prove it either.

We are now ready to prove Theorem \ref{ThB}:  every reduced fusion system 
which is not tame is the reduction of some exotic fusion system.  This is 
basically a consequence of the definition of tameness, together with 
\cite[Theorem 9]{O3} which gives a general procedure for constructing 
extensions of linking systems.

\begin{Thm} \label{untame->exotic}
Let $\calf$ be a reduced fusion system which is not tame.  Then there is 
an exotic fusion system $\3\calf$ whose reduction is isomorphic to 
$\calf$.
\end{Thm}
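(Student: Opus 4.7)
The plan is to split into cases based on whether $\calf$ is realizable. If $\calf$ is itself exotic, then take $\3\calf := \calf$; since $\calf$ is reduced, we have $O_p(\calf) = 1$ and $O^p(\calf) = O^{p'}(\calf) = \calf$, so the reduction terminates immediately and $\red(\calf) = \calf$. The claim holds trivially.

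Assume therefore that $\calf \cong \calf_S(G)$ for some finite group $G$ with $S \in \sylp{G}$, and set $\call = \call_S^c(G)$. By the non-tameness hypothesis, $\kappa_H$ fails to be split surjective for \emph{every} finite group $H$ realizing $\calf$. The first step is to isolate a finite $p$-subgroup $\Gamma \le \Out\typ(\call)$ whose inclusion $\iota\colon \Gamma \hookrightarrow \Out\typ(\call)$ cannot be expressed as $\kappa_{G_0} \circ \sigma$ for any finite group $G_0$ realizing $\calf$ and any section $\sigma\colon \Gamma \to \Out(G_0)$. Using that $\Out\typ(\call)$ depends only on $\calf$ (via the uniqueness of centric linking systems for realizable fusion systems), non-tameness produces such a $\Gamma$: roughly, it is taken to be a minimal $p$-subgroup witnessing the failure of a splitting of $\kappa_G$, and one argues that this obstruction cannot be resolved by passing to any alternative model of $\calf$. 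Identifying $\Gamma$ so that the obstruction persists across all realizations is the main obstacle of the proof.

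Given such $\Gamma$, apply the extension theorem \cite[Theorem 9]{O3} to produce a $p$-local finite group $(\til S, \3\calf, \3\call)$ with $\til S = S \rtimes \Gamma$, with $\call \nsg \3\call$ as a normal linking subsystem, and with the induced $\Gamma$-action on $\call$ realizing the embedding $\iota$. The cohomological obstruction to this construction vanishes because $Z(\calf) = 1$, by reducedness of $\calf$. To check $\red(\3\calf) \cong \calf$, observe that $\calf \nsg \3\calf$ is a normal subsystem of $p$-power index, so Lemma \ref{F0<|F}(e) gives $O_p(\3\calf) \cap S = O_p(\calf) = 1$; a short argument using Proposition \ref{norm<=>} and the faithfulness of the $\Gamma$-action on $\call$ then forces $O_p(\3\calf) = 1$. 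Theorem \ref{t:O^p(F)}(a) identifies $\calf$ with $O^p(\3\calf)$, and reducedness of $\calf$ halts the iteration at $\calf$.

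Finally, to see $\3\calf$ is exotic, suppose for contradiction that $\3\calf \cong \calf_{\til S}(\til G)$ for some finite group $\til G$. By Theorem \ref{t:O^p(F)}(a) applied to $\3\calf$, the unique subsystem of $p$-power index over $S$ is $\calf$, and the standard correspondence between such subsystems and appropriate normal subgroups realizes $\calf$ as $\calf_S(G_0)$ for a normal subgroup $G_0 \nsg \til G$ with Sylow $p$-subgroup $S$ and $\til G / G_0 \cong \Gamma$; Proposition \ref{L(G0)<|L(G)} confirms compatibility of the associated linking systems. The conjugation action of $\til G$ on $G_0$ then descends to a homomorphism $\sigma\colon \Gamma \to \Out(G_0)$, and by construction of $\3\calf$ the composition $\kappa_{G_0} \circ \sigma$ equals $\iota$, directly contradicting the defining property of $\Gamma$. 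Hence no such $\til G$ exists, and $\3\calf$ is exotic, completing the proof.
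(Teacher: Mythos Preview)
Your proposal has a genuine gap at the step you yourself flag as ``the main obstacle'': the existence of a finite $p$-subgroup $\Gamma \le \Out\typ(\call)$ whose inclusion cannot factor through $\kappa_{G_0}$ for \emph{any} realization $G_0$ of $\calf$. Non-tameness says only that for each $G_0$ there is no splitting of $\kappa_{G_0}$ over the \emph{entire} group $\Out\typ(\call)$; it does not obviously produce a single $p$-subgroup $\Gamma$ that is simultaneously bad for all realizations. Your contradiction step requires exactly this uniformity (since the $G_0$ you extract from a hypothetical $\til G$ depends on $\til G$), and you give no argument for it. Without this, the proof does not close.

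The paper avoids this obstacle by a different construction. Rather than extending by a $p$-subgroup of $\Out\typ(\call)$, it chooses a finite abelian $p$-group $A$ on which $\Out\typ(\call)$ acts \emph{faithfully}, sets $\calf_0 = A \times \calf$, and applies \cite[Theorem~9]{O3} to the action of the full $\Gamma = A \rtimes \Aut\typ^I(\call)$ on $\call_0 = A \times \call$. The resulting $\3\calf$ has $O_p(\3\calf) = A$ (not $1$), and the reduction recovers $\calf$ via $C_{\3\calf}(A)/Z(A) \cong \calf$, not via $O^p$. If $\3\calf$ were realizable by $\3G$, one replaces $\3G$ by $N_{\3G}(A)$, sets $G = C_{\3G}(A)/A$, and shows the conjugation map $\Aut_{\3G}(A) \cong \Out\typ(\call) \to \Out(G) \xrightarrow{\kappa_G} \Out\typ(\call_S^c(G))$ is injective. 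Having chosen $\call$ so that $|\Out\typ(\call)|$ is maximal among linking systems for $\calf$, this injection is forced to be an isomorphism, so $\kappa_G$ splits --- contradicting non-tameness. The point is that the faithful action of the whole of $\Out\typ(\call)$ on $A$ encodes the full obstruction at once, sidestepping any need to isolate a universal bad $p$-subgroup.
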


\begin{proof}  If $\calf$ is itself exotic, then we take $\3\calf=\calf$.  
So assume $\calf$ is the fusion system of a finite group, 
and hence that $\calf$ has at least one associated centric 
linking system $\call$.  Assume $\call$ is chosen such that 
$|\Out\typ(\call)|$ is maximal among all centric linking systems 
associated to $\calf$.  (All such linking systems are isomorphic to each 
other by \cite[Proposition 3.1]{BLO2} and Theorem A in 
\cite{limz-odd,limz}, but since those results use the classification of 
finite simple groups, we will not use them here.)  Since $\calf$ is not 
tame, it is not the fusion system of any finite group $H$ for which 
$\kappa_H$ is split surjective.  

Since $Z(\calf)=1$ ($\calf$ is reduced), we can identify $\Aut_\call(S)$ 
as a normal subgroup of $\Aut\typ^I(\call)$ via its conjugation action on 
$\call$ (Lemma \ref{OutI1}(a)).  Thus 
$\Out\typ(\call)=\Aut\typ^I(\call)/\Aut_\call(S)$.  Let $A$ 
be any finite abelian $p$-group on which $\Out\typ(\call)$ acts 
faithfully, and let 
	\[ \nu\:\Aut\typ^I(\call)\Right2{}\Aut(A) \]
denote the given action.  Thus $\Ker(\nu)=\Aut_\call(S)$.  

Set $S_0=A\times{}S$ and $\calf_0=A\times\calf$ 
($=\calf_A(A)\times\calf$).  We refer to the beginning of Section 
\ref{s:factor}, or to \cite[\S\,1]{BLO2}, for the definition of the 
product of two fusion systems.  Set $\call_0=A\times\call$:  the centric 
linking system associated to $\calf_0$ whose objects are the subgroups 
$A\times{}P\le{}S_0$ for $P\in\Ob(\call)$, and where 
$\Mor_{\call_0}(A\times{}P,A\times{}Q)= A\times\Mor_\call(P,Q)$.  

Set $\Gamma_0=\Aut_{\call_0}(S_0)=A\times\Aut_\call(S)$.  Set 
$\Gamma=A\sd{}\Aut\typ^I(\call)$:  the semidirect product taken with 
respect to the action $\nu$ of $\Aut\typ^I(\call)$ on $A$.  Thus 
$\Gamma_0$ embeds as a normal subgroup of $\Gamma$, and 
$\Gamma/\Gamma_0\cong\Out\typ(\call)$.  To avoid confusion, an element 
$\psi\in\Aut_\call(S)$ will be written $c_\psi$ when regarded as an element 
of $\Gamma_0\nsg\Gamma$.

We claim the given $\Gamma$-action on $\call_0$ satisfies the hypotheses of 
\cite[Theorem 9]{O3}.  This means checking that the following 
diagram commutes:
	\beq \vcenter{\xymatrix@C=30pt{
	\llap{$A\times\Aut_\call(S)=\,$} \Gamma_0 \ar[r]^-{\conj} 
	\ar[d]_{\incl} & \Aut\typ^I(\call_0) 
	\rlap{\,$=\Aut\typ^I(A\times\call)$} 
	\ar[d]^{(\alpha\mapsto\alpha_{A\times{}S})} \\
	\llap{$A \sd{} \Aut\typ^I(\call)=\,$}
	\Gamma \ar[r]^(.45){\conj} \ar[ur]^{\tau} & \Aut(\Gamma_0) 
	\rlap{~,}
	}} \eeq
where $\tau$ sends $(a,\gamma)\in{}A\rtimes\Aut\typ^I(\call)$ to 
$(\nu(\gamma),\gamma)\in\Aut\typ^I(A\times\call)$.  For 
$\psi\in\Aut_\call(S)$, $\nu(c_\psi)=\Id_A$, so 
$\tau(a,c_\psi)=(\Id,c_\psi)$, which shows that the upper triangle 
commutes.  As for the lower triangle, for $(a,\gamma)\in\Gamma$ and 
$(b,c_\psi)\in\Gamma_0$, 
	\[ \tau(a,\gamma)(b,c_\psi) = (\nu(\gamma)(b),c_{\gamma(\psi)}) = 
	(\nu(\gamma)(b)\cdot a,c_{\gamma(\psi)}\circ\gamma)(a,\gamma)^{-1}
	= (a,\gamma)(b,c_\psi)(a,\gamma)^{-1} \]
(since $c_{\gamma(\psi)}=\gamma\circ{}c_\psi\circ\gamma^{-1}$);
and thus the lower triangle commutes.

Fix $\3S\in\sylp{\Gamma}$.  We identify $S_0=O_p(\Gamma_0)$ 
(via $\delta_{S_0}$), and hence $S_0\nsg\3S$. Since
 	\[ C_\Gamma(S_0) = C_\Gamma(A\times{}S) = C_{\Gamma_0}(A\times{}S)
 	= A \times C_{\Aut_\call(S)}(S) = A\times{}Z(S) \]
is a $p$-group, and since all objects in $\call_0$ are $\calf_0$-centric 
by construction, \cite[Theorem 9]{O3} shows that there exists a saturated 
fusion system $\3\calf$ over $\3S$ and an associated linking system 
$\3\call$ such that $(S_0,\calf_0,\call_0)\nsg (\3S,\3\calf,\3\call)$ and 
$\Aut_{\3\call}(S_0)=\Gamma$ with the action on $\call_0$ given by $\tau$. 
In particular,
 	\beqq \Aut_{\3\calf}(S_0)=\Aut_\Gamma(S_0)
 	= \bigl\{ (\nu(\gamma),\til\mu_\call(\gamma)) \,\big|\,
 	\gamma\in\Aut\typ^I(\call) \bigr\} ~. \label{e:2.6a} \eeqq

Assume $\3\calf$ is realizable: the fusion system of a finite group 
$\3G$. Since $A$ is central in $\calf_0$, $O_p(\calf_0)/A$
is normal in $\calf_0/A\cong\calf$ by Proposition \ref{F/Q}.  Since 
$O_p(\calf)=1$ ($\calf$ is reduced), this shows that $O_p(\calf_0)=A$.
By Lemma \ref{F0<|F}(e) we then get $A\nsg\3\calf$. By Proposition
\ref{N_F=F(N_G)} we have $\3\calf \cong \calf_{\3S}(\3G) =
\calf_{\3S}(N_{\3G}(A))$.  Upon replacing $\3G$ by $N_{\3G}(A)$, we may 
assume $A\nsg \3G$.  

Set $G_0=C_{\3G}(A)$ and $G=G_0/A$.  Assume the following two statements 
hold:
\begin{enumerate}[(i) ]
\item \label{e:ii} $A=O_p(\3\calf)$ and $C_{\3\calf}(A)=\calf_0$. 

\item \label{e:iii} The composite
	\[ \xi\: 
	\Aut_{\3G}(A) \cong \3G/G_0 \Right4{\conj} \Out(G) 
	\Right4{\kappa_G} \Out\typ(\call_S^c(G)) \]
is injective.
\end{enumerate}
We now finish the proof of the theorem, assuming \eqref{e:ii} and 
\eqref{e:iii}.

By \eqref{e:ii}, $C_{\3\calf}(O_p(\3\calf))/Z(O_p(\3\calf))=
\calf_0/A\cong \calf$. Since $O^p(\calf)=O^{p'}(\calf)=\calf$, this shows 
$\red(\3\calf)\cong\calf$.  Also, $S_0=C_{\3S}(A)\in\sylp{C_{\3G}(A)}$ 
since $\calf_0=C_{\3\calf}(A)$.  Hence by Proposition \ref{N_F=F(N_G)} 
(applied with $K=1$), $\calf_0=C_{\3\calf}(A)=\calf_{S_0}(C_{\3G}(A))$, 
and so 
	\[ \calf \cong \calf_0/A \cong \calf_{S_0/A}(G_0/A) 
	\cong \calf_S(G)~. \]
By condition (ii) in Definition \ref{d:F0<|F} (applied to 
$\calf_0\nsg\3\calf$), and since $A\nsg\3\calf$ and $A\le{}S_0$, each 
$\varphi\in\Aut_{\3\calf}(A)$ has the form 
$\varphi=\varphi_0\circ\alpha|_{A,A}$ for some 
$\alpha\in\Aut_{\3\calf}(S_0)=\Aut_\Gamma(S_0)$ and some 
$\varphi_0\in\Aut_{\calf_0}(A)$.  
Also, $\Aut_{\calf_0}(A)=1$ by definition, and thus 
	\[ \Aut_{\3G}(A) = \Aut_{\3\calf}(A) = \Aut_\Gamma(A) 
	\cong \Gamma/\Gamma_0 \cong\Out\typ(\call)~. \]  
So by \eqref{e:iii}, there is a homomorphism $s$ from $\Out\typ(\call)$ to 
$\Out(G)$ 
such that $\kappa_G\circ{}s$ is injective.  Since $\call$ was chosen with 
$|\Out\typ(\call)|$ maximal, $\kappa_G\circ{}s$ is an isomorphism, so 
$\kappa_G$ is split surjective, contradicting the assumption that $\calf$ 
is not tame.  We conclude that $\3\calf$ is exotic (and 
$\red(\3\calf)\cong\calf$).  

It remains to prove \eqref{e:ii} and \eqref{e:iii}.

\smallskip

\noindent\textbf{Proof of (i): }
For each $P\in\Ob(\3\call)$, $P_0=P\cap{}S_0\in\Ob(\call_0)$, so 
$P_0=A\times{}Q$ for some $Q\le{}S$ which is $\calf$-centric.  Then 
$C_{\3S}(P)\le{}C_{\3S}(A)=\3S\cap C_\Gamma(A)=\3S\cap\Gamma_0=S_0$, so 
$C_{\3S}(P)\le{}A\times{}Z(Q)\le{}P$.  
Since this holds for all subgroups $\3\calf$-conjugate to $P$, 
all objects in $\3\call$ are $\3\calf$-centric.  

Set $B=O_p(\3\calf)$.  We already saw that $A=O_p(\calf_0)$.  Hence 
$B\cap{}S_0=A$ by Lemma \ref{F0<|F}(e).  Since $B\nsg\3S$ and 
$S_0\nsg\3S$, it follows that $[B,S_0]\le{}A$.  

Since $A\nsg\calf_0$ and $B\nsg\3\calf$, each 
$\varphi\in\Hom_{\calf_0}(P,Q)$ can be extended to a morphism 
$\widebar{\varphi}\in\Hom_{\3\calf}(PB,QB)$ such that 
$\widebar{\varphi}|_{PA}\in\Hom_{\calf_0}(PA,QA)$ and 
$\widebar{\varphi}(B)=B$.  Then $\widebar{\varphi}|_A=\Id_A$.  Hence for 
each $g\in{}B$, $g$ and $\widebar{\varphi}(g)$ have the same conjugation 
action on $A$, and $g^{-1}\widebar{\varphi}(g)\in 
C_B(A)=B\cap{}C_{\3S}(A)=B\cap{}S_0=A$.  By Lemma \ref{g-on-L0}, 
$c_{\delta(g)}=\tau(\delta_{S_0}(g))\in\Aut\typ^I(\call_0)$ is the identity 
modulo $A$, and thus $g\in{}A$ by definition of $\tau$.  So 
$B=O_p(\3\calf)=A$.

Since $C_{\3S}(A)=S_0$, $\calf_0$ and $C_{\3\calf}(A)$ are both fusion 
systems over $S_0$.  Also, $\calf_0\subseteq{}C_{\3\calf}(A)$ since $A$ is 
central in $\calf_0$.  To see that $C_{\3\calf}(A)=\calf_0$, fix $P, Q\leq 
S_0$ and $\varphi\in \Hom_{C_{\3\calf}(A)}(P,Q)$. By definition, $\varphi$ 
extends to $\3\varphi\in \Hom_{\3\calf}(AP,AQ)$ with $\3\varphi(A)=A$ and 
$\3\varphi|_A = \Id_A$. Since $\calf_0\nsg \3\calf$, condition (ii) in 
Definition \ref{d:F0<|F} shows that there are $\alpha\in 
\Aut_{\3\calf}(S_0)$ and $\varphi_0\in \Hom_{\calf_0}(\alpha(AP),AQ)$ such 
that $\3\varphi = \varphi_0\circ \alpha|_{AP,\alpha(AP)}$.  For each $a\in 
A$, $\alpha(a)=\varphi_0^{-1}(\3\varphi(a))=a$, and thus 
$\alpha|_A=\Id_A$. Hence
  	\[ \alpha \in \bigl\{\beta\in\Aut_{\3\calf}(S_0) \,\big|\, 
  	\beta|_A=\Id_A\bigr\}
 	= \{\Id_A\}\times\autf(S) = \Aut_{\calf_0}(S_0) ~, \]
where the first equality holds by \eqref{e:2.6a} (and since 
$\til\mu_\call(\Aut_\call(S))=\autf(S)$).  Thus 
$\alpha\in\Aut_{\calf_0}(S_0)$, so $\3\varphi\in\Mor(\calf_0)$, and hence 
also $\varphi\in\Mor(\calf_0)$. This proves that $C_{\3\calf}(A)=\calf_0$.

\smallskip

\noindent\textbf{Proof of (ii): }
Set 
	\[ \call^*=\call_S^c(G), \qquad
	\call_0^*=\call_{S_0}^{\calh_0}(G_0)=\call_{S_0}^c(G_0), 
	\qquad\textup{and}\qquad 
	\3\call^*=\call_{\3S}^{\3\calh}(\3G)~, \]
where $\calh_0=\Ob(\call_0)$ and $\3\calh=\Ob(\3\call)$.  Note that 
$(S_0,\calf_0,\call_0^*)\nsg(\3S,\3\calf,\3\call^*)$ by Proposition 
\ref{L(G0)<|L(G)}.  Let $\cj\:\3G\Right2{}\Aut(G)$ denote the conjugation 
action of $\3G$ on $G$.  Set 
	\[ H = \bigl\{g\in N_{\3G}(S_0) 
	\,\big|\, \til\kappa_G(\cj(g))=\Id_{\call^*} \bigr\}
	\qquad\textup{and}\qquad T=H\cap\3S~. \]

We first claim $T=A$.  By \cite[Theorem 6.8]{BCGLO2}, $\call_0^*/A$ is a 
centric linking system associated to $\calf_0/A\cong\calf$. Hence the 
natural functor $\call_0^*/A\Right2{}\call^*$ (induced by the projection 
$G_0\Right2{}G$) is an isomorphism, since it commutes with the structure 
functors. So for $g\in\3S$, $g\in{}T$ if and only if 
$c_{\delta(g)}\in\Aut\typ^I(\call_0^*)$ is the identity modulo $A$, in the 
sense of Lemma \ref{g-on-L0}.  We showed in the proof of (i) that each 
$P\in\Ob(\3\call^*)=\Ob(\3\call)$ is $\3\calf$-centric.  Hence by Lemma 
\ref{g-on-L0}, applied to both normal pairs 
$(S_0,\calf_0,\call_0)\nsg(\3S,\3\calf,\3\call)$ and 
$(S_0,\calf_0,\call_0^*)\nsg(\3S,\3\calf,\3\call^*)$, $g\in{}T$ if and 
only if $c_{\delta(g)}\in\Aut\typ^I(\call_0)$ is the identity modulo $A$; 
i.e., induces the identity on $\call$.  By definition of 
$\3S\le\Gamma=A\rtimes\Aut\typ^I(\call)$, this is the case exactly when 
$g\in{}A$.  

Thus $H$ is a normal subgroup of $N_{\3G}(S_0)$ whose 
intersection with its Sylow $p$-subgroup $\3S$ is $A$.  It follows that 
$H/A$ has order prime to $p$.  We claim that $H\le{}G_0$.  Fix $h\in{}H$ 
of order prime to $p$.  Then $\cj(h)\in\Aut(G)$ acts via the identity on 
$S=S_0/A$, so $[h,S_0]\le{}A$.  Hence by \eqref{e:2.6a}, 
$c_h=(\nu(\gamma),\Id_S)\in\Aut_{\3G}(S_0)$ for some 
$\gamma\in\Aut\typ^I(\call)$ such that $\gamma\in\Ker(\til\mu_\call)$.  
Since $\Ker(\mu_\call)$ is a $p$-group by Lemma \ref{Ker(mu)-p-gp} and $h$ 
has order prime to $p$, $\gamma\in\Aut_\call(S)$, so 
$\nu(\gamma)=1\in\Aut(A)$, and $h\in{}G_0$.  Thus 
$H=O^p(H){\cdot}A\le{}G_0$.

Fix $g\in{}\3G$ such that $c_g\in\Ker(\xi)$.  Recall we are only 
interested in $g$ modulo $C_{\3G}(A)=G_0$.  Since 
$\3G=G_0{\cdot}N_{\3G}(S_0)$ by the Frattini argument, 
we can assume $g$ normalizes $S_0$.  Thus 
$\kappa_G([\cj(g)])=1$ in $\Out\typ(\call^*)$, so 
$\til\kappa_G(\cj(g))=c_\gamma$ for some $\gamma\in\Aut_{\call^*}(S)$.  
Let $h\in{}N_G(S)$ be such that $\gamma=[h]$ and lift $h$ to 
$\til{h}\in{}N_{G_0}(S_0)$.  Upon replacing $g$ by $\til{h}^{-1}g$, we can 
assume $\til\kappa_G(\cj(g))=\Id_{\call^*}$, and thus $g\in{}H\le{}G_0$.  
Hence $c_g=\Id_A$, $\xi$ is injective, and this finishes the proof of 
\eqref{e:iii}.
\end{proof} 


\newsubb{Strongly tame fusion systems and linking systems for extensions}
{s:lim2}

\newcommand{\simpclass}[1]{\widehat{\mathfrak{L}}(#1)}
\newcommand{\gpclass}[1]{\mathfrak{G}(#1)}

We are now ready to start working on the proof of Theorem \ref{ThA}.  
As stated in the introduction, this proof uses the 
vanishing of certain higher limit groups, and through that depends on the 
classification of finite simple groups.  In order to have a clean 
statement which does not depend on the classification (Theorem 
\ref{T:reduce}), we first define a certain class of finite groups which in 
fact (using the classification) is shown to include all finite groups.  

The obstruction groups for the existence and uniqueness of centric 
linking systems associated to a given saturated fusion system are higher 
derived functors for inverse limits taken over the centric orbit category 
of the fusion system.  We begin by defining this category.

\begin{Defi} \label{d:orb}
Let $\calf$ be a fusion system over a finite $p$-group $S$, and let 
$\calf^c\subseteq\calf$ be the full subcategory whose objects are the 
$\calf$-centric subgroups of $S$.  The \emph{centric orbit category} 
$\orb(\calf^c)$ of $\calf$ is the category with 
$\Ob(\orb(\calf^c))=\Ob(\calf^c)$, and where
	\[ \Mor_{\orb(\calf^c)}(P,Q) = \Inn(Q){\backslash}\homf(P,Q) \]
for any pair of objects $P,Q\le{}S$.  In particular, 
$\Aut_{\orb(\calf^c)}(P)=\outf(P)$ for each $P$.  If 
$\calf_0\subseteq\calf^c$ is any full subcategory, then $\orb(\calf_0)$ 
denotes the full subcategory of $\orb(\calf^c)$ with the same objects as 
$\calf_0$. 
\end{Defi}

We need the following technical result about higher limits over these 
orbit categories.

\begin{Lem} \label{lim*(orb)}
Let $\calf$ be a saturated fusion system over a finite $p$-group $S$.  Let 
$\calh\subseteq\Ob(\calf^c)$ be any subset which is closed under 
$\calf$-conjugacy and overgroups, and let $\calf^\calh\subseteq\calf^c$ be 
the full subcategory with object set $\calh$.  Fix a functor
$F\:\orb(\calf^c)\op\Right2{}\zploc\mod$.  Assume, for each 
$P\in\Ob(\calf^c){\sminus}\calh$, that either $O_p(\outf(P))\ne1$, or some 
element of order $p$ in $\outf(P)$ acts trivially on $F(P)$.  Let 
$F_0\:\orb(\calf^c)\op\Right2{}\zploc\mod$ be the functor where 
$F_0(P)=F(P)$ if $P\in\calh$ and $F_0(P)=0$ otherwise.  Then 
	\[ \higherlim{\orb(\calf^c)}*(F) \cong 
	\higherlim{\orb(\calf^c)}*(F_0) \cong 
	\higherlim{\orb(\calf^\calh)}*(F|_{\orb(\calf^\calh)\op})~. \]
\end{Lem}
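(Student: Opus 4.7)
The plan is to prove the two isomorphisms separately. For the second isomorphism, the key observation is that because $\calh$ is closed under both $\calf$-conjugacy and overgroups, the full subcategory $\orb(\calf^\calh)$ is a cosieve in $\orb(\calf^c)$: for any morphism $[\varphi]\colon P\to Q$ in $\orb(\calf^c)$ with $P\in\calh$, the image $\varphi(P)$ lies in $\calh$ by conjugacy closure, so $Q\supseteq\varphi(P)$ lies in $\calh$ by overgroup closure. Passing to opposite categories, $\orb(\calf^\calh)\op$ becomes a sieve in $\orb(\calf^c)\op$, and $F_0$ is the right Kan extension along this inclusion of its restriction (extension by zero agrees with Kan extension for sieves). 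A standard comparison argument for sieves then identifies the two higher-limit groups.

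For the first isomorphism, I form the short exact sequence of functors on $\orb(\calf^c)\op$,
\[
0 \to F_1 \to F \to F_0 \to 0,
\]
where $F_1(P)=F(P)$ for $P\in\Ob(\calf^c)\sminus\calh$ and $F_1(P)=0$ for $P\in\calh$. This is actually a short exact sequence because the cosieve property rules out the only problematic case (a morphism $P\to Q$ with $P\in\calh$ and $Q\notin\calh$). Via the induced long exact sequence, it suffices to prove $\higherlim{\orb(\calf^c)\op}{*}(F_1)=0$. Enumerate the $\calf$-conjugacy classes of objects in $\Ob(\calf^c)\sminus\calh$ as $[P_1],\ldots,[P_N]$ with $|P_1|\le\cdots\le|P_N|$, and define $F_1^{(k)}\sset F_1$ by $F_1^{(k)}(P)=F(P)$ when $P$ is $\calf$-conjugate to some $P_i$ with $i\le k$, and $F_1^{(k)}(P)=0$ otherwise. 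Because every morphism $P\to Q$ in $\calf^c$ is injective (so $|P|\le|Q|$, with equality forcing $P$ and $Q$ to be $\calf$-conjugate), each $F_1^{(k)}$ is a subfunctor, and the successive quotients $F_1^{(k)}/F_1^{(k-1)}$ are atomic functors supported on the single class $[P_k]$.

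By the standard Jackowski--McClure--Oliver identification of higher limits of atomic functors,
\[
\higherlim{\orb(\calf^c)\op}{*}(F_1^{(k)}/F_1^{(k-1)}) \cong \Lambda^*(\outf(P_k);F(P_k)),
\]
so the problem reduces to showing $\Lambda^*(\outf(P_k);F(P_k))=0$ for each $k$. The case $O_p(\outf(P_k))\ne1$ is the classical JMO vanishing criterion for $\Lambda^*$. The remaining case---where some element of order $p$ in $\outf(P_k)$ acts trivially on $F(P_k)$---is the main technical point; I would handle it by invoking the refinement of the vanishing theorem asserting that $\Lambda^*(\Gamma;M)=0$ whenever the kernel of $\Gamma\to\Aut(M)$ has order divisible by $p$. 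Chaining the long exact sequences obtained from the filtration then yields $\higherlim{\orb(\calf^c)\op}{*}(F_1)=0$, completing the proof.
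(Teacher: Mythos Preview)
Your proposal is correct and follows essentially the same route as the paper. The paper's proof is more compressed: for the first isomorphism it invokes \cite[Lemma 2.3]{limz-odd} (which packages exactly your filtration-by-atomic-functors argument) together with \cite[Proposition 6.1(ii)]{JMO} for the vanishing of $\Lambda^*(\outf(P);F(P))$ under either hypothesis, and for the second isomorphism it observes that extension by zero is right adjoint to restriction (your cosieve observation) and hence preserves injectives.
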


\begin{proof}  Let $F_1\subseteq{}F$ be the subfunctor defined by setting 
$F_1(P)=F(P)$ if 
$P\notin\calh$ and $F_1(P)=0$ otherwise.  Thus $F_0\cong{}F/F_1$.  By 
\cite[Lemma 2.3]{limz-odd}, $\higherlimm{\orb(\calf^c)}*(F_1)=0$ if certain 
graded groups $\Lambda^*(\outf(P);F(P))$ vanish for each 
$P\in\Ob(\calf^c){\sminus}\calh$.  By \cite[Proposition 6.1(ii)]{JMO}, 
this is the case whenever $O_p(\outf(P))\ne1$, or some element of order $p$ 
in $\outf(P)$ acts trivially on $F(P)$.  This proves the first 
isomorphism.

For any category $\calc$, let $\calc\mod$ be the category of contravariant 
functors from $\calc$ to abelian groups.  Let $E$ be the functor 
``extension by zero'' from $\orb(\calf^\calh)\mod$ to $\orb(\calf^c)\mod$. 
Since $\calh\subseteq\Ob(\calf^c)$ is closed under $\calf$-conjugacy and 
overgroups, $E$ is right adjoint to the restriction functor.  Thus $E$ 
sends injectives to injectives.  So for $\Phi$ in $\orb(\calf^\calh)\mod$, 
$\higherlimm{\orb(\calf^\calh)}*(\Phi)\cong
\higherlimm{\orb(\calf^c)}*(E(\Phi))$.  Since 
$F_0=E(F|_{\orb(\calf^\calh)\op})$, the second isomorphism now follows.  
\end{proof}

For any saturated fusion system $\calf$ over a finite $p$-group $S$, let 
	\[ \calz_\calf\: \orb(\calf^c)\op \Right5{} \zploc\mod \]
be the functor which sends an object $P$ of $\calf^c$ to $Z(P)=C_S(P)$.  
For each $\varphi\in\homf(P,Q)$ in $\calf^c$, 
	\[ \calz_\calf([\varphi]) = \varphi^{-1}|_{Z(Q)} \: Z(Q) \Right4{} 
	Z(P). \]
When $\calf=\calf_S(G)$ for some finite group $G$ and some $S\in\sylp{G}$, 
and $H\nsg{}G$ is a normal subgroup, we let 
$\calz_\calf^H\subseteq\calz_\calf$ be the subfunctor which sends $P$ to 
$Z(P)\cap{}H=\calz_\calf(P)\cap{}H$.

The obstruction to the existence of a centric linking system associated to 
$\calf$ lies in $\higherlimm{\orb(\calf^c)}3(\calz_\calf)$, and the 
obstruction to its uniqueness lies in 
$\higherlimm{\orb(\calf^c)}2(\calz_\calf)$ \cite[Proposition 3.1]{BLO2}.  
The main results in \cite{limz-odd} and \cite{limz} state that these 
groups vanish whenever $\calf$ is the fusion system of a finite group $G$.

\begin{Defi} \label{d:L(p)}
Fix a prime $p$.
\begin{enuma}  
\item Let $\simpclass{p}$ be the class of all nonabelian finite simple 
groups $L$ with the following property.  For any finite group $G$, and any 
pair of subgroups $H\nsg{}K\nsg{}G$ both normal in $G$ such that 
$K/H\cong{}L^m$ for some $m\ge1$, if we set $\calf=\calf_S(G)$ for some 
$S\in\sylp{G}$, then 
$\higherlimm{\orb(\calf^c)}i(\calz_\calf^K/\calz_\calf^H)=0$ for $i\ge2$.

\item Let $\gpclass{p}$ be the class of all finite groups $G$ all of 
whose nonabelian composition factors lie in $\simpclass{p}$.

\item A saturated fusion system $\calf$ over a finite $p$-group $S$ is 
\emph{strongly tame} if it is tamely realizable by a group 
$G\in\gpclass{p}$.  
\end{enuma}
\end{Defi}

We first show that by results in \cite{limz-odd} and \cite{limz}, for each 
prime $p$, all nonabelian finite simple groups are in $\simpclass{p}$.  
Hence all finite groups are in $\gpclass{p}$, and all tame fusion systems 
are strongly tame.

Fix a finite group $G$ with $S\in\sylp{G}$, and set $\calf=\calf_S(G)$.  
Let $\orb_p(G)$ be the $p$-subgroup orbit category of $G$ as defined in 
\cite{limz} and \cite{limz-odd}.  Let $\calz_G\:\orb_p(G)\op\Right2{}\Ab$ 
be the functor $\calz_G(P)=Z(P)$ if $P$ is $\calf$-centric and 
$\calz_G(P)=0$ otherwise.  For $H\nsg{}G$, let $\calz_G^H\subseteq\calz_G$ 
be the subfunctor $\calz_G^H(P)=Z(P)\cap{}H$ if $P$ is $\calf$-centric.  
By \cite[Lemma 2.1]{limz-odd}, for $K\nsg{}H\nsg{}G$ both normal in $G$, 
$\higherlimm{\orb(\calf^c)}*(\calz_\calf^H/\calz_\calf^K)\cong 
\higherlimm{\orb_p(G)}*(\calz_G^H/\calz_G^K)$.  In particular, 
$\calz_\calf$ and $\calz_G$ have the same higher limits.  This allows us, 
in the proofs of the next theorem and lemma, to apply the results in 
\cite{limz-odd} and \cite{limz}, which are all stated in terms of limits 
taken over $\orb_p(G)$.  

\begin{Thm} \label{t:L(p)}
For each prime $p$, the class $\simpclass{p}$ contains all nonabelian 
finite simple groups, and the class $\gpclass{p}$ contains all finite 
groups.  Hence all tame fusion systems are strongly tame.
\end{Thm}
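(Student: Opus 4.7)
The plan is to reduce all three assertions to known vanishing results for higher limits and then invoke the main theorems of \cite{limz-odd} and \cite{limz}. First, I would observe that the third claim follows immediately from the second: a tame fusion system is, by Definition \ref{d:tame}, tamely realized by some finite group $G$, and if every finite group lies in $\gpclass{p}$, then this same $G$ exhibits strong tameness by definition. Next, the second claim follows from the first by an easy composition series argument, since $\gpclass{p}$ only constrains the nonabelian composition factors; abelian composition factors are automatically permitted.

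The substantive content is therefore the first claim, that every nonabelian finite simple group $L$ lies in $\simpclass{p}$. Fix such an $L$, a finite group $G$ with normal subgroups $H \nsg K \nsg G$ satisfying $K/H \cong L^m$, and $S \in \sylp{G}$; set $\calf = \calf_S(G)$. By \cite[Lemma 2.1]{limz-odd}, as noted in the discussion preceding the theorem, the higher limits over $\orb(\calf^c)$ of $\calz_\calf^K/\calz_\calf^H$ coincide with the higher limits over $\orb_p(G)$ of $\calz_G^K/\calz_G^H$. This converts the statement into one about higher limits over the full $p$-subgroup orbit category of $G$, which is the setting analyzed in \cite{limz-odd} and \cite{limz}.

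I would then appeal to the main vanishing theorems in those papers, citing \cite{limz-odd} for $p$ odd and \cite{limz} for $p = 2$. Both establish that $\higherlim{\orb_p(G)}{i}(\calz_G^K/\calz_G^H) = 0$ for $i \geq 2$ under precisely the hypothesis that $K/H$ is a product of copies of a single nonabelian finite simple group. The strategy there is to filter $K/H$ by a $G$-stable subnormal series whose factors are either cyclic of prime order or a single copy of $L$, use the resulting long exact sequences of coefficient functors to reduce to the case $K/H \cong L$, and then apply the $\Lambda$-functor vanishing criteria of \cite[Proposition 6.1]{JMO} case by case across the families of nonabelian simple groups. Once the first claim is known, $\gpclass{p}$ contains all finite groups, and consequently every tame fusion system is strongly tame.

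The principal obstacle, and the reason this theorem rests on the classification of finite simple groups, is exactly the case-by-case verification in \cite{limz-odd} and \cite{limz} across the alternating, classical, exceptional Lie type, and sporadic families. That analysis will not be reproduced here; our task is only to package it through the orbit-category translation and the elementary composition-series reduction described above. It is worth remarking that in practice, for any specific tame fusion system of interest, the relevant group $G$ will usually have a small and well-understood composition structure, so that membership in $\gpclass{p}$ can be checked directly without invoking the full classification.
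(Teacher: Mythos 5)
Your proposal is correct and follows essentially the same route as the paper: the last two claims are immediate from the definitions, and the substantive first claim is delegated to \cite{limz-odd} and \cite{limz} via the identification of higher limits over $\orb(\calf^c)$ with those over $\orb_p(G)$ from \cite[Lemma 2.1]{limz-odd}. The only difference is citation granularity: the paper assembles the statement not from a single ``main vanishing theorem'' about $\calz_G^K/\calz_G^H$, but from the sufficient criteria (\cite[Proposition 4.1]{limz-odd} for odd $p$, \cite[Proposition 2.7]{limz} for $p=2$) combined with their classification-dependent case-by-case verifications.
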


\begin{proof}  The last two statements follow immediately from the first 
one and the definitions.  So we need only show that $\simpclass{p}$ 
contains all nonabelian finite simple groups.

Assume $p$ is odd.  By \cite[Proposition 4.1]{limz-odd} (and its proof), a 
nonabelian finite simple group $L$ with $S\in\sylp{L}$ lies in 
$\simpclass{p}$ if there is a subgroup $Q\le\mathfrak{X}_L(S)$ which is 
centric in $S$ (i.e., $C_S(Q)\le{}Q$) and not $\Aut(L)$-conjugate to any 
other subgroup of $S$.  Here, $\mathfrak{X}_L(S)$ is a certain subgroup of 
$S$ defined in \cite[\S\S\,3--4]{limz-odd}.  By \cite[Propositions 
4.2--4.4]{limz-odd} (and the classification theorem), all nonabelian 
finite simple groups have this property, and thus they all lie in 
$\simpclass{p}$.

If $p=2$, then by \cite[Proposition 2.7]{limz}, a nonabelian finite simple 
group $L$ is contained in $\simpclass{2}$ if it is contained in the class 
$\mathfrak{L}^{\ge2}(2)$ defined in \cite[Definition 2.8]{limz}.  By 
\cite[Theorems 5.1, 6.2, 7.5, 8.13, \& 9.1]{limz} and the classification 
theorem, all nonabelian finite simple groups are contained in 
$\mathfrak{L}^{\ge2}(2)$. 
\end{proof}

Theorem \ref{t:L(p)} together with Theorem \ref{T:reduce} will imply Theorem 
\ref{ThA}.  From now on, for the rest of the section, we avoid using the 
classification theorem by assuming whenever necessary that our groups are 
in $\gpclass{p}$ and applying the following lemma.

\begin{Lem} \label{lim2=0}
Fix a finite group $G$ with Sylow subgroup $S\in\sylp{G}$, and set 
$\calf=\calf_S(G)$.  Assume $G\in\gpclass{p}$.  Then the following hold.
\begin{enuma} 
\item Assume $\widehat{G}$ is a finite group with $G\nsg\widehat{G}$.  Fix 
$\widehat{S}\in\sylp{\widehat{G}}$ such that $S=\widehat{S}\cap{}G$, and 
set $\widehat{\calf}=\calf_{\widehat{S}}(\widehat{G})$.  Then 
$\higherlimm{\orb(\widehat{\calf}^c)}i(\calz_{\widehat{\calf}}^G)=0$ for 
$i\ge2$.

\item If $G=G_1\times{}G_2$, then $G\in\gpclass{p}$ if and only if 
$G_1,G_2\in\gpclass{p}$.  If $H\nsg{}G$ and $G/H$ is $p$-solvable, then 
$H\in\gpclass{p}$ if and only if $G\in\gpclass{p}$.

\item Let $\call$ be a linking system associated to $\calf$ such that all 
subgroups in $\calh\defeq\Ob(\call)$ are $\calf$-centric.  Then 
$\call\cong\call_S^{\calh}(G)$.

\item The homomorphism 
$\mu_G\:\Out\typ(\call_S^c(G))\Right2{}\Out(S,\calf)$ defined in 
Section \ref{s:Aut(L)} is surjective.
\end{enuma}
\end{Lem}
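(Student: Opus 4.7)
The plan is centered on part (a), which is the technical heart; parts (b)--(d) will follow from (a) by standard arguments. For (a), I would choose a chief series $1 = G_0 \nsg G_1 \nsg \cdots \nsg G_n = G$ with each $G_i$ normal in $\widehat{G}$, so each chief factor $G_i/G_{i-1}$ is characteristically simple as a $\widehat{G}$-group. This induces a filtration of $\calz_{\widehat{\calf}}^G$ by the subfunctors $\calz_{\widehat{\calf}}^{G_i}$, and I would prove vanishing in degrees $\ge 2$ stepwise via the associated long exact sequences, reducing to showing each quotient $F_i \defeq \calz_{\widehat{\calf}}^{G_i}/\calz_{\widehat{\calf}}^{G_{i-1}}$ satisfies $\higherlim{\orb(\widehat{\calf}^c)}j(F_i) = 0$ for all $j \ge 2$.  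The chief factors fall into three cases. If $G_i/G_{i-1}$ is a $p'$-group, then $F_i \equiv 0$ since $\calz_{\widehat{\calf}}(P) = Z(P)$ takes values in $p$-groups. If $G_i/G_{i-1} \iso L^m$ for a nonabelian finite simple group $L$, then $G \in \gpclass{p}$ forces $L \in \simpclass{p}$, and the definition of $\simpclass{p}$ applied to $G_{i-1} \nsg G_i \nsg \widehat{G}$ yields the required vanishing directly.

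The main obstacle is the remaining case, where $G_i/G_{i-1}$ is an elementary abelian $p$-group. Here $F_i$ takes values in elementary abelian $p$-groups, and I would appeal to results from \cite{limz-odd} and \cite{limz} on higher limits of ``$p$-torsion'' subfunctors of $\calz_{\widehat{\calf}}$, possibly in combination with Lemma \ref{lim*(orb)} to shrink the orbit category to the subgroups where $F_i$ is genuinely supported.  A careful examination of how the chief factor is embedded in $Z(P)$ as $P$ ranges over $\widehat{\calf}$-centric subgroups will be required.

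For part (b), the composition factors of $G_1 \times G_2$ are the multiset union of those of $G_1$ and $G_2$, which gives the first equivalence at once. For the second, $p$-solvability of $G/H$ forces every composition factor of $G/H$ to be cyclic of prime order, so $G$ and $H$ have the same nonabelian composition factors. Part (c) follows from (a) applied with $\widehat{G} = G$: then $\higherlim{\orb(\calf^c)}2(\calz_\calf) = 0$, and by \cite[Proposition 3.1]{BLO2} the centric linking system of $\calf$ is unique up to isomorphism, hence isomorphic to $\call_S^c(G)$. The general case for $\calh \subseteq \Ob(\calf^c)$ follows by extending $\call$ uniquely to a centric linking system via Lemma \ref{lim*(orb)} and restricting back to $\calh$. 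Finally, part (d) uses the exact sequence of \cite[Theorem 8.1]{BLO2} (or the underlying obstruction-theoretic argument), which places the cokernel of $\mu_G$ inside $\higherlim{\orb(\calf^c)}2(\calz_\calf)$; this vanishes by (a), yielding surjectivity of $\mu_G$.
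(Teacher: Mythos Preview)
Your approach to part (a) matches the paper's: filter by a chief series of $G$ inside $\widehat{G}$, reduce via long exact sequences to the individual chief factors, and handle the nonabelian factors directly from the definition of $\simpclass{p}$. For the abelian factors, the paper simply cites \cite[Proposition 2.2]{limz}, which gives vanishing of $\higherlimm{\orb(\widehat{\calf}^c)}i(\calz_{\widehat\calf}^{G_{r+1}}/\calz_{\widehat\calf}^{G_r})$ for all $i\ge1$ whenever $G_{r+1}/G_r$ is abelian. Your worry about ``how the chief factor is embedded in $Z(P)$'' is unnecessary; that proposition does the work in one stroke, covering both the $p'$-case and the elementary abelian $p$-case.

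There is a genuine error in your part (b). You claim that $p$-solvability of $G/H$ forces every composition factor of $G/H$ to be cyclic of prime order. That is the definition of \emph{solvable}, not $p$-solvable. A $p$-solvable group is one whose composition factors are either $C_p$ or simple groups of order \emph{prime to $p$}, and the latter can be nonabelian (for instance $A_5$ when $p=7$). So $G$ and $H$ need not have the same nonabelian composition factors; $G$ may have extra ones of $p'$-order. The paper closes this gap by observing that any nonabelian simple group $L$ of order prime to $p$ lies in $\simpclass{p}$ trivially: if $K/H\cong L^m$ has $p'$-order then $\calz_\calf^K=\calz_\calf^H$, so the required higher limits vanish. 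You need this step.

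For (d), the reference you want is \cite[Theorem E]{BLO1}, which is exactly the statement that $\mu_G$ is surjective when $\higherlimm{\orb(\calf^c)}2(\calz_\calf)=0$; \cite[Theorem 8.1]{BLO2} is about $\Out\typ(\call)\cong\Out(|\call|\pcom)$ and does not give you the cokernel of $\mu_G$. Your sketch for (c) is workable but more circuitous than the paper's, which argues directly that the obstruction to uniqueness over $\orb(\calf^\calh)$ vanishes via Lemma \ref{lim*(orb)}.
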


\begin{proof}  \textbf{(a) } 
Let $1=G_0\nsg{}G_1\nsg\cdots\nsg{}G_m=G$ be a sequence of subgroups, all 
normal in $\widehat{G}$, such that for each $r$, $G_{r+1}/G_r$ is a 
minimal nontrivial normal subgroup of $\widehat{G}/G_r$.  By 
\cite[Theorem 2.1.5]{Gorenstein}, each quotient $G_{r+1}/G_r$ is a 
product of simple groups isomorphic to each other.  By \cite[Proposition 
2.2]{limz}, if $G_{r+1}/G_r$ is abelian, then 
$\higherlimm{\orb(\widehat{\calf}^c)}i
(\calz_{\widehat\calf}^{G_{r+1}}/\calz_{\widehat\calf}^{G_r}) =0$ for all 
$i\ge1$.  Thus (a) follows immediately from the definition of 
$\simpclass{p}$, together with the exact sequences 
	\[ \higherlim{\orb(\widehat{\calf}^c)}i(\calz_{\widehat\calf}
	^{G_s}/\calz_{\widehat\calf}^{G_r}) \Right4{}
	\higherlim{\orb(\widehat{\calf}^c)}i(\calz_{\widehat\calf}
	^{G_t}/\calz_{\widehat\calf}^{G_r}) \Right4{}
	\higherlim{\orb(\widehat{\calf}^c)}i(\calz_{\widehat\calf}
	^{G_t}/\calz_{\widehat\calf}^{G_s}) \]
for all $0\le{}r<s<t\le{}m$ and all $i\ge2$.  

\smallskip

\noindent\textbf{(b) }  The first statement is immediate, since a simple 
group is a composition factor of $G=G_1\times{}G_2$ if and only if 
it is a composition factor of $G_1$ or of $G_2$.  When $H\nsg{}G$ and 
$G/H$ is $p$-solvable, then the only simple groups which could be 
composition factors of $G$ but not of $H$ are $C_p$ and simple groups 
of order prime to $p$.  So we need only show that every nonabelian simple 
group of order prime to $p$ lies in $\simpclass{p}$.  

Fix such a simple group $L$, and assume $H\nsg{}K\nsg{}G$ (where 
$H\nsg{}G$), and $K/H\cong{}L^m$ for some $m$.  Set $\calf=\calf_S(G)$ for 
some $S\in\sylp{G}$.  Then $\calz_\calf^H=\calz_\calf^K$ since $K/H$ 
has order prime to $p$; and thus $L\in\simpclass{p}$.

\smallskip

\noindent\textbf{(c,d) }  By (a), applied with $\widehat{G}=G$, 
$\higherlimm{\orb(\calf^c)}2(\calz_\calf)=0$.  So by \cite[Theorem E]{BLO1}, 
$\mu_G$ is onto.  This proves (d).

Now let $\call$ be a linking system associated to $\calf$, set 
$\calh=\Ob(\call)$, and assume $\calh\subseteq\Ob(\calf^c)$.  Since 
$\calh$ contains all subgroups of $S$ which are $\calf$-centric and 
$\calf$-radical, $O_p(\outf(P))\ne1$ for $P\in\Ob(\calf^c){\sminus}\calh$. 
By Lemma \ref{lim*(orb)}, 
$\higherlimm{\orb(\calf^\calh)}2(\calz_\calf|_{\orb(\calf^\calh)\op})=0$ 
for $i\ge2$ since $\higherlimm{\orb(\calf^c)}2(\calz_\calf)=0$.  So by the 
same argument as that used in the proof of \cite[Proposition 3.1]{BLO2}, 
all linking systems associated to $\calf$ with object set $\calh$ are 
isomorphic.  In particular, $\call\cong\call_S^\calh(G)$, and this proves 
(c).
\end{proof}

The following is the main technical result in this subsection, and will be 
needed in the proof of Theorem \ref{ThA}.  Given $\calf_0\nsg\calf$ 
satisfying certain technical assumptions, and given a linking system 
$\call_0$ associated to $\calf_0$, we want to find $\call$ associated to 
$\calf$ such that $\call_0\nsg\call$.  It is natural to ask why this 
cannot be done using \cite[Theorem 9]{O3}, where conditions are explicitly 
set up to construct extensions of fusion and linking systems.  There seem 
to be two difficulties with that approach.  First, the hypotheses of 
Proposition \ref{F0/A=F(G)} are very different from those in \cite{O3}, 
and it is not clear how to convert from the one to the other.  But more 
seriously, even if one does manage to do that and construct an extension 
$(\calf',\call')$ of $(\calf_0,\call_0)$, it is not clear how to prove 
that $\calf'\cong\calf$; i.e., that $\call'$ really is a linking system 
associated to $\calf$.

\begin{Prop} \label{F0/A=F(G)}
Let $\calf$ be a saturated fusion system over a finite $p$-group $S$, and 
let $\calf_0\nsg\calf$ be a normal fusion subsystem over 
$S_0\nsg{}S$.  Assume $\calf_0$ is strongly tame; and either 
\begin{enuma}  
\item $\calf_0=O^p(\calf)$, or
\item $\calf_0=O^{p'}(\calf)$, or
\item $\calf_0=N_\calf^K(Q)$ for some $Q\nsg\calf$ and some 
$K\nsg\Aut(Q)$ containing $\Inn(Q)$.  
\end{enuma}
Then there is a centric linking system associated to $\calf$.
\end{Prop}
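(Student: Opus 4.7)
The plan is to use strong tameness of $\calf_0$ to construct a finite group $\widehat{G}$ whose fusion system is $\calf$, after which the centric linking system $\call_{\widehat{S}}^c(\widehat{G})$ does the job via Lemma \ref{lim2=0}(c), provided $\widehat{G}\in\gpclass{p}$. Since $\calf_0$ is strongly tame, first I would fix $G_0\in\gpclass{p}$ tamely realizing $\calf_0$ with $S_0\in\sylp{G_0}$, together with a splitting $s\:\Out\typ(\call_0)\Right2{}\Out(G_0)$ of $\kappa_{G_0}$, where $\call_0=\call_{S_0}^c(G_0)$.

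In each of the three cases the ``outer'' structure of $\calf$ over $\calf_0$ is controlled by a finite group $\Gamma$: in (a), $\Gamma=S/S_0$ is a $p$-group; in (b), $\Gamma=\outf(S)/\Out_{\calf_0}(S)$ is a $p'$-group; in (c), $\Gamma$ is the analogous quotient coming from the $K$-action modulo the piece already realized by $\calf_0$. By conditions (ii) and (iii) of Definition \ref{d:F0<|F}, $\Gamma$ acts on $\calf_0$, and surjectivity of $\mu_{G_0}$ (Lemma \ref{lim2=0}(d)) lifts this to an action on $\Out\typ(\call_0)$, whence the splitting $s$ produces an outer action of $\Gamma$ on $G_0$. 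Promoting this to an actual extension $\widehat{G}$ of $G_0$ by $\Gamma$ --- via Schur--Zassenhaus in case (b), via a direct $p$-group extension carefully tracking the conjugation action of $S$ on $S_0$ in case (a), and via a normalizer-style construction in case (c) --- yields $\widehat{G}\in\gpclass{p}$ by Lemma \ref{lim2=0}(b). Choose $\widehat{S}\in\sylp{\widehat{G}}$ with $\widehat{S}\cap G_0=S_0$, which in each case can be identified with $S$.

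The main step --- and the principal obstacle --- is then to verify $\calf_{\widehat{S}}(\widehat{G})=\calf$. By Proposition \ref{L(G0)<|L(G)} we have $\calf_0=\calf_{S_0}(G_0)\nsg\calf_{\widehat{S}}(\widehat{G})$, and by construction the normal pairs $\calf_0\nsg\calf$ and $\calf_0\nsg\calf_{\widehat{S}}(\widehat{G})$ have matching top-level extension data. In cases (a) and (b) equality then follows from Theorem \ref{t:O^p(F)}: saturated subsystems of $p$-power index over a prescribed Sylow subgroup, or of index prime to $p$ with prescribed $\outf(S)$-image, are uniquely determined. In case (c) one appeals to Proposition \ref{N_F=F(N_G)} together with the fact that $Q$ and $K$ determine $N_\calf^K(Q)$. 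This is precisely the difficulty flagged before the statement: constructing the extension $\widehat{G}$ is relatively straightforward, but verifying that its fusion system equals $\calf$ (and not some other subsystem of the correct shape) relies crucially on the uniqueness theorems for each type of subsystem --- which is exactly what the group-theoretic approach provides and the abstract extension method of \cite[Theorem 9]{O3} does not.
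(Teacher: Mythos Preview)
Your approach has a genuine gap at precisely the point the paper warns about in the paragraph preceding the proposition. You construct an extension $\widehat{G}$ of $G_0$ and then assert $\calf_{\widehat{S}}(\widehat{G})=\calf$, claiming this follows from Theorem~\ref{t:O^p(F)} in cases (a) and (b) and from Proposition~\ref{N_F=F(N_G)} in case (c). But those results run in the wrong direction: they say that given $\calf$, the subsystem $O^p(\calf)$, $O^{p'}(\calf)$, or $N_\calf^K(Q)$ is uniquely determined. They do \emph{not} say that $\calf$ is uniquely determined by $\calf_0$ together with your ``extension data''. There is no result in the paper asserting that two saturated fusion systems over $S$ with the same $O^p$ (or $O^{p'}$, or $K$-normalizer) subsystem must coincide, and producing such a statement is essentially the whole difficulty. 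Even the prior step --- building $\widehat{G}$ so that $\widehat{S}\cong S$ as extensions of $S_0$ --- involves an $H^2$ obstruction you have not addressed in case (a).

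The paper's proof avoids this entirely. It does \emph{not} attempt to realize $\calf$ by a group. Instead it builds an auxiliary group $\widehat{G}=G_0\rtimes\Delta$ (where $\Delta$ consists of all automorphisms of $G_0$ restricting to elements of $\autf(S_0)$), observes that $\calf_2=\calf_{\widehat{S}}(\widehat{G})$ and $\calf_1=\calf$ agree only on subgroups of $S_0$, and then constructs a bridge category $\calc$ together with functors $\orb(\calf_i^*)\to\calc$ that allow the higher limits $\higherlimm{\orb(\calf_1^c)}{*}(\calz_1)$ and $\higherlimm{\orb(\calf_2^c)}{*}(\calz_2)$ to be identified. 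Since $G_0\in\gpclass{p}$, Lemma~\ref{lim2=0}(a) kills the latter in degrees $\ge2$, and a separate argument handles $\calz_\calf/\calz_1$ in each of the three cases. The vanishing of $\higherlimm{\orb(\calf^c)}{3}(\calz_\calf)$ then yields the centric linking system via \cite[Proposition~3.1]{BLO2}. Note that the group-theoretic realization you are aiming for \emph{is} carried out later, in Lemma~\ref{G0<|G}, but that lemma takes a linking system $\call$ for $\calf$ as part of its input --- which is exactly what Proposition~\ref{F0/A=F(G)} is needed to supply.
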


\begin{proof}  In all cases (a), (b), and (c), $\calf_0\nsg\calf$ 
by Proposition \ref{fus-ex}.  

Since $\calf_0$ is strongly tame, we can choose a finite group 
$G_0\in\gpclass{p}$ such that $S_0\in\sylp{G_0}$, 
$\calf_0=\calf_{S_0}(G_0)$, and $\kappa_{G_0}$ is split surjective.  
We first claim that
	\beqq \til\mu_{G_0}\circ\til\kappa_{G_0}
	\: \Aut(G_0,S_0)\Right5{}\Aut(S_0,\calf_0) 
	\quad\textup{is onto.} \label{e:onto} \eeqq
As noted in Section \ref{s:tamedef}, this composite is defined 
by restriction.  Since $G_0\in\gpclass{p}$, $\mu_{G_0}$ is surjective by 
Lemma \ref{lim2=0}(d).  Also, $\kappa_{G_0}$ is split surjective by 
assumption.  Thus every element of 
$\Out(S_0,\calf_0)=\Aut(S_0,\calf_0)/\Aut_{G_0}(S_0)$ extends to an 
element of $\Out(G_0)=\Aut(G_0,S_0)/\Aut_{N_{G_0}(S_0)}(G_0)$, and hence 
the map in \eqref{e:onto} is onto. 

Define
	\[ \Delta = \bigl\{ \alpha\in\Aut(G_0,S_0) \,\big|\, 
	\alpha|_{S_0}\in\autf(S_0) \bigr\} ~. \]
We just showed that every element in $\autf(S_0)$ is the restriction of 
some element in $\Delta$.  Fix $S_\Delta\in\sylp{\Delta}$ which surjects 
onto $\Aut_S(S_0)$ under the restriction map to $\autf(S_0)$.  Set 
	\[ \widehat{G}=G_0\sd{}\Delta \qquad\textup{and}\qquad
	\widehat{S}=S_0\sd{}S_\Delta. \]
Thus $\widehat{S}\in\sylp{\widehat{G}}$.

Now set $S_1=S$, $\calf_1=\calf$, $S_2=\widehat{S}$, and 
$\calf_2=\calf_{\widehat{S}}(\widehat{G})$.  We claim, for each 
$P_0,Q_0\le{}S_0$, that 
	\beqq \begin{split}  
	\Hom_{\calf_2}(P_0,Q_0) &= \Hom_{\calf_1}(P_0,Q_0) 
	\defeq \homf(P_0,Q_0) \\
	\Hom_{S_2}(P_0,Q_0) &= \Hom_{S_1}(P_0,Q_0) 
	\defeq \Hom_S(P_0,Q_0)~.
	\end{split}
	\label{F1_0=F2_0} \eeqq
We have already remarked that $\calf_0\nsg\calf=\calf_1$, and 
$\calf_0\nsg\calf_2$ by Proposition \ref{L(G0)<|L(G)} since they are the 
fusion systems of $G_0\nsg\widehat{G}$.  Hence by condition (ii) in 
Definition \ref{d:F0<|F}, each $\varphi\in\Hom_{\calf_i}(P,Q)$ (for 
$i=1,2$ and $P,Q\le{}S_0$) is the composite of a morphism in $\calf_0$ and 
the restriction of a morphism in $\Aut_{\calf_i}(S_0)$.  Furthermore,
	\[ \Aut_{\calf_2}(S_0)= \Aut_{\widehat{G}}(S_0) = 
	\Gen{ \Aut_{G_0}(S_0) \,,\, \Res^{G_0}_{S_0}(\Delta) } =
	\Aut_{\calf_1}(S_0) \]
by \eqref{e:onto}, and the first line in \eqref{F1_0=F2_0} now follows.  
The second holds since $\Aut_{S_2}(S_0)=\Aut_S(S_0)$ by definition of 
$S_2=S_0\rtimes{}S_\Delta$.  

We next claim that for all $P_0\le{}S_0$, 
	\beqq \textup{$P_0$ is fully centralized in $\calf_2$} 
	\quad\Longleftrightarrow\quad
	\textup{$P_0$ is fully centralized in $\calf_1=\calf$~.} 
	\label{e:same-fc} \eeqq
By \eqref{F1_0=F2_0}, the $\calf_1$- and $\calf_2$-conjugacy classes of 
$P_0$ are the same, and $\Aut_{S_2}(S_0)=\Aut_S(S_0)$.  Hence for each 
$Q_0$ which is $\calf_i$-conjugate to $P_0$, 
	\[ \frac{|C_{S_1}(Q_0)|}{|C_{S_1}(S_0)|} = 
	\bigl|\bigl\{\alpha\in\Aut_S(S_0)\,\big|\,
	\alpha|_{Q_0}=\Id \bigr\}\bigr| = 
	\frac{|C_{S_2}(Q_0)|}{|C_{S_2}(S_0)|}~, \]
and so $|C_{S_1}(P_0)|$ is maximal if and only if $|C_{S_2}(P_0)|$ is 
maximal.  

We want to compute $\higherlimm{\orb(\calf_1^c)}*(\calz_{\calf_1})$ by 
comparing it with $\higherlimm{\orb(\calf_2^c)}*(\calz_{\calf_2})$.  To do 
this, we first define in Step 1 certain full subcategories 
$\calf_i^*\subseteq\calf_i^c$, and an intermediate category $\calc$ which 
can be used to compare $\orb(\calf_1^*)$ with $\orb(\calf_2^*)$.  Certain 
properties of the ``comparison functors'' 
$\Phi_i\:\orb(\calf_i^*)\Right2{}\calc$ are stated and proven in Step 2.  
In Step 3, we define certain subfunctors $\calz_i\subseteq\calz_{\calf_i}$ 
on $\orb(\calf_i^c)$, and prove that 
$\higherlimm{\orb(\calf_1^c)}*(\calz_1)\cong 
\higherlimm{\orb(\calf_2^c)}*(\calz_2)$ using the intermediate categories 
$\orb(\calf_i^*)$ and $\calc$ to compare them.  Finally, in Step 4, we 
prove that $\higherlimm{\orb(\calf_2^c)}*(\calz_2)=0$ for $*\ge2$, and 
then show that $\higherlimm{\orb(\calf^c)}*(\calz_\calf)\cong 
\higherlimm{\orb(\calf_1^c)}*(\calz_1)$ for $*\ge1$ by analyzing 
individually the three cases (a)--(c).  

Throughout the rest of the proof, whenever $P\le{}S_1$ or $P\le{}S_2$, we 
write $P_0=P\cap{}S_0$. 

\smallskip

\noindent\textbf{Step 1: }  Let $\calf_i^*\subseteq\calf_i$ ($i=1,2$) be 
the full subcategories with objects
	\[ \Ob(\calf_i^*) = \bigl\{ P\le S_i \,\big|\, 
	C_{S_i}(Q_0)\le{}Q \textup{ for all $Q$ $\calf_i$-conjugate 
	to $P$}  \bigr\}. \]
All objects in $\calf_i^*$ are $\calf_i$-centric; i.e., 
$\calf_i^*\subseteq\calf_i^c$.  Also, if $P\le{}S_i$ is $\calf_i$-conjugate 
to an object in $\calf_i^*$, then $P\in\Ob(\calf_i^*)$.  

We next construct a category $\calc$ which acts as intermediary between 
the orbit categories of $\calf_1^*$ and $\calf_2^*$.  It will be 
a subcategory of a larger category $\widehat{\calc}$, defined by setting 
	\begin{multline*}  
	\Ob(\widehat{\calc}) = \bigl\{ (P_0,K) \,\big|\, 
	P_0\le{}S_0 \textup{ is $\calf_0$-centric and fully centralized 
	in $\calf$, } \\ \Inn(P_0)\le K\le\Aut_S(P_0) \bigr\}
	\end{multline*}
and
	\[ \Mor_{\widehat{\calc}}\bigl((P_0,K),(Q_0,L)\bigr) = 
	L{\big\backslash} \bigl\{\varphi\in\homf(P_0,Q_0) 
	\,\big|\, \varphi{}K\subseteq L\varphi \bigr\}. \]
Here, we regard $\varphi{}K$ and $L\varphi$ as subsets of $\homf(P_0,Q_0)$.  

Define functors
	\[ \orb(\calf_1^*) \Right4{\Phi_1} \widehat{\calc} \Left4{\Phi_2} 
	\orb(\calf_2^*)~, \]
by setting $\Phi_i(P)=(P_0,\Aut_P(P_0))$ and 
$\Phi_i([\varphi])=[\varphi|_{P_0}]$ for $P,Q\in\Ob(\calf_i^*)$ and 
$\varphi\in\Hom_{\calf_i^*}(P,Q)$.  
Since restriction sends $\Inn(Q)$ to $\Aut_Q(Q_0)$ for 
$Q\in\Ob(\calf_i^*)$, $\Phi_i$ is well defined on morphisms if it is 
defined on objects.


To see that $\Phi_i$ is well defined on objects, fix $P\in\Ob(\calf_i^*)$, 
and set $K=\Aut_P(P_0)$.  Then $\Inn(P_0)\le\Aut_P(P_0)\le\Aut_S(P_0)$, 
since $P\ge{}P_0$, and since $\Aut_S(P_0)=\Aut_{S_i}(P_0)$ by 
\eqref{F1_0=F2_0}.  By Lemma \ref{F0<|F}(a), $P$ is $\calf_i$-conjugate to 
some $Q$ such that $Q_0$ is fully normalized in $\calf_i$ and in 
$\calf_0$.  Hence $P_0$ is $\calf_0$-centric by Lemma \ref{F0<|F}(c).  
Also, $|C_{S_i}(P_0)|=|C_P(P_0)|=|C_Q(Q_0)|=|C_{S_i}(Q_0)|$ by definition 
of $\Ob(\calf_i^*)$, and so $P_0$ is fully centralized in $\calf_i$ (hence 
in $\calf$ by \eqref{e:same-fc}) since $Q_0$ is fully centralized in 
$\calf_i$.  Thus $(P_0,K)\in\Ob(\widehat{\calc})$. 

We claim that $\Im(\Phi_1)=\Im(\Phi_2)$.  In what follows, when 
$P_0\le{}S_0$ and $K\le\Aut(P_0)$, we set 
$N_S^K(P_0)=\{g\in{}N_S(P_0)\,|\,c_g\in{}K\}$.  Then 
	\beqq P\in\Ob(\calf_i^*) \textup{ and } \Phi_i(P)=(P_0,K) 
	\quad\Longrightarrow\quad P=N_{S_i}^K(P_0) \label{e:uniq} \eeqq
since $P\ge{}C_{S_i}(P_0)$ by definition of $\Ob(\calf_i^*)$.

Assume $(P_0,K)\in\Ob(\widehat{\calc})$, and set $P_i=N_{S_i}^K(P_0)$ 
for $i=1,2$.  Then $P_i\ge{}P_0$ and $K=\Aut_{P_i}(P_0)$, since 
$\Inn(P_0)\le{}K\le\Aut_S(P_0)$ by assumption.  Also, 
$P_1\cap{}S_0=N_{S_0}^K(P_0)=P_2\cap{}S_0$, so $P_1\cap{}S_0=P_0$ if and 
only if $P_2\cap{}S_0=P_0$, and we assume this is the case since otherwise 
$(P_0,K)$ is in the image of neither functor $\Phi_i$ by \eqref{e:uniq}.  
By assumption, $P_0$ is fully centralized in $\calf$, and hence in 
$\calf_i$ by \eqref{e:same-fc}.  So for each $Q$ which is 
$\calf_i$-conjugate to $P_i$, 
$|C_{S_i}(Q_0)|\le|C_{S_i}(P_0)|=|C_{P_i}(P_0)|=|C_Q(Q_0)|$, where the last 
equality holds since any $\varphi\in\Iso_{\calf_i}(P_i,Q)$ induces an 
isomorphism of pairs $(P_i,P_0)\cong(Q,Q_0)$.  Thus 
$C_{S_i}(Q_0)\le{}Q$.  This proves that $P_i\in\Ob(\calf_i^*)$, 
and hence that $\Phi_i(P_i)=(P_0,K)$ for $i=1,2$.

Now fix objects $(P_0,K)$ and $(Q_0,L)$ in $\Im(\Phi_i)$, and choose 
$\varphi_0\in\homf(P_0,Q_0)$ such that $\varphi_0K\subseteq{}L\varphi_0$.  
Thus $[\varphi_0]\in\Mor_{\widehat{\calc}}((P_0,K),(Q_0,L))$.  If 
$[\varphi_0]=\Phi_i([\varphi])$ for some $\varphi\in\Hom_{\calf_i^*}(P,Q)$ 
($i=1$ or $2$), then $\varphi(P)\in\Ob(\calf_i^*)$, so $\varphi_0(P_0)$ 
is fully centralized in $\calf$, and hence in $\calf_1$ and $\calf_2$ by 
\eqref{e:same-fc}.  So we assume this from now on.  

Set $P_i=N_{S_i}^K(P_0)$ and $Q_i=N_{S_i}^L(Q_0)$:  these are both in 
$\Ob(\calf_i^*)$ by \eqref{e:uniq}.  Set $R_0=\varphi_0(P_0)$, let 
$\dot\varphi_0\in\isof(P_0,R_0)$ be the restriction of $\varphi_0$, and 
set $M=\dot\varphi_0K\dot\varphi_0^{-1}\le \autf(R_0)$.  Then 
$\varphi_0K\dot\varphi_0^{-1}\subseteq{}L|_{R_0}\subseteq 
\Hom_{S}(R_0,Q_0)$, and so $M\le\Aut_S(R_0)=\Aut_{S_i}(R_0)$.  By the 
extension axiom for $\calf_i$, $\varphi_0$ extends to some 
$\varphi_i\in\Hom_{\calf_i}(P_i,S_i)$, and $[\varphi_0]\in\Im(\Phi_i)$ if 
and only if $\varphi_i$ can be chosen with $\varphi_i(P_i)\le{}Q_i$.  Now, 
$M=\Aut_{\varphi_i(P_i)}(R_0)$ since $K=\Aut_{P_i}(P_0)$, so 
$\Phi_i(\varphi_i(P_i))=(R_0,M)$, and $\varphi_i(P_i)=N_{S_i}^M(R_0)$ by 
\eqref{e:uniq}.  Hence $\varphi_i(P_i)\le{}Q_i$ if and only if for all 
$\alpha\in\Aut_{S_i}(S_0)$, 
	\[ \alpha(R_0)=R_0 \textup{ and } \alpha|_{R_0}\in{}M 
	\quad\Longrightarrow\quad
	\alpha(Q_0)=Q_0 \textup{ and } \alpha|_{Q_0}\in{}L ~. \]
Since $\Aut_{S_1}(S_0)=\Aut_{S_2}(S_0)$ by \eqref{F1_0=F2_0}, 
$[\varphi_0]\in\Im(\Phi_1)$ if and only if $[\varphi_0]\in\Im(\Phi_2)$.

Now set $\calc=\Im(\Phi_1)=\Im(\Phi_2)\subseteq\widehat{\calc}$.  Since the 
$\Phi_i$ are injective on objects by \eqref{e:uniq}, this is a subcategory 
of $\widehat{\calc}$.  From now on, we regard the $\Phi_i$ as functors to 
$\calc$.

\smallskip

\noindent\textbf{Step 2: }  For each $i=1,2$, and each 
$P\in\Ob(\calf_i^*)$, set 
	\begin{align*}  
	\Gamma_i(P)&=\Ker\bigl[ \Out_{\calf_i}(P) \Right2{\Phi_i} 
	\Aut_{\calc}(P_0,\Aut_P(P_0)) \bigr] \\
	&= \Ker\bigl[ \Out_{\calf_i}(P) \Right2{R} 
	N_{\Aut_{\calf}(P_0)}(\Aut_P(P_0))/\Aut_P(P_0) \bigr] 
	\end{align*}
where $R$ is induced by restriction.  We claim that, for each $i=1,2$,
\begin{enumerate}[(i)]
\item $\Phi_i\:\orb(\calf_i^*)\Right2{}\calc$ is bijective on objects and 
surjective on morphism sets; 

\item $\Gamma_i(P)$ has order prime to $p$ for all $P$; and

\item whenever $\psi,\psi'\in\Mor_{\orb(\calf_i^*)}(P,Q)$ are such that 
$\Phi_i(\psi)=\Phi_i(\psi')$, there is $\chi\in\Gamma_i(P)$ such 
that $\psi'=\psi\circ\chi$.  
\end{enumerate}
Point (i) follows from \eqref{e:uniq} and the definition of $\calc$ in Step 
1. 

When proving (ii), it suffices to consider the case where $P$ is fully 
normalized in $\calf_i$.  If $g\in{}N_{S_i}(P)$ is such that 
$[c_g]\in\Gamma_i(P)$, then $c_g|_{P_0}\in\Aut_P(P_0)$; and since 
$C_{S_i}(P_0)\le{}P$, this implies $g\in{}P$ and 
$[c_g]=1\in\Gamma_i(P)\le\Out_{\calf_i}(P)$.  Thus $\Out_{S_i}(P)$ is a Sylow 
$p$-subgroup of $\Out_{\calf_i}(P)$ and intersects trivially with 
$\Gamma_i(P)\nsg\Out_{\calf_i}(P)$, so $|\Gamma_i(P)|$ is prime to $p$.

It remains to prove (iii).  Assume 
$\psi,\psi'\in\Mor_{\orb(\calf_i^*)}(P,Q)$ are such that 
$\Phi_i(\psi)=\Phi_i(\psi')$.  Fix 
$\varphi,\varphi'\in\Hom_{\calf_i^*}(P,Q)$ such that $\psi=[\varphi]$ and 
$\psi'=[\varphi']$.  Then $\varphi|_{P_0}=c_g\circ\varphi'|_{P_0}$ for 
some $c_g\in\Aut_Q(Q_0)$; i.e., for some $g\in{}Q$.  So upon replacing 
$\varphi'$ by $c_g\circ\varphi'$ (this time with $c_g\in\Inn(Q)$), we can 
assume $\varphi_0\defeq\varphi|_{P_0}=\varphi'|_{P_0}$.  Since $P\in 
\Ob(\calf_i^*)$, we have $C_{S_i}(\varphi_0(P_0))\le \varphi(P)$, so
	\[ \varphi(P) = \bigl\{g\in{}N_{S_i}(\varphi_0(P_0)) \,\big|\, 
	c_g\in\varphi_0\Aut_P(P_0)\varphi_0^{-1} \bigr\} = \varphi'(P). \]
Hence there is a unique $\beta\in\Aut_{\calf_i}(P)$ such that 
$\varphi'=\varphi\circ\beta$; and also $\beta|_{P_0}=\Id$.  So 
$\psi'=\psi\circ[\beta]$ in $\Mor_{\orb(\calf_i^*)}(P,Q)$, where 
$[\beta]\in\Gamma_i(P)$.  

\smallskip

\noindent\textbf{Step 3: }  Define functors
	\[ \calz_i\:\orb(\calf_i^c)\op \Right4{} 
	\zploc\mod \qquad\textup{and}\qquad 
	\calz_\calc\: \calc\op \Right4{} \zploc\mod \]
by setting $\calz_i(P)=Z(P)\cap{}S_0=C_{Z(P_0)}(P)$ and 
$\calz_\calc(Q,K)=C_{Z(Q)}(K)$ (the 
subgroup of elements of $Z(Q)$ fixed pointwise by $K$).  
Morphisms are sent in the obvious way.  Set 
$\calz_{i*}=\calz_i|_{\orb(\calf_i^*)\op}$.  

We claim that 
	\beqq \higherlim{\orb(\calf_1^c)}*(\calz_1) \cong
	\higherlim{\orb(\calf_1^*)}*(\calz_{1*}) \cong
	\higherlim{\calc}*(\calz_\calc) \cong
	\higherlim{\orb(\calf_2^*)}*(\calz_{2*}) \cong
	\higherlim{\orb(\calf_2^c)}*(\calz_2) . \label{5lims} \eeqq
Since $\calz_{i*}=\calz_\calc\circ\Phi_i$ by definition, the second and 
third isomorphisms follow from points (i--iii) in Step 2 and \cite[Lemma 
1.3]{BLO1}.

We prove the other isomorphisms in \eqref{5lims} using Lemma 
\ref{lim*(orb)}.  Fix $i=1,2$.  We already saw in Step 1 that 
$\Ob(\calf_i^*)$ is closed (inside $\Ob(\calf_i^c)$) with respect to 
$\calf_i$-conjugacy.  If $P\le{}Q\le{}S_i$ and $P\in\Ob(\calf_i^*)$, then 
for each $Q^*$ which is $\calf_i$-conjugate to $Q$, if we set 
$P^*=\varphi(P)\le{}Q^*$ for some $\varphi\in\Iso_{\calf_i}(Q,Q^*)$, then 
$C_{S_i}(P^*_0)\le{}P^*$ implies $C_{S_i}(Q^*_0)\le{}Q^*$, and so 
$Q\in\Ob(\calf_i^*)$.  Thus $\Ob(\calf_i^*)$ is closed with respect to 
overgroups.  

For each object $P$ in $\calf_i^c$ not in $\calf_i^*$, there is $P^*$ 
$\calf_i$-conjugate to $P$ such that $C_{S_i}(P^*_0)\nleq{}P^*$.  By Lemma 
\ref{QnleqP}, there is $g\in{}N_{S_i}(P^*){\sminus}P^*$ which centralizes 
$P^*_0$.  Thus $[c_g]\in\Out_{\calf_i}(P^*)$ is a nontrivial element of 
$p$-power order which acts trivially on $\calz_i(P^*)$.  So by Lemma 
\ref{lim*(orb)}, $\higherlimm{\orb(\calf_i^c)}*(\calz_i)\cong
\higherlimm{\orb(\calf_i^*)}*(\calz_{i*})$ 
for each $i=1,2$; and this finishes the proof of \eqref{5lims}.

\smallskip

\noindent\textbf{Step 4: }  By Lemma \ref{lim2=0}(a), 
$\higherlimm{\orb(\calf_2^c)}j(\calz_2)=
\higherlimm{\orb(\calf_2^c)}j(\calz_{\calf_2}^{G_0})=0$ for 
$j\ge2$.  Hence by \eqref{5lims}, 
$\higherlimm{\orb(\calf^c)}j(\calz_1)=0$ for $j\ge2$ (recall 
$\calf=\calf_1$). 

We claim that for $j\ge2$,
	\beqq \higherlim{\orb(\calf^c)}j(\calz_\calf)\cong
	\higherlim{\orb(\calf^c)}j(\calz_1). \label{ZF/Z1} \eeqq
Set $\widehat{\calz}=\calz_\calf/\calz_1$ for short; thus 
$\widehat{\calz}(P)=Z(P)/(Z(P)\cap{}S_0)$ for each $P$.  If 
$\calf_0=O^{p'}(\calf)$, then \eqref{ZF/Z1} holds since $S_0=S$ and 
hence $\calz_\calf=\calz_1$.  If $\calf_0=N_\calf^K(Q)$ for some 
$Q\nsg\calf$ and some $K\nsg\Aut(Q)$, then $\widehat{\calz}(P)=0$ for each 
$P\in\Ob(\calf^c)$ which contains $Q$, in particular for each subgroup 
which is $\calf$-centric and $\calf$-radical (Proposition \ref{norm<=>}); 
and \eqref{ZF/Z1} holds by Lemma \ref{lim*(orb)}.

Assume $\calf_0=O^p(\calf)$.  For each $P\in\Ob(\calf^c)$, let 
$H_P\nsg\outf(P)$ be the kernel of the $\outf(P)$-action on 
$\widehat{\calz}(P)=Z(P)/(Z(P)\cap{}S_0)$.  By definition of 
$S_0=\hyp(\calf)$, $H_P$ contains 
$O^p(\outf(P))$, and thus $\outf(P)/H_P$ is a $p$-group.  So for $j\ge1$,
	\[ \Lambda^j(\outf(P);\widehat{\calz}(P)) \cong
	\begin{cases} 
	0 & \textup{if $p\big||H_P|$} \\
	0 & \textup{if $p\nmid|\outf(P)|$} \\
	\Lambda^j(\outf(P)/H_P;\widehat{\calz}(P)) = 0 & 
	\textup{otherwise}
	\end{cases} \]
by \cite[Proposition 6.1]{JMO}:  by point (ii) of the proposition in the 
first case, by point (i) in the second, and by points (iii) and (ii) in 
the third.  So by \cite[Lemma 2.3]{limz-odd}, 
$\higherlimm{\orb(\calf^c)}j(\widehat{\calz})=0$ for all $j\ge1$, and 
\eqref{ZF/Z1} also holds in this case.  

We now conclude that $\higherlimm{\orb(\calf^c)}j(\calz_\calf)=0$ for all 
$j\ge2$.  So by \cite[Proposition 3.1]{BLO2}, there is a (unique) centric 
linking system $\call^c$ associated to $\calf$.  
\end{proof}

\newsubb{Proof of Theorem \ref{ThA}}{s:ThA}

We want to show that if $\red(\calf)$ is tame, then so is $\calf$.  The 
proof splits naturally into two parts.  We first show, under certain 
additional hypotheses, that if $\calf_0\nsg\calf$ and $\calf_0$ is tame, 
then $\calf$ is tame.  Afterwards, we show (again under additional 
hypotheses) that $\calf$ is tame if 
$\calf/Z(\calf)$ is tame.  In both cases, this means proving that certain 
homomorphisms are split surjective, by first constructing an appropriate 
pullback square of automorphism groups, and then applying the following 
elementary lemma.  

\begin{Lem} \label{p.b.split}
If the following square of groups and homomorphisms 
	\beq \xymatrix@C=40pt{
	A_1 \ar[r]^{\alpha} \ar[d] & A_2 \ar[d] \\
	B_1 \ar[r]^{\beta} & B_2 
	} \eeq
is a pullback square, and $\beta$ is split surjective, then $\alpha$ is 
split surjective.  \qed
\end{Lem}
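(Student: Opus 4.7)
The plan is that this lemma is a purely formal consequence of the universal property of the pullback, so the only real content is to assemble a section of $\alpha$ from one for $\beta$. Let $f_{1}\colon A_{1}\to B_{1}$ and $f_{2}\colon A_{2}\to B_{2}$ denote the two vertical arrows, and fix a homomorphism $\sigma\colon B_{2}\to B_{1}$ with $\beta\circ\sigma=\mathrm{id}_{B_{2}}$, which exists by the assumption that $\beta$ is split surjective.

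Next I would invoke the universal property of the pullback on the pair of homomorphisms $\mathrm{id}_{A_{2}}\colon A_{2}\to A_{2}$ and $\sigma\circ f_{2}\colon A_{2}\to B_{1}$. These are compatible with the cospan $A_{2}\to B_{2}\leftarrow B_{1}$ because $\beta\circ(\sigma\circ f_{2})=f_{2}=f_{2}\circ\mathrm{id}_{A_{2}}$. Hence there is a unique homomorphism $\tau\colon A_{2}\to A_{1}$ such that $\alpha\circ\tau=\mathrm{id}_{A_{2}}$ and $f_{1}\circ\tau=\sigma\circ f_{2}$. The first relation already says $\tau$ is a homomorphic section of $\alpha$, which is exactly what has to be shown.

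Alternatively, one can spell the same argument out concretely by identifying $A_{1}$ with the subgroup $\{(a,b)\in A_{2}\times B_{1}\mid f_{2}(a)=\beta(b)\}$, in which case one simply defines $\tau(a)=(a,\sigma(f_{2}(a)))$ and checks that this lands in $A_{1}$ and is a homomorphism splitting $\alpha$. Either way, there is no genuine obstacle: the lemma is just the observation that ``split surjective'' is stable under pullback.
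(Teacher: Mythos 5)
Your proof is correct, and it is exactly the argument the paper has in mind: the lemma is stated with the proof omitted (the \qed in the statement signals it is regarded as elementary), and the intended verification is precisely the one you give, namely pulling back the identity of $A_2$ and $\sigma\circ f_2$ via the universal property, or equivalently setting $\tau(a)=(a,\sigma(f_2(a)))$ in the fiber-product model. Nothing further is needed.
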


We first work with normal subsystems.  We first recall some convenient 
notation.  When $P$ is a $p$-centric subgroup of a finite group $G$ (i.e., 
an $\calf_S(G)$-centric subgroup when $P\le{}S\in\sylp{G}$), we set 
$C'_G(P)=O^p(C_G(P))$.  Thus $C'_G(P)$ has order prime to $p$, and 
$C_G(P)=Z(P)\times{}C'_G(P)$.  

For any normal pair $\SFL[_0]\nsg\SFL$, let
	\[ \til\rho = \til\rho^\call_{\call_0} \: \Aut_\call(S_0) \Right4{} 
	\Aut\typ^I(\call_0) \]
be the homomorphism which sends $\gamma\in\Aut_\call(S_0)$ to $c_\gamma$.  
Here, $c_\gamma\in\Aut\typ^I(\call_0)$ sends an object $P$ to 
$\pi(\gamma)(P)$ and sends $\psi\in\Mor_{\call_0}(P,Q)$ to 
$(\gamma|_{Q,\pi(\gamma)(Q)})\circ\psi\circ 
(\gamma|_{P,\pi(\gamma)(P)})^{-1}$ (well defined by Definition 
\ref{d:L0<|L}).  Let
	\[ \rho = \rho^\call_{\call_0} \: 
	\2{=\Aut_\call(S_0)/\Aut_{\call_0}(S_0)}{\call/\call_0} \Right6{} 
	\2{=\Aut\typ^I(\call_0)/
	\{c_\gamma\,|\,\gamma\in\Aut_{\call_0}(S_0)\}}
	{\Out\typ(\call_0)} \]
be the homomorphism induced by $\til\rho$, which sends $[\gamma]$ to the 
class of $c_\gamma$.  This is analogous to the conjugation homomorphism 
$G/G_0\Right2{}\Out(G_0)$ for a pair of groups $G_0\nsg{}G$.  For example, 
$\call_0$ is centric in $\call$ (see Definition \ref{d:L0<|L}) if and only 
if $\rho^\call_{\call_0}$ is injective.

We next show that when $\calf_0\nsg\calf$ have associated linking systems 
$\call_0\nsg\call$, where $\call_0$ is centric in $\call$ and $\calf_0$ is 
realizable, then under some extra conditions, $\calf$ is also realizable.  

\begin{Lem} \label{G0<|G}
Fix a normal pair $\SFL[_0]\nsg\SFL$ such that $\call_0$ is centric in 
$\call$.  Set $\calh_0=\Ob(\call_0)$ and $\calh=\Ob(\call)$, and assume 
$\calh_0$ is $\Aut(S_0,\calf_0)$-invariant.  Assume there 
is a finite group $G_0$ such that
\begin{enuma}  
\item $S_0\in\sylp{G_0}$, $\calf_0=\calf_{S_0}(G_0)$, and
$\call_0\cong\call_{S_0}^{\calh_0}(G_0)$;
\item $Z(G_0)=Z(\calf_0)$; and 
\item there is a homomorphism 
$\widehat{\rho}\:\call/\call_0\Right2{}\Out(G_0)$ such that 
	\[ \kappa_{G_0}^{\calh_0}\circ\widehat{\rho} = 
	\rho^\call_{\call_0}\: \call/\call_0\Right2{}\Out\typ(\call_0) ~. \]
\end{enuma}
Then $\calf=\calf_S(G)$ and $\call\cong\call_S^\calh(G)$ for some 
finite group $G$ such that $S\in\sylp{G}$, $G_0\nsg{}G$, 
$G/G_0\cong\call/\call_0$, and such that the extension realizes the given 
outer action $\widehat{\rho}$ of $G/G_0\cong\call/\call_0$ on $G_0$.
\end{Lem}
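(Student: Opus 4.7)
The plan is to construct $G$ by first building its Sylow normalizer $N\defeq N_G(S_0)$ from the linking system data, and then forming $G$ as a finite amalgamation of $G_0$ and $N$ along $N_{G_0}(S_0)$.

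First, I will construct $N$ as a group extension fitting in
\[
1 \to C'_{G_0}(S_0) \to N \to \Aut_\call(S_0) \to 1
\]
that extends the existing extension $1 \to C'_{G_0}(S_0) \to N_{G_0}(S_0) \to \Aut_{\call_0}(S_0) \to 1$ across the inclusion $\Aut_{\call_0}(S_0) \le \Aut_\call(S_0)$. The necessary action of $\Aut_\call(S_0)$ on $C'_{G_0}(S_0)$ is obtained from the conjugation action of $\Aut_\call(S_0)$ on $\call_0$ (since $\call_0\nsg\call$) combined with the isomorphism $\call_0\cong\call^{\calh_0}_{S_0}(G_0)$; hypothesis~(c) ensures this is compatible with the outer action of $H\defeq\call/\call_0$ on $G_0$ via $\widehat\rho$, and hypothesis~(b) ($Z(G_0)=Z(\calf_0)$) enters in checking that the relevant $2$-cocycle is well-defined (with values in $Z(G_0)$ where needed).

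Second, form $G$ as an internal amalgamation of $G_0$ and $N$ along $N_{G_0}(S_0)$. For this, lift $\widehat\rho$ to an action of $N$ on $G_0$ extending the conjugation action of $N_{G_0}(S_0)\le G_0$—using the coherence of the linking system $\call$ (axioms~(B) and~(C) of Definition~\ref{d:L}) to ensure consistency. Then $G$ can be realized concretely as the quotient of $G_0\rtimes N$ by the normal subgroup identifying the diagonal copy of $N_{G_0}(S_0)$. Verify $G$ is finite, $G_0\nsg G$, and
$G/G_0\cong N/N_{G_0}(S_0)\cong \Aut_\call(S_0)/\Aut_{\call_0}(S_0)=H$, with outer action $\widehat\rho$ by construction.

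Third, pick a Sylow $p$-subgroup $\hat S\le G$ containing $S_0$ (Frattini argument), and identify $\hat S\cong S$ via the Sylow embedding $S/S_0\hookrightarrow H$ coming from the image of $\delta_S(S)\le\Aut_\call(S_0)$ in $H$. Finally, verify $\calf_S(G)\cong\calf$ and $\call_S^\calh(G)\cong\call$. For the fusion system, apply condition~(ii) of Definition~\ref{d:F0<|F} to both $\calf_0\nsg\calf$ and $\calf_{S_0}(G_0)\nsg\calf_S(G)$ (the latter via Proposition~\ref{L(G0)<|L(G)}): both are generated by $\calf_0\cong\calf_{S_0}(G_0)$ together with $\autf(S_0)=\Aut_{\calf_S(G)}(S_0)$, and these coincide by construction. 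The linking system isomorphism then follows from the identification $\Aut_\call(S_0)\cong N/C'_{G_0}(S_0)\cong N_G(S_0)/C'_G(S_0)=\Aut_{\call^\calh_S(G)}(S_0)$, extended to the full linking systems by Theorem~\ref{AFT-L}.

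The main obstacle is the extension problem in step one: to extend the extension class of $N_{G_0}(S_0)\in H^2(\Aut_{\call_0}(S_0),C'_{G_0}(S_0))$ to a compatible class in $H^2(\Aut_\call(S_0),C'_{G_0}(S_0))$, with the correct $H$-action on $C'_{G_0}(S_0)$. This is precisely where hypothesis~(c) (the compatibility $\kappa_{G_0}^{\calh_0}\circ\widehat\rho=\rho^\call_{\call_0}$) is used decisively: it supplies the cocycle data coherently from the linking system axioms, and the centricity of $\call_0$ in $\call$ guarantees that the resulting extension is unambiguous.
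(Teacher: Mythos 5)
Your overall architecture (build $N_G(S_0)$ from the linking-system data, amalgamate with $G_0$ over $N_{G_0}(S_0)$ via a quotient of $G_0\rtimes N$, then identify the fusion and linking systems) matches the skeleton of the actual proof, and the amalgamation step is essentially what is done there. But the two technical pivots of your argument are gaps. First, your construction of $N$ as an extension of $\Aut_\call(S_0)$ by $C'_{G_0}(S_0)$ obtained by ``extending the extension class in $H^2(\Aut_{\call_0}(S_0),C'_{G_0}(S_0))$'' does not work as stated: $C'_{G_0}(S_0)=O^p(C_{G_0}(S_0))$ is in general nonabelian, it is invisible in $\call_0$ (it is exactly what is divided out in $\Aut_{\call_0}(S_0)=N_{G_0}(S_0)/C'_{G_0}(S_0)$), so the conjugation action of $\Aut_\call(S_0)$ on $\call_0$ gives you no action on it, and there is no general mechanism for extending a cohomology class from a subgroup to an overgroup; saying that hypothesis (c) ``supplies the cocycle data'' is not an argument. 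The same problem recurs when you need to ``lift $\widehat\rho$ to an action of $N$ on $G_0$'': an outer action does not lift to an action for free. The paper avoids all obstruction theory by \emph{defining} $N$ (called $H$ there) as the pullback of $\Aut(G_0,S_0)_{\widehat\rho}\to\Aut\typ^I(\call_0)\leftarrow\Aut_\call(S_0)$, using the exact sequences of Lemma \ref{OutI1}; hypothesis (c) gives surjectivity onto $\Aut_\call(S_0)$, centricity of $\call_0$ in $\call$ and $Z(G_0)=Z(\calf_0)$ identify $N_{G_0}(S_0)$ inside the pullback and give the kernel $C'_{G_0}(S_0)$, and the pullback comes equipped with the genuine homomorphism $H\to\Aut(G_0)$ (together with the compatibility $\varphi(h)(a)=hah^{-1}$ on $N_{G_0}(S_0)$) needed for the amalgamation.

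Second, your identification $\calf_S(G)=\calf$ and $\call_S^\calh(G)\cong\call$ is not proved. Condition (ii) of Definition \ref{d:F0<|F} only says that both fusion systems, \emph{restricted to subgroups of $S_0$}, are generated by $\calf_0$ and $\Aut(S_0)$-data; it says nothing about morphisms between subgroups of $S$ not contained in $S_0$, so coincidence of $\calf_0$ and of $\autf(S_0)$ does not determine $\calf$ over $S$. Likewise, an isomorphism $\Aut_\call(S_0)\cong N_G(S_0)/C'_{G_0}(S_0)$ does not ``extend by Theorem \ref{AFT-L}'' to an isomorphism of linking systems; one would have to construct a functor, and one also has to know beforehand that $\call_S^\calh(G)$ is a linking system at all, which requires showing every $P\in\calh$ is $G$-quasicentric (and that $O^p(C_G(S_0))=C'_{G_0}(S_0)$). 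The paper does exactly this work, building homomorphisms $\Phi_P\colon N_G(P)\to\Aut_\call(P)$ to verify quasicentricity, and then concludes $\calf=\calf_S(G)$ and $\call\cong\call_S^\calh(G)$ by invoking the \emph{uniqueness} statement of \cite[Theorem 9]{O3} for extensions of the normal pair $(S_0,\calf_0,\call_0)$ with prescribed $\Aut_\call(S_0)$ and action. This is the genuinely hard point (the authors flag it explicitly before Proposition \ref{F0/A=F(G)}), and your proposal currently has no substitute for it.
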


\begin{proof}  We construct the group $G$ in Step 1, and prove that 
$\call_S^{\calh}(G)\cong\call$ and $\calf_S(G)=\calf$ in Step 2.  
Throughout the proof, we identify $\call_0$ with 
$\call_{S_0}^{\calh_0}(G_0)$.

\smallskip

\noindent\textbf{Step 1: }  Consider the following diagram whose rows are 
exact by Lemma \ref{OutI1}:
	\beqq \vcenter{\xymatrix@C=30pt{
	1 \ar[r] & Z(G_0) \ar[r] \ar@{=}[d] & N_{G_0}(S_0) \ar[r]^-{\conj} 
	\ar@{->>}[d]_{\lambda_0} & \Aut(G_0,S_0) \ar[r]^-{\pr_1} 
	\ar[d]_{\til\kappa}^{=\til\kappa_{G_0}^{\calh_0}} & 
	\Out(G_0) \ar[r] \ar[d]_{\kappa}^{=\kappa_{G_0}^{\calh_0}} & 1 \\
	1 \ar[r] & Z(\calf_0) \ar[r] & \Aut_{\call_0}(S_0) \ar[r]^{\conj} & 
	\Aut\typ^I(\call_0) 
	\ar[r]^-{\pr_2} & \Out\typ(\call_0) \ar[r] & 1 \rlap{~.}
	}} \label{e:2.10a} \eeqq
Here, $\lambda_0$ sends $g\in{}N_{G_0}(S_0)$ to its class in 
$\Aut_{\call_0}(S_0)=N_{G_0}(S_0)/C'_{G_0}(S_0)$.  
The first and third squares clearly commute.  The second square commutes 
since for $g\in{}N_{G_0}(S_0)$, $\til\kappa$ sends $c_g$ to the 
automorphism $[a]\mapsto[gag^{-1}]=c_{\lambda_0(g)}(a)$.  By definition of 
$\til\kappa=\til\kappa_{G_0}^{\calh_0}$,
	\beqq \til\kappa(\beta)(\lambda_0(g)) = \lambda_0(\beta(g))
	\qquad \textup{for all $\beta\in\Aut(G_0,S_0)$, 
	$g\in{}N_{G_0}(S_0)$~.} \label{e:2.10aa} \eeqq

Set $\Aut(G_0,S_0)_{\widehat{\rho}}= 
\pr_1^{-1}(\widehat{\rho}(\call/\call_0))$.  Since $\call_0$ is centric in 
$\call$, $\rho=\rho_{\call_0}^\call$ sends $\call/\call_0$ injectively 
into $\Out\typ(\call_0)$.  Hence $\kappa$ sends 
$\widehat{\rho}(\call/\call_0)$ injectively into $\Out\typ(\call_0)$.  So 
by a diagram chase in \eqref{e:2.10a}, $N_{G_0}(S_0)$ is the pullback of 
$\Aut(G_0,S_0)_{\widehat{\rho}}$ and $\Aut_{\call_0}(S_0)$ over 
$\Aut\typ^I(\call_0)$.

Let $H$ be the group which makes the following square a pullback:
	\beqq \vcenter{\xymatrix@C=30pt{
	H \ar[r]^-{\varphi} \ar@{->>}[d]^-{\lambda} & 
	\Aut(G_0,S_0)_{\widehat{\rho}} 
	\ar[d]^{\til\kappa} \\
	\Aut_{\call}(S_0) \ar[r]^-{\til\rho} & \Aut\typ^I(\call_0) 
	\rlap{~.} }} \label{e:2.10b} \eeqq
For each $\alpha\in\Aut_\call(S_0)$, 
$\til\rho(\alpha)\in\pr_2^{-1}(\rho(\call/\call_0))$ by definition, and 
hence lifts to an element of $\Aut(G_0,S_0)_{\widehat{\rho}}$.  This 
proves that $\lambda$ is onto.  By comparison with the middle square in 
\eqref{e:2.10a}, we can identify 
$N_{G_0}(S_0)$ with $\lambda^{-1}(\Aut_{\call_0}(S_0))\nsg{}H$.  Thus 
	\beqq H/N_{G_0}(S_0) = H/H_0 \cong 
	\Aut_\call(S_0)/\Aut_{\call_0}(S_0) = \call/\call_0~, 
	\label{e:2.10c} \eeqq
where we set $H_0=N_{G_0}(S_0)$, regarded as a subgroup of $G_0$ and of 
$H$.

We claim that for all $h\in{}H$ and $a\in{}H_0$, 
	\beqq \varphi(h)(a) = hah^{-1} \in H_0~. \label{e:2.10d} \eeqq
Since \eqref{e:2.10b} is a pullback, it suffices to prove \eqref{e:2.10d}
after applying $\varphi$ and after applying $\lambda$.  It holds 
after applying $\lambda$ (or $\lambda_0$) since 
	\[ \lambda_0(\varphi(h)(a)) = \til\kappa(\varphi(h))(\lambda_0(a)) 
	= \til\rho(\lambda(h))(\lambda_0(a)) = 
	\lambda(h)\lambda_0(a)\lambda(h)^{-1} = \lambda_0(hah^{-1}) ~: \]
the first equality by \eqref{e:2.10aa}, the second by the commutativity of 
\eqref{e:2.10b}, and the third since $\til\rho$ is defined by conjugation 
in $\call$.  Since $\varphi|_{H_0}$ is also defined to be conjugation,
	\[ \varphi(\varphi(h)(a)) = c_{\varphi(h)(a)} = 
	\varphi(h)\circ c_a \circ\varphi(h)^{-1} = 
	\varphi(h)\circ \varphi(a) \circ\varphi(h)^{-1} = 
	\varphi(hah^{-1})~. \]
This finishes the proof of \eqref{e:2.10d}.

We want to construct a group $G$ with $G_0\nsg{}G$, 
$G/G_0\cong\call/\call_0$, and $N_G(S_0)=H$.  To do this, first set 
$\Gamma=G_0\sd{}H$:  the semidirect product with the action of $H$ on $G_0$ 
given by $\varphi$ as defined in \eqref{e:2.10b}.  Elements of $\Gamma$ are 
written as pairs $(g,h)$ for $g\in{}G_0$ and $h\in{}H$.  Thus 
$(g,h)(g',h')=(g\cdot\varphi(h)(g'),hh')$.  Set
$N=\{(a,a^{-1})\,|\,a\in{}H_0\}$.  For $a,b\in{}H_0$, 
	\[ (a,a^{-1})(b,b^{-1}) = (a{\cdot}\varphi(a^{-1})(b),a^{-1}b^{-1}) 
	= (a{\cdot}a^{-1}ba,a^{-1}b^{-1}) = (ba,(ba)^{-1}) \in N, \]
where the second equality holds by \eqref{e:2.10d}.  Thus $N$ is a 
subgroup.  For $g\in{}G_0$ and $a\in{}H_0$, 
	\begin{align*}  
	(g,1)(a,a^{-1})(g,1)^{-1} &= (ga,a^{-1})(g^{-1},1) =
	(ga{\cdot}\varphi(a^{-1})(g^{-1}),a^{-1}) \\ 
	&= (ga{\cdot}a^{-1}g^{-1}a,a^{-1}) = (a,a^{-1}) ~;
	\end{align*}
where $\varphi(a^{-1})(g^{-1})=a^{-1}g^{-1}a$ since by construction, 
$\varphi|_{H_0}$ is the conjugation homomorphism of \eqref{e:2.10a}.  
Thus $(g,1)$ normalizes (centralizes) $N$.  For $h\in{}H$ and $a\in{}H_0$, 
	\[ (1,h)(a,a^{-1})(1,h)^{-1} = (\varphi(h)(a),ha^{-1})(1,h^{-1})
	= (\varphi(h)(a),(hah^{-1})^{-1})\in{}N \]
by \eqref{e:2.10d}, and thus $(1,h)$ also normalizes $N$.  This proves that 
$N\nsg\Gamma$.

Now set $G=\Gamma/N$, and regard $G_0$ and $H$ as subgroups of $G$.  By 
construction, $G=G_0H$, $G_0\cap{}H=H_0=N_{G_0}(S_0)$, $G_0\nsg{}G$, and 
$G/G_0\cong{}H/H_0\cong\call/\call_0$ (the last isomorphism by 
\eqref{e:2.10c}).  Also, $H\le{}N_G(S_0)$, and since 
$[H:N_{G_0}(S_0)]=[G:G_0]\ge[N_G(S_0):N_{G_0}(S_0)]$, we have 
$H=N_G(S_0)$.  The outer conjugation action of $G/G_0$ on $G_0$ is induced 
by $\varphi$.  So by \eqref{e:2.10b} and the definition of 
$\Aut(G_0,S_0)_{\widehat{\rho}}$, and since $\kappa$ sends 
$\Im(\widehat{\rho})$ isomorphically to $\Im(\rho_{\call_0}^\call)$ (since 
$\rho_{\call_0}^\call$ is injective), this outer action is equal to 
$\widehat{\rho}$ via our identification $G/G_0\cong\call/\call_0$.  

By comparison of \eqref{e:2.10a} and \eqref{e:2.10b}, we see that 
	\beq \Ker(\lambda)=\Ker(\lambda_0)=C'_{G_0}(S_0)~. \eeq
In particular, $\Ker(\lambda)$ has order prime to 
$p$.  Also, $\delta_{S_0}(S)$ is a 
Sylow $p$-subgroup of $\Aut_\call(S_0)$ by Proposition \ref{L-prop}(d).  
Fix any Sylow $p$-subgroup of $\lambda^{-1}(\delta_{S_0}(S))$, and 
identify it with $S$ via $\delta_{S_0}^{-1}\circ\lambda$.  Since 
$[G:H]=[G_0:H_0]$ is prime to $p$, we also have $S\in\sylp{G}$.  

\smallskip

\noindent\textbf{Step 2: }  Set $\calf'=\calf_S(G)$ 
for short.  By Proposition \ref{L(G0)<|L(G)}, 
$\calf_0=\calf_{S_0}(G_0)$ is normal in $\calf'$.  So by Lemma 
\ref{F0<|F}(d), $\calh=\Ob(\call)$ contains all subgroups of $S$ which are 
$\calf'$-centric and $\calf'$-radical.  

We next show that all subgroups in $\calh$ are $G$-quasicentric.  
Since overgroups of quasicentric subgroups are quasicentric, it suffices 
to prove this for $P\in\calh_0$.  Fix such $P$, and assume it is fully 
centralized in $\calf'$.  We must show that $O_{p'}(C_G(P))=O^p(C_G(P))$; 
i.e., that $C_G(P)$ contains a normal subgroup of order prime to $p$ and 
of $p$-power index.  Define 
	\[ \Phi_P\: N_G(P) \Right5{} \Aut_\call(P) \]
as follows.  Fix $g\in{}N_G(P)$, write $g=g_0h$ for some $g_0\in{}G_0$ and 
$h\in{}H=N_G(S_0)$, and set $\Phi_P(g)=[g_0]\circ\lambda(h)|_{P,hPh^{-1}}$, 
where $[g_0]\in\Mor_{\call_0}(hPh^{-1},P)$ is induced by the 
identification $\call_0=\call_{S_0}^{\calh_0}(G_0)$.  If 
$g=g_0h=g'_0h'$ where $g_0,g'_0\in{}G_0$ and $h,h'\in{}H$, then 
$a\defeq{}g_0^{-1}g'_0=hh'{}^{-1}\in{}H_0$, so $g'_0=g_0a$, $h=ah'$, and
	\[ [g_0]\circ\lambda(h)|_{P,hPh^{-1}} = 
	[g_0]\circ\lambda(a)|_{h'Ph'{}^{-1},hPh^{-1}}\circ
	\lambda(h')|_{P,h'Ph'{}^{-1}} = 
	[g_0a]\circ\lambda(h')|_{P,h'Ph'{}^{-1}}. \]
Thus $\Phi_P(g)$ is well defined, independently of the choice of $g_0$ and 
$h$, and $\Phi_{S_0}=\lambda$.  Moreover, $\Phi_P|_{N_S(P)} = \delta_P$, 
since $\lambda|_S = \delta_{S_0}\:S\Right2{}\Aut_\call(P)$ by the 
identification of $S$ as a subgroup of $H$. To see that $\Phi_P$ is a 
homomorphism, it suffices to check that 
	\beqq [hg_0h^{-1}] = \lambda(h)\circ [g_0] \circ \lambda(h)^{-1} 
	\label{e:2.10f} \eeqq
for each $g_0\in{}G_0$ and $h\in{}H$, and this follows from the 
commutativity of \eqref{e:2.10b}.  

We next claim that the composite 
$\pi_P\circ\Phi_P\:N_G(P)\Right2{}\autf(P)$ sends $g\in{}N_G(P)$ to 
$c_g\in\Aut(P)$.  Set $g=g_0h$ as above.  By definition of the linking 
system $\call_{S_0}^{\calh_0}(G_0)$, 
$\pi_P(\Phi_P(g_0))=\pi_P([g_0])=c_{g_0}$.  By \eqref{e:2.10f} and axiom 
(C) for the linking system $\call$, $\pi_{S_0}(\lambda(h))\in\autf(S_0)$ 
is conjugation by  $h$, and hence it is also conjugation by $h$ on 
$P\le{}S_0\nsg{}H$.  This proves the claim.  

Since $\calf_0\nsg\calf$, $\calf_0\nsg\calf'$, and $\Aut_\calf(S_0) =
\Aut_{\calf'}(S_0)$, the $\calf$- and $\calf'$-conjugacy classes of
any subgroup $Q\leq S_0$ are the same. It follows that $\calh$ is closed
under $\calf'$-conjugacy, and that $P$ is fully centralized in
$\calf$.  
Hence 
\begin{itemize}  
\item $\Ker[\Aut_\call(P)\Right2{\pi_P}\autf(P)]=\delta_P(C_S(P))$;
\item $\Phi_P|_{N_S(P)}=\delta_P$ is injective by Proposition \ref{L-prop}(c);
\item $\Ker(\pi_P\circ\Phi_P)=C_G(P)$ since $\pi_P\circ\Phi_P(g)=c_g$; and 
\item $C_S(P)\in\sylp{C_G(P)}$ by \cite[Proposition 1.3]{BLO2}.
\end{itemize}
Hence $\Ker(\Phi_P)$ is a normal subgroup of $C_G(P)$ of order prime to 
$p$, and $C_G(P)/\Ker(\Phi_P)\cong{}C_S(P)$ is a $p$-group.  It follows 
that $\Ker(\Phi_P)=O^p(C_G(P))$, and thus that $P$ is $G$-quasicentric. 

Set $\call'=\call_S^{\calh}(G)$.  We have now shown that $\calh$ satisfies 
the conditions which ensure that $\call'$ is a linking system associated 
to $\calf_S(G)$.  By Proposition \ref{L(G0)<|L(G)} again, $\call'$ 
contains $\call_0$ as a normal linking subsystem.  Also, 
$\Aut_{\call'}(S_0)=H/\Ker(\lambda)\cong\Aut_\call(S_0)$ since 
$O^p(C_G(S_0))=\Ker(\Phi_{S_0})=\Ker(\lambda)$, and they 
have the same action on $\call_0$ (under this identification) by the 
commutativity of \eqref{e:2.10b}.  

By the remarks at the end of \cite{O3} (after the proof of Theorem 9), the 
existence of the linking systems $\call$ and $\call'$ imply that conditions 
(2) and (3) in the statement of \cite[Theorem 9]{O3} hold.  So by the 
uniqueness statement in that theorem, $\calf=\calf'$ and 
$\call\cong\call'$.  
\end{proof}

In order to compare tameness in $\calf_0$ and in $\calf$ when 
$\SFL[_0]\nsg\SFL$, we need to compare the automorphisms of $\call_0$ 
with those of $\call$.  This is done in the following lemma.  For any 
normal pair $\call_0\nsg\call$ of linking systems, we set
	\begin{align*}  
	\Aut_\call(\call_0) &= \til\rho_{\call_0}^\call(\Aut_\call(S_0)) 
	= \{c_\gamma\,|\,\gamma\in\Aut_\call(S_0)\}\le\Aut\typ^I(\call_0) \\
	\Out_\call(\call_0) &= \rho_{\call_0}^\call(\call/\call_0) = 
	\Aut_\call(\call_0)/\Aut_{\call_0}(\call_0)\le\Out\typ(\call_0) ~. 
	\end{align*}

\begin{Lem} \label{norm-pb} 
Fix a pair of finite groups $G_0\nsg{}G$, let $S_0\nsg{}S$ be Sylow 
$p$-subgroups of $G_0\nsg{}G$, and set $\calf_0=\calf_{S_0}(G_0)$ and 
$\calf=\calf_S(G)$.  Assume $Z(G_0)=Z(\calf_0)$.  Let $\calh_0$ and 
$\calh$ be sets of subgroups such that 
	\[ \call_0\defeq\call_{S_0}^{\calh_0}(G_0)
	\qquad\textup{and}\qquad \call\defeq\call_{S}^{\calh}(G) \]
are linking systems associated to $\calf_0$ and $\calf$, respectively.  
Assume 
	\[ \call_0 \nsg \call~, \qquad \call_0 \textup{ is centric in } 
	\call~, \qquad\textup{and}\qquad 
	\call/\call_0\cong{}G/G_0~. \]
Assume also $\calh_0$ is $\Aut(S_0,\calf_0)$-invariant, and $\calh$ is 
$\Aut(S,\calf)$-invariant.  Then the following square
	\beqq \vcenter{\xymatrix@C=40pt{
	\Out(G,G_0) \ar[r]^-{\kappa} \ar[d]_{R_1} & 
	\Out\typ(\call,\call_0) \ar[d]_{R_2} \\
	N_{\Out(G_0)}(\Out_G(G_0))/\Out_G(G_0) \ar[r]^-{\kappa^*} & 
	N_{\Out\typ(\call_0)}(\Out_\call(\call_0))/\Out_\call(\call_0)
	}} \label{e:2.12a} \eeqq
is a pullback.  Here, $\Out(G,G_0)\le\Out(G)$ and 
$\Out\typ(\call,\call_0)\le\Out\typ(\call)$ are the subgroups of classes of 
automorphisms which leave $G_0$ and $\call_0$ invariant, respectively, 
$\kappa$ is the restriction of $\kappa_G^\calh$, $\kappa^*$ 
is induced by $\kappa_{G_0}^{\calh_0}$, and $R_1$ and $R_2$ are induced by 
restriction.
\end{Lem}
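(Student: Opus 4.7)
The plan is to verify the universal property of the pullback: let $P$ denote the pullback of the bottom row and right column, and show that the natural map $\Phi \: \Out(G, G_0) \to P$ sending $[\beta] \mapsto (R_1[\beta], \kappa[\beta])$ is a bijection. Commutativity of the square, needed to define $\Phi$, is immediate from the definitions: for $\beta \in \Aut(G, S)$ with $\beta(G_0) = G_0$, both $\til\kappa_G^\calh(\beta)|_{\call_0}$ and $\til\kappa_{G_0}^{\calh_0}(\beta|_{G_0})$ act on $\Mor_{\call_0}(P_1, P_2) = N_{G_0}(P_1, P_2)/O^p(C_{G_0}(P_1))$ by $[a] \mapsto [\beta(a)]$, so they coincide.

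For surjectivity, given $([\beta_0], [\alpha]) \in P$, I pick representatives $\beta_0 \in \Aut(G_0, S_0)$ and $\alpha \in \Aut\typ^I(\call)$ with $\alpha(\call_0) = \call_0$. The matching condition $\kappa^*([\beta_0]) = R_2([\alpha])$ says that $\til\kappa_{G_0}(\beta_0)$ and $\alpha|_{\call_0}$ agree modulo $\Aut_\call(\call_0) = \{c_\gamma|_{\call_0} : \gamma \in \Aut_\call(S)\}$. After replacing $\alpha$ by $c_\gamma^{-1} \circ \alpha$ for a suitable $\gamma \in \Aut_\call(S)$ (not changing the class in $\Out\typ(\call, \call_0)$), we may assume $\til\kappa_{G_0}(\beta_0) = \alpha|_{\call_0}$ in $\Aut\typ^I(\call_0)$. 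I then construct $\beta \in \Aut(G, S)$ extending $\beta_0$ and inducing $\alpha$ by the pullback technique from the proof of Lemma~\ref{G0<|G}: using the Frattini decomposition $G = G_0 \cdot N_G(S)$ together with the hypothesis $\call/\call_0 \cong G/G_0$, set $\beta|_{G_0} = \beta_0$ and define $\beta$ on $N_G(S)$ by lifting the induced action of $\alpha$ on $\Aut_\call(S) = N_G(S)/C'_G(S)$ through the quotient map $N_G(S) \twoheadrightarrow \Aut_\call(S)$. The identity $\til\kappa_{G_0}(\beta_0) = \alpha|_{\call_0}$ guarantees that these two partial definitions agree on the overlap $G_0 \cap N_G(S) = N_{G_0}(S)$ modulo the kernel, and $Z(G_0) = Z(\calf_0)$ pins the lift down unambiguously.

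For injectivity, suppose $\Phi([\beta]) = (1, 1)$. Since $\kappa[\beta] = 1$, $\til\kappa_G(\beta) = c_\gamma$ for some $\gamma \in \Aut_\call(S)$; lifting $\gamma$ to $h \in N_G(S)$ and replacing $\beta$ by $c_h^{-1}\beta$ (which preserves $[\beta] \in \Out(G, G_0)$), I may assume $\til\kappa_G(\beta) = \Id_\call$. The factorization $\til\mu_G \circ \til\kappa_G = \Res$ then forces $\beta|_S = \Id_S$, and $R_1[\beta] = 1$ combined with $\beta|_{S_0} = \Id$ yields $\beta|_{G_0} = c_g|_{G_0}$ for some $g \in C_G(S_0)$. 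The hypothesis $Z(G_0) = Z(\calf_0)$ is essential here: it forces $Z(G_0)$ to be a $p$-group and thereby makes the kernel analysis for $\til\kappa_{G_0}\:\Aut(G_0, S_0) \to \Aut\typ^I(\call_0)$ tight, so that combined with $\til\kappa_{G_0}(\beta|_{G_0}) = \Id$, this forces $\beta|_{G_0}$ to be conjugation by an element of $C_G(G_0)$ modulo $\Inn(G_0)$. A final crossed-homomorphism analysis on $N_G(S)$, using that $\til\kappa_G(\beta) = \Id$ implies $\beta(a) \equiv a \pmod{C'_G(S)}$ for $a \in N_G(S)$ and that $C'_G(S)$ has order prime to $p$, then shows $\beta \in \Inn(G)$.

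The main obstacle is the consistency check in Step 2 on the overlap $N_{G_0}(S)$, where the identifications $\call/\call_0 \cong G/G_0$ and $Z(G_0) = Z(\calf_0)$ together must ensure that the kernels of $N_{G_0}(S_0) \twoheadrightarrow \Aut_{\call_0}(S_0)$ and $N_G(S) \twoheadrightarrow \Aut_\call(S)$ line up to make the two local definitions of $\beta$ glue into a global automorphism of $G$. A secondary difficulty in Step 3 is that $\til\kappa_G(\beta) = \Id$ only controls $\beta$ modulo the $p'$-groups $O^p(C_G(P))$, so nailing $\beta$ down as inner requires combining the Frattini decomposition with the center hypothesis to eliminate any residual $p'$-ambiguity.
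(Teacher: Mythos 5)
Your overall strategy---prove the pullback property directly by building, from a compatible pair $(\beta_0,\alpha)$, an automorphism of $G$ glued together over a Frattini-type decomposition---is the same as the paper's, but the surjectivity construction as you describe it has a genuine gap. You glue $\beta_0$ on $G_0$ with a lift of $\alpha_S$ on $N_G(S)$, invoking ``the Frattini decomposition $G=G_0\cdot N_G(S)$''. The Frattini argument gives $G=G_0\cdot N_G(S_0)$ (it applies because $S_0\in\sylp{G_0}$ and $G_0\nsg G$); it does \emph{not} give $G=G_0\cdot N_G(S)$, and that identity is false in the generality of the lemma. For instance, take $p=2$, $G=\Sigma_4$, $G_0=V_4$ the normal Klein four subgroup, $S=D_8$, $S_0=V_4$, $\calh_0=\{V_4\}$, $\calh=\{V_4,D_8\}$: then $Z(G_0)=Z(\calf_0)=V_4$, $\call_0\nsg\call$, $\call_0$ is centric in $\call$, and $\call/\call_0\cong\Sigma_3\cong G/G_0$, so all hypotheses hold, yet $G_0\cdot N_G(S)=V_4\cdot D_8=D_8\ne G$. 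So your two partial definitions need not even cover $G$, and the proposed gluing cannot produce the required $\beta$.

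Even over the correct decomposition $G=G_0\cdot N_G(S_0)$, the step ``define $\beta$ on $N_G(S)$ by lifting the induced action of $\alpha$ on $\Aut_\call(S)=N_G(S)/C'_G(S)$'' is unjustified: the kernel $C'_G(S)$ is a noncentral subgroup of order prime to $p$, and an automorphism of the quotient by a noncentral subgroup need not lift to the extension; the hypothesis $Z(G_0)=Z(\calf_0)$ says nothing about this lifting problem on $N_G(S)$. This is precisely what the paper's proof is organized to avoid: it works with $N_G(S_0)$ rather than $N_G(S)$, first extracts $O^p(C_G(S_0))=C'_{G_0}(S_0)$ and $C_G(G_0)=Z(G_0)$ from the hypotheses $\call/\call_0\cong G/G_0$ and centricity of $\call_0$, and then produces $\beta\in\Aut(N_G(S_0))$ as the pullback of the three compatible automorphisms $c_\alpha$, $\chi_{S_0}$, $c_\chi$ over the pullback square formed by the exact sequences $1\to Z(G_0)\to N_G(S_0)\to\Aut_G(G_0,S_0)\to 1$ and $1\to Z(\calf_0)\to\Aut_\call(S_0)\to\Aut_\call(\call_0)\to 1$ (exactness of the second uses centricity of $\call_0$); here the kernel is central, so the lift exists and is unique, and the same pullback gives agreement with $\beta_0$ on the overlap $N_{G_0}(S_0)$. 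One must then still check that the glued map is a homomorphism (the relation $c_{\beta(h)}=\alpha\,c_h\,\alpha^{-1}$ on $G_0$), and---a point your sketch omits entirely---that the resulting $\widehat{\alpha}\in\Aut(G)$ actually induces the given $\alpha$ on all of $\call$, not merely on $\call_0$ and $\Aut_\call(S_0)$; that verification uses Lemma \ref{AutI} together with the injectivity of restriction $\Mor_\call(P,Q)\to\Mor_\call(P\cap S_0,Q\cap S_0)$ from Proposition \ref{L-prop}(f), which is where the strong closure of $S_0$ enters. Your injectivity argument is closer to the paper's, but it too ultimately rests on these same identifications (in particular the decomposition $h=h_1h_2$ with $h_1\in Z(S_0)$, $h_2\in C'_{G_0}(S_0)$, and $Z(G_0)\cap C'_{G_0}(S_0)=1$), so as written the proposal does not constitute a proof.
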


\begin{proof}  By the Frattini argument, $G=G_0{\cdot}N_G(S_0)$ (all 
subgroups $G$-conjugate to $S_0$ are $G_0$-conjugate to $S_0$).  Hence 
$G/G_0\cong{}N_G(S_0)/N_{G_0}(S_0)$, while 
	\[ \call/\call_0 \defeq \Aut_\call(S_0)/\Aut_{\call_0}(S_0) =
	\bigl(N_G(S_0)/O^p(C_G(S_0))\bigr)\big/ 
	\bigl(N_{G_0}(S_0)/C'_{G_0}(S_0) \bigr) \]
(and $S_0$ is $G$-quasicentric since it is an object of the linking system 
$\call=\call_S^\calh(G)$).  Since $G/G_0\cong\call/\call_0$, it follows 
that $O^p(C_G(S_0))=C'_{G_0}(S_0)$.  Also, for each 
$g\in{}C_G(G_0)\le{}N_G(S_0)$, $[g]\in\Aut_\call(S_0)$ acts trivially on 
$\call_0$ under conjugation, so $[g]\in\Aut_{\call_0}(S_0)$ since 
$\call_0$ is centric in $\call$, and hence $g\in{}G_0$.  We have now shown 
that 
	\beqq O^p(C_G(S_0))=C'_{G_0}(S_0) \qquad\textup{and}\qquad
	C_G(G_0)=Z(G_0)~. \label{e:2.12aa} \eeqq

\smallskip

\textbf{Step 1: }
We first show the following square is a pullback:
 	\beqq \vcenter{\xymatrix@C=40pt{
 	\Aut(G,G_0,S\cdot C'_{G_0}(S_0)) \ar[r]^-{\widetilde{\kappa}}
 	\ar[d]_{\Res_1} &
 	\Aut\typ^I(\call,\call_0) \ar[d]_{\Res_2} \\
 	N_{\Aut(G_0,S_0)}(\Aut_G(G_0,S_0)) \ar[r]^-{\widetilde{\kappa}_0} &
 	N_{\Aut\typ^I(\call_0)}(\Aut_\call(\call_0)) \rlap{~.}
 	}} \label{e:2.12b} \eeqq
Here, $\Aut(G,G_0,S\cdot{}C'_{G_0}(S_0))$ is the group of automorphisms of 
$G$ which send both $G_0$ and $S\cdot{}C'_{G_0}(S_0)$ to themselves
and $\Aut\typ^I(\call,\call_0)\leq \Aut\typ^I(\call)$ is the subgroup
of elements which leave $\call_0$ invariant.

Both $\Res_1$ and $\Res_2$ are defined by restriction. Each $\alpha\in 
\Aut(G,G_0,S\cdot C'_{G_0}(S_0))$ leaves $S_0\times 
C'_{G_0}(S_0) = G_0\cap (S\cdot C'_{G_0}(S_0))$ invariant, and hence also 
leaves $S_0$ invariant.  Clearly, $\alpha|_{\Aut(G_0,S_0)}$ normalizes 
$\Aut_G(G_0,S_0)$. To see that $\Res_2$ maps to the normalizer, fix
$\sigma\in\Aut\typ^I(\call,\call_0)$ and $\gamma\in\Aut_\call(S_0)$,
and set $\sigma_0=\sigma|_{\call_0}\in \Aut\typ^I(\call_0)$.  Then
 	\beqq \sigma_0{}c_\gamma\sigma_0^{-1}=c_{\sigma(\gamma)},
 	\label{e:2.12bb} \eeqq
(using Lemma \ref{AutI} to show this holds on objects), and thus
$\sigma_0$ normalizes $\Aut_\call(\call_0)$. 

The homomorphism $\widetilde{\kappa}_0$ is the restriction of
$\til\kappa_{G_0}^{\calh_0}$, which is defined since $\calh_0$ is
$\Aut(S_0,\calf_0)$-invariant.  Since $\til\kappa_{G_0}^{\calh_0}$ maps 
$\Aut_G(G_0,S_0)$ onto $\Aut_\call(\call_0)$, it sends the normalizer of 
$\Aut_G(G_0,S_0)$ into the normalizer of $\Aut_\call(\call_0)$.

Defining $\til\kappa$ requires more explanation. For
$\alpha\in\Aut(G,G_0,S\cdot{}C'_{G_0}(S_0))$, $\alpha(S)$ is a Sylow
$p$-subgroup of $S\cdot{}C'_{G_0}(S_0)$, so $\alpha(S)=hSh^{-1}$ for
some $h\in{}C'_{G_0}(S_0)$. Hence
$c_h^{-1}\circ\alpha\in\Aut(G,G_0,S)$ and we define
$\til\kappa(\alpha) = \til\kappa_G^\calh(c_h^{-1}\circ\alpha)\in
\Aut\typ^I(\call,\call_0)$. If $h'\in C'_{G_0}(S_0)$ with
$\alpha(S) = h' S {h'}^{-1}$, then $h^{-1} h'\in C'_{G_0}(S_0)\cap
N_G(S)$. Since $S_0$ is strongly closed in $\calf$, the
restriction homomorphism 
	\[ N_G(S)/C'_G(S) = \Aut_\call(S) \Right5{} 
	\Aut_\call(S_0) = N_G(S_0)/C'_{G_0}(S_0) \] 
is injective by Proposition \ref{L-prop}(f). It follows that $h^{-1} h'\in 
C'_G(S)$, so $\til\kappa_G^\calh(c_{h^{-1} h'}) = 1$ since $C'_G(S)\leq 
O^p(C_G(P))$ for each $P\leq S$. Thus $\til\kappa$ is well defined, and it 
is easily seen to be a homomorphism.  Since conjugation by any element of 
$C'_{G_0}(S_0)$ induces the identity in $\Aut\typ^I(\call_0)$ (and since 
$\Res_2\circ\til\kappa_G^\calh= \til\kappa_{G_0}^{\calh_0}\circ\Res_1$ as 
maps from $\Aut(G,G_0,S)$ to $\Aut\typ^I(\call_0)$), square 
\eqref{e:2.12b} commutes.

Next consider the following commutative diagram:
	\beqq \vcenter{\xymatrix@C=40pt{
	1 \ar[r] & Z(G_0) \ar[r] \ar@{=}[d] & N_G(S_0) \ar[r]^-{\cj_1}
	\ar@{->>}[d]_{\lambda_0}^{g\mapsto[g]} & \Aut_G(G_0,S_0) 
	\ar@{->>}[d]^{\til\kappa_1} \ar[r] & 1 \\
	1 \ar[r] & Z(\calf_0) \ar[r]^{\delta_{S_0}} & \Aut_\call(S_0) 
	\ar[r]^-{\cj_2} & \Aut_\call(\call_0) \ar[r] & 1 \rlap{~.}
	}} \label{e:2.12c} \eeqq
Here, $\cj_1$ and $\cj_2$ are induced by conjugation, and $\til\kappa_1$ 
is the restriction of $\til\kappa_0$.  Both rows in 
\eqref{e:2.12c} are exact:  the first since 
$\Ker(\cj_1)=C_G(G_0)=Z(G_0)$ by \eqref{e:2.12aa}; and the second since 
$\Ker(\cj_2)\le\Aut_{\call_0}(S_0)$ ($\call_0$ is centric in $\call$) and 
hence $\Ker(\cj_2)=Z(\calf_0)$ by Lemma \ref{OutI1}(a).  Thus the right 
hand square in \eqref{e:2.12c} is a pullback square.  

Fix automorphisms 
	\[ \alpha\in N_{\Aut(G_0,S_0)}(\Aut_G(G_0,S_0)) 
	\qquad\textup{and}\qquad
	\chi\in\Aut\typ^I(\call,\call_0) \] 
such that $\chi|_{\call_0}=\widetilde{\kappa}_0(\alpha)$.  Then 
$\chi(S_0)=S_0$, so $\chi_{S_0}$ is an automorphism of 
$\Aut_\call(S_0)=N_G(S_0)/C'_{G_0}(S_0)$ by \eqref{e:2.12aa}.  

We first construct $\beta\in\Aut(N_G(S_0))$ such 
that for each $g\in{}N_G(S_0)$, $c_{\beta(g)}=\alpha{}c_g\alpha^{-1}$ in 
$\Aut(G_0)$ and $\chi_{S_0}([g])=[\beta(g)]$ in $\Aut_\call(S_0)$.  
Consider the following automorphisms
	\[ c_\alpha\in\Aut\bigl(\Aut_G(G_0,S_0)\bigr), \qquad
	\chi_{S_0}\in\Aut\bigl(\Aut_\call(S_0)\bigr), \qquad
	c_{\til\kappa_0(\alpha)}=c_\chi
	\in\Aut\bigl(\Aut_\call(\call_0)\bigr) \]
of groups in the pullback square in \eqref{e:2.12c}.  We want to define 
$\beta$ as the pullback of $c_\alpha$ and $\chi_{S_0}$ over $c_\chi$.  
For $\gamma\in\Aut_\call(S_0)$, $c_\chi(\cj_2(\gamma))=
\chi c_\gamma\chi^{-1}= c_{\chi(\gamma)}=\cj_2(\chi_{S_0}(\gamma))$
(using \eqref{e:2.12bb}) and thus $\cj_2\circ\chi_{S_0}=c_\chi\circ\cj_2$.  
By a similar (but simpler) computation, $\til\kappa_1\circ{}c_\alpha= 
c_{\til\kappa_0(\alpha)}\circ\til\kappa_1$; and hence these three 
automorphisms pull back (via the pullback square in \eqref{e:2.12c}) to a 
unique $\beta\in\Aut(N_G(S_0))$.  Thus for $g\in{}N_G(S_0)$,
	\beqq   [\beta(g)]=\chi_{S_0}([g])\in\Aut_\call(S_0)
	\quad\textup{and}\quad
	\cj_1(\beta(g))=c_\alpha\circ\cj_1(g)=\alpha{}c_g\alpha^{-1}
	\in\Aut(G_0) ~.
	\label{e:2.12d} \eeqq

Now, $\chi_{S_0}(\delta_{S_0}(S_0))=\delta_{S_0}(S_0)$ and 
$\chi_{S_0}(\delta_{S_0}(S))=\delta_{S_0}(S)$ since $\chi$ is isotypical 
and sends inclusions to inclusions (and hence restrictions to restrictions).  
Since $\Aut_\call(S_0)=N_G(S_0)/C'_{G_0}(S_0)$ by \eqref{e:2.12aa}, 
\eqref{e:2.12d} implies that $\beta$ sends $S_0\times{}C'_{G_0}(S_0)$ to 
itself and sends $S\cdot{}C'_{G_0}(S_0)$ to itself.  In particular, 
$\beta(S_0)=S_0$.  

Now, for all $g\in{}N_{G_0}(S_0)$, 
	\beq \lambda_0(\alpha(g)) = [\alpha(g)]=\til\kappa_0(\alpha)([g])= 
	\chi_{S_0}([g])\in\Aut_\call(S_0) 
	\tag{$\til\kappa_0(\alpha)=\chi|_{\call_0}$} \eeq
and
	\[ \cj_1\circ\alpha(g)=c_{\alpha(g)} =\alpha{}c_g\alpha^{-1}
	\in\Aut_G(G_0,S_0)~. \]
Thus $\lambda_0(\alpha(g))=\lambda_0(\beta(g))$ and 
$\cj_1(\alpha(g))=\cj_1(\beta(g))$ by comparison with \eqref{e:2.12d}; and 
hence $\alpha(g)=\beta(g)$ by the pullback square in \eqref{e:2.12c}.  This 
proves that $\alpha|_{N_{G_0}(S_0)}=\beta|_{N_{G_0}(S_0)}$.  

We already saw that $G=G_0{\cdot}N_G(S_0)$.  Define 
$\widehat{\alpha}\in\Aut(G,G_0,S\cdot{}C'_{G_0}(S_0))$ by setting 
$\widehat{\alpha}(g_0h)=\alpha(g_0)\beta(h)$ for 
$g_0\in{}G_0$ and $h\in{}N_G(S_0)$.  Since 
$\alpha|_{N_{G_0}(S_0)}=\beta|_{N_{G_0}(S_0)}$, this is well defined as a 
bijective map of sets.  For all $g_0,g'_0\in{}G_0$ and $h,h'\in{}N_G(S_0)$,
	\begin{align*}  
	\widehat{\alpha}(g_0h \cdot g'_0h') &= \widehat{\alpha}(g_0\cdot 
	c_h(g'_0) \cdot hh') 
	= \alpha(g_0)\alpha(c_h(g'_0))\beta(hh') \\
	&= \alpha(g_0) c_{\beta(h)}(\alpha(g'_0)) \beta(hh') 
	= \alpha(g_0) \beta(h) \alpha(g'_0) \beta(h') 
	= \widehat{\alpha}(g_0h)\widehat{\alpha}(g'_0h'),
	\end{align*}
where the third equality follows from the condition 
$c_{\beta(h)}=\alpha{}c_h\alpha^{-1}$.  It now follows that
$\widehat{\alpha}\in\Aut(G,G_0)$.  Also, $\widehat{\alpha}$ sends 
$S\cdot{}C'_{G_0}(S_0)$ to itself since $\beta$ does.

By construction, $\Res_1(\widehat{\alpha}) = \widehat{\alpha}|_{G_0} = 
\alpha$. We claim that $\til\kappa(\widehat{\alpha}) = \chi$. Since 
$\widehat{\alpha}|_{G_0} = \alpha$ and $\chi|_{\call_0} = 
\til\kappa_0(\alpha)$, $\til\kappa(\widehat{\alpha})$ and $\chi$ define 
the same action on $\call_0$ (by the commutativity of \eqref{e:2.12b}). 
Choose $h\in C'_{G_0}(S_0)=O^p(C_G(S_0))$ with $\widehat{\alpha}(S) = 
hSh^{-1}$ and let $\tau\in\Aut(S)$ be given by 
$\tau(s)=c_h^{-1}(\widehat{\alpha}(s))$. For $g\in N_G(S_0)$, 
$\til\kappa(\widehat{\alpha})([g]) = [h^{-1}\widehat{\alpha}(g)h] = 
[\widehat{\alpha}(g)] = [\beta(g)] = \chi([g])$ in $\Aut_\call(S_0)$ by 
\eqref{e:2.12d} and since $\widehat{\alpha}|_{N_G(S_0)}=\beta$. Hence 
$\til\kappa(\widehat{\alpha})$ and $\chi$ define the same action on 
$\Aut_\call(S_0)$. Since $\call_0$ and $\Aut_\call(S_0)$ generate the  
full subcategory $\call|_{\le{}S_0}$, 
$\widetilde{\kappa}(\widehat{\alpha})$ and $\chi$ are equal after 
restriction to this subcategory. 

We just showed that 
$\chi_{S_0}([s])=\til\kappa(\widehat{\alpha})_{S_0}([s])$ for $s\in{}S$.  
Hence $\chi_S([s]) = \til\kappa(\widehat{\alpha})_S([s])$ in 
$\Aut_\call(S)$. Lemma \ref{AutI} now implies that $\chi(P) = 
\til\kappa(\widehat{\alpha})(P)$ for $P\in \Ob(\call)$. Since both 
$\til\kappa(\widehat{\alpha})$ and $\chi$ send inclusions to inclusions, 
and since the restriction map from $\Mor_\call(P,Q)$ to 
$\Mor_\call(P\cap{}S_0,Q\cap{}S_0)$ is injective for all $P,Q\in\calh$ by 
Proposition \ref{L-prop}(f), it now follows that 
$\widetilde{\kappa}(\widehat{\alpha}) = \chi$.

To prove \eqref{e:2.12b} is a pullback, it remains to show 
$\til\kappa\times\Res_1$ is injective.  So assume 
$\widehat{\alpha}\in\Aut(G,G_0,S\cdot{}C'_{G_0}(S_0))$ is such that 
$\widehat{\alpha}|_{G_0}=\Id_{G_0}$ and 
$\widetilde{\kappa}(\widehat{\alpha})=\Id_\call$.  For each $g\in{}G$, 
$c_{\widehat{\alpha}(g)}=c_g\in\Aut(G_0)$, and hence 
$g^{-1}\widehat{\alpha}(g)\in{}C_G(G_0)=Z(G_0)$ by \eqref{e:2.12aa}.  
Since $\widetilde{\kappa}(\widehat{\alpha})=\Id_\call$, $\widehat{\alpha}$ 
induces the identity on $\Aut_\call(S_0)=N_G(S_0)/C'_{G_0}(S_0)$ (see 
\eqref{e:2.12aa} again).  Since $G=G_0{\cdot}N_G(S_0)$ and 
$\widehat{\alpha}|_{G_0}=\Id$, 
$g^{-1}\widehat{\alpha}(g)\in{}C'_{G_0}(S_0)$ for all $g\in{}G$.  Finally, 
$C'_{G_0}(S_0)\cap{}Z(G_0)=1$ because $Z(G_0)=Z(\calf_0)\le{}S_0$ is a 
$p$-group, and we conclude that $\widehat{\alpha}=\Id_G$.

\smallskip

\textbf{Step 2: }  We are now ready to prove \eqref{e:2.12a} is a pullback.  
Fix elements
	\[ [\alpha]\in{}N_{\Out(G_0)}(\Out_G(G_0))/\Out_G(G_0) 
	\qquad\textup{and}\qquad
	[\chi]\in\Out\typ(\call,\call_0) \]
such that $\kappa^*([\alpha])=R_2([\chi])$, and choose liftings 
$\alpha\in\Aut(G_0,S_0)$ and $\chi\in\Aut\typ^I(\call,\call_0)$.  Then 
$\alpha$ normalizes $\Aut_G(G_0)$, and hence also normalizes 
$\Aut_G(G_0,S_0)$.  


Since $\kappa^*([\alpha])=R_2([\chi])$,
$\chi|_{\call_0}=\widetilde{\kappa}_0(\alpha)\circ{}c_{[x]}$ for some 
element $x\in{}N_G(S_0)$ (where $[x]\in\Aut_\call(S_0)$ is the class of 
$x$).  Upon replacing $\alpha$ by $\alpha\circ{}c_x\in\Aut(G_0)$, we can 
arrange that $\chi|_{\call_0}=\widetilde{\kappa}_0(\alpha)$.  Hence 
$\alpha$ and $\chi$ pull back to an element of 
$\Aut(G,G_0,S\cdot{}C'_{G_0}(S_0))$ by Step 1, 
and so $[\alpha]$ and $[\chi]$ pull back to an element of $\Out(G,G_0)$.  

To see that this pullback is unique, fix $[\gamma]\in\Out(G,G_0)$ such 
that $R_1([\gamma])=1$ and $\kappa([\gamma])=1$, and choose 
$\gamma\in\Aut(G,G_0)$ which represents $[\gamma]$.  Then 
$\gamma(S)=gSg^{-1}$ for some $g\in{}G$, and upon replacing $\gamma$ by 
$c_g^{-1}\circ\gamma$, we can assume $\gamma(S)=S$.  Also, 
$\widetilde{\kappa}(\gamma)=c_{[y]}$ for some $y\in{}N_G(S)$; and upon 
replacing $\gamma$ by $\gamma\circ{}c_y^{-1}$, we can assume 
$\widetilde{\kappa}(\gamma)=\Id_\call$.  Now, $\gamma|_{G_0}=c_h$ for some 
$h\in{}N_G(S_0)$, and $c_{[h]}=\Id_{\call_0}$.  Hence $h\in{}G_0$ since 
$\call_0$ is centric in $\call$, and so 
$h\in{}C_{G_0}(S_0)=Z(S_0)\times{}C'_{G_0}(S_0)$.  

Write $h=h_1h_2$, where $h_1\in{}Z(S_0)$ and $h_2\in{}C'_{G_0}(S_0)$.  
Thus $[h]=[h_1]\in\Aut_{\call_0}(S_0)$, and $h_1\in{}Z(\calf_0)=Z(G_0)$ 
since $c_{[h]}=\Id_{\call_0}$ (see Lemma \ref{OutI1}(a)).  Thus 
$\gamma|_{G_0}=c_h=c_{h_2}$ in $\Aut(G_0)$.  Since 
$[S,h_2]\le[S,C'_{G_0}(S_0)]\le{}C'_{G_0}(S_0)$, 
$c_{h_2}\in\Aut(G,G_0,S{\cdot}C'_{G_0}(S_0))$.  Also, 
$\til\kappa(c_{h_2})=\Id$ by definition of $\til\kappa$ (and since 
$h_2\in{}C'_{G_0}(S_0)$).  Thus $\gamma=c_{h_2}$ since \eqref{e:2.12b} is a 
pullback, and so $[\gamma]=1$ in $\Out(G,G_0)$.  
\end{proof}

We are finally ready to prove:

\begin{Prop} \label{c:tame->tame}
Let $\SFL[_0]\nsg\SFL$ be a normal pair such that $\call_0$ is centric in 
$\call$, $\Ob(\call_0)$ and $\Ob(\call)$ are $\Aut(S_0,\calf_0)$- and 
$\Aut(S,\calf)$-invariant, respectively, and $\call_0$ is 
$\Aut\typ^I(\call)$-invariant.  Assume $\calf_0$ is tamely realized by 
some finite group $G_0$ such that $S_0\in\sylp{G_0}$, $Z(G_0)=Z(\calf_0)$, 
and $\call_0\cong\call_{S_0}^{\Ob(\call_0)}(G_0)$.  Then $\calf$ is tamely 
realized by a finite group $G$ such that $S\in\sylp{G}$, $G_0\nsg{}G$ and 
$G/G_0\cong\call/\call_0$.  
\end{Prop}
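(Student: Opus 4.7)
My plan proceeds in five steps, combining Lemmas \ref{G0<|G}, \ref{norm-pb}, and \ref{p.b.split} to transfer the tame splitting from $G_0$ to $G$. First, since $\calf_0$ is tamely realized by $G_0$, the map $\kappa_{G_0}$ is split surjective. Lemma \ref{OutI2} (applicable because the hypothesis gives $\Aut(S_0,\calf_0)$-invariance of $\Ob(\call_0)$, which contains all $\calf_0$-centric $\calf_0$-radical subgroups) identifies $\Out\typ(\call_{S_0}^c(G_0))$ with $\Out\typ(\call_0)$, so one also obtains a section $s_0\:\Out\typ(\call_0)\to\Out(G_0)$ of $\kappa_{G_0}^{\Ob(\call_0)}$. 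Setting $\widehat{\rho}=s_0\circ\rho^\call_{\call_0}$ furnishes the required homomorphism $\call/\call_0\to\Out(G_0)$ satisfying condition (c) of Lemma \ref{G0<|G}, since by construction $\kappa_{G_0}^{\Ob(\call_0)}\circ\widehat{\rho}=\rho^\call_{\call_0}$.

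I then invoke Lemma \ref{G0<|G} with this choice of $\widehat{\rho}$ (its other hypotheses are direct consequences of our assumptions on the normal pair and on $G_0$). The conclusion produces a finite group $G$ with $S\in\sylp{G}$, $G_0\nsg G$, $G/G_0\cong\call/\call_0$, $\calf\cong\calf_S(G)$, and $\call\cong\call_S^{\Ob(\call)}(G)$, such that the outer conjugation action of $G/G_0$ on $G_0$ coincides with $\widehat{\rho}$. In particular $\Out_G(G_0)=\widehat{\rho}(\call/\call_0)=s_0(\Out_\call(\call_0))$, so $s_0$ sends $\Out_\call(\call_0)$ isomorphically onto $\Out_G(G_0)$ and normalizer into normalizer; passing to quotients yields a section $s_0^{*}$ of the bottom map $\kappa^{*}$ in the pullback square \eqref{e:2.12a} of Lemma \ref{norm-pb}.

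Lemma \ref{norm-pb} applies to this $G$ --- its hypotheses encode precisely our assumptions on $\call_0\nsg\call$, together with $Z(G_0)=Z(\calf_0)$ and the two object-set invariance conditions. Lemma \ref{p.b.split} then lifts $s_0^{*}$ to a section of the top map $\kappa\:\Out(G,G_0)\to\Out\typ(\call,\call_0)$. Finally, the hypothesis that $\call_0$ is $\Aut\typ^I(\call)$-invariant forces $\Aut\typ^I(\call,\call_0)=\Aut\typ^I(\call)$ and hence $\Out\typ(\call,\call_0)=\Out\typ(\call)$; composing the section of $\kappa$ with the inclusion $\Out(G,G_0)\hookrightarrow\Out(G)$ yields a section of $\kappa_G^{\Ob(\call)}$, and a further appeal to Lemma \ref{OutI2} transfers it to a section of $\kappa_G$. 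Thus $G$ tamely realizes $\calf$.

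The main obstacle, I expect, is the bookkeeping in Step 3: one must carefully verify that the outer action of $G/G_0$ on $G_0$ produced by Lemma \ref{G0<|G} literally equals the $\widehat{\rho}$ used as input, so that the equality $\Out_G(G_0)=s_0(\Out_\call(\call_0))$ holds as subgroups of $\Out(G_0)$ --- not merely up to some intrinsic ambiguity. Only with this exact identification does $s_0$ descend to the needed section $s_0^{*}$; once that is pinned down, the remainder of the argument is a formal diagram chase against the two pullback squares.
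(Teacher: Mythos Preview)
Your proposal is correct and follows essentially the same route as the paper: choose a splitting $s$ of $\kappa_{G_0}^{\calh_0}$ via Lemma \ref{OutI2}, set $\widehat{\rho}=s\circ\rho^\call_{\call_0}$, apply Lemma \ref{G0<|G} to build $G$, then use Lemma \ref{norm-pb} and Lemma \ref{p.b.split} to lift the induced splitting $s^*$ of $\kappa^*$ to a splitting of $\kappa_G^\calh$. The ``obstacle'' you anticipate is not one: Lemma \ref{G0<|G} explicitly guarantees that the outer action of $G/G_0$ on $G_0$ realizes the given $\widehat{\rho}$, so $\Out_G(G_0)=s(\Out_\call(\call_0))$ holds on the nose.
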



\begin{proof}  Set $\calh=\Ob(\call)$ and $\calh_0=\Ob(\call_0)$.  
By assumption, $\calf_0=\calf_{S_0}(G_0)$, and 
$\kappa_{G_0}$ is split surjective.  
Also, $\call_0\cong\call_{S_0}^{\calh_0}(G_0)$ by assumption, and we 
identify these two linking systems.  By Lemma \ref{OutI2}, 
$\Out\typ(\call_0)\cong \Out\typ(\call_{S_0}^{\calh_0^c}(G_0)) 
\cong\Out\typ(\call_{S_0}^c(G_0))$, where $\calh_0^c$ is the set of 
$\calf_0$-centric subgroups in $\calh_0$.  Choose a splitting 
	\[ s\:\Out\typ(\call_0) \cong \Out\typ(\call_{S_0}^c(G_0))
	\Right5{}\Out(G_0) \]
for $\kappa_{G_0}^{\calh_0}$, and set
	\[ \widehat{\rho} = s \circ \rho_{\call_0}^\call \: \call/\call_0 
	\Right4{} \Out\typ(\call_0) \Right4{} \Out(G_0) ~. \]

By Lemma \ref{G0<|G}, there is a finite group $G$ such that 
$S\in\sylp{G}$, $G_0\nsg{}G$, $\calf=\calf_S(G)$, 
$\call\cong\call_S^{\calh}(G)$, $G/G_0\cong\call/\call_0$, and such that 
the outer action of $G/G_0$ on $G_0$ is equal to $\widehat{\rho}$ via this 
last isomorphism.  In particular, $s$ sends 
$\Out_\call(\call_0)=\Im(\rho_{\call_0}^\call)$ isomorphically to 
$\Out_G(G_0)=\Im(\widehat{\rho})$.  

Since $\call_0$ is $\Aut\typ^I(\call)$-invariant by assumption, 
$\Out\typ(\call,\call_0)=\Out\typ(\call)$.  So by Lemma \ref{norm-pb}, the 
following is a pullback square:
	\beqq \vcenter{\xymatrix@C=40pt{
	\Out(G,G_0) \ar[r]^-{\kappa} \ar[d]_{R_1} & 
	\Out\typ(\call) \ar[d]_{R_2} \\
	N_{\Out(G_0)}(\Out_G(G_0))/\Out_G(G_0) \ar[r]^-{\kappa^*} & 
	N_{\Out\typ(\call_0)}(\Out_\call(\call_0))/\Out_\call(\call_0)
	}} \label{e:2.13a} \eeqq
where $\kappa^*$ is induced by $\kappa_{G_0}^{\calh_0}$.  Since the 
splitting $s$ of $\kappa_{G_0}^{\calh_0}$ sends $\Out_\call(\call_0)$ 
isomorphically to $\Out_G(G_0)$, it induces a splitting $s^*$ of 
$\kappa^*$.  Since \eqref{e:2.13a} is a pullback, $s^*$ induces a 
splitting of $\kappa=\kappa_G^\calh|_{\Out(G,G_0)}$ (Lemma 
\ref{p.b.split}). By Lemma \ref{OutI2}, $\Out\typ(\call)\cong 
\Out\typ(\call_S^c(G))$, and so $\calf$ is tamely realized by $G$.
\end{proof}

We next turn to central extensions of fusion and linking systems.  In the 
following lemma, when $\call$ is a linking system associated to $\calf$ 
over the $p$-group $S$, and $A\le{}S$, we set 
	\[ \Aut\typ^I(\call,A)=\{\alpha\in\Aut\typ^I(\call) \,|\, 
	\alpha_S(\delta_S(A))=\delta_S(A) \}~, \]
and let $\Out\typ(\call,A)$ be its image in $\Out\typ(\call)$. 


\begin{Lem} \label{cent-pb}
Fix a finite group $G$ and a central $p$-subgroup $A\le{}Z(G)$.  Choose 
$S\in\sylp{G}$, and set $\widebar{G}=G/A$ and 
$\widebar{S}=S/A\in\sylp{\widebar{G}}$.  Set $\calf=\calf_{S}(G)$,
$\widebar{\calf}=\calf_{\widebar{S}}(\widebar{G})$ and
 	\[ \calh = \{P\le S \,|\, P\ge A, \ P/A \textup{ is
 	$\widebar{\calf}$-centric} \}~. \]
Then $\calh$ contains all subgroups of $S$ which are 
$\calf$-centric and $\calf$-radical, all subgroups in $\calh$ are 
$\calf$-centric, and hence $\call\defeq\call_{S}^\calh(G)$ is a linking
system associated to $\calf$. If, furthermore, $O_{p'}(\widebar{G})=1$ and 
$Z(\widebar{G})=Z(\widebar{\calf})$, then the 
following square is a pullback:
 	\beqq \vcenter{\xymatrix@C=40pt{
 	\Out(G,A) \ar[d]^{\nu_1} \ar[r]^-{\kappa_{G,A}^\calh} &
 	\Out\typ(\call,A) \ar[d]^{\nu_2} \\
 	\Out(\widebar{G}) \ar[r]^-{\kappa_{\widebar{G}}} &
 	\Out\typ(\widebar{\call}) \rlap{~,}
 	}} \label{e:2.14a} \eeqq
where $\widebar{\call}=\call_{\widebar{S}}^c(\widebar{G})$, 
$\kappa_{G,A}^\calh$ is defined analogously to $\kappa_G$, and $\nu_1$
and $\nu_2$ are induced by the projections  $G\Onto2{}\widebar{G}$ and
$\call\Onto2{}\widebar{\call}$.
\end{Lem}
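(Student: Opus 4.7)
For the structural claims, the key equivalence is that for $P\le S$ with $P\ge A$, $P$ is $\calf$-centric if and only if $P/A$ is $\widebar\calf$-centric. Since $A\le Z(G)$, every $\calf$-morphism (being conjugation by an element of $G$) restricts to the identity on $A$, so $A$ is central in $\calf$ and thus $A\le Z(\calf)\le O_p(\calf)$; by Proposition \ref{norm<=>}, every $\calf$-centric $\calf$-radical subgroup contains $A$. For $P\ge A$ and any $\calf$-conjugate $P^*$ (necessarily $\ge A$), $C_S(P^*)/A=C_{\widebar S}(P^*/A)$ since $A$ is central, so $C_S(P^*)\le P^*$ iff $C_{\widebar S}(P^*/A)\le P^*/A$; combined with the natural bijection between $\calf$-conjugacy classes of such $P^*$ and $\widebar\calf$-conjugacy classes of $P^*/A$, this gives the equivalence. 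Hence $\calh$ contains the centric-radicals, consists of $\calf$-centrics, and inherits closure under $\calf$-conjugacy and overgroups from the corresponding closures for $\widebar\calf$-centricity; so $\call=\call_S^\calh(G)$ is a linking system associated to $\calf$.

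Commutativity of the pullback square is immediate from naturality of $\kappa$ under the projection $G\to\widebar G$. For uniqueness, suppose $[\gamma]\in\Out(G,A)$ has $\nu_1([\gamma])=1$ and $\kappa_{G,A}^\calh([\gamma])=1$. By Frattini, choose a representative with $\gamma(S)=S$; adjusting by conjugation by a suitable element of $N_G(S)$ arranges $\widebar\gamma=\Id_{\widebar G}$. Then $\til\kappa(\gamma)=c_{[y]}$ for some $[y]\in\Aut_\call(S)$ with trivial image in $\Out\typ(\widebar\call)$, so by Lemma \ref{OutI1}(a) its image lies in $\delta_{\widebar S}(Z(\widebar\calf))=\delta_{\widebar S}(Z(\widebar G))$, using the hypothesis $Z(\widebar G)=Z(\widebar\calf)$; lifting $\bar y$ to $y\in G$ (with $\bar y\in Z(\widebar G)$) and replacing $\gamma$ by $\gamma\circ c_y^{-1}$ yields $\widebar\gamma=\Id$ and $\til\kappa(\gamma)=\Id_\call$. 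Writing $\gamma(g)=g\cdot a(g)$ for a homomorphism $a\colon G\to A$, the condition $\til\kappa(\gamma)=\Id$ on $\Aut_\call(S)=N_G(S)/O^p(C_G(S))$ forces $a(g)\in A\cap O^p(C_G(S))=1$ for $g\in N_G(S)$, since $A$ is a $p$-group while $O^p(C_G(S))$ has prime-to-$p$ order by quasicentricity of $S\in\calh$. Thus $a|_S=0$; by the focal subgroup theorem the inclusion $S\hookrightarrow G$ induces a surjection onto the maximal abelian $p$-quotient of $G$, giving $\Hom(G,A)\hookrightarrow\Hom(S,A)$, so $a=0$ and hence $[\gamma]=1$.

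Existence is the main obstacle. Given compatible $([\alpha],[\chi])$, choose representatives $\alpha\in\Aut(\widebar G,\widebar S)$ and $\chi\in\Aut\typ^I(\call,A)$, and adjust $\chi$ by conjugation by a lift of some element of $\Aut_{\widebar\call}(\widebar S)$ so that $\widebar\chi=\til\kappa_{\widebar G}(\alpha)$ holds as an equality in $\Aut\typ^I(\widebar\call)$. The technical heart of the construction is that for each $P,Q\in\calh$, the canonical map
\[
\Phi_{P,Q}\colon N_G(P,Q)\Right2{}N_{\widebar G}(\bar P,\bar Q)\times_{\Mor_{\widebar\call}(\bar P,\bar Q)}\Mor_\call(P,Q)
\]
is a bijection: injectivity uses $A\cap O^p(C_G(P))=1$, while surjectivity reduces to showing that $O^p(C_G(P))\twoheadrightarrow O^p(C_{\widebar G}(\bar P))$; for this, a $p'$-element $\bar c\in C_{\widebar G}(\bar P)$ lifts by Schur--Zassenhaus to a $p'$-element $c\in G$, and the action of $c$ on $P$ is trivial on both $A$ (central in $G$) and $P/A$ (since $\bar c$ centralizes $\bar P$), hence trivial by coprime action, so $c\in O^p(C_G(P))$. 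Using $\Phi_{P,Q}$, define $\gamma(g)$ for $g\in N_G(P,Q)$ to be the unique preimage of the compatible pair $(\alpha(\bar g),\chi([g]))$; compatibility in $\Mor_{\widebar\call}(\bar P,\bar Q)$ holds by the adjusted condition $\widebar\chi=\til\kappa_{\widebar G}(\alpha)$. Independence from the choice of $(P,Q)$ and the homomorphism property follow from the functoriality of $\chi$ (which respects restrictions and composites) together with the homomorphism property of $\alpha$; here the hypothesis $O_{p'}(\widebar G)=1$ enters to ensure that the Alperin-type generation of $G$ by the normalizers $N_G(P)$ for $P\in\calh$ is sufficiently rigid for these local definitions to glue into a global $\gamma\in\Aut(G,A)$, which then satisfies $\nu_1([\gamma])=[\alpha]$ and $\kappa_{G,A}^\calh([\gamma])=[\chi]$, completing the pullback argument.
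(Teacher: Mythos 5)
Your proof of the structural claims rests on the assertion that for $P\ge A$ one has $C_S(P^*)/A=C_{\widebar{S}}(P^*/A)$, and hence that $P$ is $\calf$-centric if and only if $P/A$ is $\widebar{\calf}$-centric. This is false: in general $C_S(P)/A$ is only \emph{contained} in $C_{\widebar{S}}(P/A)$, since an element $x$ with $1\ne[x,P]\le A$ centralizes $P/A$ but not $P$. Concretely, take $G=S=Q_8$, $A=Z(S)$, and $P\le S$ cyclic of order $4$: then $P$ is $\calf$-centric, but $C_{\widebar{S}}(P/A)=\widebar{S}\nleq P/A$, so $P/A$ is not $\widebar{\calf}$-centric. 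Thus centricity alone does not put a subgroup into $\calh$, and your argument that every $\calf$-centric $\calf$-radical subgroup lies in $\calh$ has a gap precisely because it never uses radicality. The correct argument (the one in the paper) does: for $P$ fully normalized, centric and radical, any $x\in S$ with $xA\in C_{\widebar{S}}(P/A)$ acts trivially on $A$ and on $P/A$, so $c_x\in O_p(\autf(P))$ by Lemma \ref{mod-Fr}, whence $x\in P$ by Lemma \ref{centrad}; together with Proposition \ref{F/Q} (to see $P/A$ is fully centralized) this gives $P\in\calh$.

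The more serious gap is in the existence half of the pullback. Your fiber-product bijection $N_G(P,Q)\cong N_{\widebar{G}}(\widebar{P},\widebar{Q}) \times_{\Mor_{\widebar{\call}}(\widebar{P},\widebar{Q})}\Mor_\call(P,Q)$ is correct, but it only defines the would-be automorphism $\gamma$ on $\bigcup_{P,Q\in\calh}N_G(P,Q)$, which is in general a proper subset of $G$ (note $A\notin\calh$, and $G$ need not even be generated by the normalizers $N_G(P)$, $P\in\calh$). The appeal to ``Alperin-type generation'' and to $O_{p'}(\widebar{G})=1$ does not produce the missing global construction, and no argument is given that your local definitions glue or are multiplicative; this local-to-global step is exactly where the content of the lemma lies. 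The paper handles it with two ingredients you do not have: first, $\alpha$ is lifted to a global $\widehat{\alpha}\in\Aut(G,S,A)$ by an obstruction argument, showing the extension class $[G]\in H^2(\widebar{G};A)$ is invariant under $(\alpha,r_2(\chi))$ because its restriction $[S]\in H^2(\widebar{S};A)$ is invariant (via the fusion preserving automorphism of $S$ induced by $\chi$) and restriction $H^2(\widebar{G};A)\to H^2(\widebar{S};A)$ is injective; second, the discrepancy $\til\kappa(\widehat{\alpha})^{-1}\circ\chi$ lies in $\Ker(\til\nu_2,r_2)\cong\Hom(\pi_1(|\widebar{\call}|),A)$, and the key isomorphism $\Hom(\widebar{G},A)\cong\Hom(\pi_1(|\widebar{\call}|),A)$ (proved via $p$-completion and \cite{BLO1}, or via the hyperfocal subgroup theorem and \cite{BCGLO2}) allows one to correct $\widehat{\alpha}$ by an element of $\Hom(\widebar{G},A)$ so that it induces exactly $\chi$ on $\call$. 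Without these two steps the existence of a pullback element is not established. By contrast, your uniqueness argument is essentially sound and even a bit more elementary than the paper's, replacing the exactness argument in \eqref{e:2.14b} by the injectivity of the restriction $\Hom(G,A)\to\Hom(S,A)$.
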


\begin{proof}  
We first prove the statements about $\calh=\Ob(\call)$.  If
$P\in\calh$, then $P$ is $\calf$-centric since $A\le{}P$ and $P/A$ is
$\widebar{\calf}$-centric (cf.\ \cite[Lemma 6.4(a)]{BCGLO2}).  Now
assume $P\le{}S$ is $\calf$-centric and $\calf$-radical; we must show
$P\in\calh$. It suffices to do this when $P$ is fully normalized in
$\calf$.  Since $P$ is $\calf$-centric, $A\le C_S(P)\le P$.  
For $x\in S$ with $xA\in{}C_{\widebar{S}}(P/A)$, $c_x$
induces the identity on $A$ and on $P/A$. Hence
$c_x\in{}O_p(\Aut_{\calf}(P))$ by Lemma \ref{mod-Fr}, so $x\in P$ by Lemma 
\ref{centrad}. This proves $C_{\widebar{S}}(P/A)\leq P/A$. By
Proposition \ref{F/Q}, $P/A$ is fully normalized and hence fully
centralized in $\widebar{\calf}$. We conclude that $P/A$ is
$\widebar{\calf}$-centric, so $P\in\calh$.

Consider the following diagram (with homomorphisms defined below):
	\beqq \vcenter{\xymatrix@C=40pt{
	\I221 \ar[r] & \I22\Hom(\widebar{G},A) \ar[r]^-{\lambda_1} 
	\ar[d]^{\tau}_{\cong} & 
	\I22\Aut(G,S,A) \ar[r]^-{(\til\nu_1,r_1)} 
	\ar[d]^{\til\kappa_1} 
	& \I22\Aut(\widebar{G},\widebar{S}) \times \Aut(A) 
	\ar[d]^{\til\kappa_2\times\Id} \\
	\I221 \ar[r] & \I22\Hom(\pi_1(|\widebar{\call}|),A) 
	\ar[r]^-{\lambda_2} & 
	\I22\Aut\typ^I(\call,A) \ar[r]^-{(\til\nu_2,r_2)} & 
	\I22\Aut\typ^I(\widebar{\call}) \times \Aut(A) \rlap{~.}
	}} \label{e:2.14b} \eeqq
Here, $\til\nu_1$ and $\til\nu_2$ are induced by the projection 
$G\Right2{}\widebar{G}$ and $r_1$ and $r_2$ by restriction to $A$, and 
$\Aut(G,S,A)\le\Aut(G)$ is the subgroup of automorphisms which leave both 
$S$ and $A$ invariant.  Also, $\til\kappa_1=\til\kappa_{G,A}^\calh$ 
(defined analogously to $\til\kappa_G$), and 
$\til\kappa_2=\til\kappa_{\widebar{G}}$.  The right hand square clearly 
commutes.  

For $\beta\in\Hom(\widebar{G},A)$ and $g\in{}G$, 
$\lambda_1(\beta)(g)=g{\cdot}\beta(gA)$.  For any morphism 
$\widebar\psi\in\Mor_{\widebar{\call}}(P,Q)$, let 
$[\widebar\psi]\in\pi_1(|\widebar{\call}|)$ be the class of the loop based 
at the vertex $\widebar{S}$, formed by the edges $\iota_P^{\widebar{S}}$, 
$\widebar\psi$, and $\iota_Q^{\widebar{S}}$ (in that order).  For 
$\beta\in\Hom(\pi_1(|\widebar{\call}|),A)$, $\lambda_2(\beta)$ is the 
automorphism of $\call$ which is the identity on objects, and sends 
$\psi\in\Mor_\call(P,Q)$ (with
image $\widebar{\psi}\in\Mor_{\widebar{\call}}(P/A,Q/A)$) to
$\psi\circ\delta_P(\beta([\widebar{\psi}]))$.  It follows immediately 
from these definitions that for $i=1,2$, $\lambda_i$ is injective and 
$(\til\nu_i,r_i)\circ\lambda_i$ is trivial. 

Since $A$ is a finite abelian $p$-group, $\Hom(\pi_1(X),A)\cong 
H^1(X;A)\cong H^1(X\pcom;A)$ for any ``$p$-good'' space $X$ (the second 
isomorphism by \cite[Definition I.5.1]{BK}).  Also, $|\widebar{\call}|$ is 
$p$-good by \cite[Proposition 1.12]{BLO2}, $B\widebar{G}$ is $p$-good 
since it has finite fundamental group (cf. \cite[Proposition 
VII.5.1]{BK}), and $B\widebar{G}\pcom\simeq|\widebar{\call}|\pcom$ by 
\cite[Proposition 1.1]{BLO1}.  We thus get an isomorphism
	\[ \tau\: \Hom(\widebar{G},A) \Right3{\cong} 
	H^1(B\widebar{G}\pcom;A) \Right3{\cong} 
	H^1(|\widebar{\call}|\pcom;A) \Right3{\cong}
	\Hom(\pi_1(|\widebar{\call}|),A) ~. \]
Alternatively, by \cite[Theorem B]{BCGLO2}, 
$\pi_1(|\widebar{\call}|)/O^p(\pi_1(|\widebar{\call}|))\cong 
\widebar{S}/\hyp(\widebar{\calf})$, where for an infinite group $\Gamma$, 
$O^p(\Gamma)$ denotes the intersection of all normal subgroups of $p$-power 
index.  By the hyperfocal subgroup theorem for groups 
\cite[\S\,1.1]{Puig-hyper}, $\widebar{G}/O^p(\widebar{G})\cong
\widebar{S}/\hyp(\widebar{\calf})$; and these isomorphisms induce an 
isomorphism
	\[ \tau\: \Hom(\widebar{G},A) \Right4{\cong}
	\Hom(\widebar{S}/\hyp(\widebar{\calf}),A) \Right4{\cong}
	\Hom(\pi_1(|\widebar{\call}|),A)~. \]
By either construction, $\tau$ makes the left hand square in 
\eqref{e:2.14b} commute.

An element $\alpha\in\Ker(\til\nu_1,r_1)$ is an automorphism of $G$ which 
induces the identity on $A$ and on $\widebar{G}=G/A$, and since 
$A\le{}Z(G)$, any such automorphism has the form 
$\alpha(g)=g\cdot\beta(gA)$ for some unique $\beta\in\Hom(\widebar{G},A)$. 
Thus the top row in \eqref{e:2.14b} is exact.  

Similarly, an element $\alpha\in\Ker(\til\nu_2,r_2)$ is an isotypical 
automorphism of $\call$ which sends inclusions to inclusions and 
induces the identity on $\widebar{\call}$ and on $A$.  Since 
$\call\Right2{}\widebar{\call}$ is bijective on objects (by definition), 
$\alpha$ induces the identity on objects in $\call$, and on morphisms 
it has the form $\alpha(\psi)=\psi\circ\beta(\widebar\psi)$ for some 
$\beta\:\Mor(\widebar{\call})\Right2{}A$ which preserves composition and sends 
inclusions to the identity.  Such a $\beta$ is equivalent to a 
homomorphism from $\pi_1(|\widebar{\call}|)$ to $A$ (cf. \cite[Proposition 
A.3(a)]{OV1}), so $\alpha=\lambda_2(\beta)$, and thus the second row in 
\eqref{e:2.14b} is exact.  

We are now ready to prove that \eqref{e:2.14a} is a pullback.  Fix 
automorphisms $\alpha\in\Aut(\widebar{G},\widebar{S})$ and 
$\beta\in\Aut\typ^I(\call,A)$ such that 
$\kappa_{\widebar{G}}([\alpha])=\nu_2([\beta])$.  Then 
$\til\nu_2(\beta)=\til\kappa_2(\alpha)\circ{}c_{[x]}$ for some 
$x\in{}N_{\widebar{G}}(\widebar{S})$ which induces 
$[x]\in\Aut_{\widebar{\call}}(\widebar{S})$.  So upon replacing $\alpha$ 
by $\alpha\circ{}c_x$, we can assume 
$\til\kappa_2(\alpha)=\til\nu_2(\beta)$.  Consider the following diagram:
	\beq \vcenter{\xymatrix@C=30pt{
	1 \ar[r] & \I21A \ar[r] \ar[d]_{\cong}^{r_2(\beta)} & \I21G 
	\ar[r] \ar@{-->}[d]^{\widehat\alpha} & \I21\widebar{G} \ar[r] 
	\ar[d]_{\cong}^{\alpha} & 1 \\
	1 \ar[r] & \I21A \ar[r] & \I21G 
	\ar[r] & \I21\widebar{G} \ar[r] & 1 \\
	}} \eeq
We want to find $\widehat{\alpha}\in\Aut(G)$ which makes the two squares 
commute.  This means showing that the class $[G]\in{}H^2(\widebar{G};A)$ 
is invariant under the automorphism of $H^2(\widebar{G};A)$ induced by 
$r_2(\beta)$ and $\alpha$.  But $\beta\in\Aut\typ^I(\call)$ induces an 
automorphism $\gamma=\beta_S|_{\delta_S(S)}\in\Aut(S,\calf)$ (see Lemma 
\ref{AutI}).  Also, $\gamma|_A=\beta_S|_A=r_2(\beta)$, $\gamma$ induces 
the automorphism $(\til\nu_2(\beta))_{\widebar{S}}|_{\widebar{S}}=
\alpha|_{\widebar{S}}$ on $\widebar{S}$, and thus 
$[S]\in{}H^2(\widebar{S};A)$ is invariant under these automorphisms of 
$\widebar{S}$ and $A$.  Since $H^2(\widebar{G};A)$ injects into 
$H^2(\widebar{S};A)$ under restriction, this proves that $[G]$ is also 
invariant, and hence that there is an automorphism 
$\widehat{\alpha}\in\Aut(G,S,A)$ as desired.  

Thus $(\til\nu_1,r_1)(\widehat{\alpha})=(\alpha,r_2(\beta))$.  By the 
commutativity of \eqref{e:2.14b}, 
	\[ (\til\nu_2,r_2)(\til\kappa_1(\widehat{\alpha})) = 
	(\til\kappa_2(\alpha),r_2(\beta))=(\til\nu_2,r_2)(\beta). \]
Hence there is $\chi\in\Hom(\widebar{G},A)$ such that 
$\lambda_2(\tau(\chi))=\til\kappa_1(\widehat{\alpha})^{-1}\circ\beta$, and 
the element $\widehat{\alpha}\circ\lambda_1(\chi)\in\Aut(G,S,A)$ pulls back 
$\alpha\in\Aut(\widebar{G},\widebar{S})$ and $\beta\in\Aut\typ^I(\call,A)$.  

This proves that $\Out(G)$ surjects onto the pullback in square 
\eqref{e:2.14a}.  To prove that it injects into the pullback, fix 
$\widehat{\alpha}\in\Aut(G,S,A)$ such that 
$\kappa_{G}^\calh([\widehat{\alpha}])=1$ and $\nu_1([\widehat{\alpha}])=1$.  
Upon composing $\widehat{\alpha}$ by an appropriate inner automorphism, we 
can assume it induces the identity on $\widebar{G}$.  Thus 
$\til\kappa_1(\widehat{\alpha})=c_{[x]}\in\Aut\typ^I(\call)$ for some 
$x\in{}N_{G}(S)$ inducing $[x]\in\Aut_{\call}(S)$, where $c_{[x]}$ induces the 
identity on $\widebar{\call}$.  This means that 
$xA\in{}Z(\widebar{\calf})$ (Lemma \ref{OutI1}(a)), and hence 
$xA\in{}Z(\widebar{G})$ by 
assumption.  So upon replacing $\widehat{\alpha}$ by 
$\widehat{\alpha}\circ{}c_x^{-1}\in\Aut(G)$ we have an automorphism which 
induces the identity on $\call$ and on $\widebar{G}$.  By the exactness of 
the rows in \eqref{e:2.14b} again, $\widehat{\alpha}=\Id$, and this 
finishes the proof.
\end{proof}

Lemma \ref{cent-pb} now implies the result we need about tameness.

\begin{Prop} \label{F/Z-tame}
Fix a saturated fusion system $\calf$ over a finite $p$-group $S$.  Assume 
$\calf/Z(\calf)$ is tamely realized by the finite group $\widebar{G}$ such 
that $O_{p'}(\widebar{G})=1$ and $Z(\widebar{G})=Z(\calf/Z(\calf))$.  
Then $\calf$ is tamely realized by a finite group $G$ such that 
$Z(G)=Z(\calf)$ and $G/Z(G)\cong\widebar{G}$, and hence 
$O_{p'}(G)=1$.  If $\widebar{G}\in\gpclass{p}$, then $G\in\gpclass{p}$.
\end{Prop}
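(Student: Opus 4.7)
The plan is to realize $\calf$ as the fusion system of a central extension $G$ of $\widebar{G}$ by $A\defeq Z(\calf)$, and to transfer tameness from $\widebar{G}$ to $G$ via Lemma~\ref{cent-pb}. Set $\widebar{S}=S/A$, $\widebar{\calf}=\calf/A$ (so $\widebar{\calf}=\calf_{\widebar{S}}(\widebar{G})$), and $\widebar{\call}=\call_{\widebar{S}}^c(\widebar{G})$.

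The main step, and the main obstacle, is the construction of $G$ itself. By the theory of extensions of $p$-local finite groups \cite{BCGLO2}, central extensions of $(\widebar{S},\widebar{\calf},\widebar{\call})$ by a $p$-group $A$ are classified by $H^2(|\widebar{\call}|;A)$; and via the identification $B\widebar{G}\pcom\simeq|\widebar{\call}|\pcom$ used in the proof of Lemma~\ref{cent-pb}, this cohomology equals $H^2(\widebar{G};A)$. I would let $\zeta\in H^2(\widebar{G};A)$ be the class corresponding to $\calf$, and take $G$ to be the resulting central extension $1\to A\to G\to\widebar{G}\to 1$. Then $A\le Z(G)$, a Sylow $p$-subgroup of $G$ containing $A$ may be identified with $S$, and $\calf_S(G)=\calf$ by functoriality of the classification.

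Next I would verify the expected properties. For $Z(G)=A$: if $z\in Z(G)$, then $zA\in Z(\widebar{G})=Z(\widebar{\calf})\le\widebar{S}$, so $z$ has $p$-power order modulo $A$; since $A$ is a $p$-group, $z$ itself is a $p$-element. Being central, $\gen{z}\le O_p(G)\le S$, so $z\in Z(\calf_S(G))=Z(\calf)=A$. For $O_{p'}(G)=1$: $O_{p'}(G)\cap A=1$ since $A$ is a $p$-group, so $O_{p'}(G)$ embeds as a normal subgroup of $\widebar{G}=G/A$ of order prime to $p$, and hence lies in $O_{p'}(\widebar{G})=1$.

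Finally, for tameness, note that $A=Z(G)$ is characteristic in $G$, so $\Out(G,A)=\Out(G)$; and $A=Z(\calf)$ is preserved by every fusion-preserving automorphism of $S$, hence by Lemma~\ref{AutI} by every isotypical self-equivalence of $\call$, giving $\Out\typ(\call,A)=\Out\typ(\call)$. With $\calh$ as in Lemma~\ref{cent-pb} and the hypotheses $O_{p'}(\widebar{G})=1$, $Z(\widebar{G})=Z(\widebar{\calf})$ satisfied, that lemma produces a pullback square with bottom row $\kappa_{\widebar{G}}\:\Out(\widebar{G})\to\Out\typ(\widebar{\call})$ and top row $\kappa_G^\calh\:\Out(G)\to\Out\typ(\call)$. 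Split surjectivity of $\kappa_{\widebar{G}}$ then yields, via Lemma~\ref{p.b.split}, split surjectivity of $\kappa_G^\calh$; and Lemma~\ref{OutI2} identifies $\Out\typ(\call)\cong\Out\typ(\call_S^c(G))$, so $\kappa_G$ is split surjective and $\calf$ is tamely realized by $G$. For the last assertion, $A$ is $p$-solvable, so Lemma~\ref{lim2=0}(b) gives $G\in\gpclass{p}$ if and only if $\widebar{G}\in\gpclass{p}$.
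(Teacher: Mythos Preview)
Your sketch is correct and follows essentially the same route as the paper: construct $G$ as a central extension of $\widebar{G}$ by $A=Z(\calf)$ realizing $\calf$ (the paper cites \cite[Corollary 6.14]{BCGLO2} directly rather than sketching the $H^2$ argument), verify $Z(G)=A$ and $O_{p'}(G)=1$, then apply Lemma~\ref{cent-pb} and Lemma~\ref{p.b.split} to pull split surjectivity of $\kappa_{\widebar{G}}$ back to $\kappa_G^\calh$, and finish with Lemma~\ref{OutI2}. One small slip: your appeal to Lemma~\ref{lim2=0}(b) at the end is mis-aimed, since that lemma assumes the \emph{quotient} $G/H$ is $p$-solvable, not the kernel; here $A$ is the $p$-solvable piece and $G/A=\widebar{G}$ need not be. The conclusion is still immediate, though, because $A$ is abelian and so $G$ and $\widebar{G}$ have the same nonabelian composition factors --- which is exactly how the paper argues it.
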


\begin{proof}  Set $A=Z(\calf)$ and $\widebar{S}=S/A$ for short.  By 
assumption, $\widebar{S}\in\sylp{\widebar{G}}$, 
$\calf/A\cong\calf_{\widebar{S}}(\widebar{G})$, $\kappa_{\widebar{G}}$ is 
split surjective, $O_{p'}(\widebar{G})=1$, and 
$Z(\widebar{G})=Z(\calf/A)$.  

By \cite[Corollary 6.14]{BCGLO2}, the fusion system $\calf$ is realizable, 
and by the proof of that corollary, it is realizable by a finite group $G$ 
such that $S\in\sylp{G}$, $A\le{}Z(G)$, and $G/A\cong\widebar{G}$.  
Hence $O_{p'}(G)=1$, so $Z(G)$ is a $p$-group which is central in 
$\calf$. Thus $Z(G) = Z(\calf)$.

Let $\call\subseteq\call_S^c(G)$ be the full subcategory whose objects are 
the subgroups $P\le{}S$ such that $P\ge{}A$ and $P/A$ is 
$\calf/A$-centric, and set 
$\widebar{\call}=\call_{\widebar{S}}^c(\widebar{G})$.  Then $\call$ is a 
linking system associated to $\calf$ by Lemma \ref{cent-pb}, and 
$A=Z(\calf)$ is invariant under all automorphisms in $\Aut\typ^I(\call)$ 
by Lemma \ref{AutI}.  Lemma \ref{cent-pb} now implies that the following 
is a pullback square:
	\beq \vcenter{\xymatrix@C=40pt{
	\Out(G,A) \ar[r]^-{\kappa} \ar[d] & \Out\typ(\call) \ar[d] \\
	\Out(\widebar{G}) \ar[r]^-{\kappa_{\widebar{G}}} 
	& \Out\typ(\widebar{\call}) \rlap{~.}
	}} \eeq
By assumption, $\kappa_{\widebar{G}}$ is split surjective.  Hence 
$\kappa=\kappa_G^\calh|_{\Out(G,A)}$ ($\calh=\Ob(\call)$) is also split 
surjective by Lemma \ref{p.b.split}, so $\kappa_G^\calh$ is split surjective.  
Since 
$\Out\typ(\call)\cong\Out\typ(\call_S^c(G))$ by Lemma \ref{OutI2}, this 
finishes the proof that $\calf$ is tame.

By construction, $G$ and $\widebar{G}$ have the same nonabelian 
composition factors.  Hence $G\in\gpclass{p}$ if 
$\widebar{G}\in\gpclass{p}$.
\end{proof}

One more technical lemma is needed before we can prove Theorem \ref{ThA}.

\begin{Lem} \label{Op'-Z(G)}
Let $\calf$ be a saturated fusion system over a finite $p$-group $S$.  If 
$\calf$ is tame, then there is a finite group $G$ such that $O_{p'}(G)=1$ 
and $\calf$ is tamely realized by $G$.  
If $\calf$ is strongly tame, then $G$ can be chosen such 
that in addition, $G\in\gpclass{p}$.
\end{Lem}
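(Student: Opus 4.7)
The plan is to take any finite group $G$ tamely realizing $\calf$ with $S\in\sylp{G}$, and show that $\bar G \defeq G/O_{p'}(G)$ also tamely realizes $\calf$. This suffices for both parts of the lemma: clearly $O_{p'}(\bar G)=1$, and the composition factors of $\bar G$ form a subset of those of $G$, so $G\in\gpclass{p}$ forces $\bar G\in\gpclass{p}$. Set $K=O_{p'}(G)$; since $|K|$ is prime to $p$, the projection $\pi\:G\twoheadrightarrow\bar G$ restricts to an isomorphism from $S$ onto some $\bar S\in\sylp{\bar G}$, and I will identify these throughout.

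The first step is to verify $\calf_S(G)=\calf_{\bar S}(\bar G)$. The inclusion $\Hom_G(P,Q)\subseteq\Hom_{\bar G}(\bar P,\bar Q)$ is immediate by functoriality; for the reverse, any $\bar g\in N_{\bar G}(\bar P,\bar Q)$ admits a lift $g\in G$ with $gPg^{-1}\le QK$, and since $Q\in\sylp{QK}$, a Frattini argument supplies $k\in K$ with $kg\in N_G(P,Q)$ realizing $c_{\bar g}$. The same lifting argument shows moreover that $N_G(P,Q)\twoheadrightarrow N_{\bar G}(\bar P,\bar Q)$ is surjective for every pair of subgroups.

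The main step is to show $\pi$ induces a functor $\Pi\:\call_S^c(G)\to\call_{\bar S}^c(\bar G)$ which is an isomorphism of categories. It is bijective on objects since $\calf$-centric subgroups correspond under $S\cong\bar S$; well-definedness on morphism sets uses that $\pi$ carries $O^p(C_G(P))$ into $O^p(C_{\bar G}(\bar P))$ (both being the prime-to-$p$ parts of the centralizer decompositions $C_G(P)=Z(P)\times O^p(C_G(P))$ valid for $P$ $\calf$-centric, and similarly for $\bar G$). The isomorphism on morphism sets will then follow from the five lemma applied to the commutative ladder of short exact sequences
\[ 1 \longrightarrow Z(P) \longrightarrow \Mor_{\call_S^c(G)}(P,Q) \longrightarrow \Hom_G(P,Q) \longrightarrow 1 \]
and its counterpart for $\bar G$: the outer verticals induced by $\Pi$ are isomorphisms (the left via $S\cong\bar S$, the right by the first step), forcing the middle vertical to be one as well.

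To conclude, every $\alpha\in\Aut(G)$ preserves $K=O_{p'}(G)$ and so descends to $\bar\alpha\in\Aut(\bar G)$, yielding a commutative square
\[ \xymatrix@C=35pt{\Out(G) \ar[r]^-{\kappa_G} \ar[d] & \Out\typ(\call_S^c(G)) \ar[d]^{\Pi_*}_{\cong} \\ \Out(\bar G) \ar[r]^-{\kappa_{\bar G}} & \Out\typ(\call_{\bar S}^c(\bar G)). } \]
Given any splitting $s$ of $\kappa_G$, the composite $\Out\typ(\call_{\bar S}^c(\bar G))\xrightarrow{\Pi_*^{-1}}\Out\typ(\call_S^c(G))\xrightarrow{s}\Out(G)\to\Out(\bar G)$ splits $\kappa_{\bar G}$, completing the proof. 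The main technical obstacle is the five-lemma verification establishing $\Pi$ as an isomorphism of centric linking systems; the remaining verifications about fusion systems, descent of automorphisms, and composition factors are routine.
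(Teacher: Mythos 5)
Your proposal is correct and is essentially the paper's own argument: pass to $\widebar{G}=G/O_{p'}(G)$, observe that the fusion system is unchanged and that the projection induces an isomorphism of centric linking systems, and transport the splitting of $\kappa_G$ through the resulting commutative square (the paper simply cites these facts rather than verifying them). One cosmetic remark: your ``short exact sequence'' $1\to Z(P)\to\Mor_{\call_S^c(G)}(P,Q)\to\Hom_G(P,Q)\to1$ is not literally a sequence of groups, so invoking the five lemma is an abuse of language, but the intended comparison (a free $Z(P)$-action with quotient $\Hom_G(P,Q)$, isomorphisms on both ends) does give the bijection you need.
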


\begin{proof}  Fix any $\widehat{G}$ which tamely realizes $\calf$.  If 
$\calf$ is strongly tame, we assume $\widehat{G}\in\gpclass{p}$.  Thus 
$S\in\sylp{\widehat{G}}$, $\calf\cong\calf_S(\widehat{G})$, and 
$\kappa_{\widehat{G}}$ is split surjective. Set 
$G=\widehat{G}/O_{p'}(\widehat{G})$, and identify $S$ with its image in 
$G$.  Since $G$ is a quotient group of $\widehat{G}$, $G\in\gpclass{p}$ if 
$\widehat{G}\in\gpclass{p}$.

By construction, $\calf_S(G)\cong\calf_S(\widehat{G})\cong\calf$, and 
$O_{p'}(G)=1$.  The natural homomorphism from $\widehat{G}$ onto $G$ 
induces a homomorphism between their outer automorphism groups and an 
isomorphism between their linking systems, and the resulting square 
	\beq \xymatrix@C=30pt{
	\Out(\widehat{G}) \ar[r] \ar[d]_-{\kappa_{\widehat{G}}} &
	\Out(G) \ar[d]^-{\kappa_{G}} \\
	\Out\typ(\call_S^c(\widehat{G})) \ar[r]^{\cong} & 
	\Out\typ(\call_S^c(G)) 
	} \eeq
commutes.  Since $\kappa_{\widehat{G}}$ is split surjective, so is 
$\kappa_{G}$.  
\end{proof}

We are now ready to prove Theorem \ref{ThA}.  Recall that 
$\red(\calf)$ denotes the reduction of a fusion system $\calf$ (see 
Definition \ref{d:reduced}).

\begin{Thm} \label{T:reduce}
For any saturated fusion system $\calf$ over a finite $p$-group $S$, if 
$\red(\calf)$ is strongly tame, then $\calf$ is tame.
\end{Thm}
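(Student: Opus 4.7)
The plan is a downward induction along the reduction sequence
$\calf_0 \supseteq \calf_1 \supseteq \cdots \supseteq \calf_m = \red(\calf)$
(with $Q = O_p(\calf)$ and $\calf_0 = C_\calf(Q)/Z(Q)$), propagating strong tameness from $\calf_m$ up to $\calf_0$, and then recovering tameness of $\calf$ itself from strong tameness of $\calf_0$. Strong tameness, rather than plain tameness, must be carried throughout, since Proposition \ref{F0/A=F(G)} --- which supplies a centric linking system for the larger fusion system at each stage --- takes strong tameness as its hypothesis.

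For the inductive step $\calf_i \Rightarrow \calf_{i-1}$ with $i \ge 1$: by Proposition \ref{fus-ex}, $\calf_i \nsg \calf_{i-1}$ is of type (a) or (b) in Proposition \ref{link-ex}. Using Lemma \ref{Op'-Z(G)}, fix $G_i \in \gpclass{p}$ tamely realizing $\calf_i$ with $O_{p'}(G_i)=1$; since $O_p(\calf_i)=1$ by Lemma \ref{l:red->red}, this forces $Z(G_i)=Z(\calf_i)=1$. Proposition \ref{F0/A=F(G)} then supplies a centric linking system for $\calf_{i-1}$, and Proposition \ref{link-ex} produces an associated normal pair $\call_0 \nsg \call$ with $\call_0 \cong \call_{S_i}^{\calh_0}(G_i)$ satisfying all centricity and invariance hypotheses of Proposition \ref{c:tame->tame} (in case (a), centricity of $\call_0$ in $\call$ uses $Z(\calf_{i-1}) \le O_p(\calf_{i-1}) = 1$, again from Lemma \ref{l:red->red}). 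Proposition \ref{c:tame->tame} then furnishes $G_{i-1}$ tamely realizing $\calf_{i-1}$ with $G_i \nsg G_{i-1}$ and $G_{i-1}/G_i \cong \call/\call_0$; this quotient is a $p$-group in case (a) and has order coprime to $p$ in case (b), in either case $p$-solvable, so Lemma \ref{lim2=0}(b) gives $G_{i-1} \in \gpclass{p}$, and hence $\calf_{i-1}$ is strongly tame.

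For the final passage from strongly tame $\calf_0 = C_\calf(Q)/Z(Q)$ back to $\calf$, proceed in two sub-steps. First, reverse the central quotient: since $Z(\calf_0) \le O_p(\calf_0) = 1$, we have $Z(C_\calf(Q)) = Z(Q)$, so $C_\calf(Q)/Z(C_\calf(Q)) \cong \calf_0$; choosing $\widebar G \in \gpclass{p}$ tamely realizing $\calf_0$ with $Z(\widebar G) = O_{p'}(\widebar G) = 1$, Proposition \ref{F/Z-tame} produces $G^* \in \gpclass{p}$ tamely realizing $C_\calf(Q)$ with $Z(G^*) = Z(Q)$ and $O_{p'}(G^*) = 1$. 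Second, reverse the centralizer: $C_\calf(Q) \nsg \calf$ by Proposition \ref{fus-ex}(c), and combining Proposition \ref{F0/A=F(G)}(c) (giving a centric linking system for $\calf$) with Proposition \ref{link-ex}(c) and Proposition \ref{c:tame->tame} yields a finite group $G$ tamely realizing $\calf$.

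The main obstacle lies in this last sub-step: when $Q$ is nonabelian, $C_\calf(Q) = N_\calf^{\{\Id\}}(Q)$ corresponds to $K = \{\Id\} \not\supseteq \Inn(Q)$, and Proposition \ref{link-ex}(c) does not apply directly. The fix is to route through the intermediate normal subsystem $N_\calf^{\Inn(Q)}(Q) \nsg \calf$ and establish its strong tameness from that of $C_\calf(Q)$ (exploiting that its quotient by $Z(Q)$ factors through a trivial fusion system on $Q/Z(Q)$ combined with $\calf_0$), so that Proposition \ref{link-ex}(c) with $K = \Inn(Q)$ can then be applied to deduce tameness of $\calf$ via Proposition \ref{c:tame->tame}; carefully managing this intermediate step is the most delicate part of the argument.
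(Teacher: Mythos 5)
Your induction along the reduction sequence $\calf_m=\red(\calf)\subseteq\cdots\subseteq\calf_0=C_\calf(Q)/Z(Q)$ is essentially the paper's argument (one small inaccuracy: the identification $\call_0\cong\call_{S_i}^{\calh_0}(G_i)$ is not supplied by Proposition \ref{link-ex} but by Lemma \ref{lim2=0}(c), which is precisely where $G_i\in\gpclass{p}$ enters). The genuine gap is in the final passage from $\calf_0$ to $\calf$. You correctly notice that $C_\calf(Q)=N_\calf^{\{\Id\}}(Q)$ falls outside the hypotheses of Propositions \ref{F0/A=F(G)}(c) and \ref{link-ex}(c) when $Q$ is nonabelian, but the proposed repair --- deduce strong tameness of $N_\calf^{\Inn(Q)}(Q)$ from that of $C_\calf(Q)$, ``exploiting'' that its quotient by $Z(Q)$ should be the product of the trivial fusion system on $Q/Z(Q)$ with $\calf_0$ --- is not an argument. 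Even granting the (unproved) product decomposition of $N_\calf^{\Inn(Q)}(Q)/Z(Q)$, you would still need that product to be \emph{tamely} realized by $(Q/Z(Q))\times G_0$, i.e.\ split surjectivity of $\kappa$ for the product group; nothing in the paper yields tameness of a product with a non-reduced factor (Theorem \ref{prod-tame} is proved only for reduced systems, and controlling automorphisms that mix the factors is exactly the hard point there). Nor is there any tool for passing from $C_\calf(Q)$ to $N_\calf^{\Inn(Q)}(Q)$: they differ by a non-central extension (a central product with $Q$ over $Z(Q)$), and Proposition \ref{F/Z-tame} only handles central subgroups, while Proposition \ref{c:tame->tame} presupposes a normal pair of linking systems that Proposition \ref{link-ex} cannot provide for $K=\{\Id\}$. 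So the first sub-step (realizing $C_\calf(Q)$ by one application of Proposition \ref{F/Z-tame}) is correct but leads nowhere.

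The paper closes this gap by never going through $C_\calf(Q)$ at all. It sets $\calf^*=N_\calf^{\Inn(Q)}(Q)$ over $S^*=Q{\cdot}C_S(Q)$, notes $O_p(\calf^*)=Q$ and $\Aut_{\calf^*}(Q)=\Inn(Q)$, and shows via Proposition \ref{F/Q} that for the upper central series of $Q$ one has $Z(\calf^*/Z_i(Q))=Z_{i+1}(Q)/Z_i(Q)$. Hence $\calf^*/Q\cong\calf_0$, and each step in the chain $\calf^*,\ \calf^*/Z_1(Q),\ \dots,\ \calf^*/Q$ is a quotient by the \emph{full} center; repeated application of Proposition \ref{F/Z-tame} therefore lifts strong tameness from $\calf_0$ directly up to $\calf^*$, keeping $O_{p'}(G^*)=1$ and $Z(G^*)=Z(\calf^*)$ at every stage. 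Since now $K=\Inn(Q)$ does satisfy the hypotheses of Propositions \ref{F0/A=F(G)}(c) and \ref{link-ex}(c), Lemma \ref{lim2=0}(c) and Proposition \ref{c:tame->tame} apply and give tameness of $\calf$. This iterated central-extension argument along the upper central series of $Q$ is the idea your proposal is missing; without it (or a genuine substitute for your ``delicate intermediate step''), the proof is incomplete.
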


\begin{proof}  Set $Q=O_p(\calf)$, $S_0=C_S(Q)/Z(Q)$, and 
$\calf_0=C_\calf(Q)/Z(Q)$.  Let $\red(\calf)=\calf_m\subseteq 
\calf_{m-1}\subseteq \cdots\subseteq\calf_0$ be a sequence of fusion 
subsystems, where for each $i$, $\calf_i=O^p(\calf_{i-1})$ or 
$\calf_i=O^{p'}(\calf_{i-1})$.  Let $S_m\nsg\cdots\nsg{}S_0$ be the 
corresponding sequence of $p$-groups:  each $\calf_i$ is a fusion system 
over $S_i$.  By Lemma \ref{l:red->red}, $O_p(\calf_i)=1$ for each 
$i$, and hence $Z(\calf_i)=1$ for each $i$. 

We first show inductively that each of the $\calf_i$ is strongly tame.  
Fix $1\le{}i\le{}m$, and assume $\calf_i$ is tamely realized by 
$G_i\in\gpclass{p}$.  By Lemma \ref{Op'-Z(G)}, we can assume 
$O_{p'}(G_i)=1$.  Thus $Z(G_i)$ is a $p$-group central in the fusion 
system $\calf_i$, and hence $Z(G_i)=1$ since $Z(\calf_i)=1$.  By 
Proposition \ref{F0/A=F(G)}(a,b), there is a centric linking system 
associated to $\calf_{i-1}$.  Hence by Proposition \ref{link-ex}(a,b), 
there are linking systems $\call_i\nsg\call_{i-1}$ associated to 
$\calf_i\nsg\calf_{i-1}$ such that $\call_i$ is a centric linking system 
(so $\Ob(\call_i)$ is $\Aut(S_i,\calf_i)$-invariant), $\Ob(\call_{i-1})$ 
is $\Aut(S_{i-1},\calf_{i-1})$-invariant, and $\call_i$ is 
$\Aut\typ^I(\call_{i-1})$-invariant.  Also, $\call_i$ is centric in 
$\call_{i-1}$ by Proposition \ref{link-ex}(a,b) again (and since 
$Z(\calf_{i-1})=1$).  By Lemma \ref{lim2=0}(c), 
$\call_i\cong\call_{S_i}^c(G_i)$.  The hypotheses of Proposition 
\ref{c:tame->tame} are thus satisfied, and hence $\calf_{i-1}$ is tamely 
realized by some $G_{i-1}$ such that $G_i\nsg{}G_{i-1}$ and 
$G_{i-1}/G_i\cong\call_{i-1}/\call_i$.  In particular, $G_{i-1}/G_i$ is 
$p$-solvable, and so $G_{i-1}\in\gpclass{p}$ by Lemma \ref{lim2=0}(b).  

Since $\calf_m$ was assumed to be tamely realized by some 
$G_m\in\gpclass{p}$, we now conclude that $\calf_0$ is tamely realized by 
$G_0\in\gpclass{p}$.  By Lemma \ref{Op'-Z(G)} again, we can assume 
$O_{p'}(G_0)=1$, and $Z(G_0)=1$ since $Z(\calf_0)=1$.  Next consider the 
saturated fusion system $\calf^*\defeq N_\calf^{\Inn(Q)}(Q)$
over $S^*\defeq{}Q{\cdot}C_S(Q)$.  
Since $\calf^*\nsg\calf$ by Proposition \ref{fus-ex}(c), $O_p(\calf^*)=Q$ by 
Lemma \ref{F0<|F}(e).  Let $Z(Q)=Z_1(Q)\le{}Z_2(Q)\le\cdots\le{}Q$ be the 
upper central series for $Q$.  Since $\Aut_{\calf^*}(Q)=\Inn(Q)$, 
$Z_{i+1}(Q)/Z_i(Q)$ is central in $\calf^*/Z_i(Q)$ for each $i$.  
Also, by repeated application of Proposition \ref{F/Q}, if 
$P/Z_i(Q)=Z(\calf^*/Z_i(Q))$, then $P\nsg\calf^*$, and hence $P\le{}Q$.  
Thus $Z(\calf^*/Z_i(Q))\le Z(Q/Z_i(Q))=Z_{i+1}(Q)/Z_i(Q)$, and these two 
subgroups are equal.

Thus $\calf_0=C_\calf(Q)/Z(Q)\cong\calf^*/Q$ is obtained from 
$\calf^*$ by sequentially dividing out by its center until the fusion 
system is centerfree.  By repeated application of Proposition 
\ref{F/Z-tame}, $\calf^*$ is tamely realizable by some finite group 
$G^*\in\gpclass{p}$ such that $O_{p'}(G^*)=1$ and $Z(G^*)=Z(\calf^*)$.  

By Proposition \ref{F0/A=F(G)}(c), there is a centric linking system 
associated to $\calf$.  Hence by Proposition \ref{link-ex}(c), there are 
linking systems $\call^*\nsg\call$ associated to $\calf^*\nsg\calf$, where 
all objects in $\call^*$ are $\calf^*$-centric, 
$\Ob(\call^*)$ is $\Aut(S^*,\calf^*)$-invariant, 
$\Ob(\call)$ is $\Aut(S,\calf)$-invariant, 
$\call^*$ is $\Aut\typ^I(\call)$-invariant, 
and $\call^*$ is centric in $\call$.  By Lemma \ref{lim2=0}(c) (and since 
$G^*\in\gpclass{p}$), $\call^*\cong\call_{S^*}^{\calh^*}(G^*)$.  Hence by 
Proposition \ref{c:tame->tame}, $\calf$ is tamely realized by a finite 
group $G$.
\end{proof}


\newsect{Decomposing reduced fusion systems as products}
\label{s:factor}

If $\calf_1$ and $\calf_2$ are fusion systems over finite $p$-groups $S_1$ 
and $S_2$, respectively, then $\calf_1\times\calf_2$ is the fusion system 
over $S_1\times{}S_2$ defined as follows.  For all 
$P,Q\le{}S_1\times{}S_2$, if $P_i,Q_i\le{}S_i$ denote the images of $P$ 
and $Q$ under projection to $S_i$, then
	\[ \Hom_{\calf_1\times\calf_2}(P,Q) = \bigl\{(\varphi_1,\varphi_2)|_P 
	\,\big|\, \varphi_i\in\Hom_{\calf_i}(P_i,Q_i),\ 
	(\varphi_1,\varphi_2)(P)\le{}Q \bigr\}~. \]
Here, we regard $P$ and $Q$ as subgroups of $P_1\times{}P_2$ and 
$Q_1\times{}Q_2$, respectively.  
Thus $\calf_1\times\calf_2$ is the smallest fusion system over 
$S_1\times{}S_2$ for which 
	\[ \Hom_{\calf_1\times\calf_2}(P_1\times{}P_2,Q_1\times{}Q_2) = 
	\Hom_{\calf_1}(P_1,Q_1) \times \Hom_{\calf_2}(P_2,Q_2) \]
for each $P_i,Q_i\le{}S_i$.  By \cite[Lemma 1.5]{BLO2}, 
$\calf_1\times\calf_2$ is saturated if $\calf_1$ and $\calf_2$ are 
saturated.  We leave it as an easy exercise to check, for any pair of 
finite groups $G_1,G_2$ with Sylow subgroups $S_i\in\sylp{G_i}$, that
$\calf_{S_1\times{}S_2}(G_1\times{}G_2) 
=\calf_{S_1}(G_1)\times\calf_{S_2}(G_2)$.

We say that a nontrivial fusion system $\calf$ is \emph{indecomposable} if 
it has no decomposition as a product of fusion systems over nontrivial 
$p$-groups.  The main result in this section is Theorem \ref{ThC}:  every 
reduced fusion system has a unique decomposition as a product of reduced 
indecomposable fusion systems, and the product is tame if each of the 
indecomposable factors is tame.  The first statement will be proven as 
Proposition \ref{uniq-decomp}, and the second as Theorem \ref{prod-tame}.

We first prove the following easy lemma about fusion systems over products 
of finite $p$-groups.

\begin{Lem} \label{rad<S1xS2}
Let $S_1,S_2$ be a pair of finite $p$-groups, and set $S=S_1\times{}S_2$.  
For each subgroup $P\le{}S$ which does \emph{not} split as a product 
$P=P_1\times{}P_2$ for $P_i\le{}S_i$, there is $x\in{}N_S(P){\sminus}P$ 
such that $c_x\in{}O_p(\Aut(P))$.  Hence for each saturated fusion system 
$\calf$ over $S$, and each subgroup $P\le{}S$ which is $\calf$-centric and 
$\calf$-radical, $P=P_1\times{}P_2$ for some pair of subgroups 
$P_i\le{}S_i$.  
\end{Lem}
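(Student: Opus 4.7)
The plan is to exhibit an explicit $x\in N_S(P)\setminus P$ whose conjugation action on $P$ is trivial on every successive quotient of the upper central series of $P$, and then invoke Lemma~\ref{mod-Fr} to conclude $c_x\in O_p(\Aut(P))$. The consequence about centric and radical subgroups then follows formally from Lemma~\ref{centrad}.

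Set $Q_i=\pi_i(P)\le S_i$, and $A=P\cap(S_1\times 1)$, $B=P\cap(1\times S_2)$. Goursat's lemma identifies $P$ with the preimage in $Q_1\times Q_2$ of the graph of some isomorphism $Q_1/A\cong Q_2/B$, and $P$ fails to split as a product precisely when $A\lneq Q_1$ (equivalently $B\lneq Q_2$). Assume WLOG the former. The upper central series of the $p$-group $Q_1$ exhausts $Q_1$, so there is a least $i\ge 1$ with $Z_i(Q_1)\not\le A$; pick $y_1\in Z_i(Q_1)\setminus A$ and set $x=(y_1,1)$. Then $y_1\notin A$ gives $x\notin P$ (via Goursat), while minimality of $i$ gives $[y_1,Q_1]\le Z_{i-1}(Q_1)\le A$, so for every $(a,b)\in P$, $c_x(a,b)=(y_1ay_1^{-1},b)$ still satisfies $\phi(y_1ay_1^{-1}A)=\phi(aA)=bB$ and hence lies in $P$. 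Thus $x\in N_S(P)\setminus P$.

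To verify $c_x\in O_p(\Aut(P))$, I use the characteristic (hence $\Aut(P)$-invariant) filtration $1\le Z_1(P)\le\cdots\le Z_m(P)=P$. A short induction from the surjectivity of the projections $\pi_k|_P$ gives the identity
\[Z_j(P)=P\cap\bigl(Z_j(Q_1)\times Z_j(Q_2)\bigr)\qquad\text{for all }j\ge 0.\]
For $z=(a,b)\in Z_j(P)$ one computes $[x,z]=([y_1,a],1)$, where $[y_1,a]\in[Z_i(Q_1),Z_j(Q_1)]\le Z_{\min(i,j)-1}(Q_1)\le Z_{j-1}(Q_1)$. Since $[x,z]\in P$ (as $x$ normalizes $P$) and lies in $Z_{j-1}(Q_1)\times 1$, the above identification places it in $Z_{j-1}(P)$. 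Hence $c_x$ acts trivially on every quotient $Z_j(P)/Z_{j-1}(P)$, and Lemma~\ref{mod-Fr} applied with $\cala=\Aut(P)$ yields $c_x\in O_p(\Aut(P))$.

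For the second statement, suppose $P$ is $\calf$-centric and $\calf$-radical but does not split. The first part produces $x\in N_S(P)\setminus P$ with $c_x\in O_p(\Aut(P))$; since $O_p(\Aut(P))\cap\autf(P)$ is a normal $p$-subgroup of $\autf(P)$ (hence contained in $O_p(\autf(P))$), one gets $c_x\in O_p(\autf(P))$, contradicting Lemma~\ref{centrad}. The one genuinely computational point is the identity $Z_j(P)=P\cap(Z_j(Q_1)\times Z_j(Q_2))$, and establishing it carefully by induction on $j$ (using only that the two projections $\pi_k|_P$ are surjective) is the main thing one has to check.
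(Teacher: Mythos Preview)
Your proof is correct and follows essentially the same strategy as the paper's: both hinge on the identity $Z_j(P)=P\cap\bigl(Z_j(Q_1)\times Z_j(Q_2)\bigr)$ and then apply Lemma~\ref{mod-Fr} to conclude $c_x\in O_p(\Aut(P))$, with Lemma~\ref{centrad} giving the final statement.

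The one noteworthy difference is in how $x$ is produced. You build a specific $x=(y_1,1)$ via Goursat's lemma and the upper central series of $Q_1$, then verify by hand that it normalizes $P$ and avoids $P$. The paper instead observes that $P\lneqq Q_1\times Q_2$ (a $p$-group), so the standard fact that proper subgroups of $p$-groups have strictly larger normalizers immediately yields some $x\in N_{Q_1\times Q_2}(P)\sminus P$; and then the key identity shows that \emph{any} such $x\in Q_1\times Q_2$ acts trivially on each $Z_{k+1}(P)/Z_k(P)$, since $[Q_1\times Q_2,\,Z_{k+1}(Q_1)\times Z_{k+1}(Q_2)]\le Z_k(Q_1)\times Z_k(Q_2)$. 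This sidesteps Goursat entirely and makes the argument shorter. Your explicit construction is perfectly valid, just a bit more work than needed.
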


\begin{proof}  We prove the first statement; the last then follows by 
Lemma \ref{centrad}.  

Fix $P\le{}S$.  For $i=1,2$, let $P_i\le{}S_i$ be the image of $P$ under 
projection.  Thus $P\le{}P_1\times{}P_2$.  Let $Z_k(P)$ and 
$Z_k(P_i)$ be the $k$-th terms in the upper central series for $P$ and 
$P_i$; i.e., $Z_1(P)=Z(P)$ and $Z_{k+1}(P)/Z_k(P)=Z(P/Z_k(P))$.  We claim 
that for each $k$, 
	\beqq Z_k(P)=P\cap(Z_k(P_1)\times{}Z_k(P_2))~. \label{e:3.1a} \eeqq
This is clear for $k=1$: an element of $P$ is central only if it commutes 
with all elements in $P_1$ and all elements in $P_2$.  If \eqref{e:3.1a} 
holds for $k$, then $P/Z_k(P)$ can be identified as a subgroup of 
$(P_1/Z_k(P_1))\times(P_2/Z_k(P_2))$ (a subgroup which projects onto each 
factor), and the result for $Z_{k+1}(P)$ then follows immediately.

If $P\lneqq{}P_1\times{}P_2$, then choose 
$x\in{}N_{P_1\times{}P_2}(P){\sminus}P$ (see \cite[Theorem 2.1.6]{Sz1}).  
By \eqref{e:3.1a}, conjugation by $x$ acts via 
the identity on each quotient $Z_{k+1}(P)/Z_k(P)$.  So 
$c_x\in{}O_p(\Aut(P))$ by Lemma \ref{mod-Fr}.
\end{proof}

The next lemma gives some basic properties of product fusion systems.

\begin{Lem} \label{F'xF'<FxF}
Assume $\calf_1$ and $\calf_2$ are saturated fusion systems over finite 
$p$-groups $S_1$ and $S_2$.  For each $i=1,2$, let 
$\calf'_i\subseteq\calf_i$ be a saturated fusion subsystem over 
$S'_i\le{}S_i$.  
\begin{enuma}  
\item If $\calf'_i\nsg\calf_i$ for $i=1,2$, then $\calf'_1\times\calf'_2$ 
is normal in $\calf_1\times\calf_2$.

\item If $\calf'_i$ has index prime to $p$ in $\calf_i$ for $i=1,2$, then 
$\calf'_1\times\calf'_2$ has index prime to $p$ in $\calf_1\times\calf_2$.
\end{enuma}
\end{Lem}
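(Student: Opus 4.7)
The plan is to verify both statements directly from the definitions, exploiting the fact that every morphism in $\calf_1\times\calf_2$ between product subgroups arises by restriction from a pair of morphisms $(\varphi_1,\varphi_2)$ with $\varphi_i\in\Hom_{\calf_i}$. For part (a), I would verify the three conditions (i)--(iii) of Definition \ref{d:F0<|F} for the pair $\calf'_1\times\calf'_2 \subseteq \calf_1\times\calf_2$. Strong closure of $S'_1\times S'_2$ (condition (i)) will follow immediately from strong closure of each $S'_i$ in $\calf_i$: any $\calf_1\times\calf_2$-conjugate of $(s_1,s_2)\in S'_1\times S'_2$ has the form $(\varphi_1(s_1),\varphi_2(s_2))$, and each coordinate stays in $S'_i$.

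For condition (ii), given $P,Q\le S'_1\times S'_2$ with projections $P_i,Q_i\le S'_i$ and a morphism $\varphi=(\varphi_1,\varphi_2)|_P$ with $\varphi_i\in\Hom_{\calf_i}(P_i,Q_i)$, I would apply condition (ii) for each $\calf'_i\nsg\calf_i$ to factor $\varphi_i=(\varphi_i)_0\circ\alpha_i|_{P_i,\alpha_i(P_i)}$ with $\alpha_i\in\autf[_i](S'_i)$ and $(\varphi_i)_0\in\Hom_{\calf'_i}(\alpha_i(P_i),Q_i)$. Setting $\alpha=(\alpha_1,\alpha_2)\in\Aut_{\calf_1\times\calf_2}(S'_1\times S'_2)$ and $\varphi_0=((\varphi_1)_0,(\varphi_2)_0)|_{\alpha(P),Q}$ gives the required factorization, once I note that $\varphi_0$ lives in $\calf'_1\times\calf'_2$ by its definition as a restriction of a product morphism. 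For condition (iii), the key step is to observe that every $\beta\in\Aut_{\calf_1\times\calf_2}(S'_1\times S'_2)$ splits as $\beta=(\beta_1,\beta_2)$ with $\beta_i\in\Aut_{\calf_i}(S'_i)$ (since $\beta$ is the restriction of some $(\beta_1,\beta_2)$ and $\beta(S'_1\times S'_2)=\beta_1(S'_1)\times\beta_2(S'_2)$ must equal $S'_1\times S'_2$); then condition (iii) for each $\calf'_i\nsg\calf_i$ applied coordinatewise yields the result.

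For part (b), the hypothesis gives $S'_i=S_i$, so $S'_1\times S'_2=S_1\times S_2$. Fix any $P\le S_1\times S_2$ with projections $P_i\le S_i$. The automorphism group $\Aut_{\calf_1\times\calf_2}(P)$ embeds into $\Aut_{\calf_1}(P_1)\times\Aut_{\calf_2}(P_2)$ via $(\varphi_1,\varphi_2)\mapsto(\varphi_1|_{P_1},\varphi_2|_{P_2})$, and $\Aut_{\calf'_1\times\calf'_2}(P)$ is precisely the preimage of $\Aut_{\calf'_1}(P_1)\times\Aut_{\calf'_2}(P_2)$ inside $\Aut_{\calf_1\times\calf_2}(P)$. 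Projecting $O^{p'}(\Aut_{\calf_1\times\calf_2}(P))$ to each factor lands in a subgroup generated by $p'$-elements, hence inside $O^{p'}(\Aut_{\calf_i}(P_i))\le\Aut_{\calf'_i}(P_i)$ by hypothesis. Therefore $O^{p'}(\Aut_{\calf_1\times\calf_2}(P))\le\Aut_{\calf'_1\times\calf'_2}(P)$, as required by Definition \ref{d:p-p'-index}(b).

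I do not expect any serious obstacle. The only point where one must be careful is the splitting of automorphisms of the product subgroup $S'_1\times S'_2$ in condition (iii); this uses that the subgroup is itself a direct product, so the image under projection of the product morphism is exactly $\beta_i(S'_i)$, forcing $\beta_i\in\Aut_{\calf_i}(S'_i)$. Beyond this, both parts are routine applications of the definitions combined with the product structure of morphisms.
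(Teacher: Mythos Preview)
Your proof of part (a) is correct and follows essentially the same route as the paper: verify conditions (i)--(iii) of Definition~\ref{d:F0<|F} coordinatewise, using that morphisms in $\calf_1\times\calf_2$ between subgroups of $S'_1\times S'_2$ are restrictions of pairs $(\varphi_1,\varphi_2)$.

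For part (b) your approach differs from the paper's. You verify Definition~\ref{d:p-p'-index}(b) directly: embed $\Aut_{\calf_1\times\calf_2}(P)$ in $\Aut_{\calf_1}(P_1)\times\Aut_{\calf_2}(P_2)$, observe that $\Aut_{\calf'_1\times\calf'_2}(P)$ is the preimage of $\Aut_{\calf'_1}(P_1)\times\Aut_{\calf'_2}(P_2)$, and then use that a homomorphic image of $O^{p'}(-)$ lands in $O^{p'}$ of the target. This is correct and entirely self-contained. The paper instead reduces to the case $\calf'_i=O^{p'}(\calf_i)$ (so that $\calf'_i\nsg\calf_i$), applies part (a) to get $\calf'_1\times\calf'_2\nsg\calf_1\times\calf_2$, and then invokes Lemma~\ref{F0<|F,S0=S}, which says that any normal subsystem over the same $p$-group has index prime to $p$. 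Your argument is more elementary in that it avoids both part (a) and Lemma~\ref{F0<|F,S0=S}; the paper's argument is shorter because those tools are already in hand.
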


\begin{proof}  Set $S=S_1\times{}S_2$, $S'=S'_1\times{}S'_2$, 
$\calf=\calf_1\times\calf_2$, and $\calf'=\calf'_1\times\calf'_2$.

\textbf{(a) } 
Since $S'_i$ is strongly closed in $\calf_i$, $S'$ is strongly closed in 
$\calf$.

Fix $P,Q\le{}S'$ and $\varphi\in\homf(P,Q)$.  Let $P_i,Q_i\le{}S'_i$ be the 
images of $P$ and $Q$ under projection to $S'_i$.  Then 
$\varphi=(\varphi_1,\varphi_2)|_P$ for some 
$\varphi_i\in\Hom_{\calf_i}(P_i,Q_i)$.  By condition (ii) in Definition 
\ref{d:F0<|F}, there are morphisms $\alpha_i\in\Aut_{\calf_i}(S'_i)$ and 
$\varphi'_i\in\Hom_{\calf'_i}(\alpha_i(P_i),Q_i)$ such that 
$\varphi_i=\varphi'_i\circ\alpha_i|_{P_i,\alpha_i(P_i)}$.  Set 
$\alpha=(\alpha_1,\alpha_2)\in\autf(S')$, and set 
$\varphi'=(\varphi'_1,\varphi'_2)|_{\alpha(P)}$.  Then 
$\varphi'(\alpha(P))\le{}Q$, so $\varphi'\in\Hom_{\calf'}(\alpha(P),Q)$ 
and $\varphi=\varphi'\circ\alpha|_{P,\alpha(P)}$.  This proves condition 
(ii) for the pair $\calf'\subseteq\calf$.

Let $P,Q\le{}S'$ and $P_i,Q_i\le{}S'_i$ be as above, and fix 
$\varphi=(\varphi_1,\varphi_2)|_P\in\Hom_{\calf'}(P,Q)$ and 
$\beta=(\beta_1,\beta_2)\in\autf(S')$.  Then 
$\beta_i\varphi_i\beta_i^{-1}\in\Hom_{\calf'_i}(\beta_i(P_i),\beta_i(Q_i))$
by condition (iii) for the normal pair $\calf'_i\nsg\calf_i$.  Also, 
$\beta\varphi\beta^{-1}(\beta(P))\le\beta(Q)$, and hence 
$\beta\varphi\beta^{-1}\in\Hom_{\calf'}(\beta(P),\beta(Q))$.  This proves 
condition (iii) for the pair $\calf'\subseteq\calf$, and finishes the proof 
that $\calf'$ is normal in $\calf$.

\smallskip

\noindent\textbf{(b) }  Note that $S'_i=S_i$, since $\calf'_i$ has index 
prime to $p$ in $\calf_i$.  Since $\calf'_i\supseteq{}O^{p'}(\calf_i)$, it 
suffices to prove this point when $\calf'_i=O^{p'}(\calf_i)$, and thus when 
$\calf'_i\nsg\calf_i$ (Proposition \ref{fus-ex}(b)).  Hence 
$\calf'_1\times\calf'_2$ is normal in $\calf_1\times\calf_2$ by (a).  
Since they are fusion systems over the same $p$-group, 
the result now follows by Lemma \ref{F0<|F,S0=S}.
\end{proof}

We next prove the following criterion for a reduced fusion system to 
decompose:  $\calf$ factors as a product of fusion subsystems whenever $S$ 
factors as a product of subgroups which are strongly closed in $\calf$.

\begin{Prop} \label{F=F1xF2}
Let $\calf$ be a saturated fusion system over a finite $p$-group 
$S=S_1\times\cdots\times{}S_m$, where $S_1,\ldots,S_m$ are all strongly 
closed in $\calf$.  Set $\calf_i=\calf|_{S_i}$ ($i=1,\ldots,m$):  the full 
subcategory of $\calf$ with objects the subgroups of $S_i$, regarded as a 
fusion system over $S_i$.  For each $i$, 
let $S_i^*=\prod_{j\ne{}i}S_j$, identify $S=S_i\times{}S_i^*$, and let 
$\calf'_i\subseteq\calf_i$ be the fusion subsystem over $S_i$ where for 
$P,Q\le{}S_i$, 
	\[ \Hom_{\calf'_i}(P,Q) = \bigl\{\varphi\in\Hom_{\calf_i}(P,Q) 
	\,\big|\, (\varphi,\Id_{S_i^*})\in
	\homf(P\times{}S_i^*,Q\times{}S_i^*) \bigr\}~. \]
Then $\calf'_i$ and $\calf_i$ are saturated fusion 
systems for each $i$, $O^{p'}(\calf_i)\subseteq\calf'_i$; and 
	\[ \calf'_1\times\cdots\times\calf'_m \subseteq \calf \subseteq 
	\calf_1 \times\cdots\times \calf_m~. \]
If $O^{p'}(\calf)=\calf$, then $\calf'_i=\calf_i$ for each $i$, and hence 
$\calf=\calf_1\times\cdots\times\calf_m$.
\end{Prop}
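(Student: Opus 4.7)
The plan is to exploit the chain of inclusions $\calf'_1\times\cdots\times\calf'_m\subseteq\calf\subseteq\calf_1\times\cdots\times\calf_m$ established in the earlier parts of the proposition, together with the relation $O^{p'}(\calf_i)\subseteq\calf'_i$, to show that $\prod\calf'_i$ is a saturated fusion subsystem of $\calf$ over $S$ of index prime to $p$; the hypothesis $O^{p'}(\calf)=\calf$ will then force $\calf=\prod\calf'_i$, from which $\calf'_i=\calf_i$ follows by restriction to $S_i$.

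First I would note that the third claim of the proposition is exactly the statement that each $\calf'_i$ has index prime to $p$ in $\calf_i$ (Definition \ref{d:Op-p'}).  By iterated application of Lemma \ref{F'xF'<FxF}(b), this propagates to the products: $\prod_{i=1}^m\calf'_i$ has index prime to $p$ in $\prod_{i=1}^m\calf_i$, and in particular $\Aut_{\prod\calf'_i}(P)\supseteq O^{p'}(\Aut_{\prod\calf_i}(P))$ for every $P\le{}S$; both products are saturated by \cite[Lemma 1.5]{BLO2}.

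Next I would upgrade this to a $p'$-indexing relation inside $\calf$ itself.  Since $\calf\subseteq\prod\calf_i$ yields $\Aut_\calf(P)\le\Aut_{\prod\calf_i}(P)$, the elementary fact that $O^{p'}(H)\le O^{p'}(G)$ whenever $H\le{}G$ (which holds because $H/(H\cap O^{p'}(G))$ injects into the $p'$-group $G/O^{p'}(G)$, forcing $O^{p'}(H)\le H\cap O^{p'}(G)$) gives
\[ O^{p'}(\Aut_\calf(P)) \le O^{p'}(\Aut_{\prod\calf_i}(P)) \le \Aut_{\prod\calf'_i}(P) \]
for every $P\le{}S$.  Hence $\prod\calf'_i$ is a saturated fusion subsystem of $\calf$ over $S$ of index prime to $p$, so by Definition \ref{d:Op-p'} we have $O^{p'}(\calf)\subseteq\prod\calf'_i$, and the assumption $O^{p'}(\calf)=\calf$ now forces $\calf=\prod\calf'_i$.

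Finally, $\calf'_i=\calf_i$ will follow by restriction to subgroups of $S_i$.  For $P,Q\le{}S_i$ the projections of $P$ and $Q$ to $S_j$ are trivial for $j\ne{}i$, so every morphism in $\prod\calf'_j$ from $P$ to $Q$ is of the form $(\varphi,1,\ldots,1)|_P$ with $\varphi\in\Hom_{\calf'_i}(P,Q)$, giving $\Hom_{\prod\calf'_j}(P,Q)=\Hom_{\calf'_i}(P,Q)$.  Combined with $\homf(P,Q)=\Hom_{\calf_i}(P,Q)$ (from the definition $\calf_i=\calf|_{S_i}$) and the equality $\calf=\prod\calf'_j$ just established, we obtain $\Hom_{\calf_i}(P,Q)=\Hom_{\calf'_i}(P,Q)$ for all such $P,Q$, whence $\calf'_i=\calf_i$ and then $\calf=\prod\calf'_i=\prod\calf_i$.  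The main obstacle is not in this last piece but in the prerequisite saturation of each $\calf'_i$, which is what allows Lemma \ref{F'xF'<FxF} to apply; once that and the chain of inclusions are in hand, the argument above is essentially formal.
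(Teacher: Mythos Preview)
Your proposal is correct and follows essentially the same route as the paper: use Lemma \ref{F'xF'<FxF}(b) to get that $\prod\calf'_i$ has index prime to $p$ in $\prod\calf_i$, deduce (via the chain of inclusions) that it has index prime to $p$ in $\calf$, conclude $\calf=\prod\calf'_i$ from $O^{p'}(\calf)=\calf$, and then read off $\calf'_i=\calf_i$ by restricting to $S_i$. You have simply spelled out the step the paper compresses into the word ``hence'' (namely the monotonicity $O^{p'}(H)\le O^{p'}(G)$ for $H\le G$), and your observation that $O^{p'}(\calf_i)\subseteq\calf'_i$ already forces $\calf'_i$ to have index prime to $p$ in $\calf_i$ is valid since $\Aut_{O^{p'}(\calf_i)}(P)\ge O^{p'}(\Aut_{\calf_i}(P))$ by definition.
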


\begin{proof}  Fix $i\in\{1,\ldots,m\}$.  We first claim that
	\beqq \begin{split}  
	\textup{$\forall$ $P,Q\le S_i$ and $\varphi\in\Hom_{\calf_i}(P,Q)$, 
	there are $\psi\in\Aut_{\calf}(S_i^*)$ and 
	$\chi\in\Aut_{\calf_i}(S_i)$}& \\
	\textup{such that}\quad 
	(\varphi,\psi)\in\homf(P\times{}S_i^*,Q\times{}S_i^*) 
	\quad\textup{and}\quad
	\chi|_Q\circ\varphi\in\Hom_{\calf'_i}(P,S_i)~.& 
	\end{split} \label{e:3.2-a} \eeqq 
If $\varphi(P)$ is fully centralized in $\calf$, the existence of $\psi$ 
follows by the extension axiom, and since the $S_i$ are all strongly 
closed in $\calf$.  The general case then follows upon choosing 
$\alpha\in\isof(\varphi(P),R)$ where $R\le{}S_i$ is fully centralized in 
$\calf$, and applying the extension axiom to $\alpha\circ\varphi$ and to 
$\alpha$.  By the extension axiom again, this time applied to $\psi$, 
there is $\chi$ such that $(\chi^{-1},\psi)\in\autf(S)$, and hence 
$\chi|_Q\circ\varphi\in\Hom_{\calf'_i}(P,S_i)$.  This finishes the proof 
of \eqref{e:3.2-a}.

Two subgroups of $S_i$ are $\calf_i$-conjugate if and only 
if they are $\calf$-conjugate; and they cannot be $\calf$-conjugate to any 
other subgroups of $S$ since $S_i$ is strongly closed.  Also, for 
$P\le{}S_i$, $|N_S(P)|=|N_{S_i}(P)|\cdot|S_i^*|$ and 
$|C_S(P)|=|C_{S_i}(P)|\cdot|S_i^*|$.  Hence $P$ is fully normalized 
(centralized) in $\calf_i$ if and only if it is fully normalized 
(centralized) in $\calf$.  By \eqref{e:3.2-a}, $P,Q\le{}S_i$ are 
$\calf_i$-conjugate only if $P$ is $\calf'_i$-conjugate to a subgroup in 
the $\Aut_{\calf_i}(S_i)$-orbit of $Q$, and hence $P$ is fully normalized 
(centralized) in $\calf_i$ if and only if it is fully normalized 
(centralized) in $\calf'_i$.  Also, in the context of axiom (II), 
$N_\varphi^\calf=N_\varphi^{\calf_i}\times{}S^*_i$ for all 
$\varphi\in\Mor(\calf_i)$, and $N_\varphi^{\calf'_i}=N_\varphi^{\calf_i}$ 
for all $\varphi\in\Mor(\calf'_i)$.  Axioms (I) and (II) for $\calf_i$ and 
for $\calf'_i$ now follow easily from the same axioms applied to $\calf$; 
and thus $\calf_i$ and $\calf'_i$ are saturated.  

Fix $P\le{}S_i$, and choose $\varphi\in\Aut_{\calf_i}(P)$ and 
$\alpha\in\Aut_{\calf'_i}(P)$.  By \eqref{e:3.2-a}, there is 
$\psi\in\Aut_{\calf}(S_i^*)$ such that 
$(\varphi,\psi),(\alpha,\Id)\in\autf(P\times{}S_i^*)$.  Hence 
$(\varphi\alpha\varphi^{-1},\Id)\in\autf(P\times{}S_i^*)$, 
$\varphi\alpha\varphi^{-1}\in\Aut_{\calf'_i}(P)$, and so
$\Aut_{\calf'_i}(P)$ is normal in $\Aut_{\calf_i}(P)$.  When $P$ is fully 
normalized, $\Aut_{\calf'_i}(P)$ contains 
$\Aut_{S_i}(P)\in\sylp{\Aut_{\calf_i}(P)}$, and thus $\Aut_{\calf'_i}(P)\ge
O^{p'}(\Aut_{\calf_i}(P))$.  Hence $\calf'_i$ has index 
prime to $p$ in $\calf_i$ (see Definition \ref{d:p-p'-index}), and so
$\calf'_i\supseteq{}O^{p'}(\calf_i)$.  

Clearly, $\calf$ contains $\calf'_1\times\cdots\times\calf'_m$.  By 
Lemma \ref{rad<S1xS2} together with Alperin's fusion theorem (Theorem 
\ref{AFT}), each morphism in $\calf$ is a composite of restrictions 
of automorphisms of subgroups of the form $P_1\times\ldots\times{}P_m$ for 
$P_i\le{}S_i$.  Since the $S_i$ are strongly closed in $\calf$, each such 
automorphism has the form $(\varphi_1,\ldots,\varphi_m)$ for some 
$\varphi_i\in\autf(P_i)=\Aut_{\calf_i}(P_i)$.  Hence for arbitrary 
$P,Q\le{}S$, if $P_i,Q_i\le{}S_i$ denote the images of $P$ and $Q$ under 
projection, then each $\varphi\in\homf(P,Q)$ extends to some morphism 
$(\varphi_1,\ldots,\varphi_m)$ where $\varphi_i\in\homf(P_i,Q_i)$.  Since 
$\homf(P_i,Q_i)=\Hom_{\calf_i}(P_i,Q_i)$, this shows that 
$\calf\subseteq\calf_1\times\cdots\times\calf_m$.

Since $\calf'_i$ has index prime to $p$ in $\calf_i$ for each $i$, 
$\calf'_1\times\cdots\times\calf'_m$ has index prime to $p$ in 
$\calf_1\times\cdots\times\calf_m$ by Lemma \ref{F'xF'<FxF}(b), and hence 
has index prime to $p$ in $\calf$.  So if $O^{p'}(\calf)=\calf$, then 
$\calf=\calf'_1\times\cdots\times\calf'_m$; and $\calf_i=\calf'_i$ for 
each $i$ by definition of $\calf_i$.
\end{proof}

Note that if $\calf$ is any fusion system (saturated or not) over a 
finite $p$-group $S=S_1\times{}S_2$, and $\calf$ factors as a product of 
fusion systems over $S_1$ and $S_2$, then the factors must be the fusion 
subsystems $\calf_i=\calf'_i$ as defined in Proposition \ref{F=F1xF2}.  In 
other words, if there is any such factorization, it must be unique.

We next show that a product of reduced fusion systems is reduced.

\begin{Prop} \label{F1xF2-red}
Fix finite $p$-groups $S_1$ and $S_2$ and saturated fusion systems 
$\calf_i$ over $S_i$.  Set $\calf=\calf_1\times\calf_2$.  Then 
	\beq 
	O_p(\calf)=O_p(\calf_1)\times{}O_p(\calf_2), \quad
	O^p(\calf)=O^p(\calf_1)\times{}O^p(\calf_2), \quad
	O^{p'}(\calf)=O^{p'}(\calf_1)\times{}O^{p'}(\calf_2)~. \eeq
In particular, $\calf$ is reduced if and only if $\calf_1$ and $\calf_2$ 
are both reduced.
\end{Prop}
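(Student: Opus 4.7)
The plan is to establish the three identities for $O_p$, $O^p$, and $O^{p'}$ in turn; the statement about reducedness is then immediate, since $\calf$ is reduced exactly when $O_p(\calf) = 1$, $O^p(\calf) = \calf$, and $O^{p'}(\calf) = \calf$, each of which factors over a product. Throughout, set $S = S_1 \times S_2$ and, for $P \le S$, write $P_i = \pr_i(P) \le S_i$. For the $O_p$ identity, the inclusion $O_p(\calf_1) \times O_p(\calf_2) \le O_p(\calf)$ follows immediately from Lemma \ref{F'xF'<FxF}(a). For the reverse, I would set $Q = O_p(\calf)$ and, given $\varphi_1 \in \Hom_{\calf_1}(P,R)$ with $P,R \le S_1$, extend $(\varphi_1, \Id_{S_2}) \in \Hom_\calf(P \times S_2, R \times S_2)$, using $Q \nsg \calf$, to some $\widetilde\varphi \in \Hom_\calf((P \times S_2)Q, (R \times S_2)Q)$ with $\widetilde\varphi(Q) = Q$. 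A short computation shows $(P \times S_2)Q = PQ_1 \times S_2$, and since morphisms between split subgroups in $\calf_1 \times \calf_2$ are pairs, $\widetilde\varphi = (\widetilde\varphi_1, \Id_{S_2})$ with $\widetilde\varphi_1$ extending $\varphi_1$. Projecting $\widetilde\varphi(Q) = Q$ to the first coordinate yields $\widetilde\varphi_1(Q_1) = Q_1$; hence $Q_1 \nsg \calf_1$, so $Q_1 \le O_p(\calf_1)$, and symmetrically $Q_2 \le O_p(\calf_2)$. Since $Q \le Q_1 \times Q_2$, this completes the inclusion.

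For the $O^p$ identity, I plan to apply the uniqueness part of Theorem \ref{t:O^p(F)}(a). Set $\calf_0 = O^p(\calf_1) \times O^p(\calf_2)$, saturated by \cite[Lemma 1.5]{BLO2}. Combining Alperin's theorem (Theorem \ref{AFT}) with Lemma \ref{rad<S1xS2} reduces the computation of $\hyp(\calf)$ to automorphisms on split subgroups $P = P_1 \times P_2$, on which $O^p(\autf(P)) = O^p(\Aut_{\calf_1}(P_1)) \times O^p(\Aut_{\calf_2}(P_2))$, so $\hyp(\calf) = \hyp(\calf_1) \times \hyp(\calf_2)$ and $\calf_0$ sits over $\hyp(\calf)$. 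To check $\calf_0$ has $p$-power index, fix $P \le \hyp(\calf)$ and consider the restriction homomorphism from the stabilizer of $P$ inside $\Aut_{\calf_1}(P_1) \times \Aut_{\calf_2}(P_2)$ onto $\autf(P)$; this is surjective and sends $O^p$ to $O^p$ (since $O^p$ of a quotient equals the image of $O^p$). Lifts of elements of $O^p(\autf(P))$ thus lie in $O^p(\Aut_{\calf_1}(P_1)) \times O^p(\Aut_{\calf_2}(P_2)) \le \Aut_{O^p(\calf_1)}(P_1) \times \Aut_{O^p(\calf_2)}(P_2)$, yielding $O^p(\autf(P)) \le \Aut_{\calf_0}(P)$. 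Uniqueness then gives $O^p(\calf) = \calf_0$. For $O^{p'}$, Lemma \ref{F'xF'<FxF}(b) yields $O^{p'}(\calf) \subseteq O^{p'}(\calf_1) \times O^{p'}(\calf_2)$ by minimality. Conversely, on split $P = P_1 \times P_2$ one has $O^{p'}(\autf(P)) = O^{p'}(\Aut_{\calf_1}(P_1)) \times O^{p'}(\Aut_{\calf_2}(P_2)) \le \Aut_{O^{p'}(\calf)}(P)$, so every $(\alpha_1, \Id)$ and $(\Id, \alpha_2)$ with $\alpha_i \in O^{p'}(\Aut_{\calf_i}(P_i))$ lies in $O^{p'}(\calf)$; an arbitrary morphism $(\varphi_1, \varphi_2)|_P$ of $O^{p'}(\calf_1) \times O^{p'}(\calf_2)$ is then a composite of restrictions of such pieces, hence in $O^{p'}(\calf)$.

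The main technical obstacle will be in the $O^p$ case: carefully handling non-split subgroups $P \le P_1 \times P_2$ with $P \neq P_1 \times P_2$. The argument relies on the fact that the restriction map from the stabilizer of $P$ inside $\Aut_{\calf_1}(P_1) \times \Aut_{\calf_2}(P_2)$ onto $\autf(P)$ is a quotient homomorphism, so $O^p$ maps onto $O^p$, and that any lift decomposes coordinatewise into $O^p(\Aut_{\calf_i}(P_i))$. Once this bookkeeping is in place, the $p$-power-index condition of Definition \ref{d:p-p'-index}(a) is verified uniformly for all $P \le \hyp(\calf)$, and uniqueness closes the argument.
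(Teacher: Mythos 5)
Your handling of $O_p(\calf)$ and your overall strategy for $O^p(\calf)$ (compute $\hyp(\calf)$, verify the $p$-power index condition, invoke the uniqueness in Theorem \ref{t:O^p(F)}(a)) follow the paper's route, and your stabilizer argument for the $p$-power index condition correctly fills in a step the paper leaves implicit. (Minor point: Lemma \ref{F'xF'<FxF}(a) gives normality of the product \emph{subsystem}, not of the subgroup $O_p(\calf_1)\times O_p(\calf_2)$; you need Lemma \ref{F0<|F}(e) or the one-line direct check with Definition \ref{D:subgroups} to conclude it is normal in $\calf$.) However, your claim that ``combining Theorem \ref{AFT} with Lemma \ref{rad<S1xS2} reduces the computation of $\hyp(\calf)$ to split subgroups'' does not hold as stated: Alperin's theorem writes $\alpha\in O^p(\autf(P))$ as a composite of restrictions of automorphisms of centric radical subgroups, but those automorphisms need not lie in $O^p$ of their automorphism groups, so the telescoped elements $s^{-1}\alpha(s)$ are only seen to lie in $\foc(\calf)$. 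The equality $\hyp(\calf)=\hyp(\calf_1)\times\hyp(\calf_2)$ is nevertheless immediate from the definition (this is the paper's ``by definition''): $O^p(\autf(P))$ is generated by its $p'$-elements, and a $p'$-element of $\autf(P)$ is the restriction of a unique pair in $\Aut_{\calf_1}(P_1)\times\Aut_{\calf_2}(P_2)$ whose coordinates again have order prime to $p$, hence lie in $O^p(\Aut_{\calf_i}(P_i))$.

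The genuine gap is in the inclusion $O^{p'}(\calf_1)\times O^{p'}(\calf_2)\subseteq O^{p'}(\calf)$. You assert that every morphism of $O^{p'}(\calf_1)\times O^{p'}(\calf_2)$ is a composite of restrictions of automorphisms $(\alpha_1,\Id)$, $(\Id,\alpha_2)$ with $\alpha_i\in O^{p'}(\Aut_{\calf_i}(P_i))$; this would require $O^{p'}(\calf_i)$ to be generated, as a fusion system, by the subgroups $O^{p'}(\Aut_{\calf_i}(R))$. But $O^{p'}(\calf_i)$ is by Definition \ref{d:Op-p'} the \emph{minimal saturated} subsystem of index prime to $p$, and by Theorem \ref{t:O^p(F)}(b) its automorphism group at $S_i$ is prescribed by a subgroup $\Gamma\nsg\Out_{\calf_i}(S_i)$ which is in general strictly larger than anything generated by $O^{p'}(\Aut_{\calf_i}(S_i))$ and $\Inn(S_i)$ --- compare the description quoted in the proof of Proposition \ref{F(An)-red}, where $\Aut_{O^{p'}(\calf)}(S)$ contains every $\alpha\in\autf(S)$ whose restriction to some invariant centric subgroup lies in $O^{p'}(\autf(P))$. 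Alperin's theorem applied to the saturated system $O^{p'}(\calf_i)$ only reduces you to $\Aut_{O^{p'}(\calf_i)}(R)$, not to $O^{p'}(\Aut_{\calf_i}(R))$, so your generation step is unproven. This is exactly the difficulty the paper's proof is built to avoid: it notes that $O^{p'}(O^{p'}(\calf))=O^{p'}(\calf)$ by transitivity of ``index prime to $p$'', applies Proposition \ref{F=F1xF2} (which uses the extension axiom, not a generation statement) to split $O^{p'}(\calf)$ as a product $\calg_1\times\calg_2$, checks on split subgroups $P_1\times 1$ that each $\calg_i$ has index prime to $p$ in $\calf_i$, hence $\calg_i\supseteq O^{p'}(\calf_i)$, and combines this with the inclusion from Lemma \ref{F'xF'<FxF}(b) that you already have. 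To close your argument you would need either Proposition \ref{F=F1xF2} (or an equivalent splitting statement for $O^{p'}(\calf)$), or a genuine proof that $(\alpha_1,\Id_{S_2})\in\Mor(O^{p'}(\calf))$ for every $\alpha_1\in\Aut_{O^{p'}(\calf_1)}(R)$, $R\le S_1$.
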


\begin{proof}  Set $S=S_1\times{}S_2$.  
The decomposition of $O_p(\calf)$ is clear:  if $P\le{}S$ is 
normal in $\calf$, then so are its projections into $S_1$ and $S_2$, 
and $P_i\nsg\calf_i$ implies $P_1\times P_2\nsg\calf$.

The relation ``of index prime to $p$'' among fusion systems is transitive 
(see Definition \ref{d:p-p'-index}), and hence 
$O^{p'}(O^{p'}(\calf))=O^{p'}(\calf)$.  So by Proposition \ref{F=F1xF2}, 
$O^{p'}(\calf)=\calf'_1\times\calf'_2$ for some pair of fusion systems 
$\calf'_i$ over $S_i$.  Also, 
$O^{p'}(\calf)\subseteq{}O^{p'}(\calf_1)\times{}O^{p'}(\calf_2)$ by Lemma 
\ref{F'xF'<FxF}(b), so $\calf'_i\subseteq{}O^{p'}(\calf_i)$, and 
$\calf'_i$ has index prime to $p$ in $\calf_i$ since 
$\calf'_1\times\calf'_2$ has index prime to $p$ in $\calf$.  Thus 
$\calf'_i=O^{p'}(\calf_i)$.

By definition,
	\[ \hyp(\calf) = \gen{s^{-1}\alpha(s) \,|\, 
	s\in{}P\le{}S,\ \alpha\in O^p(\autf(P))}
	= \hyp(\calf_1) \times \hyp(\calf_2)~. \]
Since $O^p(\calf)$ is the unique fusion subsystem over $\hyp(\calf)$ of 
$p$-power index in $\calf$ (Theorem \ref{t:O^p(F)}(a)), we have 
$O^p(\calf)=O^p(\calf_1)\times{}O^p(\calf_2)$.

The last statement is now immediate.
\end{proof}

By definition, every fusion system $\calf$ factors as a product of 
indecomposable fusion systems.  The following lemma is the key step when 
showing that this factorization is unique (not only up to isomorphism) 
when $\calf$ is reduced.

\begin{Lem} \label{dbl-decomp}
Let $\calf$ be a reduced fusion system over a finite $p$-group $S$.  Assume 
$\calf=\calf_1\times\calf_2=\calf_3\times\calf_4$, where each $\calf_i$ is 
a saturated fusion system over some $S_i\le{}S$.  Set 
$S_{ij}=S_i\cap{}S_j$ for $i=1,2$ and $j=3,4$.  Then 
$\calf=\calf_{13}\times{}\calf_{14}\times{}\calf_{23}\times{}\calf_{24}$, 
where $\calf_{ij}$ is a reduced fusion system over $S_{ij}$.
\end{Lem}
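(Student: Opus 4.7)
The plan is to reduce the decomposition of $\calf$ to the group-theoretic identity $S = S_{13}\times S_{14}\times S_{23}\times S_{24}$ as an internal direct product; once that is established, Propositions \ref{F=F1xF2} and \ref{F1xF2-red} (applied to the reduced $\calf$ with $O^{p'}(\calf) = \calf$) will finish things off. First I would note that each $S_{ij}$ is strongly closed in $\calf$ as an intersection of strongly closed subgroups, that the four $S_{ij}$ pairwise commute (using $[S_1,S_2] = [S_3,S_4] = 1$), and that they pairwise intersect trivially (e.g.\ $S_{13}\cap S_{14}\le S_3\cap S_4 = 1$, while $S_{14}\cap S_{24}\le S_1\cap S_2 = 1$), so the product $S_{13}S_{14}S_{23}S_{24}\le S$ is automatically an internal direct product. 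To force equality with $S$, it suffices by $S = S_1\times S_2$ to show $S_1 = S_{13}\times S_{14}$ and the symmetric statement for $S_2$; looking at the projection $\pi_4\colon S = S_3\times S_4\to S_4$, whose restriction to $S_1$ has kernel $S_{13}$, the first of these is equivalent to $\pi_4(S_1) = S_{14}$.

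The heart of the argument is a \emph{core computation}: for any $g\in\pi_4(S_1)$ and any $\varphi\in\Hom_{\calf_4}(P,Q)$ defined on $g$, $\varphi(g)\in gS_{14}$. Indeed, lifting $g$ to $\tilde g = (g_3,g)\in S_1$ and applying the product morphism $(\Id_{\langle g_3\rangle},\varphi)\in\Mor(\calf_3\times\calf_4) = \Mor(\calf)$ sends $\tilde g$ to $(g_3,\varphi(g))$, which lies in $S_1$ by strong closure; then $\tilde g\cdot(g_3,\varphi(g))^{-1} = (1,g\varphi(g)^{-1})\in S_1\cap S_4 = S_{14}$. An immediate corollary is that $\pi_4(S_1)$ and (by the same argument) $\pi_4(S_2)$ are strongly closed in $\calf_4$. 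Next, $[\pi_4(S_1),\pi_4(S_2)] = \pi_4([S_1,S_2]) = 1$ and $\pi_4(S_1)\pi_4(S_2) = \pi_4(S) = S_4$, while any $g\in\pi_4(S_1)\cap\pi_4(S_2)$ satisfies $\varphi(g)\in gS_{14}\cap gS_{24} = \{g\}$ for every $\calf_4$-morphism defined on $g$, so by Alperin's theorem and the extension axiom $\langle g\rangle\le Z(\calf_4)\le O_p(\calf_4) = 1$. Thus $S_4 = \pi_4(S_1)\times\pi_4(S_2)$ is a direct product of strongly closed subgroups.

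Since $\calf$ is reduced, so is $\calf_4$ by Proposition \ref{F1xF2-red}, and Proposition \ref{F=F1xF2} yields $\calf_4 = \calf_4'\times\calf_4''$ with $\calf_4' = \calf_4|_{\pi_4(S_1)}$; a second application of Proposition \ref{F1xF2-red} shows $\calf_4'$ is reduced, so $\hyp(\calf_4') = \pi_4(S_1)$. But the core computation applied to morphisms $\alpha\in O^p(\Aut_{\calf_4'}(P))\subseteq\Aut_{\calf_4}(P)$ forces each generator $g^{-1}\alpha(g)$ of $\hyp(\calf_4')$ to lie in $S_{14}$, so $\pi_4(S_1) = \hyp(\calf_4')\le S_{14}$ and hence $\pi_4(S_1) = S_{14}$. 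Symmetric computations give $S_1 = S_{13}\times S_{14}$ and $S_2 = S_{23}\times S_{24}$, whence $S = S_{13}\times S_{14}\times S_{23}\times S_{24}$, and a final application of Proposition \ref{F=F1xF2} produces $\calf = \calf_{13}\times\calf_{14}\times\calf_{23}\times\calf_{24}$ with $\calf_{ij} = \calf|_{S_{ij}}$ reduced by Proposition \ref{F1xF2-red}. The main obstacle will be the joint exploitation of both product decompositions in the core computation — it is essential that the morphism $(\Id,\varphi)\in\calf_3\times\calf_4$ really coincides with a morphism of $\calf$ so that strong closure of $S_1$ can be invoked — together with the hyperfocal bootstrap, where one must ensure via Proposition \ref{F=F1xF2} that $\calf_4'$ is saturated and reduced enough that its hyperfocal subgroup spans all of $\pi_4(S_1)$ while still being generated by morphisms obeying the core bound.
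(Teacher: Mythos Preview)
Your argument is correct, and your core computation is essentially the same observation the paper exploits. The difference lies in how it is used. The paper works directly in $S$: for $\calf$-conjugate $x,y\in S_1$, write $x=x_3x_4$, $y=y_3y_4$ with $x_i,y_i\in S_i$, decompose the conjugating morphism as a restriction of $(\varphi_3,\varphi_4)$, and apply $(\varphi_3,\Id_{S_4})$ to get $y_3x_4\in S_1$ by strong closure, whence $x_3^{-1}y_3\in S_{13}$ and similarly $x_4^{-1}y_4\in S_{14}$. This immediately yields $\foc(\calf_1)\le S_{13}\times S_{14}$, and since $\calf$ is reduced, $\foc(\calf)=\foc(\calf_1)\times\foc(\calf_2)=S$ forces $S=\prod S_{ij}$ in a single stroke. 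Your route via the projection $\pi_4$, the intermediate decomposition $\calf_4=\calf_4'\times\calf_4''$, and the hyperfocal bootstrap reaches the same destination by a longer path; it works, but the focal-subgroup shortcut avoids having to first decompose $\calf_4$.

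One small point: your justification that $g\in\pi_4(S_1)\cap\pi_4(S_2)$ lies in $Z(\calf_4)$ via ``Alperin's theorem and the extension axiom'' is imprecise. A cleaner argument is that $A=\pi_4(S_1)\cap\pi_4(S_2)$ is strongly closed (as an intersection of strongly closed subgroups) and lies in $Z(S_4)$, hence in every $\calf_4$-centric subgroup; Proposition~\ref{norm<=>} then gives $A\nsg\calf_4$, so $A\le O_p(\calf_4)=1$.
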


\begin{proof}  By assumption, the subgroups $S_i$ for $i\in\{1,2,3,4\}$ are 
all strongly closed in $\calf$, and $S=S_1\times{}S_2=S_3\times{}S_4$.  
Fix $x,y\in{}S_1$ which are $\calf$-conjugate, and choose 
$\varphi\in\homf(\gen{x},\gen{y})$ which sends $x$ to $y$.  Write 
$x=x_3x_4$ and $y=y_3y_4$, where $x_3,y_3\in{}S_3$ and $x_4,y_4\in{}S_4$.  
There are homomorphisms $\varphi_i\in\Hom_{\calf_i}(\gen{x_i},\gen{y_i})$ 
for $i=3,4$ which send $x_i$ to $y_i$, and such that $\varphi$ is the 
restriction of $(\varphi_3,\varphi_4)$.  Hence 
$(\varphi_3,\Id_{S_4})(x)=y_3x_4$, $y_3x_4\in{}S_1$ since $S_1$ is 
strongly closed, and thus $x_3^{-1}y_3\in{}S_{13}$.  By a similar 
argument, $x_4^{-1}y_4\in{}S_{14}$, and thus 
$x^{-1}y\in{}S_{13}\times{}S_{14}$.  This proves that 
$\foc(\calf_1)\le{}S_{13}\times{}S_{14}$.

By a similar argument, $\foc(\calf_2)\le{}S_{23}\times{}S_{24}$.  Since 
$\calf=\calf_1\times\calf_2$, it follows that
	\[ \foc(\calf) = \foc(\calf_1)\times\foc(\calf_2) \le 
	S_{13}\times{}S_{14}\times{}S_{23}\times{}S_{24} \le S ~. \]
Also, $\foc(\calf)=S$ since $\calf$ is reduced (Theorem 
\ref{t:O^p(F)}(a)), so $S$ is the product of the $S_{ij}$.  Since the 
intersection of two subgroups which are strongly closed in $\calf$ is 
strongly closed in $\calf$, $\calf$ splits as a product of reduced fusion 
systems $\calf_{ij}$ over $S_{ij}$ by Propositions \ref{F=F1xF2} and 
\ref{F1xF2-red} (recall $O^{p'}(\calf)=\calf$ since $\calf$ is reduced). 
\end{proof}

This now implies the uniqueness of any decomposition of a reduced fusion 
system as a product of indecomposables.

\begin{Prop} \label{uniq-decomp}
Each reduced fusion system $\calf$ over a finite $p$-group $S$ has a 
unique factorization $\calf=\calf_1\times\cdots\times\calf_m$ as a product 
of indecomposable fusion systems $\calf_i$ over subgroups $S_i\nsg{}S$.  
Moreover, the $\calf_i$ are all reduced, and each fusion preserving 
automorphism $\alpha\in\Aut(S,\calf)$ permutes the factors $S_i$.
\end{Prop}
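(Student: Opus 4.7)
The plan is to prove the three claims (existence of a factorization, uniqueness, and the permutation property) essentially by iterating Lemma~\ref{dbl-decomp}, with Proposition~\ref{F=F1xF2} and Proposition~\ref{F1xF2-red} doing the supporting work.

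For \emph{existence}, I will induct on $|S|$. If $\calf$ is itself indecomposable, the factorization is trivial. Otherwise $\calf=\calf'\times\calf''$ over proper subgroups $S',S''\lneqq S$, and Proposition~\ref{F1xF2-red} shows that $\calf'$ and $\calf''$ are both reduced. The inductive hypothesis then decomposes each into indecomposable reduced factors, and splicing these together gives the desired factorization $\calf=\calf_1\times\cdots\times\calf_m$ over a direct-product decomposition $S=S_1\times\cdots\times S_m$ with each $S_i\nsg S$; Proposition~\ref{F1xF2-red} (applied to the grouping $\calf=\calf_i\times\prod_{j\ne i}\calf_j$) confirms each $\calf_i$ is reduced.

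For \emph{uniqueness} I will use induction on $m+n$, given two factorizations $\calf=\calf_1\times\cdots\times\calf_m=\calf'_1\times\cdots\times\calf'_n$ over $S_1,\ldots,S_m$ and $S'_1,\ldots,S'_n$. Setting $T_1=S_2\times\cdots\times S_m$ and $T'_1=S'_2\times\cdots\times S'_n$, I group these as $\calf=\calf_1\times(\calf_2\times\cdots\times\calf_m)=\calf'_1\times(\calf'_2\times\cdots\times\calf'_n)$ and apply Lemma~\ref{dbl-decomp} to obtain a common refinement as a product of four reduced fusion subsystems over $S_1\cap S'_1$, $S_1\cap T'_1$, $T_1\cap S'_1$, $T_1\cap T'_1$. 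Restricting to $S_1$ yields $\calf_1=\calf|_{S_1\cap S'_1}\times\calf|_{S_1\cap T'_1}$, and indecomposability of $\calf_1$ forces one of these intersections to be trivial. Without loss of generality $S_1\le S'_1$; by symmetry (indecomposability of $\calf'_1$), $S'_1\le S_1$, and hence $S_1=S'_1$. Since $O^{p'}(\calf)=\calf$, Proposition~\ref{F=F1xF2} identifies $\calf_1=\calf|_{S_1}=\calf'_1$ as subcategories. The same four-fold refinement forces $T_1=T'_1$ (each equals $T_1\cap T'_1$), and Proposition~\ref{F=F1xF2} again identifies $\calf_2\times\cdots\times\calf_m=\calf|_{T_1}=\calf'_2\times\cdots\times\calf'_n$. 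The inductive hypothesis on this smaller reduced system then yields the matching of the remaining factors.

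For the \emph{permutation property}, given $\alpha\in\Aut(S,\calf)$ I observe that $\alpha(S_i)\nsg S$ is strongly closed in $\calf$ (since $\alpha$ preserves $\calf$-conjugacy), and that the induced self-equivalence $c_\alpha$ of $\calf$ carries $\calf_i$ isomorphically to $\calf|_{\alpha(S_i)}$, which is therefore also indecomposable. Thus $\calf=\calf|_{\alpha(S_1)}\times\cdots\times\calf|_{\alpha(S_m)}$ is another decomposition of $\calf$ into indecomposable reduced factors over strongly closed subgroups, and by the uniqueness proven above $\{\alpha(S_1),\ldots,\alpha(S_m)\}=\{S_1,\ldots,S_m\}$ as sets of subgroups of $S$, i.e., $\alpha$ permutes the $S_i$. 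The main technical obstacle is the bookkeeping in the four-fold refinement step---in particular, convincing oneself that the ``complement'' $T_1$ of $S_1$ is determined by the decomposition (not just $S_1$ alone), which is exactly what Lemma~\ref{dbl-decomp} provides via its explicit product description.
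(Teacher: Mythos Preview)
Your approach---existence by induction on $|S|$, uniqueness via Lemma~\ref{dbl-decomp}, and the permutation property via uniqueness---matches the paper's. However, your uniqueness argument has a gap at the step ``without loss of generality $S_1\le S'_1$''. The two alternatives coming from indecomposability of $\calf_1$ are $S_1\cap T'_1=1$ (giving $S_1\le S'_1$) and $S_1\cap S'_1=1$ (giving $S_1\le T'_1$), and these are \emph{not} symmetric: $S'_1$ carries a single indecomposable factor while $T'_1=S'_2\times\cdots\times S'_n$ carries $n-1$ of them. In the second case you cannot simply relabel to land $S_1$ inside some $S'_j$ without further argument, so the ``by symmetry'' step that follows does not apply.

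The paper treats the two cases separately and inducts on $|S|$ rather than on $m+n$: if $S_1\cap S'_1=1$, the four-fold refinement exhibits $\calf_1$ as a direct factor of $\prod_{i\ge2}\calf'_i=\calf|_{T'_1}$, and since $T'_1\lneqq S$ the inductive hypothesis says the indecomposable factorization of $\calf|_{T'_1}$ is unique, forcing $\calf_1=\calf'_j$ as subcategories for some $j\ge2$; one more application of Lemma~\ref{dbl-decomp} then identifies the complements $\prod_{i\ne1}\calf_i=\prod_{i\ne j}\calf'_i$. Your argument can be repaired the same way, or alternatively by iterating Lemma~\ref{dbl-decomp} on successive groupings $T'_1=S'_2\times(S'_3\times\cdots\times S'_n)$ until you find $j$ with $S_1\cap S'_j\ne1$---but in either case the second alternative must be handled explicitly.
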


\begin{proof}  Let $\calf=\calf_1\times\cdots\times\calf_m 
=\calf'_1\times\cdots\times\calf'_n$ be two decompositions as products of 
indecomposable fusion systems.  By Lemma \ref{dbl-decomp} applied to the 
decompositions $\calf=\calf_1\times\prod_{i\ge2}\calf_i
=\calf'_1\times\prod_{i\ge2}\calf'_i$, and since $\calf_1$ and $\calf'_1$ 
are indecomposable, either $\calf_1=\calf'_1$ and 
$\prod_{i\ge2}\calf_i=\prod_{i\ge2}\calf'_i$, or 
$\calf_1$ is a direct factor in $\prod_{i\ge2}\calf'_i$.  In the latter 
case, we can assume by induction on $|S|$ that the decomposition of 
$\prod_{i\ge2}\calf'_i$ is unique, 
and hence that for some $j$, $\calf_1=\calf'_j$ and so 
$\prod_{i\ne1}\calf_i=\prod_{i\ne{}j}\calf'_i$ (Lemma \ref{dbl-decomp} 
again).  By the same induction 
hypothesis, this proves that the two decompositions are equal up to 
permutation of the factors.  The factors $\calf_i$ are all reduced by 
Proposition \ref{F1xF2-red}.

Fix $\alpha\in\Aut(S,\calf)$.  Since $S=\prod_{i=1}^m\alpha(S_i)$ is a 
product of subgroups which are strongly closed in $\calf$, $\calf$ factors 
as a product of saturated fusion systems over the $\alpha(S_i)$ by 
Proposition \ref{F=F1xF2} (and since $O^{p'}(\calf)=\calf$).  So $\alpha$ 
permutes the factors $S_i$ by the uniqueness of the decomposition.
\end{proof}

We are now ready to prove that a product of reduced, 
\emph{indecomposable}, tame fusion systems is tame.  Together with Theorem 
\ref{T:reduce}, this shows that any ``minimal'' exotic fusion system is 
indecomposable as well as reduced.

\begin{Thm} \label{prod-tame}
Fix a reduced fusion system $\calf$ over a finite $p$-group $S$, and let 
$\calf=\calf_1\times\cdots\times\calf_m$ be its unique factorization as a 
product of indecomposable fusion systems.  If $\calf_i$ is tame (strongly 
tame) for each $i$, then $\calf$ is tame (strongly tame).
\end{Thm}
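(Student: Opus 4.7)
The plan is to tamely realize $\calf$ by $G=G_1\times\cdots\times G_m$, where each $G_i$ is a carefully chosen tame realization of $\calf_i$, and to exhibit a splitting of $\kappa_G$ built from splittings of the $\kappa_{G_i}$ together with a ``permutation'' factor coming from the subgroup $\Sigma\le\Sigma_m$ of permutations $\sigma$ with $\calf_{\sigma(i)}=\calf_i$.

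First I would set up the groups. Since $\calf_i$ is reduced, $Z(\calf_i)=1$, and Lemma \ref{Op'-Z(G)} supplies a tame realization $G_i$ with $O_{p'}(G_i)=1$; then $Z(G_i)\le S_i$ is a $p$-group central in $\calf_i$, so $Z(G_i)=1$. Moreover $G_i$ is indecomposable as a finite group: a direct factor of order prime to $p$ would lie in $O_{p'}(G_i)=1$, while a decomposition with both factors of $p$-divisible order would contradict indecomposability of $\calf_i$. For strongly tame $\calf_i$, also arrange $G_i\in\gpclass{p}$. I fix identifications so that $\calf_i\cong\calf_j$ implies $G_i=G_j$ (reusing one chosen realization per isomorphism class). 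Set $G=\prod G_i$ with Sylow $S=\prod S_i$; then $\calf_S(G)=\calf$, $Z(G)=1$, and if each $G_i\in\gpclass{p}$ then $G\in\gpclass{p}$ by repeated application of Lemma \ref{lim2=0}(b). Take $\calh=\{P_1\times\cdots\times P_m\,|\,P_i\in\Ob(\calf_i^c)\}$: this is $\Aut(S,\calf)$-invariant (fusion-preserving automorphisms permute the $S_i$ by Proposition \ref{uniq-decomp}), contains every $\calf$-centric $\calf$-radical subgroup by iterating Lemma \ref{rad<S1xS2}, and consists of $\calf$-centric subgroups by a coordinate-wise centralizer computation. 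Hence by Lemma \ref{OutI2}, $\Out\typ(\call_S^c(G))\cong\Out\typ(\call_S^\calh(G))$, and the identities $N_G(P,Q)=\prod N_{G_i}(P_i,Q_i)$ and $O^p(C_G(P))=\prod O^p(C_{G_i}(P_i))$ identify $\call_S^\calh(G)$ with the product category $\call\defeq\prod\call_i$, where $\call_i=\call_{S_i}^c(G_i)$.

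The structural heart of the proof is to establish the semidirect-product decompositions
\[
\Out(G)\cong\Bigl(\prod_i\Out(G_i)\Bigr)\rtimes\Sigma
\quad\text{and}\quad
\Out\typ(\call)\cong\Bigl(\prod_i\Out\typ(\call_i)\Bigr)\rtimes\Sigma,
\]
with the $\Sigma$-splittings given by literal permutation of the (identified) factors. For $\Out(G)$: since each $G_i$ is centerless and indecomposable, Krull-Schmidt together with the fact that direct complements in a product with a centerless factor are unique forces the decomposition $G=\prod G_i$ to be unique as a set of subgroups, so $\Aut(G)$ permutes the $G_i$, with kernel $\prod\Aut(G_i)$. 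For $\Out\typ(\call)$: any $\alpha\in\Aut\typ^I(\call)$ induces $\til\mu_\call(\alpha)\in\Aut(S,\calf)$ (Lemma \ref{AutI}), which permutes the $S_i$ by Proposition \ref{uniq-decomp}; composing with the appropriate permutation automorphism of $\call$ reduces to the case that each $S_i$ is preserved, and then the coordinate-wise factorization of morphism sets in $\prod\call_i$, together with the isotypical and inclusion-preserving properties of $\alpha$, forces $\alpha$ to split as a product of self-equivalences of the individual $\call_i$.

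The map $\kappa_G$ respects these decompositions, acting as $\prod\kappa_{G_i}$ on the product factors (by inspection of the definitions) and as the identity on $\Sigma$. Tameness of each $\calf_i$ supplies sections $s_i$ of $\kappa_{G_i}$; combining them with the identity section of $\Sigma$ yields a section of $\kappa_G$, proving $\calf$ is tame, and strongly tame when each $\calf_i$ is. The main obstacle I expect is the product decomposition of $\Aut\typ^I(\call)$ in the factor-preserving case: a self-equivalence of a product category need not a priori respect the product structure, so one must exploit the coordinate-wise structure of morphism sets together with the constraints of $\Aut\typ^I$ (preserving $\delta$ and inclusions) to force coordinate-wise decomposition.
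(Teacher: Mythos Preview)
Your overall architecture matches the paper's: realize $\calf$ by the product $G=\prod G_i$ (with $G_i\cong G_j$ whenever $\calf_i\cong\calf_j$), split $\Out\typ(\call)$ into a factor-preserving piece and a permutation piece, and assemble a section of $\kappa_G$ from sections $s_i$ of the $\kappa_{G_i}$ together with the permutation automorphisms. However, there is a genuine technical gap and one overclaim that the paper avoids.

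The gap is your use of $\calh=\{P_1\times\cdots\times P_m\mid P_i\in\Ob(\calf_i^c)\}$ together with Lemma~\ref{OutI2}. The set $\calh$ is \emph{not} closed under overgroups: an overgroup of $P_1\times\cdots\times P_m$ in $S$ need not itself be a product. Hence $\call_S^\calh(G)$ is not a linking system in the sense of Definition~\ref{d:L}, and Lemma~\ref{OutI2} does not apply. The paper is explicit about this: it sets $\widehat{\call}=\call_1\times\cdots\times\call_m$, identifies it only as a full subcategory of $\call=\call_S^c(G)$, and remarks that $\widehat{\call}$ is not a linking system. Instead of transferring the problem to $\widehat{\call}$, the paper works inside the full $\call$ and proves, for $\alpha\in\Aut\typ^0(\call)$, the factorization $\alpha(\psi,\Id_{S_i^*})=(\alpha_i(\psi),\Id_{S_i^*})$ by a generation argument (elements of prime-to-$p$ order, then $\delta_P(N_{S_i}(P))$, then Theorem~\ref{AFT-L}). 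That step is exactly where your ``coordinate-wise factorization of morphism sets \dots forces $\alpha$ to split'' hides the real work, and it cannot be bypassed by invoking Lemma~\ref{OutI2} on $\calh$.

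The overclaim is the asserted isomorphism $\Out\typ(\call)\cong\bigl(\prod_i\Out\typ(\call_i)\bigr)\rtimes\Sigma$. The paper constructs only a \emph{monomorphism} $\widebar{\Psi}\colon\Out\typ^0(\call)\hookrightarrow\prod_i\Out\typ(\call_i)$ and never claims surjectivity; the verification that $\kappa_G\circ s=\Id$ uses only injectivity of $\widebar{\Psi}$. Your stronger statement is unproven and unnecessary. Relatedly, for the $s_i$ to assemble to a homomorphism on the semidirect product you need them to be $\Sigma$-equivariant; the paper arranges this by fixing isomorphisms $\tau_{ij}\colon G_i\to G_j$ for $i\sim j$ satisfying cocycle conditions and choosing the $s_i$ so that $c_{\tau_{ij}}\circ s_i=s_j\circ c_{\widehat{\tau}_{ij}}$. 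Your remark about ``reusing one chosen realization per isomorphism class'' is in the right spirit but does not by itself guarantee this compatibility of the splittings.
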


\begin{proof}  Let $S=S_1\times\cdots\times{}S_m$ be the corresponding 
decomposition of $p$-groups; i.e., $\calf_i$ is a fusion system over 
$S_i$. Assume each $\calf_i$ is tame, and let $G_i$ be a finite group 
which tamely realizes $\calf_i$.  Assume also that these are chosen so 
that $G_i\cong{}G_j$ if $\calf_i\cong\calf_j$.  Set 
$\call_i=\call_{S_i}^c(G_i)$.  Set $G=G_1\times\cdots\times{}G_m$, 
$\call=\call_S^c(G)$, and 
$\widehat{\call}=\call_1\times\cdots\times\call_m$.  We identify 
$\widehat{\call}$ with the full subcategory of $\call$ having as objects 
those $P=P_1\times\cdots\times{}P_m$ where $P_i\in\Ob(\call_i)$.  Note that 
$\widehat{\call}$ is not a linking system, since $\Ob(\widehat{\call})$ is 
not closed under overgroups.

Set $\mm=\{1,\ldots,m\}$.  Define 
	\[ \Aut\typ^0(\call) = \bigl\{ \alpha\in\Aut\typ^I(\call) \,\big|\, 
	\alpha_S(\delta_S(S_i))=\delta_S(S_i) \textup{ for each $i\in\mm$} 
	\bigr\}~. \]
We first construct a monomorphism
	\beq \Psi\: \Aut\typ^0(\call) \Right4{} 
	\Aut\typ^I(\call_1) \times \cdots 
	\times \Aut\typ^I(\call_m) \eeq
such that for each $\alpha\in\Aut\typ^0(\call)$, if 
$\Psi(\alpha)=(\alpha_1,\dots,\alpha_m)$, then 
$\alpha|_{\widehat{\call}}=\prod_{i\in\mm}\alpha_i$. 

To define $\Psi$, fix $\alpha\in\Aut\typ^0(\call)$, and let 
$\beta\in\Aut(S,\calf)$ be the induced automorphism of Lemma \ref{AutI} 
(i.e., $\delta_S(\beta(g))=\alpha(\delta_S(g))$ for $g\in{}S$).  Then 
$\beta(S_i)=S_i$ for each $i$ since $\delta_S$ is injective.  Also, by 
Lemma \ref{AutI}, $\alpha(P)=\beta(P)$ for each $P\in\Ob(\call)$, and 
$\pi\circ\alpha=c_\beta\circ\pi$, where $c_\beta\in\Aut(\calf)$ is 
conjugation by $\beta$ (and its restrictions).  

Fix $i\in\mm$, set $S_i^*=\prod_{j\ne{}i}S_j$ and 
$\call_i^*=\prod_{j\ne{}i}\call_j$, and identify $S=S_i\times{}S_i^*$ and 
$\widehat{\call}=\call_i\times\call_i^*$.  We claim the following:  
	\beqq \textup{$\forall$ $\psi\in\Mor(\call_i)$, \quad$\exists$ 
	$\alpha_i(\psi)\in\Mor(\call_i)$ such that 
	$\alpha(\psi,\Id_{S_i^*})=(\alpha_i(\psi),\Id_{S_i^*})$.} 
	\label{e:3.6b} \eeqq
For each $\psi\in\Mor(\call_i)$, 
	\[ \pi(\alpha(\psi,\Id_{S_i^*}))=c_\beta(\pi(\psi),\Id_{S_i^*})
	= (c_\beta(\pi(\psi)),\Id_{S_i^*}) \in \Mor(\calf) \]
since $\beta(S_j)=S_j$ for all $j$.  Hence by axiom (A), 
$\alpha(\psi,\Id_{S_i^*})=(\alpha_i(\psi),\delta_{S_i^*}(z))$ for some 
$\alpha_i(\psi)\in\Mor(\call_i)$ and some $z\in{}Z(S_i^*)$.  In 
particular, \eqref{e:3.6b} holds when $\psi$ is an automorphism of order 
prime to $p$.  Since $\alpha(\delta_P(x))=\delta_{\beta(P)}(\beta(x))$ for 
all $P\in\Ob(\call)$ and all $x\in{}N_S(P)$ (and since $\beta(S_i)=S_i$), 
\eqref{e:3.6b} also holds when $\psi=\delta_P(x)$ for $P\le{}S_i$ and 
$x\in{}N_{S_i}(P)$.  When $P\in\Ob(\call_i)$ is fully normalized in 
$\calf_i$, $\Aut_{\call_i}(P)$ is generated by elements of order prime to 
$p$ and by its Sylow $p$-subgroup $\delta_P(N_{S_i}(P))$ (Proposition 
\ref{L-prop}(d)), and hence \eqref{e:3.6b} holds for all 
$\psi\in\Aut_{\call_i}(P)$.  Finally, by Theorem \ref{AFT-L}, all morphisms 
in $\call_i$ are composites of restrictions of automorphisms of fully 
normalized subgroups, and hence \eqref{e:3.6b} holds for all 
$\psi\in\Mor(\call_i)$.  

Now let $\alpha_i\in\Aut(\call_i)$ be the automorphism defined by sending 
$P\in\Ob(\call_i)$ to $\beta(P)$, and $\psi\in\Mor(\call_i)$ to 
$\alpha_i(\psi)$ as defined in \eqref{e:3.6b}.  This is clearly a functor, 
it is isotypical since $\alpha$ is, and it preserves inclusions since 
$\alpha$ does.  Set $\Psi(\alpha)=(\alpha_1,\dots,\alpha_m)$.  Since each 
morphism in $\widehat{\call}$ is a composite of restrictions of morphisms 
of the form $(\psi_i,\Id_{S^*_i})$ for $\psi_i\in\Mor(\call_i)$, the 
restriction of $\alpha$ to $\widehat{\call}$ is $\prod_{i\in\mm}\alpha_i$.  

By construction, $\Psi$ is a homomorphism.  If 
$\Psi(\alpha)=(\Id_{\call_1},\ldots,\Id_{\call_m})$, then 
$\alpha|_{\widehat{\call}}=\Id$ by the above remarks, $\alpha$ is the 
identity on objects since $\alpha_S=\Id_{\Aut_\call(S)}$ (Lemma 
\ref{AutI}), and so $\alpha=\Id_\call$ by Theorem \ref{AFT-L} and 
since all $\calf$-centric $\calf$-radical subgroups are 
objects in $\widehat{\call}$
(Lemma \ref{rad<S1xS2}).  Hence $\Psi$ is injective.  Finally, since 
$\Aut_\call(S)\cong\prod_{i\in\mm}\Aut_{\call_i}(S_i)$, $\Psi$ induces a 
monomorphism
	\[ \widebar{\Psi}\: \Out\typ^0(\call) \defeq 
	\Aut\typ^0(\call)/\{c_\zeta\,|\,\zeta\in\Aut_\call(S)\} 
	\Right4{} \Out\typ(\call_1) \times \cdots \times \Out\typ(\call_m) 
	~. \]

Next consider the equivalence relation $\sim$ on $\mm$, where $i\sim{}j$ 
if $G_i\cong{}G_j$ (equivalently, $\calf_i\cong\calf_j$).  Fix 
isomorphisms $\tau_{ij}\in\Iso(G_i,G_j)$ for all pairs $i\sim{}j$ of 
elements in $\mm$, such that $\tau_{ij}(S_i)=S_j$, $\tau_{ii}=\Id_{G_i}$, 
$\tau_{ji}=\tau_{ij}^{-1}$, and $\tau_{ik}=\tau_{jk}\circ\tau_{ij}$ 
whenever $i\sim{}j\sim{}k$.  Let 
$\widehat{\tau}_{ij}\:\call_i\Right2{\cong}\call_j$ be the induced 
isomorphism of linking systems.  Then conjugation by $\widehat{\tau}_{ij}$ 
sends $\Out\typ(\call_i)$ to $\Out\typ(\call_j)$.  For each $i$, fix a 
splitting $s_i\:\Out\typ(\call_i)\Right2{}\Out(G_i)$ of $\kappa_{G_i}$, 
chosen so that $c_{\tau_{ij}}\circ{}s_i= 
s_j\circ{}c_{\widehat{\tau}_{ij}}$ if $i\sim{}j$. 

Let $\widebar{\Sigma}\le\Sigma_m$ be the group of permutations $\sigma$ of 
$\mm$ such that $\sigma(i)\sim{}i$ for each $i$.  For each 
$\sigma\in\widebar{\Sigma}$, let 
$\widehat{\sigma}_G\in\Aut(G)$ be the automorphism which sends $G_i$ to 
$G_{\sigma(i)}$ via $\tau_{i,\sigma(i)}$, and set 
$\widehat{\sigma}_\call=\til\kappa_G(\widehat{\sigma}_G)$.  Thus 
$\widehat{\sigma}_\call\in\Aut\typ^I(\call)$ sends each $\call_i$ to 
$\call_{\sigma(i)}$ via $\widehat{\tau}_{i,\sigma(i)}$.  

Fix $\alpha\in\Aut\typ^I(\call)$, and let $\beta\in\Aut(S,\calf)$ be the 
restriction of $\alpha_S\in\Aut(\Aut_\call(S))$ to $S\cong\delta_S(S)$.  
By Proposition \ref{uniq-decomp}, there is $\sigma\in\Sigma_m$ such that 
$\beta(S_i)=S_{\sigma(i)}$ for each $i$.  Since $\beta$ is fusion 
preserving, $\calf_i\cong\calf_{\sigma(i)}$, and hence $i\sim\sigma(i)$, 
for each $i$.  Thus $\sigma\in\widebar{\Sigma}$, and 
$\widehat{\sigma}_\call^{-1}\circ\alpha\in\Aut\typ^0(\call)$.  So 
$\Aut\typ^I(\call)$ is generated by $\Aut\typ^0(\call)$ and the 
$\widehat{\sigma}_\call$.  

Now let $s\:\Out\typ(\call)\Right2{}\Out(G)$ be the composite
	\begin{align*}  
	\Out\typ(\call) &= \Out\typ^0(\call) \sd{} 
	\{[\widehat{\sigma}_\call] \,|\, \sigma\in\widebar{\Sigma} \} 
	\RIGHT4{\widebar{\Psi}\sd{}}
	{([\widehat{\sigma}_\call]\mapsto\sigma)}
	\bigl( \Out\typ(\call_1) \times \cdots 
	\times \Out\typ(\call_m) \bigr) \sd{} \widebar{\Sigma} \\
	&\RIGHT7{(s_1,\dots,s_m)\sd{}}{(\sigma\mapsto[\widehat{\sigma}_G])}
	\bigl( \Out(G_1) \times \cdots \times \Out(G_m) \bigr) \sd{} 
	\{[\widehat{\sigma}_G] \,|\, \sigma\in\widebar{\Sigma} \} 
	\Right4{\incl} \Out(G) ~.
	\end{align*}
We must show $\kappa_G\circ{}s=\Id$.  Since 
$\kappa_G(s([\widehat{\sigma}_\call]))=\kappa_G([\widehat{\sigma}_G])
=[\widehat{\sigma}_\call]$ for $\sigma\in\widebar{\Sigma}$, it will suffice 
to show $\kappa_G(s([\alpha]))=[\alpha]$ for $\alpha\in\Aut\typ^0(\call)$.  
Let $\Out^0(G)\le\Out(G)$ be the subgroup of classes of automorphisms which 
leave each $G_i$ invariant, and consider the following composite:
	\begin{multline*}  
	\Out\typ^0(\call) \Right4{\widebar{\Psi}} 
	\prod_{i=1}^m\Out\typ(\call_i) \Right4{\prod s_i} 
	\prod_{i=1}^m\Out(G_i)\cong\Out^0(G) \\
	\Right8{\kappa_G|_{\Out^0(G)}} 
	\Out\typ^0(\call) \Right4{\widebar{\Psi}} 
	\prod_{i=1}^m\Out\typ(\call_i)~. 
	\end{multline*}
Here, $(\prod{}s_i)\circ\widebar{\Psi}=s|_{\Out\typ^0(\call)}$, 
$\widebar{\Psi}\circ\kappa_G|_{\Out^0(G)}=\prod\kappa_{G_i}$, and 
$(\prod\kappa_{G_i})\circ(\prod{}s_i)=\Id$.  This proves that 
$\widebar{\Psi}\circ\kappa_G|_{\Out^0(G)}\circ{}s|_{\Out\typ^0(\call)}
=\widebar{\Psi}$. 
Since $\widebar{\Psi}$ is injective, $\kappa_G\circ{}s=\Id$ on 
$\Out\typ^0(\call)$.  Thus $s$ is a splitting for $\kappa_G$, and this 
finishes the proof that $\calf$ is tame.

If each $\calf_i$ is strongly tame, then we can choose the $G_i$ to all be 
in the class $\gpclass{p}$.  Hence $G\in\gpclass{p}$ by Lemma 
\ref{lim2=0}(b), and $\calf$ is strongly tame.
\end{proof}

Theorem \ref{prod-tame} does \emph{not} say that an arbitrary product of 
reduced, tame fusion systems is tame:  such a product could conceivably 
have an indecomposable factor which is not tame.  However, at least when 
$p=2$, a theorem of Goldschmidt implies this is not possible.

\begin{Thm} 
Assume $p=2$, and let $\calf$ be a reduced fusion system over a $2$-group 
$S$. Assume $\calf=\calf_1\times\calf_2$, where $\calf_i$ is a fusion 
system over $S_i$ and $S=S_1\times{}S_2$.  Then $\calf$ is realizable, 
tame, or strongly tame if and only if $\calf_1$ and $\calf_2$ are both 
realizable, tame, or strongly tame, respectively.
\end{Thm}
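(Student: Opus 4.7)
The plan is to prove the two directions separately. The \emph{if} direction is largely in place already: for realizability, if $\calf_i=\calf_{S_i}(G_i)$ for $i=1,2$, then $\calf\cong\calf_S(G_1\times G_2)$; for tameness and strong tameness, apply Theorem \ref{prod-tame}. The substantive content lies in the \emph{only if} direction, which is where the hypothesis $p=2$ enters via a theorem of Goldschmidt.

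Suppose $\calf$ is realized by a finite group $G$ with $S\in\sylp{G}$. Since morphisms in $\calf=\calf_1\times\calf_2$ are restrictions of pairs, each $S_i$ is strongly closed in $\calf$. I would then invoke Goldschmidt's theorem on $2$-fusion, which (specifically for $p=2$) asserts that a strongly closed subgroup of a Sylow $2$-subgroup is the Sylow $2$-subgroup of the normal subgroup it generates in $G$. Setting $G_i=\gen{S_i^G}\nsg G$, this yields $S_i\in\sylp{G_i}$. The identification $\calf_{S_i}(G_i)=\calf_i$ then follows by observing that $\homf(P,Q)=\Hom_{\calf_i}(P,Q)$ for $P,Q\le S_i$ (by the product structure), and that by Theorem \ref{AFT} each such morphism is a composite of restrictions of automorphisms of $\calf$-centric $\calf$-radical subgroups; by Lemma \ref{rad<S1xS2} these split as products $R_1\times R_2$, and the $R_i$-component of each such automorphism is realized by an element of $G_i$. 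This settles the realizable case.

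For strong tameness, since $G_i\nsg G\in\gpclass{p}$, its composition factors form a subset of those of $G$, so $G_i\in\gpclass{p}$. For tameness, I would first replace $G$ by $G/O_{p'}(G)$, using the argument of Lemma \ref{Op'-Z(G)}, so that $O_{p'}(G)=1$. Then $G_1\cap G_2$ is normal in $G$ with Sylow $2$-subgroup $S_1\cap S_2=1$, hence of odd order, hence trivial. So $G_1\times G_2\nsg G$, and the quotient has odd order since $S\le G_1G_2$; by Schur--Zassenhaus, $G=(G_1\times G_2)\rtimes H$ for some odd-order $H$. The splitting of $\kappa_G$ can then be transferred to splittings of each $\kappa_{G_i}$ using the pullback square of Lemma \ref{norm-pb} applied to $G_1\times G_2\nsg G$, together with the factorization of $\Out\typ$ of a product linking system exhibited in the proof of Theorem \ref{prod-tame}.

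The main obstacle I anticipate is the appeal to Goldschmidt's theorem in the non-abelian form needed here: Goldschmidt's original 1974 result is stated for abelian strongly closed subgroups, so one must cite (or adapt) the generalization to arbitrary strongly closed $2$-subgroups to conclude $\gen{S_i^G}\cap S=S_i$. Granted that step, the transfer of the splitting in the tame case is formal but requires careful setup of the pullback diagrams and an argument handling the permutation of factors in case $\calf_1\cong\calf_2$, paralleling the construction of $\widebar{\Sigma}$ in the proof of Theorem \ref{prod-tame}.
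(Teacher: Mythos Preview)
Your overall plan is right, and in particular you correctly locate Goldschmidt's theorem as the place where $p=2$ enters. But two points need fixing.

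\textbf{The ``if'' direction is not immediate.} Theorem \ref{prod-tame} is stated only for the factorization of $\calf$ into \emph{indecomposable} pieces, and $\calf_1,\calf_2$ are not assumed indecomposable. You cannot directly conclude from tameness of $\calf_1,\calf_2$ that their indecomposable factors are tame --- that is precisely (a special case of) what the theorem is asserting. The paper handles this by proving the ``only if'' direction first and then using it: once you know that tameness of a reduced product implies tameness of each factor, apply this to $\calf_1$ and $\calf_2$ separately to get tameness of all indecomposable factors of $\calf$, and only then invoke Theorem \ref{prod-tame}.

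\textbf{On Goldschmidt and the tameness transfer.} Your worry about the abelian hypothesis is misplaced: the reference is Goldschmidt's 1975 paper on arbitrary strongly closed $2$-subgroups, and its Corollary A1 says directly that if $S_1,S_2$ are strongly closed in $S$ with $S_1\cap S_2=1$, then (after reducing to $O_{2'}(G)=1$ via Lemma \ref{Op'-Z(G)}) their normal closures $G_1,G_2$ satisfy $G_1\cap G_2=1$. Since $G_1\times G_2$ then has odd index and $\calf$ is reduced, $\calf=\calf_S(G_1\times G_2)$ and hence $\calf_i=\calf_{S_i}(G_i)$ immediately; your detour through Lemma \ref{rad<S1xS2} is unnecessary here. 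For the splitting, the paper does not use Lemma \ref{norm-pb}: instead it builds $\Phi\colon\Aut\typ^I(\call_1)\times\Aut\typ^I(\call_2)\to\Aut\typ^I(\call)$ explicitly, notes that any $\gamma\in\Aut(G,S)$ representing $s([\Phi(\alpha_1,\Id)])$ restricts to the identity on $S_2$ and hence fixes its normal closure $G_2$, and passes to $G/G_2$ (not $G_1$) to get the splitting for $\kappa_{G/G_2}$, where $\calf_{S_1}(G/G_2)\cong\calf_1$. No permutation-of-factors bookkeeping is needed in this direction.
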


\begin{proof}  Assume $\calf=\calf_S(G)$, where $G$ is a finite group and 
$S\in\syl2{G}$.  If $\calf$ is tame, we also assume $\kappa_G$ is split 
surjective, and if $\calf$ is strongly tame, we also assume 
$G\in\gpclass{2}$.  By Lemma \ref{Op'-Z(G)}, we can assume $O_{2'}(G)=1$.  

Let $G_i\nsg{}G$ be the normal closure of $S_i$ in $G$.  Since $\calf$ 
factors as a product $\calf_1\times\calf_2$, the subgroups $S_1$ and $S_2$ 
are strongly closed in $\calf$, and hence strongly closed in $G$ in the 
sense of \cite{Goldschmidt}.  So by Goldschmidt's 
theorem \cite[Corollary A1]{Goldschmidt}, $G_1\cap{}G_2=1$.  Thus 
$G_1\times{}G_2$ is a normal subgroup of odd index in $G$.  Since 
$\calf=\calf_S(G)$ has no proper normal subsystem of odd index (since it 
is reduced), $\calf_S(G)=\calf_S(G_1\times{}G_2)=\calf_{S_1}(G_1)\times 
\calf_{S_2}(G_2)$.  Hence $\calf_i=\calf_{S_i}(G_i)$ for $i=1,2$ 
(there can be at most one way to factor $\calf$ as a product of fusion 
systems over the $S_i$), and thus each $\calf_i$ is realizable.

Set $\call=\call_S^c(G)$ and $\call_i=\call_{S_i}^c(G_i)$.  
Define $\Phi\:\Aut\typ^I(\call_1)\times\Aut\typ^I(\call_2)\Right2{} 
\Aut\typ^I(\call)$ as follows.  Fix $\alpha_i\in\Aut\typ^I(\call_i)$ 
($i=1,2$).  Let $\beta_i\in\Aut(S_i,\calf_i)$ be the corresponding 
automorphisms (see Lemma \ref{AutI}), and set 
$\beta=(\beta_1,\beta_2)\in\Aut(S,\calf)$.  Thus 
$\alpha_i(P_i)=\beta_i(P_i)$ for each $P_i\in\Ob(\call_i)$ and 
$\pi(\alpha_i(\psi_i))=\beta_i\pi(\psi_i)\beta_i^{-1}$ for 
$\psi_i\in\Mor(\call_i)$.  Define $\alpha\in\Aut\typ^I(\call)$ on objects by 
setting $\alpha(P)=\beta(P)$ for $P\in\Ob(\call)$.  Fix 
$\psi\in\Mor_\call(P,Q)$, let $P_i,Q_i\le{}S_i$ be the images of $P$ and 
$Q$ under projection, and set $\widehat{P}=P_1\times{}P_2$ and 
$\widehat{Q}=Q_1\times{}Q_2$.  Since $G$ and $G_1\times{}G_2$ have the same 
fusion system over $S$, $\psi=[g]$ for some 
$g=(g_1,g_2)\in{}N_G(P,Q)$, where $g_i\in{}G_i$.  Then 
$g_i\in{}N_{G_i}(P_i,Q_i)$, and hence $\psi$ extends to 
$\widehat{\psi}=(\psi_1,\psi_2)\in\Mor_\call(\widehat{P},\widehat{Q})$ 
where $\psi_i=[g_i]\in\Mor_{\call_i}(P_i,Q_i)$.  Also, 
$\pi(\alpha_1(\psi_1),\alpha_2(\psi_2)) 
=\beta(\pi(\psi_1),\pi(\psi_2))\beta^{-1}$ sends $\beta(P)$ into 
$\beta(Q)$, and we define 
$\alpha(\psi)=(\alpha_1(\psi_1),\alpha_2(\psi_2))|_{\beta(P),\beta(Q)}$.  
Finally, $\alpha\in\Aut\typ^I(\call)$ since 
$\alpha_i\in\Aut\typ^I(\call_i)$, and we set 
$\Phi(\alpha_1,\alpha_2)=\alpha$.  

Assume $\calf$ is tamely realized by $G$, and let 
$s\:\Out\typ(\call)\Right2{}\Out(G)$ be a splitting for $\kappa_G$.  
For each $\alpha_1\in\Aut\typ^I(\call_1)$, 
$s([\Phi(\alpha_1,\Id_{\call_2})])=[\gamma]$ for some $\gamma\in\Aut(G,S)$ 
such that $\gamma|_{S_2}=\Id$.  Also, $\gamma(G_2)=G_2$ since $G_2$ is the 
normal closure of $S_2$ in $G$, and so $\gamma$ induces 
$\widebar{\gamma}\in\Aut(G/G_2,S_1)$.  The class 
$[\widebar{\gamma}]\in\Out(G/G_2)$ is independent of the choice of 
$\gamma$ modulo $\Inn(G)$, and hence this gives a well defined 
homomorphism $s_1$ from $\Out\typ^I(\call_1)$ to $\Out(G/G_2)$.  Also, 
$\calf_{S_1}(G/G_2)\cong\calf/S_2\cong\calf_1$, so 
$\call^c_{S_1}(G/G_2)\cong\call_1$; and $s_1$ is a splitting for 
$\kappa_{G/G_2}$ since $s$ is a splitting for $\kappa_G$.  Thus $\calf_1$ 
is tame, and $\calf_2$ is tame by a similar argument. If $\calf$ is 
strongly tame, then we can choose $G\in\gpclass{2}$, so 
$G/G_i\in\gpclass{2}$ ($i=1,2$) by Lemma \ref{lim2=0}(b), and hence 
$\calf_1$ and $\calf_2$ are strongly tame.  

This proves the ``only if'' part of the theorem.  Clearly, $\calf$ is 
realizable if both factors are.  If $\calf_1$ and $\calf_2$ are both 
(strongly) tame, then we have just shown that each of the indecomposable 
factors of $\calf_1$ and $\calf_2$ is (strongly) tame, and so $\calf$ is 
(strongly) tame by Theorem \ref{prod-tame}. 
\end{proof}


\newsect{Examples}

We now give three families of examples, to illustrate some of the 
techniques which can be used to prove tameness of reduced fusion systems.  
As an introduction to these techniques, we first list the reduced fusion 
systems over dihedral and semidihedral groups and prove they are all tame. 
Next, we prove that certain fusion systems studied in \cite[\S\,4--5]{OV2} 
are reduced and tame; as a way of explaining how the information 
about these fusion systems given in \cite{OV2} is just what is needed to 
prove tameness.  As a third example, we prove that the fusion systems of 
all alternating groups are tame, and that they are reduced with certain 
obvious exceptions.  

In general, tameness is shown by examining, for a $p$-local finite group 
$\SFL$ realized by $G$, the homomorphisms
	\[ \Out(G) \Right4{\kappa_G} \Out\typ(\call) \Right4{\mu_G} 
	\Out(S,\calf) \]
defined in Sections \ref{s:tamedef} and \ref{s:Aut(L)}.  By definition, 
$\calf$ is tame if $\kappa_G$ is split surjective (for some choice of 
$G$).  However, the group $\Out(S,\calf)$ is usually much easier to 
describe than $\Out\typ(\call)$, and the composite $\mu_G\circ\kappa_G$ is 
induced by restriction to $S$.  So we need some way of describing 
$\Ker(\mu_G)$.

We first recall some definitions.  A proper subgroup $H\lneqq{}G$ of a 
finite group $G$ is \emph{strongly $p$-embedded} if $p\big||H|$, and 
for each $g\in{}G{\sminus}H$, $H\cap{}gHg^{-1}$ has order prime to $p$.  It 
is not hard to see that $G$ has a strongly $p$-embedded subgroup if and 
only if the poset $\cals_p(G)$ of nontrivial $p$-subgroups is disconnected 
(cf. \cite[Theorem X.4.11(b)]{HB3}), but we will not be using that here.

When $\calf$ is a saturated fusion system over a finite $p$-group $S$,
then a proper subgroup $P\lneqq{}S$ is \emph{$\calf$-essential} if it
is $\calf$-centric and fully normalized, and $\outf(P)$ contains a
strongly $p$-embedded subgroup. Thus each $\calf$-essential subgroup is 
fully normalized and $\calf$-centric by definition, and is $\calf$-radical
since $O_p(\Gamma)=1$ for any group $\Gamma$ which has a strongly
$p$-embedded subgroup.  See, e.g., \cite[Theorem 6.4.3]{Sz2} for a
proof of this last statement (it is  shown there only for $p=2$, but
the same proof works for odd primes). The following proposition is a
stronger version of Theorems \ref{AFT} and \ref{AFT-L}, and helps show
the importance of essential subgroups when working with fusion
systems.

\begin{Thm} \label{AFT-E}
Let $\calf$ be any saturated fusion system over a finite $p$-group $S$.  
Let $\cale$ be the set of $\calf$-essential subgroups of $S$, and set 
$\cale_+=\cale\cup\{S\}$.  Then each morphism in $\calf$ is a composite of 
restrictions of elements of $\autf(P)$ for $P\in\cale_+$.  If $\call$ is a 
linking system associated to $\calf$, then each morphism in $\call$ is a 
composite of restrictions of elements of $\Aut_\call(P)$ for $P\in\cale_+$. 
\end{Thm}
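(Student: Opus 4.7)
The plan is to upgrade Theorem \ref{AFT} (respectively Theorem \ref{AFT-L}) by showing that any $\alpha\in\autf(P)$ for $P$ fully normalized, $\calf$-centric, and $\calf$-radical but not in $\cale_+$ can itself be written as a composite of restrictions of automorphisms of strictly larger subgroups. The statement then follows by downward induction on $[S:P]$, since each step either terminates at $S$ or at an $\calf$-essential subgroup.

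The key group-theoretic ingredient is the standard fact that a finite group $\Gamma$ contains no strongly $p$-embedded subgroup if and only if $\Gamma=\bigl\langle N_\Gamma(R) \,\big|\, 1\ne R\le T\bigr\rangle$ for some (any) $T\in\sylp{\Gamma}$ (cf.\ \cite[Theorem X.4.11]{HB3}). I would apply this to $\Gamma=\outf(P)$, using that $\Out_S(P)\in\sylp{\outf(P)}$ because $P$ is fully normalized. Since $P\notin\cale_+$ means in particular $\outf(P)$ has no strongly $p$-embedded subgroup, each $\bar\alpha\in\outf(P)$ factors as a product $\bar\alpha_1\cdots\bar\alpha_n$ in which each $\bar\alpha_i$ normalizes some nontrivial $R_i\le\Out_S(P)$. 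Lifting through the projection $\autf(P)\surj\outf(P)$ and using that each inner automorphism $c_g$ for $g\in P$ is the restriction of $c_g\in\Inn(S)\le\autf(S)$, I can write any $\alpha\in\autf(P)$ as $\alpha=\alpha_1\circ\cdots\circ\alpha_n\circ\iota$, where $\iota$ is a restriction from $S\in\cale_+$, and each $\alpha_i\in\autf(P)$ normalizes the full preimage $R_i^*\le\Aut_S(P)$ of $R_i$.

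The extension step uses axiom (II). Let $\widebar{R}_i=\{g\in N_S(P)\,|\, c_g|_P\in R_i^*\}$; then $\widebar{R}_i\supsetneq P$ because $R_i\ne1$ and $P$ is $\calf$-centric, and $\Aut_{\widebar{R}_i}(P)=R_i^*$. Because $\alpha_i$ normalizes $R_i^*$ and $\alpha_i(P)=P$, for every $g\in\widebar{R}_i$ we have $\alpha_i\circ c_g\circ\alpha_i^{-1}\in R_i^*\le\Aut_S(\alpha_i(P))$, so $\widebar{R}_i\le N_{\alpha_i}$. Since $P$ is fully centralized in $\calf$ by axiom (I), the extension axiom produces $\widebar{\alpha}_i\in\autf(\widebar{R}_i)$ with $\widebar{\alpha}_i|_P=\alpha_i$, completing the inductive step for $\calf$.

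For the linking system version, I would repeat the argument leading to Theorem \ref{AFT-L} with the fusion-system Alperin theorem replaced by the strengthened version just proved. Using Proposition \ref{L-prop}(a), the task reduces to decomposing elements of $E(P)=\Ker[\Aut_\call(P)\to\autf(P)]$ for $P\in\Ob(\call)$; when $P$ is fully centralized, axiom (A) gives $E(P)=\delta_P(C_S(P))$, and each $\delta_P(g)$ is the restriction of $\delta_S(g)\in\Aut_\call(S)$. For $P$ not fully centralized, one chooses a fully centralized $\calf$-conjugate $Q$ and some $\psi\in\Iso_\call(P,Q)$ that decomposes appropriately (lifting the decomposition of $\pi(\psi)$ produced by the fusion-system argument via Proposition \ref{L-prop}(e) to extend each factor over $\widebar{R}_i$), and then conjugates. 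The main obstacle is the bookkeeping in the third paragraph: one must check that the chosen preimage $R_i^*$ of $R_i$ lies in $\Aut_S(P)$ and is normalized by the lift $\alpha_i$, and that the resulting $\widebar{R}_i$ strictly contains $P$. These are exactly the points that force the hypothesis ``$P$ is $\calf$-centric and $\outf(P)$ has no strongly $p$-embedded subgroup'' to be used in full strength.
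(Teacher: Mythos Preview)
Your argument is correct and is precisely the standard proof that the paper cites rather than reproduces: for the fusion-system statement the paper simply refers to \cite[\S5]{Puig} and \cite[Corollary 2.6]{OV2}, whose arguments proceed exactly by your combination of the strongly $p$-embedded criterion and the extension axiom, and for the linking-system statement the paper invokes Proposition~\ref{L-prop}(a) just as you do. One small remark: the parenthetical appeal to Proposition~\ref{L-prop}(e) in your final paragraph is unnecessary---once the $\calf$-version is established, lifting a decomposition of $\pi(\psi)$ to $\call$ requires only the surjectivity of $\pi$ on morphism sets (axiom (A) or Proposition~\ref{L-prop}(a)), not the extension property inside $\call$.
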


\begin{proof}  The statement about morphisms in $\calf$ is shown in 
\cite[\S\,5]{Puig}, and also in \cite[Corollary 2.6]{OV2}.  The second 
statement follows from this together with Proposition \ref{L-prop}(a) (and 
since $\Ob(\call)$ is closed under overgroups).  
\end{proof}

The following proposition will be useful when describing $\Ker(\mu_G)$, 
and for determining whether or not explicit elements in this group vanish. 
In fact, it applies to help describe $\Ker(\mu_\call)$, when $\call$ is an 
arbitrary linking system (not necessarily induced by a finite 
group).  For any fusion system $\calf$ over $S$ and any $P\le{}S$, we 
write 
	\[ C_{Z(P)}(\autf(P))=\{g\in{}Z(P)\,|\,\alpha(g)=g
	\textup{ for all } \alpha\in\autf(P)\} \]
and similarly for $C_{Z(P)}(\Aut_S(P))$ and $C_{Z(P)}(\Aut_\call(P))$.

\begin{Prop} \label{Ker(mu)}
Let $\calf$ be a saturated fusion system over the finite $p$-group $S$, 
and let $\call$ be a linking system associated to $\calf$.  Let 
$\call^c\subseteq\call$ be the full subcategory whose objects are the 
$\calf$-centric objects in $\call$.  Each element in $\Ker(\mu_\call)$ is 
represented by some $\alpha\in\Aut\typ^I(\call)$ such that 
$\alpha_S=\Id_{\Aut_\call(S)}$.  For each such $\alpha$, there are 
elements $g_P\in{}C_{Z(P)}(\Aut_S(P))$, defined for each fully normalized 
subgroup $P\in\Ob(\call^c)$, for which the following hold:
\begin{enuma}  
\item $\alpha_P\in\Aut(\Aut_\call(P))$ is conjugation by $\delta_P(g_P)$, 
and $g_P$ is uniquely determined by $\alpha$ modulo $C_{Z(P)}(\autf(P))$.
In particular, $\alpha_P=\Id_{\Aut_\call(P)}$ if and only if 
$g_P\in{}C_{Z(P)}(\autf(P))$.  

\item Assume $P,Q\in\Ob(\call^c)$ are both fully normalized in $\calf$.  
If $Q=aPa^{-1}$ for some $a\in{}S$, then we can choose $g_Q=ag_Pa^{-1}$.  
More generally, if $Q$ is $\calf$-conjugate to $P$, and there is 
$\zeta\in\Iso_\call(P,Q)$ such that $\alpha(\zeta)=\zeta$, then we can 
choose $g_Q=\pi(\zeta)(g_P)$. In either case, 
$\alpha_P=\Id_{\Aut_\call(P)}$ if and only if 
$\alpha_Q=\Id_{\Aut_\call(Q)}$.

\item If $Q\le{}P$ are both fully normalized objects in $\call^c$, then 
$g_P\equiv{}g_Q$ (mod $C_{Z(Q)}(\autf(P,Q))$), where $\autf(P,Q)$ is the 
group of those $\varphi\in\autf(P)$ such that $\varphi(Q)=Q$. 

\item Let $\cale$ be the set of all $\calf$-essential subgroups $P\lneqq{}S$ 
and let $\cale_0\subseteq\cale$ be the 
subset of those $P\in\cale$ such that $C_{Z(P)}(\autf(P))\lneqq 
C_{Z(P)}(\Aut_S(P))$.  Then $[\alpha]=1$ in $\Out\typ(\call)$ if and only 
if there is $g\in{}C_{Z(S)}(\autf(S))$ such that 
$g_P\in{}g{\cdot}C_{Z(P)}(\autf(P))$ for all $P\in\cale_0$.  

\item Let $\cale_0$ be as in \textup{(d)}, and let $\E_0$ be the set of 
all $P\in\cale_0$ such that $P=C_S(E)$ for some elementary abelian 
$p$-subgroup $E\le{}S$ which is fully centralized in $\calf$.  Then 
$[\alpha]=1$ in $\Out\typ(\call)$ if and only if there is 
$g\in{}C_{Z(S)}(\autf(S))$ such that $g_P\in{}g{\cdot}C_{Z(P)}(\autf(P))$ 
for all $P\in\E_0$.  In fact, it suffices that this hold for at least one 
subgroup $P\in\E_0$ in each $\calf$-conjugacy class intersecting $\E_0$ 
nontrivially.
\end{enuma}
\end{Prop}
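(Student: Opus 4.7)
The plan is to proceed in four strands: first normalize the representative so that $\alpha_S=\Id_{\Aut_{\call}(S)}$, then construct the elements $g_P$ at each fully normalized $\calf$-centric $P$ and verify (a)--(c), and finally combine these with Alperin's fusion theorem (Theorem \ref{AFT-E}) to prove (d) and the finer reduction in (e).

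For the normalization, I start with $[\alpha]\in\Ker(\mu_\call)$. The exact sequence of Lemma \ref{OutI1}(a) shows $\til\mu_\call(\alpha)\in\Aut_{\calf}(S)=\pi(\Aut_{\call}(S))$, so lifting through $\pi$ to $\gamma\in\Aut_{\call}(S)$ and replacing $\alpha$ by $c_\gamma^{-1}\circ\alpha$ reduces to $\til\mu_\call(\alpha)=\Id_S$; then $\alpha_S$ fixes $\delta_S(S)$ pointwise. Since $S$ is fully normalized, $\delta_S(S)$ is a normal Sylow $p$-subgroup of $\Aut_{\call}(S)$ with $p'$-quotient $\outf(S)$, and Lemma \ref{AutI} makes $\alpha_S$ induce the identity on $\autf(S)$; the vanishing $H^1(\outf(S);\delta_S(Z(S)))=0$ (Maschke, since $|\outf(S)|$ is prime to $p$) then forces $\alpha_S$ to be conjugation by some $\delta_S(z_0)$ with $z_0\in Z(S)$, and replacing $\alpha$ by $c_{\delta_S(z_0)}^{-1}\circ\alpha$ gives the desired representative.

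For (a)--(c), I fix a fully normalized $P\in\Ob(\call^c)$. The fact that $\alpha_S=\Id$ and $\alpha$ preserves inclusions, applied to $\delta_{P,S}(g)=\delta_S(g)\circ\iota_P^S$ together with $\iota_P^S$ being a categorical monomorphism (Proposition \ref{L-prop}(f)), forces $\alpha_P(\delta_P(g))=\delta_P(g)$ for all $g\in N_S(P)$; thus $\alpha_P$ fixes the Sylow $p$-subgroup $\delta_P(N_S(P))$ (Proposition \ref{L-prop}(d)) pointwise, and Lemma \ref{AutI} makes it induce the identity on $\autf(P)$ and hence on the $p'$-quotient $\Aut_{\call}(P)/\delta_P(N_S(P))$. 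A second Maschke-type argument with coefficients in $C_{\Aut_{\call}(P)}(\delta_P(N_S(P)))$, which axiom~(C) identifies with $\delta_P(C_{Z(P)}(\Aut_S(P)))$, produces $g_P\in C_{Z(P)}(\Aut_S(P))$ with $\alpha_P=c_{\delta_P(g_P)}$, the ambiguity in $g_P$ being precisely the kernel $\delta_P(C_{Z(P)}(\autf(P)))$ of the conjugation action on $\Aut_{\call}(P)$ (again by axiom~(C)). For (b), functoriality of $\alpha$ gives $\alpha_Q(c_\zeta(\psi))=c_{\alpha(\zeta)}(\alpha_P(\psi))$; for $Q=aPa^{-1}$ the canonical $\zeta=\delta_{P,Q}(a)$ is the unique isomorphism-factor (Proposition \ref{L-prop}(b$'$)) of the fixed morphism $\delta_{P,S}(a)=\delta_S(a)\circ\iota_P^S$, so $\alpha(\zeta)=\zeta$, and in both cases axiom~(C) expands the equality $\alpha_Q=c_\zeta\alpha_Pc_\zeta^{-1}$ to $g_Q=\pi(\zeta)(g_P)$. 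For (c), naturality gives $\alpha_P(\psi)|_Q=\alpha_Q(\psi|_Q)$ for each $\psi\in\Aut_{\call}(P)$ with $\pi(\psi)(Q)=Q$; since $g_P\in Z(P)\cap N_S(Q)\le Z(Q)$ (using that $Q$ is $\calf$-centric), $\delta_P(g_P)|_{Q,Q}=\delta_Q(g_P)$ by functoriality of $\delta$, and comparing the two sides forces $\delta_Q(g_Pg_Q^{-1})$ to commute with $\psi|_Q$ for all such $\psi$, which by axiom~(C) is exactly the congruence $g_P\equiv g_Q\pmod{C_{Z(Q)}(\autf(P,Q))}$.

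For (d) and (e), I combine (a)--(c) with Theorem \ref{AFT-E} and Proposition \ref{L-prop}(a): $\alpha=\Id$ on $\call$ iff $\alpha_P=\Id$ for all $P\in\cale\cup\{S\}$, iff $g_P\in C_{Z(P)}(\autf(P))$ for each such $P$. This is automatic when $C_{Z(P)}(\Aut_S(P))=C_{Z(P)}(\autf(P))$, i.e.\ outside $\cale_0$, while the normalization ensures $g_S\in Z(\calf)$. Modifying by $c_{\delta_S(g)^{-1}}$ for any $g\in Z(\calf)=C_{Z(S)}(\autf(S))$ acts trivially on $\call$ but shifts each $g_P$ to $g^{-1}g_P$, so the condition ``$g_P\equiv g\pmod{C_{Z(P)}(\autf(P))}$ on $\cale_0$ for some $g$'' is equivalent to $[\alpha]=1$; then part (b), together with the fact that $g\in Z(\calf)$ is fixed by $\pi(\zeta)$, reduces the check to one representative per $\calf$-conjugacy class. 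For the further reduction (e) from $\cale_0$ to $\E_0$, the plan is that for each $P\in\cale_0$ I locate an $\autf(P)$-invariant elementary abelian subgroup $E\le Z(P)$ which, after $\calf$-conjugating $P$ via (b) if necessary, is fully centralized in $\calf$ and satisfies $Q\defeq C_S(E)\in\E_0$; then an application of (c) to the pair $P\le Q$ (using that $E$ being $\autf(P)$-invariant gives substantial control on $\autf(Q,P)$) transports the condition from $Q$ to $P$. The main obstacle will be precisely this step: while the other parts reduce to axiomatic manipulations together with standard $p'$-cohomology vanishing, (e) demands a structural argument producing, for every essential $P$ with $C_{Z(P)}(\autf(P))\lneqq C_{Z(P)}(\Aut_S(P))$, a fully centralized elementary abelian $E\le Z(P)$ whose centralizer already lies in $\E_0$ and detects the nontriviality of $g_P$ modulo $C_{Z(P)}(\autf(P))$.
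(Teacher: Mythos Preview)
Your treatment of the normalization and of parts (a)--(c) follows the paper's argument closely and is essentially correct. One phrasing issue in (a): the quotient $\Aut_{\call}(P)/\delta_P(N_S(P))$ is not a group (the Sylow subgroup need not be normal), so your ``Maschke-type argument with coefficients in $C_{\Aut_{\call}(P)}(\delta_P(N_S(P)))$'' is not literally what is happening. The paper instead passes to the normal subgroup $\delta_P(P)\nsg\Aut_{\call}(P)$, so that the cocycle $\psi\mapsto\alpha_P(\psi)\psi^{-1}\in\delta_P(Z(P))$ descends to a class in $H^1(\outf(P);Z(P))$, and then uses injectivity of restriction to the Sylow subgroup $\Out_S(P)$ (this is \cite[Lemma~1.2]{OV2}). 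Your argument can be repaired in exactly this way.

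In (d) there is a small but real confusion. You write $Z(\calf)=C_{Z(S)}(\autf(S))$ and say that $c_{\delta_S(g)^{-1}}$ ``acts trivially on $\call$''. Neither is right in general: for $g\in C_{Z(S)}(\autf(S))$ the automorphism $c_{\delta_S(g)}$ need not be the identity on $\call$, but it does represent the trivial class in $\Out\typ(\call)$ (Lemma~\ref{OutI1}(a)) and it does fix $\Aut_{\call}(S)$ pointwise, which is all you need. With that correction your argument for (d) is the paper's.

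Part (e) is where there is a genuine gap. Your plan is, for each $P\in\cale_0$, to pass \emph{upward} to $Q=C_S(E)$ for a suitable $E\le Z(P)$, hope that $Q\in\E_0$, and transport via (c). But there is no reason $Q$ should be $\calf$-essential (hence no reason $Q\in\E_0$), and even if it were, (c) only gives $g_P\equiv g_Q\pmod{C_{Z(P)}(\autf(Q,P))}$, which is not enough to deduce $g_P\in C_{Z(P)}(\autf(P))$ from the corresponding statement for $Q$ without further control on $\autf(Q,P)\to\autf(P)$. The paper's argument goes in the opposite direction and uses a maximality trick: assuming $\alpha\ne\Id$, choose $P\in\cale_0$ with $\alpha_P\ne\Id$ and $|P|$ \emph{maximal}. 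Then $\alpha_T=\Id$ for all $T$ with $|T|>|P|$, which (via Theorem~\ref{AFT-E}) produces for every $\calf$-conjugate $Q$ of $P$ an isomorphism $\zeta\in\Iso_{\call}(P,Q)$ fixed by $\alpha$. Setting $E=\Omega_1(Z(P))$ and $P^*=N_{C_S(E)}(P)$, a Frattini argument gives $\Aut_{\call}(P)=N_{\Aut_{\call}(P)}(\delta_P(P^*))\cdot C_{\Aut_{\call}(P)}(\delta_P(Z(P)))$; since conjugation by $g_P\in Z(P)$ is trivial on the second factor but nontrivial on $\Aut_{\call}(P)$, it is nontrivial on the first, and extending to $\Aut_{\call}(P^*)$ via Proposition~\ref{L-prop}(e) gives $\alpha_{P^*}\ne\Id$. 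Maximality then forces $P^*=P$, i.e.\ $C_S(E)=P$, so $P\in\E_0$. This is the missing idea; your ``upward'' scheme does not supply it.
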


\begin{proof}  We identify $S$ with $\delta_S(S)\le\Aut_\call(S)$ for 
short.  Fix $\alpha\in\Aut\typ^I(\call)$ such that 
$[\alpha]\in\Ker(\mu_\call)$.  Set $\beta=\til\mu_\call(\alpha)$; thus 
$\beta\in\autf(S)$.  Choose $\zeta\in\Aut_\call(S)$ such that 
$\pi(\zeta)=\beta$.  Then $\til\mu_\call(c_\zeta)=\beta$ by axiom (C) for 
the linking system $\call$, and so upon replacing $\alpha$ by 
$\alpha\circ{}c_{\zeta}{}^{-1}$, we can arrange that $\alpha_S$ is the 
identity on $\delta_S(S)\nsg\Aut_\call(S)$.  We will show in the proof of 
(a) how to arrange that $\alpha_S=\Id_{\Aut_\call(S)}$.  

\smallskip

\noindent\textbf{(a) } 
Fix a fully normalized subgroup $P\in\Ob(\call^c)$.  Set 
$\Gamma=\Aut_\call(P)$ for short, and identify $P$ with 
$\delta_P(P)\nsg\Gamma$.  Set 
$\Out(\Gamma,P)=\Aut(\Gamma,P)/\Inn(\Gamma)$, where 
$\Aut(\Gamma,P)\le\Aut(\Gamma)$ is the subgroup of automorphisms leaving 
$P$ invariant.  By \cite[Lemma 1.2]{OV2}, there is an exact sequence
	\[ 1 \Right2{} H^1(\Gamma/P;Z(P)) \Right4{\eta} \Out(\Gamma,P) 
	\Right4{R} N_{\Out(P)}(\Out_{\Gamma}(P))/\Out_{\Gamma}(P), \]
where $R$ is induced by restriction.  Since $\alpha_P\in\Aut(\Gamma)$ and 
$\alpha_P|_{\delta_P(N_S(P))}=\Id$, $[\alpha_P]\in\Ker(R)$, and 
$\eta^{-1}([\alpha_P])$ is trivial after restriction to 
$H^1(N_S(P)/P;Z(P))$.  The restriction map from $H^1(\Gamma/P;Z(P))$ to 
$H^1(N_S(P)/P;Z(P))$ is injective since $\delta_P(N_S(P))\in\sylp{\Gamma}$ 
(Proposition \ref{L-prop}(d)), and hence $[\alpha_P]=1$.  Thus 
$\alpha_P=c_{\delta_P(g_P)}$ for some $g_P\in{}Z(P)$ which is uniquely 
determined modulo $C_{Z(P)}(\Gamma)=C_{Z(P)}(\autf(P))$.  Also, 
$g_P\in{}C_{Z(P)}(\Aut_S(P))$, since $\alpha_P$ is the identity on 
$\delta_P(N_S(P))$.  

Set $\gamma=\delta_S(g_S)\in\Aut_\call(S)$.  Upon replacing $\alpha$ by 
$\alpha\circ{}c_{\gamma}^{-1}$, we can arrange that $\alpha_S=\Id$.

\smallskip

\noindent\textbf{(b) } Assume $\zeta\in\Iso_\call(P,Q)$ and 
$\alpha(\zeta)=\zeta$.  Fix $\psi\in\Aut_\call(Q)$, and set 
$\varphi=\zeta^{-1}\psi\zeta\in\Aut_\call(P)$.  Set $g=\pi(\zeta)(g_P)$; 
then $\zeta\circ\delta_P(g_P)\circ\zeta^{-1}=\delta_Q(g)$ by axiom (C) for a 
linking system.  Hence 
	\[ \alpha_Q(\psi) = \alpha_Q(\zeta\varphi\zeta^{-1}) = 
	\zeta\alpha_P(\varphi)\zeta^{-1} = 
	\zeta\delta_P(g_P)\varphi\delta_P(g_P)^{-1}\zeta^{-1} =
	\delta_Q(g)\psi\delta_Q(g)^{-1}~, \]
and we can choose $g_Q=g$.  

If $Q=aPa^{-1}$ and $\zeta=\delta_{P,Q}(a)$, then $\alpha(\zeta)=\zeta$ 
since $\alpha_S=\Id$  (and since $\alpha$ sends inclusions to inclusions). 
So again we can choose $g_Q=c_a(g_P)$.  

In either case, $g_P\in{}C_{Z(P)}(\autf(P))$ if and only if 
$g_Q\in{}C_{Z(Q)}(\autf(Q))$, and hence $\alpha_P=\Id$ if and only if 
$\alpha_Q=\Id$.

\smallskip

\noindent\textbf{(c) } Assume $Q\le{}P$, and let $\Aut_\call(P,Q)$ be the 
group of elements $\psi\in\Aut_\call(P)$ such that $\pi(\psi)(Q)=Q$.  Then 
$\alpha$ commutes with the restriction map 
	\[ \Res^P_Q\: \Aut_\call(P,Q) \Right4{} \Aut_\call(Q) \]
which is injective by Proposition \ref{L-prop}(f).  So if $\alpha$ acts on 
$\Aut_\call(P,Q)$ via conjugation by $\delta_P(g_P)$ and on 
$\Aut_\call(Q)$ via conjugation by $\delta_Q(g_Q)$, they must have the 
same action on $\Aut_\call(P,Q)$.  Since $g_Q$ and $g_P$ both lie in 
$Z(Q)\ge{}Z(P)$, we conclude $g_Q\equiv{}g_P$ (mod 
$C_{Z(Q)}(\autf(P,Q))$). 

\smallskip

\noindent\textbf{(d) }   
By Theorem \ref{AFT-E}, all 
morphisms in $\call$ are composites of restrictions of elements in 
$\Aut_\call(P)$ for $P$ $\calf$-essential or $P=S$.  
Hence if $\alpha\ne\Id_\call$, then since $\alpha_S=\Id$ by assumption, 
$\alpha_P\ne\Id$ for some $P\in\cale$.  By (a), 
$g_P\in{}C_{Z(P)}(\Aut_S(P))$ but $g_P\notin{}C_{Z(P)}(\autf(P))$, 
and so $P\in\cale_0$.  The converse is clear:  if $\alpha=\Id_\call$, 
then $\alpha_P=\Id$ and hence $g_P\in{}C_{Z(P)}(\autf(P))$ for all 
$P\in\cale_0$.

By definition, $[\alpha]=1$ in $\Out\typ(\call)$ if and only if 
$\alpha=c_\beta$ for some $\beta\in\Aut_\call(S)$.  Since $\alpha_S=\Id$, 
$\beta\in{}Z(\Aut_\call(S))$, and hence $\beta=\delta_S(g)$ for some 
$g\in{}C_{Z(S)}(\autf(S))$.  Thus $[\alpha]=1$ if and only if 
$\alpha=c_{\delta_S(g)}$ for some $g\in{}C_{Z(S)}(\autf(S))$, which we 
just saw is the case exactly when $g^{-1}g_P\in{}C_{Z(P)}(\autf(P))$ for 
all $P\in\cale_0$. 

\smallskip

\noindent\textbf{(e) }  We first prove the following statement:
	\beqq \begin{split}  
	\alpha=\Id_\call \ &\Longleftrightarrow 
	\textup{ $g_P\in{}C_{Z(P)}(\autf(P))$ for all $P\in\E_0$} \\
	&\Longleftrightarrow 
	\textup{ $\forall\,P\in\E_0$ $\exists\,Q\in\E_0$ 
	$\calf$-conjugate to $P$ with $g_Q\in{}C_{Z(Q)}(\autf(Q))$} ~.
	\end{split} \label{e:4.2a} \eeqq
The first statement implies the second by (a), and the second
implies the third tautologically.

Now assume $\alpha\ne\Id_\call$.  As was just seen in the proof of (d), 
there is $P\in\cale_0$ such that 
$g_P\notin{}C_{Z(P)}(\autf(P))$.  Assume $P$ is such that $|P|$ is maximal 
among orders of all such subgroups.  We will show that $P\in\E_0$ 
(possibly after replacing $P$ by another subgroup in its $\calf$-conjugacy 
class), and that $g_Q\notin{}C_{Z(Q)}(\autf(Q))$ for each $Q\in\E_0$ which 
is $\calf$-conjugate to $P$.  This will prove the remaining implication in 
\eqref{e:4.2a}.

We first check that 
	\beqq \textup{$T\in\Ob(\call)$ and $|T|>|P|$}
	\quad\Longrightarrow\quad
	\alpha_T=\Id~. \label{e:4.2aa} \eeqq
If $T=S$ or $T\in\cale_0$, this follows by assumption.  If 
$T\in\cale{\sminus}\cale_0$, then 
$g_{T}\in{}C_{Z(T)}(\Aut_S(T))=C_{Z(T)}(\autf(T))$, and hence 
$\alpha_T=\Id$ by definition of $g_T$.  Otherwise, each 
$\psi\in\Aut_\call(T)$ is a composite of restrictions of automorphisms of 
subgroups in $\cale\cup\{S\}$ (Theorem \ref{AFT-E}), each of those 
automorphisms and its restrictions are sent to themselves by $\alpha$, and 
hence $\alpha_T(\psi)=\psi$.

We next claim that 
	\beqq \textup{for all $Q$ $\calf$-conjugate to $P$, there is 
	$\zeta\in\Iso_\call(P,Q)$ such that $\alpha_{P,Q}(\zeta)=\zeta$.} 
	\label{e:4.2c} \eeqq
Choose any $\zeta_0\in\Iso_\call(P,Q)$.  By Theorem \ref{AFT-E} 
again, $\zeta_0$ is the composite of restrictions of automorphisms 
$\psi_i\in\Aut_\call(R_i)$ for subgroups $R_i\le{}S$ with $|R_i|\ge|P|$.  
If we remove from this composite all $\psi_i$ for which $|R_i|=|P|$, we 
get an isomorphism $\zeta\in\Iso_\call(P,Q)$ which is a composite of 
restrictions of automorphisms of strictly larger subgroups.  We just 
showed that $\alpha_{R_i}(\psi_i)=\psi_i$ whenever $|R_i|>|P|$, and thus 
$\alpha_{P,Q}(\zeta)=\zeta$.

Set $E=\Omega_1(Z(P))$:  the $p$-torsion subgroup of the center $Z(P)$.  
If $E$ is not fully normalized in $\calf$, then choose 
$\varphi\in\homf(N_S(E),S)$ such that $\varphi(E)$ is fully normalized 
(using \cite[Proposition A.2(b)]{BLO2}).  Then $\varphi(P)$ is fully 
normalized since $N_S(\varphi(P))\ge{}\varphi(N_S(P))$.  By 
\eqref{e:4.2c}, there is $\zeta\in\Iso_\call(P,\varphi(P))$ such that 
$\alpha_{P,\varphi(P)}(\zeta)=\zeta$.  So $\alpha_{\varphi(P)}\ne\Id$ by 
(b).  Upon replacing $P$ by $\varphi(P)$ and $E$ by $\varphi(E)$, we can 
now assume $E$ and $P$ are both fully normalized.  

Set $P^*=N_{C_S(E)}(P)\ge{}P$ and $\Gamma=\Aut_\call(P)$ for short.  To 
simplify notation, we identify $N_S(P)$ with $\delta_P(N_S(P))$.  Then 
$E\nsg\Gamma$, so $C_\Gamma(E)\nsg\Gamma$; and $P^*\in\sylp{C_\Gamma(E)}$ 
since $N_S(P)\in\sylp{\Gamma}$ (Proposition \ref{L-prop}(d)).  Also, 
$C_\Gamma(Z(P))\nsg\Gamma$, and has $p$-power index in $C_\Gamma(E)$ 
since each automorphism of $Z(P)$ which is the identity on its $p$-torsion 
subgroup $E$ has $p$-power order (cf. \cite[Theorem 5.2.4]{Gorenstein}).  
Hence each Sylow $p$-subgroup of $C_\Gamma(E)$ is 
$C_\Gamma(Z(P))$-conjugate to $P^*=C_{N_S(P)}(E)$.  By the Frattini 
argument, 
	\beqq \Gamma=N_\Gamma(P^*){\cdot}C_\Gamma(Z(P))~. \label{e:4.2b} 
	\eeqq

Since $\alpha_P\ne\Id_\Gamma$ is conjugation by $g_P\in{}Z(P)$, $\alpha_P$ 
is the identity on $C_\Gamma(Z(P))$.  Hence by \eqref{e:4.2b}, $\alpha_P$ 
is not the identity on $N_\Gamma(P^*)$.  By Proposition \ref{L-prop}(e), 
each $\alpha\in{}N_\Gamma(P^*)$ extends to 
$\widebar{\alpha}\in\Aut_\call(P^*)$, and thus 
$\alpha_{P^*}\ne\Id_{\Aut_\call(P^*)}$.  
If $C_S(E)\gneqq{}P$, then $P^*=N_{C_S(E)}(P)\gneqq{}P$ (cf. \cite[Theorem 
2.1.6]{Sz1}), which would imply $\alpha_{P^*}=\Id$ by \eqref{e:4.2aa}.  
We now conclude that $C_S(E)=P$, and hence that $P\in\E_0$.

Assume $Q\in\E_0$ is $\calf$-conjugate to $P$.  By \eqref{e:4.2c}, there 
is $\zeta\in\Iso_\call(P,Q)$ such that $\alpha(\zeta)=\zeta$.  So by (b), 
$\alpha_Q\ne\Id$ since $\alpha_P\ne\Id$, and this finishes the proof of 
\eqref{e:4.2a}.

The rest of the proof of (e) is identical to that of (d).
\end{proof}

As one simple application of Proposition \ref{Ker(mu)}, consider 
the group $G=A_6\cong{}PSL_2(9)$.  Set 
	\[ T_1=\gen{(1\,2)(3\,4),(1\,3)(2\,4)}\cong{}C_2^2~, \qquad
	T_2=\gen{(1\,2)(3\,4),(3\,4)(5\,6)}\cong{}C_2^2~, \] and 
$S=\gen{T_1,T_2}\in\syl2{G}$, and let $\calf=\calf_S(G)$ and 
$\call=\call_S^c(G)$.  Then $\cale=\cale_0=\widehat{\cale}_0= 
\{T_1,T_2\}$.  Set $g=(5\,6)$, and consider the automorphism 
$\alpha=\til\kappa_G(c_g)\in\Aut\typ^I(\call)$.  Then 
$\alpha\in\Ker(\til\mu_G)$, since $[g,S]=1$ (and since 
$\til\mu_G\circ\til\kappa_G$ sends $\beta\in\Aut(G,S)$ to $\beta|_S$).  
Since $[g,N_G(T_1)]=1$, $\alpha_{T_1}=\Id_{\Aut_\call(T_1)}$.  
Since $(1\,2)(3\,4)(5\,6)$ commutes with 
$N_G(T_2)=\gen{T_2,(1\,3)(2\,4),(1\,3\,5)(2\,4\,6)}$, $\alpha_{T_2}$ acts 
on $\Aut_\call(T_2)\cong{}N_G(T_2)$ as conjugation by 
$x=(1\,2)(3\,4)\in{}Z(S)$.  So in the notation of Proposition 
\ref{Ker(mu)}, $g_{T_1}=1$ and $g_{T_2}=x$.  In both cases, 
$C_{Z(T_i)}(\autf(T_i))=1$, so the $g_{T_i}$ are uniquely determined.  
Hence by Proposition \ref{Ker(mu)}(d), $[\alpha]=\kappa_G([c_g])$ 
represents a nontrivial element in $\Ker(\mu_G)$.

If $[\alpha]\in\Ker(\mu_G)$ is arbitrary, represented by 
$\alpha\in\Aut\typ^I(\call)$ such that $\alpha_S=\Id_{\Aut_\call(S)}$, 
then by Proposition \ref{Ker(mu)} again, $g_{T_i}\in{}Z(S)$ for $i=1,2$, 
and $[\alpha]=1$ if and only if $g_{T_1}=g_{T_2}$.  Thus 
$\Ker(\mu_G)\cong{}C_2$ is generated by $\kappa_G([c_g])$ as described 
above. Using this, and the well known description of 
$\Out(A_6)\cong{}C_2^2$, it is not hard to see that $\kappa_G$ is an 
isomorphism from $\Out(G)$ to $\Out\typ(\call)$.

This example will be generalized in two different ways below:  to 
other groups $PSL_2(q)$ for $q\equiv\pm1$ (mod $8$) in Proposition 
\ref{D(2^k)}, and to other alternating groups in Proposition 
\ref{F(An)-tame}.

\newsub{Dihedral and semidihedral 2-groups}

As our first examples, we list all reduced fusion systems over dihedral 
and semidihedral 2-groups, and prove they are all tame.  The list of all 
fusion systems over such groups is well known; it turns out that each of
them supports exactly one fusion system which is reduced.

As usual, $v_p(-)$ denotes the $p$-adic valuation:  $v_p(n)=k$ if $p^k|n$ 
but $p^{k+1}{\nmid}n$.

\begin{Prop} \label{D(2^k)}
Let $S$ be a dihedral group of order $2^k$ ($k\ge3$).  Then there is a 
unique reduced fusion system $\calf$ over $S$, and it is tame.  
Let $q$ be a prime power such that $v_2(q^2-1)=k+1$, set $G=PSL_2(q)$, and 
fix $S^*\in\syl2{G}$.  Then $S\cong{}S^*$ and $\calf\cong\calf_{S^*}(G)$; 
and $\kappa_G$ is an isomorphism if $q=p^{2^{k-2}}$ for some prime 
$p\equiv5$ (mod $8$).
\end{Prop}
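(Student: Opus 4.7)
The plan is to proceed in four stages: classify all saturated fusion systems over the dihedral group $S = D_{2^k}$ and isolate the unique reduced one; realize it by $PSL_2(q)$; compute $\Ker(\mu_G)$ via Proposition \ref{Ker(mu)}; and then derive tameness (and the isomorphism statement for the special value of $q$) by comparing $\Out(G)$, $\Out\typ(\call)$, and $\Out(S,\calf)$.

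First, I would recall that for $k\ge3$ the only proper subgroups of $S$ that can be $\calf$-centric and $\calf$-radical for any saturated fusion system $\calf$ over $S$ are the two $S$-conjugacy classes of Klein four subgroups $V_1,V_2$ (each self-centralizing in $S$). By Alperin's fusion theorem (Theorem \ref{AFT}), $\calf$ is determined by the pair $(\Aut_\calf(V_1),\Aut_\calf(V_2))$, each lying between $\Aut_S(V_i)\cong C_2$ and $\Aut(V_i)\cong \Sigma_3$. If $\Aut_\calf(V_i)=\Aut_S(V_i)$ for some $i$, then $V_i$ carries no exotic automorphisms, and one checks directly from Proposition \ref{norm<=>} together with Theorem \ref{t:O^p(F)} that either $O_2(\calf)\ne1$ (if both $V_i$ are ``trivial'') or $\calf$ has a proper normal subsystem of $2$-power index or index prime to $2$. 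Hence reducedness forces $\Aut_\calf(V_i)=\Sigma_3$ for both $i$, proving uniqueness and existence of a reduced fusion system $\calf$.

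For the realization, I would recall Dickson's description of subgroup structure in $PSL_2(q)$: when $v_2(q^2-1)=k+1$ the Sylow $2$-subgroup $S^*$ of $G=PSL_2(q)$ is dihedral of order $2^k$, and the two $G$-conjugacy classes of Klein four subgroups of $S^*$ each have normalizer isomorphic to $\Sigma_4$; hence $\Aut_{\calf_{S^*}(G)}(V_i)=\Sigma_3$ for both $i$, so $\calf_{S^*}(G)\cong \calf$. The ``hard part'' of the entire proof is the remainder, namely the tameness computation, and the main obstacle is to show that every outer equivalence of $\call_{S^*}^c(G)$ is genuinely realized by an automorphism of $G$ (not only up to the kernel of $\mu_G$).

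To handle that, I would apply Proposition \ref{Ker(mu)} with $\widehat{\cale}_0=\{V_1,V_2\}$ (each $V_i$ is the centralizer of itself, an elementary abelian $2$-group). Since $Z(V_i)=V_i$, $C_{V_i}(\autf(V_i))=1$ and $C_{V_i}(\Aut_S(V_i))=Z(S)=\langle z\rangle$, each element of $\Ker(\mu_\call)$ is encoded by a pair $(g_{V_1},g_{V_2})\in \langle z\rangle^2$, and by part (e) two such pairs give equal outer classes iff they differ by the same $g\in C_{Z(S)}(\autf(S))=\langle z\rangle$; hence $\Ker(\mu_\call)\cong C_2$. A short direct construction (using the same $c_g$-trick as in the worked $A_6$ example in the paper, choosing $g\in N_G(V_2)\setminus S$ whose image in $\Out(V_2)$ is a transposition not in $\Aut_S(V_2)$) produces an explicit generator of $\Ker(\mu_G)$ lying in the image of $\kappa_G$.

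Finally, I would determine $\Aut(S,\calf)$ by noting that a fusion-preserving automorphism of $S$ must permute $\{V_1,V_2\}$, and that every such automorphism of $S$ is fusion preserving (since both $V_i$ now carry the full $\Sigma_3$); a direct calculation with $\Aut(D_{2^k})$ gives $|\Out(S,\calf)|=2^{k-2}$ or $2^{k-1}$ depending on whether the two classes are swapped by an outer automorphism of $S$. Combining this with the exact sequence $1\to\Ker(\mu_G)\to\Out\typ(\call)\to\Im(\mu_G)\le\Out(S,\calf)\to 1$ bounds $|\Out\typ(\call)|$. For general $q$ with $v_2(q^2-1)=k+1$, checking that the standard diagonal and field automorphisms of $PSL_2(q)$ together with the element producing the generator of $\Ker(\mu_G)$ map onto $\Out\typ(\call)$ gives split surjectivity of $\kappa_G$, hence tameness. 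For the special case $q=p^{2^{k-2}}$ with $p\equiv5\pmod 8$, we have $\Out(G)\cong C_2\times C_{2^{k-2}}$ (diagonal times Galois), of order $2^{k-1}$; matching this with the computed order of $\Out\typ(\call)$, and observing that the splitting just constructed is injective in this case (the restriction-to-$S$ map on field automorphisms is faithful because $p\equiv 5\pmod 8$ forces the field automorphisms to act nontrivially on the $2$-power torsion of the torus), yields that $\kappa_G$ is an isomorphism.
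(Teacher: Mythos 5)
Your first two steps (classifying the candidate essential subgroups, forcing $\Aut_\calf(V_i)\cong\Sigma_3$ on both classes of Klein four subgroups, and realizing the result by $PSL_2(q)$, which also settles saturation) are essentially the paper's argument, and your use of Proposition \ref{Ker(mu)} with $\E_0=\{V_1,V_2\}$, $C_{Z(V_i)}(\autf(V_i))=1$, $C_{Z(V_i)}(\Aut_S(V_i))=Z(S)$ correctly gives the \emph{upper bound} $|\Ker(\mu_\call)|\le 2$. The trouble is concentrated in the tameness step.

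First, the claim that $\kappa_G$ is split surjective for every $q$ with $v_2(q^2-1)=k+1$ is false. Take $k=3$ and $q=7$: then $\Out(PSL_2(7))\cong C_2$, generated by the diagonal automorphism, which restricts to a noninner automorphism of $S$; but $\Out\typ(\call)$ has order $4$ (it is the same linking system as that of $A_6$, for which the paper's worked example shows $\kappa_{A_6}$ is an isomorphism onto a group of order $4$). So $\kappa_{PSL_2(7)}$ is not surjective, and in general (e.g.\ for all prime $q$) the diagonal and field automorphisms need not hit $\Ker(\mu_G)$ at all. Tameness has to be witnessed by the special realizations $q=p^{2^{k-2}}$, $p\equiv5\pmod8$ (or by $A_6$ when $k=3$), which is exactly how the proposition is phrased. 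Second, your proposed generator of $\Ker(\mu_G)$ does not exist as described: an element $g\in N_G(V_2)\sminus S$ lies in $G$, so $c_g$ is inner, $[c_g]=1$ in $\Out(G)$, and $\kappa_G([c_g])=1$. In the $A_6$ example the whole point is that $g=(5\,6)$ lies \emph{outside} $A_6$, i.e.\ $c_g$ is a genuine outer automorphism centralizing $S$. Here the analogous element is the field automorphism $\alpha$ of order $2$, which exists only because $q$ is a square, and which restricts to the \emph{identity} on $S$ (it sends $u\mapsto -u$); your closing parenthesis asserting that field automorphisms act faithfully on $S$ because $p\equiv5\pmod8$ has this backwards. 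The real content of the proof — which your sketch omits — is showing $\kappa_G([\alpha])\ne1$ in $\Out\typ(\call)$: the paper does this by lifting to $SL_2(q)$ and computing the elements $g_{T_i}$ of Proposition \ref{Ker(mu)}, finding $g_{T_i}=1$ for $i$ even (there $T_i\le PSL_2(\sqrt{q})$, which $\alpha$ centralizes) but $g_{T_i}=w\ne1$ for $i$ odd (the lift of $a^ib$ is moved by the central involution of $SL_2$). Without that computation you have neither a nontrivial element of $\Ker(\mu_G)\cap\Im(\kappa_G)$, hence no surjectivity of $\kappa_G$, nor (combined with the surjectivity of $\mu_G\circ\kappa_G$ onto $\Out(S,\calf)=\Out(S)$, which has order $2^{k-2}$, not possibly $2^{k-1}$) the injectivity needed for the isomorphism statement.
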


\begin{proof}  Fix $a,b\in{}S$ such that $\gen{a}$ has index two and 
$S=\gen{a,b}$.  For each $i\in\Z$, set 
$T_i=\gen{a^{2^{k-2}},a^ib}\cong{}C_2^2$.  Two subgroups $T_i$ and $T_j$ 
are $S$-conjugate if and only if $i\equiv{}j$ (mod 2).  Set 
$\calp=\{T_i\,|\,i\in\Z\}$.  

If $P\le{}S$ is cyclic of order $2^m$, then $\Aut(P)\cong(\Z/2^m)^\times$ 
is a 2-group.  If $P\le{}S$ is dihedral of order $2^m\ge8$, then there is 
a unique cyclic subgroup of index two in $P$, and $\Aut(P)$ is a 2-group 
by Lemma \ref{mod-Fr}.  Thus the only subgroups $P\le{}S$ for which 
$\Aut(P)$ is not a $2$-group are the $T_i$.

Define $\calf$ to be the fusion system over $S$ generated by the 
automorphisms in $\Inn(S)$, $\Aut(P)$ for $P\in\calp$, and their 
restrictions.  Assume $\calf$ is saturated (this will be shown later).  
Then $\foc(\calf)=\gen{[S,S],\calp}=S$, and hence $O^2(\calf)=\calf$ 
(Theorem \ref{t:O^p(F)}(a)).  Also, $O^{2'}(\calf)=\calf$ since any normal 
subsystem of odd index would have to contain the same automorphism groups, 
and $O_2(\calf)=1$ by inspection.  Thus $\calf$ is reduced.

Let $\calf^*$ be an arbitrary saturated fusion system over $S$ such that 
$\foc(\calf^*)=S$.  Let $\cale$ be the set of all $\calf^*$-essential 
subgroups of $S$.  If $P\in\cale$, then $\Aut(P)$ must have 
elements of odd order, and hence $P\in\calp$.  For each 
$T_i\in\calp$, $\Aut(T_i)\cong\Sigma_3$ and $\Aut_S(T_i)\cong{}C_2$.  
Hence $\Aut_{\calf^*}(T_i)=\Aut(T_i)$ if $T_i\in\cale$.  Since $\Aut(S)$ 
is a 2-group, Theorem \ref{AFT-E} implies $\calf^*$ is generated by 
automorphisms in $\Aut_{\calf^*}(S)=\Inn(S)$, the $\Aut(P)$ for 
$P\in\cale\subseteq\calp$, and their restrictions.  In particular, 
$\foc(\calf^*)\le\gen{[S,S],\cale}$, and this has index at least two in 
$S$ if $\cale\subsetneqq\calp$.  Hence $\cale=\calp$, and so 
$\calf^*=\calf$.

Set $G=PSL_2(q)$ for any prime power $q\equiv\pm1$ (mod $8$), and fix 
$S^*\in\syl2{G}$.  As is well known, $S^*$ is a dihedral group and 
$|G|=\frac12q(q^2-1)$, so $S^*\cong{}D_{2^k}$ where $k=v_2(q^2-1)-1$.  So 
we identify $S^*=S$ for $S$ as above.  Since $G$ is simple, 
$\foc(\calf_{S}(G))=S\cap[G,G]=S$ by the focal subgroup theorem (cf. 
\cite[Theorem 7.3.4]{Gorenstein}), and we have just seen this implies 
$\calf_{S}(G)=\calf$.  In particular, $\calf$ is saturated, and hence 
reduced.

Now assume $q=p^{2^{k-2}}$, where $p\equiv5$ (mod $8$) (and $k\ge3$).  
The homomorphism $\kappa_G$ is an isomorphism in this case by 
\cite[Proposition 7.9]{BLO1}, where it is shown more generally for 
$p\equiv\pm3$ (mod $8$).  But we give a different proof here to illustrate 
how Proposition \ref{Ker(mu)} can be applied. 

Set $\til{G}=SL_2(q)$.  Fix $u\in\F_q^\times$ of order $2^k$.  Set 
$\til{a}=\mxtwo{u}00{u^{-1}}$ and $\til{b}=\mxtwo01{-1}0$, and let 
$a,b\in{}G$ be their images in the quotient.  Then 
$S\defeq\gen{a,b}\in\syl2{G}$.  Let $\delta\in\Aut(G)$ be conjugation by 
$\mxtwo{u}001$; then $\delta(a)=a$ and $\delta(b)=ab$.  Since $u$ is not a 
square in $\F_q^\times$, $[\delta]$ generates the subgroup (of order 2) 
of diagonal automorphisms in $\Out(G)$.  

By \cite[\S\,3]{Steinberg}, $\Out(G)=\gen{[\delta]}\times\gen{[\psi^p]} 
\cong C_2\times{}C_{2^{k-2}}$, where $\psi^p$ is the field automorphism 
which acts via $x\mapsto{}x^p$ on matrix elements.  Also, $\psi^p(a)=a^p$ 
and $\psi^p(b)=b$.  Since $p\equiv5$ (mod $8$), $[\delta|_S]$ and 
$[\psi^p|_S]$ generate $\Out(S)$.  Thus 
	\[ \mu_G\circ\kappa_G \: \Out(G) \Right5{} \Out(S,\calf) = \Out(S) 
	\]
is surjective with kernel generated by $[\alpha]$, where 
$\alpha=(\psi^p)^{2^{k-3}}$ is the field automorphism of order 2.

To prove that $\kappa_G$ is an isomorphism, it remains to show that 
$\Ker(\mu_G)$ has order 2 and is generated by $\kappa_G([\alpha])$.  
Set $w=a^{2^{k-2}}\in{}Z(S)$.  We refer to Proposition \ref{Ker(mu)}.  
Since $\alpha$ is the identity on $S=N_G(S)$ ($\alpha(a)=a$ since the 
field automorphism of order two sends $u$ to $-u$), there are elements 
$g_{T_i}\in{}C_{Z(T_i)}(\Aut_S(T_i))=\gen{w}$ for each $i$ such that 
$\til\kappa_G(\alpha)$ acts on $\Aut_\call(T_i)$ via conjugation by 
$g_{T_i}$. These elements are uniquely defined since 
$C_{Z(T_i)}(\autf(T_i))=1$.  

When $i$ is even, $T_i\le{}G_0\defeq{}PSL_2(\sqrt{q})$ (recall 
$T_i=\gen{a^{2^{k-2}},a^ib}$), and $N_{G_0}(T_i)$ has index at most two in 
$N_G(T_i)$.  Since $\alpha|_{G_0}=\Id$ and $\alpha|_S=\Id$, 
$\alpha$ is the identity on $N_G(T_i)\cong\Sigma_4$ 
in this case, and so $g_{T_i}=1$.

Now consider $T_i$ for odd $i$.  Let $\til{T}_i\cong{}Q_8$ be the inverse 
image in $\til{G}$ of $T_i\le{}G$, let $\til{w}$ be any lifting of $w$ to 
$\til{G}$, and set $z=\til{w}^2=\mxtwo{-1}00{-1}\in{}Z(\til{G})$.  
Since the field automorphism of order two sends $u$ to $-u$, it sends 
$\til{a}^i\til{b}$ to $z\til{a}^i\til{b}$.  If $\alpha$ acted on 
$N_G(T_i)\cong\Sigma_4$ via the identity, then its action on 
$N_{\til{G}}(\til{T}_i)$ would be the identity on a subgroup of index two, 
which necessarily would include $\til{T}_i$.  Since this is not the case, 
we conclude that $\alpha$ acts via conjugation by $w$, and thus that 
$g_{T_i}=w$ for $i$ odd.

By Proposition \ref{Ker(mu)}(d), since $g_{T_0}=1$ and $g_{T_1}=w$ (and 
$C_{Z(T_i)}(\autf(T_i))=1$), $\kappa_G([\alpha])\ne1$ in $\Out\typ(\call)$, 
and it is the only nontrivial element in $\Ker(\mu_G)$.  Thus 
$\Ker(\mu_G)\cong{}C_2$, which is what was left to prove.
\end{proof}

We now consider the semidihedral case.  

\begin{Prop} \label{SD(2^k)}
Let $S$ be a semidihedral group of order $2^k$ ($k\ge4$).  Then there is a 
unique reduced fusion system $\calf$ over $S$, and it is tame.  
Let $q$ be a prime power such that $v_2(q-1)=k-2$, set $G=PSU_3(q)$, and 
fix $S^*\in\syl2{G}$.  Then $S\cong{}S^*$ and $\calf\cong\calf_{S^*}(G)$, 
and $\kappa_G$ is an isomorphism if $3{\nmid}(q+1)$ and $q=p^{2^{k-4}}$ for 
some prime $p\equiv5$ (mod $8$).
\end{Prop}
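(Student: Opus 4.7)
The plan is to mirror the proof of Proposition \ref{D(2^k)}. Writing $S = \gen{a,b \mid a^{2^{k-1}} = b^2 = 1,\ bab^{-1} = a^{2^{k-2}-1}}$ with central involution $z = a^{2^{k-2}}$, the first step is to classify the subgroups of $S$ whose automorphism group contains an element of odd order. Apart from $S$ itself, these consist of a single $S$-conjugacy class of Klein four subgroups $V = \gen{z,\, a^{2j}b}$, with $\Aut(V) \cong \Sigma_3$ and $\Aut_S(V) \cong C_2$, and a single $S$-conjugacy class of quaternion subgroups $Q = \gen{a^{2^{k-3}},\, ab} \cong Q_8$, with $\Aut(Q) \cong \Sigma_4$; every other proper subgroup is cyclic, dihedral, semidihedral, or generalized quaternion of order at least $16$, each containing a characteristic cyclic subgroup of index two and hence having a $2$-group automorphism group by Lemma \ref{mod-Fr}. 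Define $\calf$ to be the fusion system over $S$ generated by $\Inn(S)$, $\Aut(V)$, $\Aut(Q)$, and restrictions. The focal-subgroup argument of Proposition \ref{D(2^k)} then yields $\foc(\calf) = S$, $O_2(\calf) = 1$, and $O^{2'}(\calf) = \calf$, so $\calf$ is reduced once known to be saturated; and by Theorem \ref{AFT-E} any reduced saturated fusion system $\calf^*$ over $S$ has essentials contained in $\{V,Q\}$ and must contain both classes (else $\foc(\calf^*)$ is proper in $S$), forcing $\calf^* = \calf$.

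For the realization claim I would invoke the classical Alperin--Brauer--Gorenstein description of the $2$-fusion of $G = PSU_3(q)$: when $q \equiv 1 \pmod{4}$, which is equivalent to $v_2(q-1) = k-2 \ge 2$, the group $G$ has $S^* \in \syl{2}{G}$ semidihedral of order $2^{v_2(q-1)+2} = 2^k$, and the essential subgroups of $\calf_{S^*}(G)$ are exactly one class of Klein fours (with $\Sigma_3$ acting) and one class of $Q_8$'s (with $\Sigma_4$ acting). By the uniqueness established in the previous paragraph, $\calf_{S^*}(G) \cong \calf$, which in particular implies that $\calf$ is saturated, reduced, and realizable.

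The final claim, that $\kappa_G$ is an isomorphism when $3 \nmid (q+1)$ and $q = p^{2^{k-4}}$ with $p \equiv 5 \pmod{8}$, follows from a size and kernel comparison together with Proposition \ref{Ker(mu)}. A direct count gives $|\Out(SD_{2^k})| = 2^{k-3}$, and under these hypotheses $\Out(G)$ reduces to the field-automorphism group $\Gal(\F_{q^2}/\F_p)$, cyclic of order $2^{k-3}$ (the diagonal factor vanishes because $3 \nmid q+1$). Checking that field automorphisms restrict faithfully to $S^*$ --- which for $p \equiv 5 \pmod{8}$ is verified by tracking the action on a generator of a non-split torus of $SU_3(q)$ --- shows that $\mu_G \circ \kappa_G \: \Out(G) \to \Out(S^*,\calf)$ is injective and hence an isomorphism between two groups of order $2^{k-3}$, so $\kappa_G$ is injective. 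To close, apply Proposition \ref{Ker(mu)}(d) to $\call = \call^c_{S^*}(G)$: the set $\cale_0$ consists only of $V$, since every automorphism of $Q_8$ fixes its unique involution $z$, giving $C_{Z(Q)}(\autf(Q)) = \gen{z} = C_{Z(Q)}(\Aut_{S^*}(Q))$ and excluding $Q$ from $\cale_0$; the resulting criterion reduces to asking that $g_V \in \gen{z}/1$ agree with some $g \in C_{Z(S^*)}(\autf(S^*)) = \gen{z}$, which is automatic. Hence $\Ker(\mu_G) = 1$, and $\kappa_G$ is an isomorphism. The hardest step is the faithfulness computation for the field automorphisms on $S^*$, an explicit matrix-level calculation in $SU_3(q)$ analogous to the one carried out in the dihedral case.
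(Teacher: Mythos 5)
Your construction of $\calf$, the uniqueness argument via Theorem \ref{AFT-E} and the focal subgroup, and the identification $\calf\cong\calf_{S^*}(PSU_3(q))$ all run along the same lines as the paper; the paper obtains the semidihedral Sylow subgroup from the order formula together with Carter--Fong's result for $GU_2(q)$ and then uses simplicity plus the focal subgroup theorem, where you quote the Alperin--Brauer--Gorenstein fusion analysis, but both routes feed into the same uniqueness statement. Your computation of $\Out(G)$ (field automorphisms of order $2^{k-3}$, no diagonal part since $3\nmid q+1$) and the verification that $\mu_G\circ\kappa_G$ maps $\Out(G)$ isomorphically onto $\Out(S,\calf)=\Out(S)$, of order $2^{k-3}$, also match the paper's explicit matrix computation.

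The one step that is not right as written is the application of Proposition \ref{Ker(mu)}(d). The set $\cale_0$ is not a single subgroup $V$: it consists of \emph{all} the Klein four subgroups $T_i=\gen{z,a^{2i}b}$ (there are $2^{k-3}\ge2$ of them, each $\calf$-centric, fully normalized and essential, with $C_{Z(T_i)}(\autf(T_i))=1\lneqq\gen{z}=C_{Z(T_i)}(\Aut_S(T_i))$), and part (d) requires one element $g\in C_{Z(S)}(\autf(S))=\gen{z}$ with $g_P\in g\cdot C_{Z(P)}(\autf(P))$ simultaneously for every $P\in\cale_0$; since the ambiguity groups are trivial, this says exactly that all the $g_{T_i}$ are equal, which is not ``automatic''. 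Indeed, the same reasoning applied verbatim in the dihedral case would give $\Ker(\mu_G)=1$ there as well, contradicting the kernel of order $2$ exhibited in the proof of Proposition \ref{D(2^k)}. What saves the semidihedral case --- and is exactly the fact you recorded but did not use --- is that the $T_i$ form a single $S$-conjugacy class: by Proposition \ref{Ker(mu)}(b) one may take $g_{aT_ia^{-1}}=ag_{T_i}a^{-1}=g_{T_i}$ (the $g_{T_i}$ lie in $Z(S)$), so all the $g_{T_i}$ coincide and (d) applies with $g$ equal to that common value. Alternatively, after noting $T_i=C_S(T_i)$ you could invoke the last sentence of Proposition \ref{Ker(mu)}(e), which permits checking a single representative of the one $\calf$-conjugacy class. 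With this one-line repair your argument agrees with the paper's proof.
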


\begin{proof}  Fix $a,b\in{}S$ such that $\gen{a}$ has index two, $b^2=1$, 
and $S=\gen{a,b}$.  Then $|a^ib|=2$ for $i$ even and $|a^ib|=4$ for $i$ 
odd.  For each $i\in\Z$, set $T_i=\gen{a^{2^{k-2}},a^{2i}b}\cong{}C_2^2$, 
and $R_i=\gen{a^{2^{k-3}},a^{2i+1}b}\cong{}Q_8$.  The $T_i$ are all 
$S$-conjugate to each other, and similarly for the $R_i$.  Set 
$\calp=\{T_i,R_i\,|\,i\in\Z\}$.

As shown in the proof of Proposition \ref{D(2^k)}, $\Aut(P)$ is a 2-group 
for each $P\le{}S$ which is cyclic, or dihedral of order $\ge8$.  The same 
argument applies when $P$ is quaternion of order $\ge16$, and also to $S$ 
itself.  Thus the only subgroups $P\le{}S$ for which $\Aut(P)$ is not a 
$2$-group are those in $\calp$.  

Define $\calf$ to be the fusion system over $S$ generated by the 
automorphisms in $\Inn(S)$, $\Aut(P)$ for $P\in\calp$, and their 
restrictions.  Assume $\calf$ is saturated (to be shown later).  Then 
$\foc(\calf)=\gen{[S,S],\calp}=S$, and hence $O^2(\calf)=\calf$ (Theorem 
\ref{t:O^p(F)}(a)).  Also, $O^{2'}(\calf)=\calf$ since any normal 
subsystem of odd index would have to contain the same automorphism groups, 
and $O_2(\calf)=1$ by inspection.  Thus $\calf$ is reduced.

Let $\calf^*$ be an arbitrary saturated fusion system over $S$ such that 
$\foc(\calf^*)=S$.  Let $\cale$ be the set of all $\calf^*$-essential 
subgroups of $S$.  If $P\in\cale$, then $\Aut(P)$ must have 
elements of odd order, and hence $P\in\calp$.  
For all $P\in\calp$, $[\Aut(P):\Aut_S(P)]=3$, and hence 
$\Aut_{\calf^*}(P)=\Aut(P)$ if $P\in\cale$.  Since $\Aut(S)$ is a 2-group, 
Theorem \ref{AFT-E} implies $\calf^*$ is generated by automorphisms in 
$\Aut_{\calf^*}(S)=\Inn(S)$, the $\Aut(P)$ for $P\in\cale$, and their 
restrictions.  In particular, $\foc(\calf^*)\le\gen{[S,S],\cale}$, and 
this has index at least two in $S$ if $\cale\subsetneqq\calp$.  Hence 
$\cale=\calp$, and so $\calf^*=\calf$.

Fix a prime power $q\equiv1$ (mod $4$), set $G=PSU_3(q)$, and fix 
$S^*\in\syl2{G}$.  Then $|G|=\frac1dq^3(q^2-1)(q^3+1)$ where 
$d=\textup{gcd}(3,q+1)$ \cite[p. 118]{Taylor}, and hence $|S^*|=2^k$ where 
$k=v_2(q-1)+2$.  Since $GU_2(q)$ has odd index in $SU_3(q)$, and the Sylow 
2-subgroups of $GU_2(q)$ are semidihedral by \cite[p.143]{CF}, the Sylow 
2-subgroups of $SU_3(q)$ and of $G$ are also semidihedral.  Thus 
$S^*\cong{}SD_{2^k}$, and we identify $S^*=S$ as above.  Since $G$ is 
simple, $\foc(\calf_S(G))=S$ (cf. \cite[Theorem 7.3.4]{Gorenstein}), and 
we just saw this implies $\calf_S(G)=\calf$.  In particular, $\calf$ is 
saturated.

Now assume $3{\nmid}(q+1)$ and $q=p^{2^{k-4}}$ for some prime $p\equiv5$ 
(mod $8$).  By \cite[\S\,3]{Steinberg}, $\Out(G)$ is generated by 
diagonal and field automorphisms; where the group of diagonal 
automorphisms has order $\textup{gcd}(3,q+1)=1$.  Thus 
$\Out(G)=\gen{[\psi^p]}$, generated by the class of the field automorphism 
$(x\mapsto{}x^p)$.  Since $G=PSU_3(q)$ is defined via matrices 
over $\F_{q^2}$, $\psi^p$ has order $2^{k-3}$.  

More explicitly, regard $G=PSU_3(q)=SU_3(q)$ as the group of 
matrices $M\in{}SL_3(q^2)$ such that $\psi^q(M^t)=M^{-1}$, where $M^t$ is 
the transpose $(a_{ij})\mapsto(a_{4-j,4-i})$.  Fix $u\in\F_{q^2}^\times$ of 
order $2^{k-1}$ (recall $v_2(q-1)=k-2$), and set 
$a=\textup{diag}(u,-1,u^{-q})$.  Since $u^{q-1}=-1$, $a\in{}SU_3(q)$.  Set 
$b=\mxthree001010100$.  Then $bab^{-1}=a^{-q}=a^{2^{k-2}-1}$, and so 
$S=\gen{a,b}$ is semidihedral.  Also, $\psi^p(a)=a^p$, $\psi^p(b)=b$, so 
$[\psi^p|_S]$ generates $\Out(S)$, and we conclude that
	\[ \mu_G \circ \kappa_G \: \Out(G) \Right5{\cong} \Out(S,\calf) = 
	\Out(S) \]
is an isomorphism. 

It remains to prove that $\Ker(\mu_G)=1$.  Fix $[\alpha]\in\Ker(\mu_G)$, 
and choose a representative $\alpha\in\Aut\typ^I(\call_S^c(G))$ for the 
class $[\alpha]$ such that $\alpha_S$ is the identity on 
$\Aut_{\call_S^c(G)}(S)$.  In the notation of Proposition \ref{Ker(mu)}, 
$\cale_0$ contains only the subgroups $T_i$, since $Z(R_i)\cong{}C_2$ (and 
hence $C_{Z(R_i)}(\Aut_S(R_i))=C_{Z(R_i)}(\autf(R_i))$).  If $\alpha$ is 
represented by elements $g_P$, then 
$g_{T_i}\in{}C_{Z(T_i)}(\Aut_S(T_i))=Z(S)$ for each $i$, and is uniquely 
determined since $C_{Z(T_i)}(\autf(T_i))=1$.  All of the $g_{T_i}$ are 
equal by point (b) in the proposition, and hence $[\alpha]=1$ by point 
(d).  
\end{proof}

\newsub{Tameness of some fusion systems studied in \cite{OV2}}

We next consider some fusion systems studied in \cite[\S4--5]{OV2}, and 
prove they are reduced and tame using the lists of essential subgroups and 
other information determined there. 

\begin{Prop} \label{MJMcL-tame}
The fusion systems at the prime $2$ of the group $PSL_4(5)$, and of the 
sporadic simple groups $M_{22}$, $M_{23}$, $\textup{McL}$, $J_2$, and 
$J_3$, are all reduced and tame.  Moreover, if $G$ is any of these groups, 
then $\kappa_G$ is an isomorphism.  
\end{Prop}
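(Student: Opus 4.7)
The plan is to treat each of the six groups by the same three-step template, drawing on the detailed structural data (Sylow $2$-subgroup, $\calf$-essential subgroups with their automizers and $S$-normalizers) already compiled for these groups in \cite[\S\S4--5]{OV2}. In each case, fix $S\in\syl2{G}$, set $\calf=\calf_S(G)$, and let $\call=\call_S^c(G)$.

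First I would verify that $\calf$ is reduced by checking the three conditions of Definition \ref{d:reduced}: $O_2(\calf)=1$ via Proposition \ref{norm<=>} (the listed essential subgroups have trivial common intersection in $S$); $O^2(\calf)=\calf$ by computing $\foc(\calf)$ from the odd-order generators of $\autf(P)$ for each essential $P$ and checking it equals $S$ (using Theorem \ref{t:O^p(F)}(a)); and $O^{2'}(\calf)=\calf$, because $\autf(S)$ is a $2$-group and so is each $\autf(P)/O^{2'}(\autf(P))$ for essential $P$, so by Theorem \ref{AFT-E} no proper subsystem of index prime to $2$ can exist.

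Next, to prove $\kappa_G\:\Out(G)\to\Out\typ(\call)$ is an isomorphism, I would proceed in two parts. First, show that the composite $\mu_G\circ\kappa_G\:\Out(G)\to\Out(S,\calf)$ is surjective; in each of the six cases both $\Out(G)$ (known from the ATLAS, with $\Out(M_{23})=1$) and $\Out(S,\calf)$ (computable from the essential data in \cite{OV2}) are small, and one exhibits explicit representatives in $\Aut(G,S)$ restricting to generators of $\Out(S,\calf)$. Second, apply Proposition \ref{Ker(mu)}(d,e) to show $\Ker(\mu_G)=1$: given $\alpha\in\Aut\typ^I(\call)$ representing a class in $\Ker(\mu_G)$ normalized so that $\alpha_S=\Id$, produce the canonical elements $g_P\in C_{Z(P)}(\Aut_S(P))$ for each essential $P$, identify the subset $\cale_0$ of essentials with $C_{Z(P)}(\autf(P))\lneqq C_{Z(P)}(\Aut_S(P))$, and check via the compatibility conditions (b) and (c) of the proposition that all nontrivial $g_P$ lift to a single element of $C_{Z(S)}(\autf(S))$. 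Combining $\Ker(\mu_G)=1$ with surjectivity of $\mu_G\circ\kappa_G$ yields that $\kappa_G$ is surjective, and matching orders $|\Out(G)|=|\Out(S,\calf)|=|\Out\typ(\call)|$ then gives the isomorphism.

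The main obstacle will be the bookkeeping in the third step for the larger groups, especially $PSL_4(5)$, $\textup{McL}$, and $J_3$, where several essential subgroups occur and one must carefully compute each $Z(P)$, $\Aut_S(P)$, $\autf(P)$ and the resulting fixed-point quotient $C_{Z(P)}(\Aut_S(P))/C_{Z(P)}(\autf(P))$. The point of the organization in \cite[\S\S4--5]{OV2} is that precisely these data are already tabulated there, so the proof reduces to a case-by-case inspection; still, missing a single essential subgroup or miscomputing a fixed-point subgroup of an automizer could hide a nontrivial element of $\Ker(\mu_G)$ and invalidate the argument.
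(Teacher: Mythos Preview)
Your template is essentially the paper's, and the use of Proposition~\ref{Ker(mu)} together with the \cite{OV2} tables for the $\Ker(\mu_G)$ step is exactly right.  There are, however, two concrete gaps in the reducedness part.

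First, your argument for $O^{2'}(\calf)=\calf$ assumes $\autf(S)$ is a $2$-group.  This fails for $J_2$ and $J_3$: there $S=S_0\rtimes\gen{\theta}$ with $S_0=UT_3(4)$, and $\outf(S)=\gen{[\dot\gamma_1]}\cong C_3$ (see \cite[\S4]{OV2}).  (Also, $\autf(P)/O^{2'}(\autf(P))$ always has odd order, so ``is a $2$-group'' can only mean it is trivial; but even granting $O^{2'}(\autf(P))=\autf(P)$ for every essential $P$, this does not force $O^{2'}(\calf)=\calf$ when $\outf(S)\ne1$.)  The paper handles this differently: once $O_2(\calf)=1$ is known, Lemma~\ref{F0<|F}(e) gives $O_2(O^{2'}(\calf))=1$, so $O^{2'}(\calf)$ is a centerfree nonconstrained fusion system over $S$ and must appear on the classification lists of \cite[Theorems~4.8 and 5.11]{OV2}; inspection of those lists then yields $O^{2'}(\calf)=\calf$.

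Second, your claim that the essential subgroups have trivial common intersection is not correct: in all six cases the essentials contain $Z(S_0)=E_{13}\cong C_2^2$.  The paper instead shows $O_2(\calf)\le A_1\cap A_2=E_{13}$ (or $S_0\cap Q$ for $J_2,J_3$) using Proposition~\ref{norm<=>}, and then uses specific elements of $\autf(S_0)$ and $\autf(N_2)$ (respectively $\autf(Q)$) that do not normalize any nontrivial subgroup of $E_{13}$ to conclude $O_2(\calf)=1$.

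One further remark on the $\kappa_G$ step: rather than exhibiting automorphisms of $G$ to prove surjectivity of $\mu_G\circ\kappa_G$, the paper computes $|\Out(S,\calf)|$ from the \cite{OV2} data and then proves \emph{injectivity} of $\mu_G\circ\kappa_G$ via a uniform cohomological observation (labelled \eqref{e:Lyons}) applied to the $2$-local subgroups $N_G(A_i)$ (or $N_G(Z(S))$, $N_G(E_{13})$ for $J_2,J_3$), which generate $G$.  Since $|\Out(G)|=|\Out(S,\calf)|$ in each case, this is equivalent to your surjectivity route, but the injectivity argument is more uniform and avoids chasing explicit outer automorphisms through $S$.
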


\begin{proof}  By \cite[\S1.5]{GL}, $\Out(G)\cong{}C_2$ when 
$G\cong{}M_{22}$, $\textup{McL}$, $J_2$, or $J_3$, while $\Out(M_{23})=1$. 
By \cite[(3.2)]{Steinberg}, when $G=PSL_4(5)$, $\Out(G)$ is generated by 
diagonal automorphisms (induced by conjugation by diagonal matrices in 
$GL_4(5)$) and a graph automorphism (induced by transpose inverse).  Since 
all multiples of the identity in $GL_4(5)$ have determinant one, the group 
of diagonal outer automorphisms is isomorphic to $\F_5^\times\cong{}C_4$.  
Since the graph automorphism inverts all diagonal matrices, we get
$\Out(G)\cong{}D_8$.  

Now let $G$ be any of the above six groups, fix $S\in\syl2{G}$, and set 
$\calf=\calf_S(G)$.  We prove below in each case that among the 
homomorphisms
	\[ \Out(G) \Right4{\kappa_G} \Out\typ(\call_S^c(G)) \Right4{\mu_G} 
	\Out(S,\calf) ~, \]
$\mu_G\circ\kappa_G$ is an isomorphism and $\mu_G$ is injective.  
It then follows that $\kappa_G$ is an isomorphism.

We show that $\mu_G\circ\kappa_G$ is injective for each of these groups, 
using arguments suggested to us by Richard Lyons.  These are based on the 
following statement, applied to certain subgroups $H\le{}G$:
	\beqq \left. \begin{array}{l}
	\alpha\in\Aut(H),\ S\in\syl2{H},\ \alpha|_S=\Id_S \\
	Q=O_2(H),\ C_H(Q)\le{}Q, \end{array} \right\} \ 
	\Longrightarrow\ \alpha\in\Aut_{Z(S)}(H). \label{e:Lyons} \eeqq
This follows, for example, from \cite[Lemma 1.2]{OV2}:  $\alpha\in\Inn(H)$ 
if a certain element in $H^1(H/Q;Z(Q))$ vanishes, and this element does 
vanish since its restriction to the Sylow subgroup $S/Q$ vanishes.  Thus 
$\alpha\in\Inn(H)$ and is the identity on $S$, so it must be conjugation 
by an element of $C_H(S)=Z(S)$.  

As in \cite{OV2}, we let $S_0=UT_3(4)$ denote the group of upper triangular 
$3\times3$ matrices over $\F_4$ with $1$'s on the diagonal.  For $x\in\F_4$ 
and $1\le{}i<j\le3$, $e_{ij}^x\in{}UT_3(4)$ is the matrix with entry $x$ in 
position $(i,j)$, $1$'s on the diagonal, and $0$'s elsewhere.  Set 
	\[ E_{ij}=\{e_{ij}^x\,|\,x\in\F_4\}~, \quad
	A_1=\gen{E_{12},E_{13}}~,\quad\textup{and}\quad 
	A_2=\gen{E_{13},E_{23}}~. \]
The field automorphism of $\F_4$ 
is denoted $x\mapsto\bar{x}$, and we write
$\F_4=\{0,1,\omega,\bar\omega\}$.  Also,
$\tau,\rho_1^*,\rho_2^*,\gamma_0,\gamma_1,c_\phi\in\Aut(S_0)$ are the 
automorphisms
	\begin{align*}  
	\tau\left(\mxthree1ab01c001\right) &= 
	\mxthree1cb01a001^{-1},  & 
	\rho_1^*\left(\mxthree1ab01c001\right) &= 
	\mxthree1a{b+\bar{a}}01c001,  & 
	\rho_2^*\left(\mxthree1ab01c001\right) &= 
	\mxthree1a{b+\bar{c}}01c001, \\
	\gamma_0\Bigl(\mxthree1ab01c001\Bigr) &= 
	\mxthree1{\omega a}{\bar{\omega}b}01{\omega c}001 , & 
	\gamma_1\Bigl(\mxthree1ab01c001\Bigr) &= 
	\mxthree1{\omega a}{b}01{\bar{\omega}c}001 , & 
	c_\phi\left(\mxthree1ab01c001\right) &= 
	\mxthree1{\bar{a}}{\bar{b}}01{\bar{c}}001 . 
	\end{align*}
The group $\Out(S_0)\cong{}C_2^4\sd{}(\Sigma_3\times\Sigma_3)$ is 
described precisely by \cite[Lemma 4.5]{OV2}.  In particular, the 
subgroups $\gen{\gamma_0,c_\phi\circ\tau}$ and $\gen{\gamma_1,\tau}$ are 
isomorphic to $\Sigma_3$ and commute with each other.

By the focal subgroup theorem (cf. \cite[Theorem 7.3.4]{Gorenstein}), 
$\foc(\calf)=S\cap[G,G]=S$ in each case, and hence $O^2(\calf)=\calf$.  
Assume $O_2(\calf)=1$, and set $\calf_0=O^{2'}(\calf)$.  By Lemma 
\ref{F0<|F}(e), $O_2(\calf_0)=1$, so $\calf_0$ is a centerfree, 
nonconstrained fusion system over $S$, and is included in the list of such 
fusion systems over $S$ in \cite[Theorems 4.8 \& 5.11]{OV2}.  By 
inspection of those lists, we see that $\calf_0=\calf$ in all cases.  So 
to prove $\calf$ is reduced, it remains only to show that $O_2(\calf)=1$.  

In each of Cases 1 and 2 below, we prove successively that (i) 
$\mu_G\circ\kappa_G$ is an isomorphism, (ii) $\mu_G$ is injective, and 
(iii) $O_2(\calf)=1$.

\smallskip

\noindent\textbf{Case 1: }  Assume first that 
$S=S_\phi=S_0\rtimes\gen{\phi}$:  the extension of $UT_3(4)$ by a field 
automorphism of $\F_4$.  Then $S_0=\gen{A_1,A_2}$ is characteristic in 
$S$, since $A_1$ and $A_2$ are the unique subgroups of $S$ isomorphic to 
$C_2^4$ (cf. \cite[Lemma 5.1(b)]{OV2}).  Since $c_\phi$ permutes freely a 
basis of $Z(S_0)=E_{13}$, \cite[Corollary 1.3 \& Lemma 4.5(a)]{OV2} imply 
there is an isomorphism
        \[ \Out(S) \RIGHT4{\Res}{\cong} 
        C_{\Out(S_0)}(\gen{[c_\phi]})/\gen{[c_\phi]} = 
        \gen{[\rho_1^*],[\rho_2^*],[\tau]} \cong D_8 ~. \]
Let $\dot\tau,\dot\rho_1^*,\dot\rho_2^*\in\Aut(S)$ be the extensions of 
$\tau,\rho_1^*,\rho_2^*\in\Aut(S_0)$ which send $\phi$ to itself.  

Set $H_i=\gen{A_i,\phi}$, and $N_i=\gen{H_i,\e12^1\e23^1}$.  By 
\cite[Theorem 5.11]{OV2} and Table 5.2 in its proof, in all cases, $S_0$ 
is $\calf$-essential, and for $i=1,2$ either $H_i$ or $N_i$ is 
$\calf$-essential but not both.  Also, $\outf(S)=1$ (since $\Out(S)$ is a 
2-group), and $\outf(S_0)=\gen{[\gamma_0],[c_\phi]}$ or 
$\gen{[\gamma_0],[\gamma_1],[c_\phi]}$.  By \cite[Lemma 5.8]{OV2}, there 
is a unique possibility for $\outf(N_i)$ if $N_i$ is essential, and hence 
this group is normalized by $[\dot\rho_1^*]$ and $[\dot\rho_2^*]$.  By 
\cite[Lemma 5.7]{OV2}, there are two possibilities for $\outf(H_i)$ (if 
$H_i$ is essential) which are exchanged under conjugation by 
$[\dot\rho_i^*]$ and invariant under conjugation by $[\dot\rho_{3-i}^*]$.  

By inspection, for $i=1,2$, $[\rho_i^*,\gamma_0]=1$ but 
$[\rho_i^*,\gamma_1]\ne1$.  Together with the above observations about the 
action of $\dot\rho_i^*$ on the possibilities for $\outf(H_i)$ and 
$\outf(N_i)$, this shows that $\dot\rho_i^*$ is fusion preserving 
(contained in $\Aut(S,\calf)$) exactly when $N_i$ is $\calf$-essential and 
$[\gamma_1]\notin\outf(S_0)$ (and $\dot\rho_1^*\dot\rho_2^*\in\Aut(S,\calf)$ 
only if $N_1$ and $N_2$ are both essential).  Also, $\dot\tau$ is fusion 
preserving if either the $N_i$ are both essential or the $H_i$ are both 
essential (and the $\outf(H_i)$ are chosen appropriately in the latter 
case), and otherwise $\Out(S,\calf)\le\gen{[\dot\rho_1^*],[\dot\rho_2^*]}$.  
Thus $\Out(S,\calf)$ is as described in Table \ref{tab:m22}, where we refer 
to \cite[Table 5.2]{OV2} for the information about the fusion systems.  

\begin{table}[ht]
\[ \renewcommand{\arraystretch}{1.5}
\begin{array}{|c||c|c|c|c|} \hline
G & \textup{$\calf$-essential} & \outf(S_0) & \Out(S,\calf) & \Out(G) \\
\hline\hline
M_{22} & S_0,H_1,N_2 & \gen{[\gamma_0],[c_{\phi}]} & 
\gen{[\dot\rho_2^*]}\cong{}C_2 
& C_2 \\ \hline
M_{23} & S_0,H_1,N_2 & \gen{[\gamma_0],[\gamma_1],[c_{\phi}]} & 1 & 1 \\ \hline
PSL_4(5) & S_0,N_1,N_2 & \gen{[\gamma_0],[c_{\phi}]} & \Out(S)\cong{}D_8 & D_8 
\\ \hline
\textup{McL} & S_0,N_1,N_2 & \gen{[\gamma_0],[\gamma_1],[c_{\phi}]} & 
\gen{[\dot\tau]}\cong{}C_2 & C_2 \\ \hline
\end{array}
\]
\caption{} \label{tab:m22}
\end{table}


\noindent\textbf{(i) }  Since $|\Out(G)|=|\Out(S,\calf)|$, it suffices to 
prove $\mu_G\circ\kappa_G$ is injective.  Fix $\alpha\in\Aut(G,S)$ such 
that $\mu_G(\kappa_G([\alpha]))=1$; thus $\alpha|_S=c_g$ for some 
$g\in{}N_G(S)$.  Upon replacing $\alpha$ by $c_g^{-1}\circ\alpha$, we can 
assume $\alpha|_S=\Id_S$.  When $G$ is one of the three sporadic groups, 
then by \cite[\S1.5]{GL}, $A_i$ is centric in $N_G(A_i)$ ($i=1,2$) and 
$G=\gen{N_G(A_1),N_G(A_2)}$.  When 
$G\cong{}PSL_4(5)\cong{}P\Omega_6^+(5)$, this is easily checked by 
identifying $S_0\le{}P\Omega_6^+(5)$ as the subgroup generated by classes 
of diagonal matrices (with respect to an orthonormal basis), together with 
permutation matrices for the permutations $(1\,2)(3\,4)$ and 
$(3\,4)(5\,6)$.  So by \eqref{e:Lyons}, there are elements 
$z_1,z_2\in{}Z(S)=\gen{\e13^1}$ such that $\alpha|_{N_G(A_i)}=c_{z_i}$ for 
$i=1,2$.  Let $g\in{}N_G(S_0)$ be such that $c_g=\gamma_0\in\autf(S_0)$.  
Then $g\in{}N_G(A_i)$ for $i=1,2$ since $\gamma_0$ leaves the $A_i$ 
invariant, so $\alpha(g)=c_{z_1}(g)=c_{z_2}(g)$, and hence $z_1=z_2$ since 
$[g,Z(S)]\ne1$.  Thus $\alpha\in\Aut_{Z(S)}(G)$.  

\noindent\textbf{(ii) }  Set $\call=\call_S^c(G)$.  By Proposition 
\ref{Ker(mu)}, each element of $\Ker(\mu_G)$ is represented by some 
$\alpha\in\Aut\typ^I(\call)$ which is the identity on objects and on 
$\Aut_\call(S)$, and such that for each fully normalized $P\in\Ob(\call)$, 
$\alpha_P\in\Aut(\Aut_\call(P))$ is conjugation by some element 
$g_P\in{}C_{Z(P)}(\Aut_S(P))$.  Since $Z(N_i)\cong{}C_2$ (so 
$C_{Z(N_i)}(\Aut_S(N_i))=C_{Z(N_i)}(\autf(N_i))$), the only 
$\calf$-essential subgroups which could be in the set $\cale_0$ defined in 
Proposition \ref{Ker(mu)}(d) are $S_0$, and $H_1$ and its $S$-conjugates 
if they are essential.

When $P=S_0$, 
	\beqq g_P \in C_{Z(P)}(\Aut_S(P)) = \gen{\e13^1} = Z(S) = 
	C_{Z(S)}(\autf(S))~. \label{e:m22a} \eeqq
So if $H_1$ is not $\calf$-essential, then $[\alpha]=1$ in 
$\Out\typ(\call)$ by Proposition \ref{Ker(mu)}(d).

Assume now that $H_1$ is $\calf$-essential. Then $H_1$ and $A_1$ are both 
$\calf$-centric and fully normalized in $\calf$, and \eqref{e:m22a} holds 
when $P$ is either of these subgroups.  By the description of $\autf(S_0)$ 
and $\autf(H_1)$ in Table \ref{tab:m22} and \cite[Lemma 5.7(a)]{OV2}, $A_1$ 
is invariant under all $\calf$-automorphisms of $S_0$ and of $H_1$, and 
hence $\autf(S_0,A_1)=\autf(S_0)$ and $\autf(H_1,A_1)=\autf(H_1)$.  Also, 
$C_{A_1}(\autf(S_0))=C_{A_1}(\autf(H_1))=1$.  Proposition \ref{Ker(mu)}(c) 
now implies $g_{H_1}=g_{A_1}=g_{S_0}$.  So $[\alpha]=1$ by Proposition 
\ref{Ker(mu)}(d) again; and thus $\mu_G$ is injective. 

\noindent\textbf{(iii) }  By \cite[Table 5.2]{OV2}, for each 
$i=1,2$, $\outf(A_i)$ is isomorphic to one of the groups $\Sigma_5$, 
$(C_3\times{}A_5){\rtimes}C_2$, $A_6$, or $A_7$.  Hence $A_1$ and $A_2$ are 
$\calf$-radical and $\calf$-centric, and 
$O_2(\calf)\le{}A_1\cap{}A_2=E_{13}$ by Proposition \ref{norm<=>}.  Since 
no proper nontrivial subgroup of $E_{13}$ is invariant under the action of 
$\gamma_0\in\autf(S_0)$, and $E_{13}$ itself is not invariant under the 
action of $\nu_2\in\autf(N_2)$ (see \cite[Lemma 5.8]{OV2}), we conclude 
that $O_2(\calf)=1$.

\smallskip

\noindent\textbf{Case 2: }  Now assume 
$S=S_\theta=S_0\rtimes\gen{\theta}$, where 
$c_\theta=\tau\circ{}c_\phi\in\Aut(S_0)$.  Thus $G=J_2$ or $J_3$.  Again 
in this case, $S_0$ is characteristic in $S$ (cf. \cite[Lemma 
4.1(d)]{OV2}).  Since $c_\theta$ permutes freely a basis of 
$Z(S_0)=E_{13}$, \cite[Corollary 1.3]{OV2}  together with the description 
of $\Out(S_0)$ in \cite[Lemma 4.5]{OV2}, imply there is an isomorphism
        \[ \Out(S) \RIGHT4{\Res}{\cong} 
        C_{\Out(S_0)}(\gen{[c_\theta]})/\gen{[c_\theta]} \cong \Sigma_4. \]

Set $Q=\gen{E_{13},\e12^1\e23^1,\e12^\omega\e23^{\bar\omega},\theta}$, an 
extraspecial group of type $D_8\times_{C_2}Q_8$ with 
$\Out(Q)\cong\Sigma_5$.  Let $\dot\gamma_1\in\Aut(S)$ be the extension of 
$\gamma_1\in\Aut(S_0)$ which sends $\theta$ to itself.  By 
results in \cite[\S4.2--3]{OV2}, $\calf=\calf_S(G)$ is isomorphic to the 
fusion system generated by automorphisms
        \[ \outf(S)=\gen{[\dot\gamma_1]},\qquad
        \outf(S_0)=\gen{[\gamma_0],[\gamma_1],[c_\theta]}\cong
        C_3\times\Sigma_3, \qquad
        \outf(Q)\cong{}A_5; \]
and by $\outf(A_i)\cong{}GL_2(4)$ if $G=J_3$.  Since 
$\Aut(S,\calf)/\Inn(S)$ normalizes $\outf(S)\cong{}C_3$, and the 
normalizer in $\Sigma_4$ of a subgroup of order $3$ has order $6$, 
$|\Out(S,\calf)|\le2$.  

\noindent\textbf{(i) }  
In both cases ($G\cong{}J_2$ or $J_3$), $\Out(G)\cong{}C_2$.  So to prove 
$\mu_G\circ\kappa_G$ is an isomorphism, it suffices to show it is 
injective.  Fix $\alpha\in\Aut(G,S)$ such that 
$\mu_G(\kappa_G([\alpha]))=1$; as before, we can assume $\alpha|_S=\Id_S$. 
By \cite[\S1.5]{GL}, $N_G(Z(S))$ and $N_G(E_{13})$ satisfy the hypotheses 
of \eqref{e:Lyons}, and they generate $G$ since both are maximal proper 
subgroups.  By \eqref{e:Lyons}, $\alpha|_{N_G(Z(S))}=\Id$, and 
$\alpha|_{N_G(E_{13})}=c_z$ for some $z\in{}Z(S)$.  Thus 
$\alpha\in\Aut_{Z(S)}(G)$.  

\noindent\textbf{(ii) }  Set $\call=\call_S^c(G)$.  By Proposition 
\ref{Ker(mu)}, each element of $\Ker(\mu_G)$ is represented by some 
$\alpha\in\Aut\typ^I(\call)$ which is the identity on objects, and such 
that for each fully normalized $P\in\Ob(\call)$, 
$\alpha_P\in\Aut(\Aut_\call(P))$ is conjugation by some element 
$g_P\in{}C_{Z(P)}(\Aut_S(P))$.  Since $Z(Q)\cong{}C_2$ (hence 
$C_{Z(Q)}(\Aut_S(Q))=C_{Z(Q)}(\autf(Q))$), $\cale_0$ contains at most the 
subgroups $S_0$, $A_1$, and $A_2$.  Note that in both cases, $A_1$ and 
$A_2$ are $\calf$-centric and fully normalized in $\calf$.

In both cases, $\gamma_0\in\autf(S_0)$ leaves $A_1$ and $A_2$ invariant, 
and acts on each of the groups $Z(S_0)=E_{13}$, $A_1$, and $A_2$ with 
trivial fixed subgroup.  Hence 
	\[ C_{Z(A_1)}(\autf(S_0,A_1)) = 
	C_{Z(A_2)}(\autf(S_0,A_2)) = 1 ~, \]
and so $g_{A_1}=g_{S_0}=g_{A_2}$ by Proposition \ref{Ker(mu)}(c).  Also,
$g_{S_0}\in{}C_{Z(S_0)}(\Aut_S(S_0))=\gen{\e13^1}=Z(S)= 
C_{Z(S)}(\autf(S))$, and Proposition \ref{Ker(mu)}(d) applies (with 
$g=g_{S_0}$) to show that $[\alpha]=1$ in $\Out\typ(\call)$.  Thus $\mu_G$ 
is injective.

\noindent\textbf{(iii) }  Since $S_0$ and $Q$ are $\calf$-centric and 
$\calf$-radical, $O_2(\calf)\le{}S_0\cap{}Q$.  Also, $\autf(Q)$ acts 
transitively on the set of elements of order four in $Q$, and on the set 
of noncentral elements of order two.  Since each of those sets contains 
elements in $S_0$ and elements not in $S_0$, this implies 
$O_2(\calf)\le{}Z(Q)=\gen{\e13^1}$.  Since $\gamma_0\in\autf(S_0)$ and 
$\gamma_0(\e13^1)\ne\e13^1$, it follows that $O_2(\calf)=1$.
\end{proof}

\newsub{Alternating groups}

We prove here that all fusion systems of alternating groups are tame, and 
are also (with the obvious exceptions) reduced.  Unlike the other examples 
given in this paper, we prove tameness without first determining the list 
of essential subgroups. 

We first fix some notation when working with alternating and symmetric 
groups.  We always regard $A_n\le\Sigma_n$ as groups of permutations of 
the set $\bfn=\{1,\ldots,n\}$.  For $\sigma\in\Sigma_n$, we set 
$\supp(\sigma)=\{i\in\bfn\,|\,\sigma(i)\ne{}i\}$ (the \emph{support} of 
$\sigma$).  Likewise, for $H\le\Sigma_n$, $\supp(H)$ is defined to be the 
union of the supports of its elements.

\begin{Lem} \label{Out(S,F(An))}
Fix a prime $p$ and $n\ge{}p^2$.  Assume $n\ge8$ if $p=2$.  Set 
$G=A_n$, and fix $S\in\sylp{G}$.  Set $q=p$ if $p$ is odd, and $q=4$ if 
$p=2$.  Then 
	\[ \Out(S,\calf_S(G)) \cong \begin{cases}  
	C_2 & \textup{if $n\equiv0,1$ (mod $q$)} \\
	1 & \textup{otherwise.} \end{cases} \]
In all cases, $\mu_G\circ\kappa_G$ sends 
$\Out(G)=\Out_{\Sigma_n}(G)\cong{}C_2$ onto $\Out(S,\calf_S(G))$.
\end{Lem}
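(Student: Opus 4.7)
The plan has two main components: first show $\Aut(S,\calf_S(A_n)) = \Aut_{\Sigma_n}(S)$, and then compute $\Aut_{\Sigma_n}(S)/\Aut_{A_n}(S)$ via the sign character. I would set up notation via the base-$p$ expansion $n = \sum_i a_i p^i$, so that $\sylp{\Sigma_n}$ decomposes as $\widetilde S = \prod_i S_{p^i}^{a_i}$, with $S_{p^i} = C_p\wr\cdots\wr C_p$ acting on $p^i$ points, the $a_i$ copies occupying disjoint blocks, and $a_0$ points fixed. For $p$ odd, $S = \widetilde S = \sylp{A_n}$; for $p = 2$, $S = \syl2{A_n}$ is the kernel of the sign character $\epsilon|_{\widetilde S}$, of index $2$ in $\widetilde S$. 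Since $A_n \nsg \Sigma_n$, conjugation by $N_{\Sigma_n}(S)$ preserves fusion, giving the natural chain $\Aut_{A_n}(S) \le \Aut_{\Sigma_n}(S) \le \Aut(S,\calf_S(A_n))$.

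The principal obstacle is the equality $\Aut(S,\calf_S(A_n)) = \Aut_{\Sigma_n}(S)$; only the upper bound is nontrivial. I would apply Alperin's fusion theorem (Theorem~\ref{AFT}): any $\alpha \in \Aut(S,\calf_S(A_n))$ must permute the $\calf$-essential subgroups of $\calf_S(A_n)$ and conjugate their $\calf$-automorphism groups. The essential subgroups can be read off from the wreath decomposition of $S$, and the resulting combinatorial constraints on $\alpha$ force it to preserve the block structure up to permutation of isomorphic $S_{p^i}$-factors; but all such symmetries are realized inside $N_{\Sigma_n}(S)$, so $\alpha$ is induced by conjugation by some element of $N_{\Sigma_n}(S)$. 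This step is the main obstacle and requires the most care.

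Granting the equality, $\Out(S,\calf_S(A_n)) \cong \Aut_{\Sigma_n}(S)/\Aut_{A_n}(S)$, which the identity $[N_{\Sigma_n}(S):N_{A_n}(S)]/[C_{\Sigma_n}(S):C_{A_n}(S)]$ identifies with $\epsilon(N_{\Sigma_n}(S))/\epsilon(C_{\Sigma_n}(S))$, a quotient of order at most $2$. For $p$ odd, $C_{\Sigma_n}(S) = \prod_i Z(S_{p^i})^{a_i} \times \Sigma_{a_0}$; since elements of $Z(S_{p^i})$ have $p$-power order (hence even parity for $p$ odd), $\epsilon|_C$ is nontrivial iff $\Sigma_{a_0}$ contains a transposition, i.e.\ iff $a_0 \ge 2$, while $\epsilon|_N$ is always nontrivial for $n \ge p^2$ (a $(p-1)$-cycle from any factor $S_{p^i}$ is odd). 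Hence the quotient is $C_2$ exactly when $a_0 \in \{0,1\}$, i.e.\ $n \equiv 0,1 \pmod p$. For $p = 2$ an analogous but more delicate analysis of sign through the iterated wreath product, noting that the generator of $Z(S_{2^i})$ is an odd permutation only when $i=1$, yields the condition $n \equiv 0,1 \pmod 4$.

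Finally, surjectivity of $\mu_G \circ \kappa_G$ is automatic when $\Out(S,\calf_S(A_n)) = 1$. When $\Out(S,\calf_S(A_n)) \cong C_2$, the parity analysis above furnishes some $\tau \in N_{\Sigma_n}(S)$ with $\epsilon(\tau) = -1$; then $c_\tau \in \Aut(A_n, S)$ represents the nontrivial class of $\Out(A_n) = \Out_{\Sigma_n}(A_n)$, and $c_\tau|_S$ lies in $\Aut_{\Sigma_n}(S)\setminus\Aut_{A_n}(S)$, hence represents the generator of $\Out(S,\calf_S(A_n))$.
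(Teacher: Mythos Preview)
Your proposal has a genuine gap at the crucial step: showing $\Aut(S,\calf_S(A_n)) = \Aut_{\Sigma_n}(S)$. You say the essential subgroups ``can be read off from the wreath decomposition'' and that the resulting constraints force $\alpha$ to preserve the block structure, but you give no indication of how either of these would go. Determining the $\calf$-essential subgroups of $\calf_S(A_n)$ is genuinely hard (note that the relevant fusion is in $A_n$, not merely in $S$), and the paper explicitly remarks that for the alternating groups it proves tameness \emph{without} first listing the essential subgroups. Even granting such a list, passing from ``$\alpha$ permutes essentials compatibly with their $\calf$-automorphism groups'' to ``$\alpha$ is induced by some $\sigma\in N_{\Sigma_n}(S)$'' is not automatic.

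The paper's route is quite different and avoids essential subgroups entirely. It introduces a single characteristic subgroup $Q\le S$: the subgroup generated by all $S$-subgroups $G$-conjugate to $E_*=\langle(1\,2\cdots p)\rangle$ (odd $p$) or $E_*=\langle(1\,2)(3\,4),(1\,3)(2\,4)\rangle$ ($p=2$), so $Q=Q_1\times\cdots\times Q_k$ with disjoint supports. The heart of the argument is an explicit rank-versus-orbit count showing that any $\alpha\in\Aut(S,\calf)$ satisfies $\alpha(Q)=Q$; then a conjugacy-class-size count in $Q$ shows $\alpha$ permutes the $Q_i$, so after composing with some $c_g$ ($g\in N_{\Sigma_n}(S)$) one may assume $\alpha|_Q=\Id$. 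Since $Q$ is normal and centric in $S$, the constrained fusion system $N_\calf(Q)$ has a unique model $G_0$; $\alpha$ extends to $\beta\in\Aut(G_0)$, and from $\beta|_Q=\Id$ together with the structure of $G_0$ one deduces $\alpha=\Id_S$.

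Your parity computation for the quotient is essentially correct and agrees with the paper's: $\Out(S,\calf)=1$ exactly when some odd permutation centralizes $S$, which happens iff $n\not\equiv 0,1\pmod q$. (One small slip: the odd element you want in $N_{\Sigma_n}(S)$ is a $(p-1)$-cycle in the \emph{normalizer}, not ``from any factor $S_{p^i}$'', since $S_{p^i}$ is a $p$-group.)
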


\begin{proof}  Set $\calf=\calf_S(G)$ for short.  Set 
$E_*=\gen{(1\,2\,\cdots\,p)}\cong{}C_p$ if $p$ is odd, and 
$E_*=\gen{(1\,2)(3\,4),(1\,3)(2\,4)}\cong{}C_2^2$ if $p=2$.  Let $Q\le{}S$ 
be the subgroup generated by all subgroups of $S$ which are $G$-conjugate 
to $E_*$.  If $E_1$ and $E_2$ are $G$-conjugate to $E_*$, then either 
$E_1=E_2$, or $\supp(E_1)\cap\supp(E_2)=\emptyset$ and $[E_1,E_2]=1$, or 
$\gen{E_1,E_2}$ is not a $p$-group.  Since this last case is impossible 
when $E_1,E_2\le{}S$, we conclude that $Q=Q_1\times\cdots\times{}Q_k$, 
where $k=[n/q]$ and the $Q_i$ are pairwise commuting subgroups conjugate 
to $E_*$.  

Fix $\alpha\in\Aut(S,\calf)$, and set $R=\alpha(Q)$.  We first show that 
$R=Q$.  For $i\ge1$, let $r_i$ be the number of orbits of length $p^i$ 
under the action of $R$ on $\bfn$.  Thus 
	\beqq \sum_{i\ge1}p^ir_i = |\supp(R)| \le \begin{cases}  
	q{\cdot}[n/q] & \textup{if $p$ is odd or $r_1=0$} \\
	2{\cdot}[n/2] & \textup{if $p=2$ and $r_1\ge1$}
	\end{cases} \label{e:An1} \eeqq
since $\supp(R)$ has order a multiple of $p$, and a multiple of $4$ when 
$p=2$ and $r_1=0$.  Since $R\cong{}Q$ is elementary abelian, $R$ is 
contained in a product $\prod_{i\ge1}(B_i)^{r_i}$, where $B_i\cong{}C_p^i$ 
acts freely on a subset of $\bfn$ of order $p^i$, and hence 
	\beqq \rk(Q) = \rk(R) \le \begin{cases} \sum_{i\ge1}ir_i & \textup{if 
	$p$ is odd or $r_1=0$} \\
	\sum_{i\ge1}ir_i-1 & \textup{if $p=2$ and $r_1\ge1$~.}
	\end{cases} \label{e:An2} \eeqq
In the last case, ``$-1$'' appears since $R$ contains only even 
permutations, and since the only factors $B_i$ which act via odd 
permutations are those for $i=1$.

Thus if $p$ is odd or $r_1=0$, then by \eqref{e:An1} and \eqref{e:An2},
	\beqq \sum_{i\ge1}p^ir_i \le q\cdot[n/q] = qk = p\cdot \rk(Q) 
	\le \sum_{i\ge1}pir_i ~. \label{e:An2x} \eeqq
Also, $p^i\ge{}pi$, with equality only when $i=1$ or $p^i=4$.  Hence 
\eqref{e:An2x} is possible only when $p$ is odd, $r_1=k$, and 
$r_i=0$ for $i>1$; or when $p=2$, $r_2=k$, and $r_i=0$ for $i>2$.  In both 
cases, $R$ is a product of subgroups conjugate to $E_*$, and thus $R=Q$.  

Now assume $p=2$ and $r_1\ne0$.  By \eqref{e:An1} and \eqref{e:An2} again,
	\[ \sum_{i\ge1}2^ir_i - 2 \le 2\cdot ([n/2]-1) \le 4{\cdot}[n/4] 
	= 4k = 2\cdot \rk(Q) \le \sum_{i\ge1}2ir_i - 2~, \]
so $r_i=0$ for $i\ge3$, and the inequalities are equalities.  In 
particular, $r_1+2r_2=[n/2]=2k+1$, so $r_1$ and $[n/2]$ are both odd.  
Hence $R\cong(C_2^2)^{r_2}\times{}C_2^{r_1-1}$ (and $r_1\ge3$), where each 
element in $\Aut_G(R)$ permutes the $C_2^2$-factors and the $C_2$-factors. 
It follows that 
$\Aut_G(R)\cong(\Sigma_3\wr\Sigma_{r_2})\times\Sigma_{r_1}$.  Since 
$\alpha$ is fusion preserving, we have $\Aut_G(R)\cong\Aut_G(Q)$, where 
$\Aut_G(Q)=\Aut_{\Sigma_n}(Q)\cong\Sigma_3\wr\Sigma_k$ since $[n/2]$ is 
odd ($n-4k\ge2$ where $4k=|\supp(Q)|$, so there is a transposition which 
centralizes $Q$).  Thus 
$\Sigma_3\wr\Sigma_k\cong(\Sigma_3\wr\Sigma_{r_2})\times\Sigma_{r_1}$.  
Since $(\Sigma_3\wr\Sigma_\ell)\ab\cong{}C_2^2$ for all $\ell\ge2$, we get 
$r_2=1$, $\Sigma_3\wr\Sigma_k\cong\Sigma_3\times\Sigma_{r_1}$, and this is 
clearly impossible.  

Thus $\alpha(Q)=Q$.  Since $\alpha$ is fusion preserving, it permutes the 
$G$-conjugacy classes in $Q$.  For each $1\leq r\leq k$, there are 
$\binom{k}{r}{\cdot}(q-1)^r$ products of $r$ disjoint $p$-cycles in $Q$ if 
$p$ is odd, and $\binom{k}{r}{\cdot}(q-1)^r$ products of $2r$ disjoint 
$2$-cycles in $Q$ if $p=2$.  Clearly, $k(q-1)<\binom{k}r{\cdot}(q-1)^r$ 
for $1<r<k$, and $k(q-1)<(q-1)^k$ since $k>1$ and $(k,q)\ne(2,3)$ by 
assumption.  Hence $\alpha$ sends the set of $p$-cycles in $Q$ (products 
of two $2$-cycles in $Q$) to itself.  Since the $p$-cycles (products of 
two $2$-cycles) are precisely the nonidentity elements in 
$\bigcup_{i=1}^kQ_i$, and since $Q_1,\dots,Q_k$ are the maximal subgroups 
in this set, $\alpha$ permutes the $Q_i$.  

Thus there is $g\in{}N_{\Sigma_n}(Q)$ such that $c_g|_Q=\alpha|_Q$, and 
hence 
	\[ \Aut_{gSg^{-1}}(Q)=(\alpha|_Q)\Aut_S(Q)(\alpha|_Q)^{-1}
	=\Aut_S(Q) \]
since $\alpha\in\Aut(S)$.  Since $Q\nsg{}S$ by construction, 
this implies $gSg^{-1}\le{}S{\cdot}C_{\Sigma_n}(Q)$, where $S$ normalizes 
$C_{\Sigma_n}(Q)$ since it normalizes $Q$.  Hence there is 
$h\in{}C_{\Sigma_n}(Q)$ such that $hg\in{}N_{\Sigma_n}(S)$ (and 
$\alpha|_Q=c_{hg}|_Q$).  Upon replacing $\alpha$ by 
$\alpha\circ{}c_{hg}^{-1}$, we can now assume $\alpha|_Q=\Id$.

Set $\calf_0=N_\calf(Q)$.  Since $Q$ is fully normalized in $\calf$ and 
$Q\nsg{}S$, this is a saturated fusion system over $S$.  Also, 
$C_S(Q)\le{}Q$:  any permutation which centralizes $Q$ must leave each set 
$\supp(Q_i)$ invariant, and hence 
$C_G(Q)\cong(C_{A_q}(E_*))^k\times{}A_{n-qk}\cong{}Q\times{}A_{n-qk}$.  
Thus $Q$ is normal and 
centric in $\calf_0$, so $\calf_0$ is \emph{constrained} in the sense of 
\cite[Definition 4.1]{BCGLO1}. By \cite[Proposition 4.3]{BCGLO1}, there is 
a finite group $G_0$, unique up to isomorphism, such that $O_{p'}(G_0)=1$, 
$Q\nsg{}G_0$, $C_{G_0}(Q)\le{}Q$, $S\in\sylp{G_0}$, and 
$\calf_0=\calf_S(G_0)$.  Thus $Q\nsg{}G_0$ and $G_0/Q\cong\autf(Q)$.  The 
fusion preserving automorphism $\alpha$ induces an automorphism of 
$\calf_0=N_\calf(Q)$, and hence by the uniqueness of $G_0$ induces an 
automorphism $\beta\in\Aut(G_0)$ such that $\beta|_S=\alpha$.  Let 
$H\nsg{}G_0$ be the group of those $g\in{}G_0$ such that $c_g$ sends each 
$Q_i$ to itself via an automorphism of order prime to $p$.  Thus 
$H/Q\le(C_{p-1})^k$ (with index 1 or 2) when $p$ is odd, and 
$H/Q\cong{}C_3^k$ when $p=2$.  Since $\beta|_Q=\Id_Q$ and $H/Q$ has order 
prime to $p$, $\beta|_{H}$ is conjugation by an element $a\in{}Q$.  Upon 
replacing $\alpha$ and $\beta$ by $\alpha\circ{}c_a^{-1}$ and 
$\beta\circ{}c_a^{-1}$, we can assume $\beta|_{H}=\Id_{H}$.  But now, 
$Z(H)=1$, so distinct elements of $G_0$ have distinct conjugation actions 
on $H$, and hence $\beta=\Id_{G_0}$.  Thus $\alpha=\beta|_S=\Id_S$.  

We have now shown that each element of $\Aut(S,\calf)$ is conjugation by 
some element of $\Sigma_n$.  Since $n>6$, $\Out(G)=\Out_{\Sigma_n}(G)$ 
by, e.g., \cite[Theorem 3.2.17]{Sz1}. Thus $\mu_G\circ\kappa_G$ sends 
$\Out(G)\cong{}C_2$ onto $\Out(S,\calf)$.  This last group is trivial 
exactly when there is $g\in{}N_{\Sigma_n}(S){\sminus}A_n$ such that 
$c_g|_S\in\autf(S)$; i.e., when $c_g|_S=c_h|_S$ for some $h\in{}N_G(S)$.  
Upon replacing $g$ by $gh^{-1}$, we see that $\Out(S,\calf)=1$ if and only 
if some odd permutation $g\in\Sigma_n{\sminus}A_n$ centralizes $S$.  

If $n\not\equiv0,1$ (mod $q$), then there is a transposition $(i\,j)$ 
which centralizes $S$:  when $p$ is odd because one can choose 
$i,j\in\bfn{\sminus}\supp(S)$, and when $p=2$ because the $S$-action on 
$\bfn$ has an orbit 
$\{i,j\}$ of order $2$.  Thus $\Out(S,\calf)=1$ in this case.  If 
$n\equiv0,1$ (mod $q$), then $|\bfn{\sminus}\supp(Q)|\le1$, and so 
	\[ C_{\Sigma_n}(S) \le C_{\Sigma_n}(Q) = Q \le A_n~. \]
Thus $\Out(S,\calf)=\Out_{\Sigma_n}(S)$ has order two in this case.  
\end{proof}

The following well known lemma will be needed when working with 
elementary abelian subgroups of symmetric groups.

\begin{Lem} \label{cent-Sigma}
Fix $n\ge1$ and an abelian subgroup $G\le\Sigma_n$.  Let 
$H_1,\dots,H_m\le{}G$ be the distinct stabilizer subgroups for the action 
of $G$ on $\bfn$, and let $X_i\subseteq\bfn$ be the set of elements with 
stabilizer subgroup $H_i$ (so $\bfn$ is the disjoint union of the $X_i$).  
Then each $X_i$ is $G$-invariant.  Let $k_i$ be the number of $G$-orbits 
in $X_i$.  Then 
	\[ C_{\Sigma_n}(G) \cong \prod_{i=1}^m (G/H_i)\wr\Sigma_{k_i}~, \]
where each factor $(G/H_i)\wr\Sigma_{k_i}$ has support $X_i$, 
$\Sigma_{k_i}$ permutes the $k_i$ $G$-orbits in $X_i$, and each factor 
$G/H_i$ in $(G/H_i)^{k_i}$ has as support one of those $G$-orbits.
\end{Lem}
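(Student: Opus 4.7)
\medskip

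The plan is to reduce the computation of $C_{\Sigma_n}(G)$ to a union of transitive pieces, then recognize each piece as a $G$-set automorphism group. First I would check that each $X_i$ is $G$-invariant: for $x\in\bfn$ and $g\in G$, the stabilizer of $g(x)$ equals $g\,\mathrm{Stab}_G(x)\,g^{-1}$, which coincides with $\mathrm{Stab}_G(x)$ because $G$ is abelian. Thus the stabilizer function $x\mapsto H_x$ is constant on $G$-orbits, so each $X_i$ (the preimage of $H_i$) is a $G$-invariant subset of $\bfn$.

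Next I would show that any $\sigma\in C_{\Sigma_n}(G)$ permutes each $X_i$ into itself. For $x\in\bfn$, we have $g\in\mathrm{Stab}_G(\sigma(x))$ iff $\sigma^{-1}g\sigma(x)=x$, and since $\sigma$ centralizes $G$ this is iff $g\in\mathrm{Stab}_G(x)$. So $\mathrm{Stab}_G(\sigma(x))=\mathrm{Stab}_G(x)$ and $\sigma(X_i)=X_i$ for each $i$. Consequently
\[
  C_{\Sigma_n}(G) \;=\; \prod_{i=1}^m C_{\Sigma(X_i)}(G|_{X_i})\,,
\]
where $G|_{X_i}$ denotes the image of $G$ in $\Sigma(X_i)$; and $C_{\Sigma(X_i)}(G|_{X_i})$ is exactly the group $\mathrm{Aut}_{G\text{-set}}(X_i)$ of $G$-equivariant bijections of $X_i$.

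I would then analyse each factor. On $X_i$ the subgroup $H_i$ acts trivially and $G/H_i$ acts freely, so as a $G$-set $X_i$ is a disjoint union of $k_i$ copies of the regular $(G/H_i)$-set. Choosing a base point in each of the $k_i$ orbits identifies $X_i$ with $(G/H_i)\times\{1,\dots,k_i\}$ where $G/H_i$ acts by left translation on the first factor. A $G$-equivariant bijection of $(G/H_i)\times\{1,\dots,k_i\}$ must send each orbit to an orbit, giving a permutation of $\{1,\dots,k_i\}$; having done so, on each orbit it is a $G/H_i$-equivariant self-map of the regular $(G/H_i)$-set, which is right translation by some element of $G/H_i$ (and left translation coincides with right translation since $G$ is abelian). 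This gives the standard identification
\[
  \mathrm{Aut}_{G\text{-set}}(X_i) \;\cong\; (G/H_i)\wr\Sigma_{k_i}\,,
\]
with $\Sigma_{k_i}$ permuting the $k_i$ orbits and the base group $(G/H_i)^{k_i}$ acting by translation on each individual orbit. Assembling over $i$ yields the desired formula, and the explicit supports are visible from the construction.

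There is no serious obstacle; the only point requiring some care is the identification of the $G$-equivariant automorphism group of a free transitive $G/H_i$-set with $G/H_i$ itself, where one must keep left and right translations straight (a non-issue here since $G$ is abelian).
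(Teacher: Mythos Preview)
Your proof is correct and follows essentially the same approach as the paper's: both use that $G$ abelian forces stabilizers to be constant on orbits (hence each $X_i$ is $G$-invariant), that centralizing permutations preserve stabilizers (hence each $X_i$), and then identify the centralizer on $X_i$ as the automorphism group of a disjoint union of copies of the regular $(G/H_i)$-set, yielding the wreath product. The only cosmetic difference is that the paper writes this via the normal subgroup $C_0$ of orbit-preserving elements and the quotient $\prod_i\Sigma_{k_i}$, whereas you phrase it in $G$-set language.
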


\begin{proof}  Let $Y_1,\dots,Y_t$ be the $G$-orbits in $\bfn$, and let 
$C_0\le C_{\Sigma_n}(G)$ be the subgroup of elements which leave each 
of the $Y_i$ invariant.  Since $G$ is abelian, $y$ and $g(y)$ have 
the same stabilizer subgroup for each $g\in{}G$ and each $y\in\bfn$.  Let 
$K_i$ be the stabilizer subgroup of the elements in $Y_i$.  Then the 
homomorphism 
	\[ \chi\: \prod_{i=1}^t(G/K_i) \Right5{} C_0~, \]
defined by setting $\chi(g_1K_1,\dots,g_tK_t)(y)=g_i(y)$ for $y\in{}Y_i$, 
is an isomorphism.

Since all elements in each orbit have the same stabilizer subgroup, each 
set $X_i$ is a union of orbits $Y_j$ (i.e., is $G$-invariant).  Also, 
$C_0$ is normal in $C_{\Sigma_n}(G)$:  it is the kernel of the 
homomorphism to $\Sigma_t$ which describes how an element $\sigma$ 
permutes the orbits.  Each $\sigma\in{}C_{\Sigma_n}(G)$ 
sends each orbit in $\bfn$ to another orbit with the same 
stabilizer subgroup, and thus leaves each $X_i$ invariant.  Since $X_i$ 
contains $k_i$ orbits, 
$C_{\Sigma_n}(G)/C_0\cong\prod_{i=1}^m\Sigma_{k_i}$, and 
$C_{\Sigma_n}(G)$ is isomorphic to the product of the wreath products 
$(G/H_i)\wr\Sigma_{k_i}$.  
\end{proof}

We are now ready to prove that all fusion systems of alternating groups are 
tame.

\begin{Prop} \label{F(An)-tame}
Fix a prime $p$ and $n\ge2$, set $G=A_n$, and choose $S\in\sylp{G}$.  Then 
$\calf_S(G)$ is tame.  If $p=2$ and $n\ge8$; or if $p$ is odd, 
$n\ge{}p^2$ and $n\equiv0,1$ (mod $p$); then 
	\[ \kappa_G\:\Out(G)\Right4{}\Out\typ(\call_S^c(G)) \cong C_2 \] 
is an isomorphism.  
\end{Prop}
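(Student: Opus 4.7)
The plan is to first dispose of small $n$ and then attack the main case $n\ge p^2$ (and $n\ge 8$ when $p=2$) by combining Lemma \ref{Out(S,F(An))} with a careful analysis of $\Ker(\mu_G)$ via Proposition \ref{Ker(mu)}.

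\emph{Reduction to the main case.}  First I would handle $n<p^2$ (and the leftover $p=2$ cases with $n<8$).  When $n<p^2$ the Sylow $p$-subgroup $S$ of $A_n$ is elementary abelian, so $S\nsg\calf_S(A_n)$ is $\calf$-centric and $\calf$ is constrained.  By Proposition \ref{red(constr)}, $\red(\calf)=1$, which is trivially strongly tame, so Theorem \ref{T:reduce} gives tameness of $\calf$.  For $p=2$ and $n\in\{6,7\}$, $S\cong D_8$; one checks that $N_{A_7}(S)\le A_6$, so $\calf_S(A_7)=\calf_S(A_6)$ and both are the unique reduced fusion system over $D_8$ from Proposition \ref{D(2^k)}, hence tame.  (The cases $n\le 5$ at $p=2$ and the analogous small cases at odd $p$ likewise have abelian or trivial Sylow subgroup.)

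\emph{Main case: bounding $\Out\typ(\call)$.}  Write $\call=\call_S^c(G)$.  By Theorem \ref{t:L(p)}, $G=A_n\in\gpclass{p}$, so Lemma \ref{lim2=0}(d) tells us that $\mu_G\:\Out\typ(\call)\surj\Out(S,\calf)$ is surjective, and Lemma \ref{Out(S,F(An))} tells us that $\mu_G\circ\kappa_G$ already surjects onto $\Out(S,\calf)$, which has order at most $2$.  Since $\Out(G)\cong C_2$ (for $n\ne 6$), once I show $\kappa_G$ is surjective, splitness is automatic and tameness follows.  Surjectivity of $\kappa_G$ is equivalent to the inclusion
\[
\Ker(\mu_G) \subseteq \Im(\kappa_G),
\]
because then $\Out\typ(\call)=\Im(\kappa_G)\cdot\Ker(\mu_G)=\Im(\kappa_G)$.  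So the heart of the argument is controlling $\Ker(\mu_G)$.

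\emph{Controlling $\Ker(\mu_G)$.}  I apply Proposition \ref{Ker(mu)}(e): any $[\alpha]\in\Ker(\mu_G)$ has a representative $\alpha$ with $\alpha_S=\Id$, and is characterized by the elements $g_P\in C_{Z(P)}(\Aut_S(P))$ (mod $C_{Z(P)}(\autf(P))$), where $P$ ranges over one representative in each $\calf$-conjugacy class of subgroups in $\widehat\cale_0$, i.e.\ subgroups of the form $P=C_S(E)$ with $E\le S$ fully centralized elementary abelian, $P$ being $\calf$-essential, and $C_{Z(P)}(\autf(P))\lneqq C_{Z(P)}(\Aut_S(P))$.  For $G=A_n$, such $E$ is (up to $\calf$-conjugacy) generated by disjoint $p$-cycles (for $p$ odd) or by disjoint pairs of commuting transpositions (for $p=2$), and Lemma \ref{cent-Sigma} gives a completely explicit product description of $C_{\Sigma_n}(E)$, and hence of $P=C_S(E)$, $Z(P)$, and the action of $\Aut_{N_G(E)}(E)$ on $Z(P)$.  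Using this structure I would compute $C_{Z(P)}(\autf(P))$ and $C_{Z(P)}(\Aut_S(P))$ explicitly, and show that the quotient $C_{Z(P)}(\Aut_S(P))/C_{Z(P)}(\autf(P))$ is either trivial, or generated by (the image of) a transposition in $C_{\Sigma_n}(S)$.

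\emph{Realizing kernel elements and the main obstacle.}  Finally, I combine the previous computation with the Lyons-style principle \eqref{e:Lyons} of Proposition \ref{MJMcL-tame}: any $\beta\in\Aut(G,S)$ with $\beta|_S=\Id$ is, by $\Out(A_n)=\Out_{\Sigma_n}(A_n)$ (for $n\ne 6$), conjugation by some $\sigma\in C_{\Sigma_n}(S)$, and $\til\kappa_G(c_\sigma)$ acts on each $\Aut_\call(P)$ for $P\in\widehat\cale_0$ by conjugation by $\sigma$.  Matching this with the computation of the previous step, I expect to find, for every $[\alpha]\in\Ker(\mu_G)$, a $\sigma\in C_{\Sigma_n}(S)$ with $g_P=\sigma\pmod{C_{Z(P)}(\autf(P))}$ for each $P\in\widehat\cale_0$, so that $[\alpha]=\kappa_G([c_\sigma])\in\Im(\kappa_G)$; this $\sigma$ will be trivial when $n\equiv 0,1\pmod q$ (so $\Ker(\mu_G)=1$ and $\kappa_G$ is an isomorphism onto $C_2$) and an odd transposition otherwise (so $\kappa_G$ is still onto, forcing $\kappa_G$ to be an iso for $p=2$ and forcing $\Out\typ(\call)=1$ for $p$ odd, since $\kappa_G$ factors through $\Out(S,\calf)=1$ and $\Ker(\mu_G)$ is then a $p$-group of odd order).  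The main obstacle will be the last step: showing that two \emph{a priori} different elements of $\Ker(\mu_G)$ with the same $g_P$-data are equal across all $\calf$-essential $P$ and not just those in $\widehat\cale_0$.  The $\widehat\cale_0$ reduction in Proposition \ref{Ker(mu)}(e) is precisely designed for this, but verifying its hypothesis —- that every $\calf$-essential $P$ outside $\widehat\cale_0$ is the centralizer of an elementary abelian $E$ in the relevant sense, or else has $C_{Z(P)}(\autf(P))=C_{Z(P)}(\Aut_S(P))$ —- is where the bulk of the case-by-case work on alternating groups will go.
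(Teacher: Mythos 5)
Your skeleton coincides with the paper's: dispose of small $n$ via constrained fusion systems and $A_6\cong PSL_2(9)$, then control $\Ker(\mu_G)$ using Lemma \ref{Out(S,F(An))}, Proposition \ref{Ker(mu)} and Lemma \ref{cent-Sigma}, and realize the nontrivial kernel element by conjugation by a transposition centralizing $S$. But the plan has a genuine gap exactly where the real work lies. You misread Proposition \ref{Ker(mu)}(e): it is unconditional (its proof already reduces detection from all essential subgroups to the set $\E_0$ of essential $P=C_S(E)$ with $C_{Z(P)}(\autf(P))\lneqq C_{Z(P)}(\Aut_S(P))$), so the ``hypothesis'' you single out as the main obstacle does not exist; what actually must be proved is a classification of $\E_0$ itself. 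Moreover your description of the relevant $E$ is wrong: for $p=2$ the members of $\E_0$ not containing the weakly closed subgroup $Q=Q_1\times\cdots\times Q_k$ turn out to be $P=\widebar{P}\cap A_n$ with $\widebar{P}=\langle(1\,2),(3\,4),(5\,6)\rangle\times P_2$ and $P_2$ a Sylow $2$-subgroup of the symmetric group on the remaining $4k-4$ points, so $E=Z(P)$ contains involutions whose supports have all the occurring $2$-power orbit sizes, not just products of two transpositions. Proving that such $P$ exist only when $n\equiv2,3\pmod4$, form a single $\calf$-conjugacy class, and satisfy $|C_{Z(P)}(\Aut_S(P))/C_{Z(P)}(\autf(P))|=2$ --- via the strongly $2$-embedded condition on $\outf(P)$ and the counting around \eqref{e:An5} --- is the heart of the paper's proof, and your plan contains none of it and mislocates it.

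Two further points. Your simultaneous matching of all the $g_P$ by one $\sigma\in C_{\Sigma_n}(S)$ is not automatic from computing the quotients class by class: a priori several classes in $\E_0$ could contribute independent order-$2$ factors, and then no single $\sigma$ (and no bound $|\Ker(\mu_G)|\le2$) follows. The paper obtains coherence by first anchoring on $Q$: since $Q$ is weakly closed and $C_Q(\autf(Q))=1$ (the $3$-cycles $h_i$), Proposition \ref{Ker(mu)}(b,c) forces $g_P\equiv g_Q$ for all $P\ge Q$, and after normalizing $g_Q=1$ only classes not containing $Q$ remain --- and there is at most one such class. This anchoring step is absent from your plan. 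Finally, at odd primes you replace the paper's one-line argument (by \cite[Theorem E]{BLO1} and \cite[Theorem A]{limz-odd}, $\mu_G$ is injective, so Lemma \ref{Out(S,F(An))} finishes immediately) by an unexecuted odd-prime analysis of $\E_0$; besides shifting substantial unproved work into the plan, your formulation there is incoherent as stated, since for odd $p$ the quotient $C_{Z(P)}(\Aut_S(P))/C_{Z(P)}(\autf(P))$ is a $p$-group and cannot be ``generated by a transposition''. Either carry out that odd-prime analysis in full or invoke the injectivity of $\mu_G$ at odd primes as the paper does.
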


\begin{proof}  Set $\calf=\calf_S(G)$ and $\call=\call_S^c(G)$.  If 
$n<p^2$, or if $p=2$ and $n<6$, then the Sylow $p$-subgroups of $A_n$ are 
abelian, so $\calf$ is constrained, $\red(\calf)=1$ by Proposition 
\ref{red(constr)}, and so $\calf$ is tame by Theorem \ref{ThA}.  If $p=2$ 
and $n=6,7$, then since $A_6\cong{}PSL_2(9)$ and $A_7$ has the same fusion 
system as $A_6$, $\calf$ is tame by Proposition \ref{D(2^k)}.  

If $p$ is odd and $n\ge{}p^2$, then 
$\mu_G\:\Out\typ(\call)\Right2{\cong}\Out(S,\calf)$ is an isomorphism 
by \cite[Theorem E]{BLO1} and 
\cite[Theorem A]{limz-odd}.  So by Lemma \ref{Out(S,F(An))}, either 
$n\equiv0,1$ (mod $p$) and $\kappa_G$ is an isomorphism, or 
$\Out\typ(\call)=1$ and hence $\kappa_G$ is split surjective.  Thus 
$\calf$ is tame in these cases.

It remains to handle the case $p=2$ and $n\ge8$.  By Lemma 
\ref{Out(S,F(An))} again, it suffices to prove 
	\beqq \textup{$\Ker(\mu_G)=1$ if $n\equiv0,1$ (mod $4$) 
	\quad\textup{and}\quad $|\Ker(\mu_G)|\le2$ if $n\equiv2,3$ (mod 
	$4$)~,} \label{e:An3} \eeqq
and also 
	\beqq \textup{$n\equiv2,3$ (mod $4$) \quad$\Longrightarrow$\quad 
	there is $x\in{}C_{\Sigma_n}(S){\sminus}G$ such that 
	$\kappa_G([c_x])\ne1$~.} 
	\label{e:An4} \eeqq

Let $Q\le{}S$ be as in the proof of Lemma \ref{Out(S,F(An))}:  the 
subgroup generated by all subgroups of $S$ $G$-conjugate to 
$E_*=\gen{(1\,2)(3\,4),(1\,3)(2\,4)}$.  We saw in the proof of the lemma 
that $Q=Q_1\times\cdots\times{}Q_k$, where $k=[n/4]$, the $Q_i$ are the 
only subgroups of $S$ $G$-conjugate to $E_*$, and they have pairwise 
disjoint support.  Thus $Q$ is weakly closed: the unique subgroup of $S$ 
in its $G$-conjugacy class.

Fix $[\alpha]\in\Ker(\mu_G)$.  By Proposition \ref{Ker(mu)}, we can assume 
$[\alpha]$ is the class of $\alpha\in\Aut\typ^I(\call)$ for which 
$\alpha_S=\Id_{\Aut_\call(S)}$.  Let $g_P\in{}C_{Z(P)}(\Aut_S(P))$, for 
$P\le{}S$ $\calf$-centric and fully normalized, be the elements defined in 
Proposition \ref{Ker(mu)}.  Set $g=g_Q\in{}C_Q(\Aut_S(Q))=Z(S)$ (the last 
equality since $Q$ is normal and centric in $S$).  For each fully 
normalized $P\ge{}Q$ 
(including $P=S$), all automorphisms in $\autf(P)$ leave $Q$ invariant 
since it is weakly closed, so $g_P\equiv{}g_Q=g$ (mod 
$C_{Z(Q)}(\autf(P))=C_{Z(P)}(\autf(P))$) by Proposition \ref{Ker(mu)}(c).  
So upon replacing $\alpha$ by $\alpha\circ{}c_{[g]}^{-1}$, we can assume 
$g=1$, and $\alpha_P=\Id_{\Aut_\call(P)}$ (and $g_P=1$) for all fully 
normalized $P\ge{}Q$. 

For each $1\le{}i\le{}k$, there is a $3$-cycle $h_i\in{}N_G(Q)$ which 
permutes transitively the involutions in $Q_i$ and centralizes the other 
$Q_j$.  Thus $C_Q(\autf(Q))\le\bigcap_{i=1}^kC_Q(h_i)=1$.  
So by Proposition \ref{Ker(mu)}(e), $[\alpha]=1$ if and only if 
for each $\calf$-conjugacy class $\calp$ of subgroups which do not contain 
$Q$, if $\calp\cap\E_0\ne\emptyset$, then there is at least one 
subgroup $P\in\calp\cap\E_0$ such that $g_P\in{}C_{Z(P)}(\autf(P))$.  
Recall $P\in\E_0$ if $P$ 
is $\calf$-essential, $P=C_S(E)$ for some elementary abelian subgroup $E$ 
fully centralized in $\calf$, and 
$C_{Z(P)}(\Aut_S(P))\gneqq{}C_{Z(P)}(\autf(P))$.  Let $\E_0^{\ngeq{}Q}$ be 
the set of subgroups $P\in\E_0$ which do not contain $Q$.

Fix $P=C_S(E)\in\E_0^{\ngeq{}Q}$.  Since $E$ is fully centralized, 
$P\in\syl2{C_G(E)}$.  Since $P$ is $\calf$-essential, $\outf(P)$ has a 
strongly $2$-embedded subgroup, and hence all involutions in any Sylow 
$2$-subgroup of $\outf(P)$ are in its center (cf. \cite[Propositions 
3.3(a) \& 3.2]{OV2}).  In particular, $\outf(P)$ contains no subgroup 
isomorphic to $D_8$.

Fix $\widebar{P}\in\syl2{C_{\Sigma_n}(E)}$ which contains $P$.  Thus 
$P=\widebar{P}\cap{}A_n$.  Also, $E\nsg\widebar{P}$, so 
$\widebar{P}\le\widebar{P}{\cdot}C_{\Sigma_n}(\widebar{P})\le 
C_{\Sigma_n}(E)$, and hence 
	\beqq \widebar{P}{\cdot}C_{\Sigma_n}(\widebar{P})\big/\widebar{P} 
	\textup{ has odd order.} \label{e:An5} \eeqq
By Lemma \ref{cent-Sigma}, each union of $m$ $E$-orbits of order $q=2^i$ 
which have the same stabilizer subgroup contributes a factor 
$E_q\wr\Sigma_m$ to $C_{\Sigma_n}(E)$, where $E_q\cong(C_2)^i$ is acting 
freely on an orbit of order $q$ in $\bfn$.  Since a Sylow $2$-subgroup of 
$\Sigma_m$ is a product of wreath products $C_2\wr\cdots\wr{}C_2$, 
$\widebar{P}\in\syl2{C_{\Sigma_n}(E)}$ is a product of subgroups of the 
form $E_q\wr{}C_2\wr\cdots\wr{}C_2$ (or $E_q$) with pairwise disjoint 
support.  If $\widebar{P}$ contains a factor 
$E_q\wr{}C_2\wr\cdots\wr{}C_2$ for $q=2^r\ge8$, then $\outf(P)$ contains 
$GL_r(2)\ge{}D_8$, which we just saw is impossible.  

Write $\bfn=X_0\amalg X_1\amalg X_2$, where $X_0$ is the set of points 
fixed by $\widebar{P}$, $X_1$ is the union of $\widebar{P}$-orbits of 
length 2, and $X_2$ is the union of $\widebar{P}$-orbits of length $\ge4$. 
By the above description of $\widebar{P}$, $\widebar{P}=P_1\times{}P_2$, 
where $\supp(P_i)=X_i$ for $i=1,2$, $P_1\cong{}C_2^m$ where $2m=|X_1|$, 
and $P_2$ is a product of subgroups $E_4\wr{}C_2\wr\cdots\wr{}C_2$ and 
$C_2\wr\cdots\wr{}C_2$ (the latter of order $\ge8$).  By \eqref{e:An5}, 
$|X_0|\le1$, since otherwise there would be a 2-cycle in 
$C_{\Sigma_n}(\widebar{P})$ not in $\widebar{P}$.  

Each factor $E_4$ or $C_2\wr{}C_2$ (with support of order 4) contains a 
subgroup conjugate to $E_*$ (thus one of the factors $Q_i$ in $Q$).  Thus 
$X_2\subseteq\supp(Q\cap\widebar{P})$.  If $n-|X_2|\le3$, then 
$X_2=\supp(Q)$, so $Q\le\widebar{P}\cap{}A_n=P$, contradicting the 
original assumption on $P$.  Thus $|X_0\cup{}X_1|>3$. Since $|X_0|\le1$ 
and $|X_1|=2m$ is even, we have $m\ge2$.  

If $\{i,j\}$ is any of the $m$ orbits of order $2$ in $X_1$, then 
$(i\,j)\in{}C_{\Sigma_n}(P){\sminus}A_n$ and $\widebar{P}=\gen{P,(i\,j)}$.  
Thus $N_{\Sigma_n}(P)=N_{\Sigma_n}(\widebar{P})$, 
$C_{\Sigma_n}(P)=C_{\Sigma_n}(\widebar{P})$, 
$P{\cdot}C_{\Sigma_n}(P)=\widebar{P}{\cdot}C_{\Sigma_n}(\widebar{P})$, and 
so $N_{\Sigma_n}(P)/P{\cdot}C_{\Sigma_n}(P)\cong N_G(P)/P{\cdot}C_G(P)$.  
This proves that
	\[ \Out_G(P) = \Out_{\Sigma_n}(P) \cong 
	\Out_{\Sigma_n}(\widebar{P}) 
	\cong \Sigma_m \times \Out_{\Sigma_{X_2}}(P_2), \]
where the first isomorphism is induced by restriction.  Here, 
$\Sigma_{X_2}$ is the group of permutations of $X_2$.  

If $m=2$, then $O_2(\Out_G(P))\ne1$, and if $m\ge4$, then 
$\Out_G(P)\ge{}D_8$.  Either of these would contradict the assumption that 
$\Out_G(P)$ contains a strongly $2$-embedded subgroup.  Thus $m=3$, and 
$X_1=\supp(P_1)$ has order 6.  A group with a strongly $2$-embedded 
subgroup cannot split as a product of two groups of even order, so 
$|\Out_{\Sigma_{X_2}}(P_2)|$ is odd.  Since 
$P_2{\cdot}C_{\Sigma_{X_2}}(P_2)/P_2$ is isomorphic to a subgroup of 
$\widebar{P}{\cdot}C_{\Sigma_n}(\widebar{P})/\widebar{P}$, it has odd 
order by \eqref{e:An5}, and hence 
	\[ \bigl|N_{\Sigma_{X_2}}(P_2)/P_2\bigr| = \left| 
	\frac{N_{\Sigma_{X_2}}(P_2)}{P_2{\cdot}C_{\Sigma_{X_2}}(P_2)} 
	\right| \cdot \left| \frac{P_2{\cdot}C_{\Sigma_{X_2}}(P_2)}{P_2} 
	\right| = \bigl|\Out_{\Sigma_{X_2}}(P_2)\bigr| \cdot 
	\bigl| P_2{\cdot}C_{\Sigma_{X_2}}(P_2)/P_2 \bigr| \]
is also odd.  If $P_2\le{}T\in\syl2{\Sigma_{X_2}}$, then $N_T(P_2)/P_2$ 
has odd order, so $P_2=T$ (cf. \cite[Theorem 2.1.6]{Sz1}), and thus 
$P_2\in\syl2{\Sigma_{X_2}}$.   

Since $P_2$ is a Sylow 2-subgroup of a symmetric group and has no orbits 
of order 2, it is a product of subgroups $C_2\wr\cdots\wr{}C_2$ of order 
$\ge8$.  Since $4\big||X_2|$ (a union of orbits 
of order $2^i\ge4$) and $|X_0|\le1$, 
	\[ n=|X_0|+6+|X_2|\equiv2,3 \pmod{4}~.  \]

If $R$ is any other subgroup in $\E_0^{\ngeq{}Q}$, then 
$R=\widebar{R}\cap{}G$, $\bfn=Y_0\amalg{}Y_1\amalg{}Y_2$ where $Y_0$ is 
the set of elements fixed by $\widebar{R}$ and $Y_1$ is the union of 
$R$-orbits of order $2$, $\widebar{R}=R_1\times{}R_2$ where 
$\supp(R_i)=Y_i$, $R_2\in\syl2{\Sigma_{Y_2}}$, $|Y_1|=6=|X_1|$, and 
$|Y_2|=|X_2|$ (the largest multiple of $4$ which is $\le{}n{-}6$).  Thus 
$R$ is $\Sigma_n$-conjugate to $P$, and is $A_n$-conjugate to $P$ since 
there are odd permutations which centralize $P$ (the transpositions in 
$P_1$).  

Now, $Z(\widebar{P})=P_1\times{}Z(P_2)$, where $Z(P_2)$ is a product of 
one copy of $C_2$ for each factor $C_2\wr\cdots\wr{}C_2$ in $P_2$ 
(equivalently, for each $P_2$-orbit in $X_2$).  Also, each of these 
factors $C_2$ has support the corresponding $P_2$-orbit, hence of order a 
multiple of $4$, and hence contained in $A_n$.  Thus $Z(P_2)\le{}G=A_n$.  
Also, $\autf(P)$ acts via the identity on $Z(P_2)$, since all of the 
factors $C_2\wr\cdots\wr{}C_2$ in $P_2$ have different orders (hence their 
supports have different orders).  Since 
$\Aut_{A_n}(P_1\cap{}A_n)\cong\Sigma_3$ acts on $P_1$ by permuting the 
three transpositions, $\autf(P)$ acts on $P_1\cap{}A_n\cong{}C_2^2$ with 
trivial fixed set.  Since $Z(P) = (P_1\cap A_n)\times Z(P_2)$, it now 
follows that $C_{Z(P)}(\Aut_S(P))/C_{Z(P)}(\autf(P))$ has order two.

To summarize, every class in $\Ker(\mu_G)$ is represented by some $\alpha$ 
such that $\alpha_P=\Id$ when $P\ge{}Q$, and for such $\alpha$, 
$[\alpha]=1$ if and only if $g_P\in{}C_{Z(P)}(\autf(P))$ for some 
representative in each $\calf$-conjugacy class in $\E_0^{\ngeq{}Q}$.  When 
$n\equiv0,1$ (mod $4$), $\E_0^{\ngeq{}Q}=\emptyset$, so $\Ker(\mu_G)=1$.  
When $n\equiv2,3$ (mod $4$), all subgroups in $\E_0^{\ngeq{}Q}$ are 
$\calf$-conjugate to some fixed $P$, and so $|\Ker(\mu_G)|\le 
|C_{Z(P)}(\Aut_S(P))/C_{Z(P)}(\autf(P))|=2$.  This proves \eqref{e:An3}.  

Assume $n\equiv2,3$ (mod $4$), and 
$P=\widebar{P}\cap{}A_n\in\E_0^{\ngeq{}Q}$ as above.  Set $k=[n/4]$ as 
before.  Assume $S$ was chosen so that 
$\supp(S)=X_1\amalg{}X_2=\{1,\ldots,4k+2\}$ and 
$\supp(Q)=\{3,\ldots,4k+2\}$.  We have shown that 
$|X_2|=|\supp(S)|-6=4k-4$ and $X_2\subseteq\supp(Q)$.  Thus, after 
possibly rearranging the elements of $\bfn$, we can assume 
$X_2=\{7,8,\ldots,4k+2\}$ and $P_1=\gen{(1\,2),(3\,4),(5\,6)}$.

Set $x=(1\,2)$.  Then $\Out(G)=\gen{[c_x]}\cong{}C_2$, $[x,S]=1$, and 
$c_x$ is the identity on $N_G(Q)/C'_G(Q)$ and hence on $\Aut_\call(Q)$.  
(Note that if $n=4k+3$, then $C'_G(Q)=\gen{(1\,2\,n)}$ does not commute 
with $x$.)  Also, $(1\,2)(3\,4)(5\,6)$ centralizes $N_G(P)$,  and hence 
$c_x$ acts on $\Aut_\call(P)$ (or on $N_G(P)$) via conjugation  by 
$g_P\defeq (3\,4)(5\,6)\in C_{Z(P)}(\Aut_S(P))$. Since $g_P\notin 
C_{Z(P)}(\autf(P))$, $[c_x]$ is sent to a nontrivial element in  
$\Ker(\mu_G)$. This proves \eqref{e:An4}, and finishes the proof of the 
proposition. 
\end{proof}

We finish by proving that with the obvious exceptions, most fusion systems 
of alternating groups are reduced.

\begin{Prop} \label{F(An)-red}
Fix a prime $p$ and $n\ge{}p^2$ such that $n\equiv0,1$ (mod $p$).  Assume 
$n\ge8$ if $p=2$.  Set $G=A_n$, and choose $S\in\sylp{G}$.  Then the fusion 
system $\calf_S(G)$ is reduced.
\end{Prop}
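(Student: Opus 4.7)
The plan is to verify separately the three defining conditions for $\calf = \calf_S(G)$ to be reduced, namely $O_p(\calf) = 1$, $O^p(\calf) = \calf$, and $O^{p'}(\calf) = \calf$.

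The two ``index'' conditions follow quickly from the simplicity of $A_n$ (which holds for $n \ge 5$). Since $[G,G] = G$, the focal subgroup theorem gives $\foc(\calf) = S \cap [G,G] = S$, and Theorem~\ref{t:O^p(F)}(a) yields $O^p(\calf) = \calf$. Similarly, since $A_n$ has no proper normal subgroup of index prime to $p$, the standard correspondence between normal subsystems of $\calf_S(G)$ of index prime to $p$ and normal subgroups of $G$ of prime-to-$p$ index (as exploited in \cite{BCGLO2} and implicit in Section~\ref{s:ThA}) forces $O^{p'}(\calf) = \calf$.

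The main work is $O_p(\calf) = 1$. Keep the notation of the proof of Lemma~\ref{Out(S,F(An))}: let $q = p$ if $p$ is odd and $q = 4$ if $p = 2$, set $k = \lfloor n/q \rfloor$, and let $Q = Q_1 \times \cdots \times Q_k \le S$ with $\supp(Q_i) = \{q(i-1)+1, \ldots, qi\}$. The hypothesis $n \equiv 0, 1 \pmod{q}$ forces $|\bfn \setminus \supp(Q)| \le 1$, so (as in the proof of Lemma~\ref{Out(S,F(An))}, using Lemma~\ref{cent-Sigma}) we have $C_G(Q) = Q$ (hence $Q$ is $\calf$-centric), and $\autf(Q) = N_G(Q)/Q$ is an index-$\le 2$ subgroup of $\Aut(E_*) \wr \Sigma_k$. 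Since $|\Aut(E_*)|$ is prime to $p$ and $O_p(\Sigma_k) = 1$, we get $O_p(\autf(Q)) = 1$, so $Q$ is $\calf$-radical. By Proposition~\ref{norm<=>}, $O_p(\calf) \le Q$, and being normal in $\calf$ it is $\autf(Q)$-invariant. Since $\Aut(E_*)$ acts irreducibly on each $Q_i$ and $\autf(Q)$ permutes the $Q_i$ transitively (through its image in $\Sigma_k$), the only $\autf(Q)$-invariant subgroups of $Q$ are $1$ and $Q$; hence $O_p(\calf) \in \{1, Q\}$.

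It remains to rule out $O_p(\calf) = Q$ by showing $Q$ is not strongly closed in $\calf$. For $p = 2$, the element $g' = (1,2)(5,6)$ normalizes $Q$ (acting on $Q_1$ and $Q_2$ as transpositions in $\Aut(E_*) = \Sigma_3$ and centralizing the remaining $Q_i$), so $\langle Q, g'\rangle$ is a $2$-group which up to the choice of $S$ lies in $S$; then $g' \notin Q$, and $g'$ is $A_n$-conjugate to $(1,2)(3,4) \in Q_1$ by cycle type. For $p$ odd, let $t = \prod_{j=1}^{p}(j, p+j, \ldots, (p-1)p + j)$, a product of $p$ disjoint $p$-cycles cyclically permuting $Q_1, \ldots, Q_p$, and set $g' = t \cdot g_1^{-1} g_2$ where $g_i$ is the standard $p$-cycle generating $Q_i$. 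A direct computation inside the subgroup $C_p \wr C_p \le S$ (the case $n = p^2$ being typical) shows that $g'$ has order $p$ and consists of $p$ disjoint $p$-cycles whose supports cross the block decomposition, so $g' \notin Q$; and $g'$ has the same cycle type $(p^p, 1^{n-p^2})$ as $g_1 g_2 \cdots g_p \in Q$, hence is $A_n$-conjugate to it because the repeated cycle lengths prevent the $\Sigma_n$-class from splitting. Since $t$ normalizes $Q$, up to the choice of $S$ we have $\langle Q, t\rangle \le S$, placing $g'$ in $S$ and contradicting strong closure. The main obstacle is precisely this $p$ odd computation: one must verify by direct wreath-product calculation that $g' = t \cdot g_1^{-1} g_2$ has the claimed cycle structure on the first $p^2$ points of $\bfn$, and confirm that its $\Sigma_n$-class does not split in $A_n$.
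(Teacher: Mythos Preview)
Your argument for $O^{p'}(\calf)=\calf$ has a genuine gap: there is no ``standard correspondence'' between subsystems of $\calf_S(G)$ of index prime to~$p$ and normal subgroups of $G$ of index prime to~$p$, and simplicity of $G$ does \emph{not} imply $O^{p'}(\calf_S(G))=\calf_S(G)$. A clean counterexample is $G=PSL_2(8)$ at $p=3$: here $S\cong C_9$ is the unique $\calf$-centric subgroup, $\autf(S)\cong C_2$, and since $O^{3'}(C_2)=1$ the trivial fusion system over $C_9$ is a saturated subsystem satisfying Definition~\ref{d:p-p'-index}(b), so $O^{3'}(\calf)\ne\calf$ even though $G$ is simple. (The $p$-power-index analogue \emph{is} true, via the hyperfocal subgroup theorem; presumably that is what you had in mind.) The paper instead invokes \cite[Theorem~5.4]{BCGLO2}: it suffices to show $\autf^0(S)=\autf(S)$, and for this one checks that each $\alpha\in\autf(S)$ satisfies $\alpha|_P\in O^{p'}(\autf(P))$ for a suitable $\calf$-centric $P$ with $\alpha(P)=P$. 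For $p=2$ one takes $P=Q$ and verifies $O^{2'}(\autf(Q))=\autf(Q)$ directly from the wreath-product description; for $p$ odd the paper has to construct a specific $\alpha$ and a different centric $P\le S$ by hand, so real work is required.

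On the $O_p(\calf)=1$ step your overall strategy matches the paper's, but two details need attention. The assertion $O_p(\Sigma_k)=1$ fails for $p=2$ and $k\in\{2,4\}$ (i.e.\ $n\in\{8,9,16,17\}$), so your justification that $Q$ is $\calf$-radical is incomplete there, though the conclusion is correct. And for $p$ odd your permutation $t=\prod_{j=1}^{p}(j,\,p+j,\,\ldots,\,(p-1)p+j)$ is \emph{itself} already a product of $p$ disjoint $p$-cycles lying in $S\smallsetminus Q$, hence $A_n$-conjugate to $g_1\cdots g_p\in Q$; the modification $t\cdot g_1^{-1}g_2$ and the ``main obstacle'' computation you flag are unnecessary.
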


\begin{proof}  Set $\calf=\calf_S(G)$.  By the focal subgroup theorem (cf. 
\cite[Theorem 7.3.4]{Gorenstein}), $\foc(\calf)=S\cap[G,G]=S$, so 
$O^p(\calf)=\calf$.  

Let $Q\le{}S$ be as in the proof of Lemma \ref{Out(S,F(An))}:  the 
subgroup generated by all subgroups of $S$ $G$-conjugate to $E_*$, where 
$E_*=\gen{(1\,2\,\cdots\,p)}\cong{}C_p$ if $p$ is odd, and 
$E_*=\gen{(1\,2)(3\,4),(1\,3)(2\,4)}\cong{}C_2^2$ if $p=2$.  We saw in the 
proof of the lemma that $Q=Q_1\times\cdots\times{}Q_k$, where $k=[n/p]$ 
($p>2$) or $[n/4]$ ($p=2$), the $Q_i$ are the only subgroups of $S$ 
$G$-conjugate to $E_*$, and they have pairwise disjoint support.  Thus $Q$ 
is $\autf(S)$-invariant.  We also saw that $C_S(Q)=Q$, and hence $Q$ is 
$\calf$-centric (since it is the only subgroup in its $\calf$-conjugacy 
class by construction).  Finally,
	\beqq \Aut_{\Sigma_n}(Q) \cong \Aut(E_*)\wr\Sigma_k 
	\qquad\textup{where}\qquad \Aut(E_*)\cong \begin{cases} 
	C_{p-1} & \textup{if $p>2$} \\
	\Sigma_3 & \textup{if $p=2$~,} \end{cases} \label{e:aaa} \eeqq
and hence $\autf(Q)$ has index at most two in this wreath product.  When 
$p=2$, since $\Sigma_k\le\Aut_{\Sigma_n}(Q)$ permutes the $Q_i$ with 
support of order $4$, it is contained in $\autf(Q)$.

Set $R=O_p(\calf)$.  Since $Q$ is $\calf$-centric, and is $\calf$-radical 
by \eqref{e:aaa}, $R\le{}Q$ by Proposition \ref{norm<=>}.  Assume $R\ne1$, 
and fix $g\in{}R$ of order $p$.  There is $h\in{}Q$ which is $G$-conjugate 
to $g$ (a product of the same number of $p$-cycles) such that $gh$ is a 
$p$-cycle (or a product of two 2-cycles if $p=2$).  Then  $h\in{}R$ since 
$R\nsg\calf$, and so $gh\in{}R$.  Since each $Q_i$ is generated by 
elements $G$-conjugate to $gh$, this would imply that $R=Q$.  But in all 
cases, there are elements both in $Q$ and in $S{\sminus}Q$ which are 
products of $p$ disjoint $p$-cycles, so $Q$ is not strongly closed in 
$\calf$.  We conclude that $R=O_p(\calf)=1$.

Now set $\calf_0=O^{p'}(\calf)$; we must show $\calf_0=\calf$.  By 
\cite[Theorem 5.4]{BCGLO2}, it suffices to show that 
$\Aut_{\calf_0}(S)=\autf(S)$.  Also, by the same theorem, 
	\begin{multline*} 
	\Aut_{\calf_0}(S)=\autf^0(S) \ge \bigl\langle\alpha\in\autf(S) 
	\,\big|\, \alpha|_P\in O^{p'}(\autf(P)), \\
	\textup{ some $\calf$-centric subgroup $P\le{}S$ with 
	$\alpha(P)=P$} \bigr\rangle~.
	\end{multline*}
For $\alpha\in\autf(S)$, if $\alpha|_Q\in O^{p'}(\autf(Q))$, then 
$\alpha\in\Aut_{\calf_0}(S)$.  If $p=2$, then 
$O^{2'}(\autf(Q))=\autf(Q)$ by the description in \eqref{e:aaa}, so 
$\calf_0=\calf$ in this case.  

Assume $p$ is odd.  Let $p^\ell$ be the largest power of $p$ such that 
$p^\ell\le{}n$.  Write $S=S_1\times{}S_2$, where 
$\supp(S_1)\cap\supp(S_2)=\emptyset$ and $|\supp(S_1)|=p^\ell$.  Fix 
$T\in\sylp{\Sigma_p}$, and identify 
	\[ S_1 = T\wr T\wr \cdots\wr T \le 
	\Sigma_p\wr \Sigma_p\wr \cdots\wr \Sigma_p \le 
	\Sigma_{p^\ell} \le \Sigma_n. \]
Let $\Phi\:(\Sigma_p)^\ell\Right2{} 
\Sigma_p\wr\cdots\wr\Sigma_p\le\Sigma_{p^\ell}$ be the monomorphism which 
sends the first factor diagonally to $(\Sigma_p)^{p^{\ell-1}}$, the second 
factor diagonally to $(1\wr\Sigma_p)^{p^{\ell-2}}$, etc.  Set 
$P_1=\Phi(T^\ell)$ and $P=P_1\times{}S_2\le{}S$.  Fix $u\in\F_p^\times$ of 
order $p-1$, and choose $h\in{}N_{\Sigma_p}(T)$ such that $hgh^{-1}=g^u$ 
for $g\in{}T$.  Let $\alpha\in\autf(S)$ be conjugation by 
$\Phi(h,h^{-1},1,\dots,1)$.  Then $\alpha|_{P_1}$ has matrix 
$\diag(u,u^{-1},1,\dots,1)\in \text{SL}_\ell(p)$ with respect to the 
canonical basis. Since $\Aut_\calf(P_1)$ has index at most two in 
$\Aut_{\Sigma_n}(P_1)\cong \text{GL}_\ell(p)$, we get $\alpha|_P\in 
O^{p'}(\autf(P))$, and so $\alpha\in\Aut_{\calf_0}(S)$ since $P$ is 
$\calf$-centric.  Also, $\alpha|_Q$ represents a generator of 
$\autf(Q)/O^{p'}(\autf(Q))\cong\F_p^\times$, so this finishes the proof 
that $\Aut_{\calf_0}(S)=\autf(S)$ and hence that $\calf_0=\calf$.  Thus 
$\calf$ is reduced.
\end{proof}


\bigskip\bigskip


\begin{thebibliography}{BLO2}

\bibitem[Asch]{Asch} M. Aschbacher, Normal subsystems of fusion systems, 
Proc. London Math. Soc. 97 (2008), 239--271

\bibitem[BK]{BK} P. Bousfield \& D. Kan, Homotopy limits, completions, and 
localizations, Lecture notes in math. 304, Springer-Verlag (1972)

\bibitem[BLO1]{BLO1} C. Broto, R. Levi, \& B. Oliver, Homotopy 
equivalences of $p$-completed classifying spaces of finite groups, 
Invent. math. 151 (2003), 611--664

\bibitem[BLO2]{BLO2} C. Broto, R. Levi, \& B. Oliver, The homotopy theory
of fusion systems, J. Amer. Math. Soc. 16 (2003), 779--856

\bibitem[BCGLO1]{BCGLO1} C. Broto, N. Castellana, J. Grodal, R. Levi, \&
B. Oliver, Subgroup families controlling $p$-local finite groups, Proc.
London Math. Soc. 91 (2005), 325--354

\bibitem[BCGLO2]{BCGLO2} C. Broto, N. Castellana, J. Grodal, R. Levi, \&
B. Oliver, Extensions of $p$-local finite groups, Trans. Amer. Math. Soc.
359 (2007), 3791--3858



\bibitem[CF]{CF} R. Carter \& P. Fong, The Sylow $2$-subgroups of the 
finite classical groups, J. Algebra 1 (1964), 139--151

\bibitem[Gd]{Goldschmidt} D. Goldschmidt, Strongly closed $2$-subgroups of 
finite groups, Annals of Math. 102 (1975), 475--489

\bibitem[G]{Gorenstein} D. Gorenstein, Finite groups, Harper \& Row (1968)

\bibitem[GL]{GL} D. Gorenstein \& R. Lyons, The local structure of finite 
groups of characteristic two type, Memoirs Amer. Math. Soc. 276 (1983)


\bibitem[HB3]{HB3} B. Huppert \& N. Blackburn, Finite groups III,
Springer-Verlag (1982)

\bibitem[JMO]{JMO} S. Jackowski, J. McClure, \& B. Oliver, Homotopy 
classification of self-maps of $BG$ via $G$-actions, Annals of Math. 135 
(1992), 183--270


\bibitem[O1]{limz-odd} B. Oliver, Equivalences of classifying spaces 
completed at odd primes, Math. Proc. Camb. Phil. Soc. 137 (2004), 321--347

\bibitem[O2]{limz} B. Oliver, Equivalences of classifying spaces completed 
at the prime two, Amer. Math. Soc. Memoirs 848 (2006)

\bibitem[O3]{O3} B. Oliver, Extensions of linking systems and fusion 
systems, Trans. Amer. Math. Soc. (to appear)

\bibitem[OV1]{OV1} B. Oliver \& J. Ventura, Extensions of linking systems 
with $p$-group kernel, Math. Annalen 338 (2007), 983--1043

\bibitem[OV2]{OV2} B. Oliver \& J. Ventura, Saturated fusion systems over 
$2$-groups, Trans. Amer. Math. Soc. 361 (2009), 6661--6728

\bibitem[Pg1]{Puig-hyper} Ll. Puig, The hyperfocal subalgebra of a block,
Invent. math. 141 (2000), 365--397

\bibitem[Pg2]{Puig} L. Puig, Frobenius categories, J. Algebra 303 (2006),
309--357

\bibitem[Rz]{Ruiz} A. Ruiz, Exotic normal fusion subsystems of general 
linear groups, J. London Math. Soc. 76 (2007), 181--196

\bibitem[St]{Steinberg} R. Steinberg, Automorphisms of finite linear 
groups, Canadian J. Math. 12 (1960), 606--615

\bibitem[Sz1]{Sz1} M. Suzuki, Group theory I, Springer-Verlag (1982)

\bibitem[Sz2]{Sz2} M. Suzuki, Group theory II, Springer-Verlag (1986)

\bibitem[Ta]{Taylor} D. Taylor, The geometry of the classical groups, 
Heldermann Verlag (1992)

\end{thebibliography}
\end{document}

